\documentclass[11pt]{amsart}

\usepackage{dsfont}

\usepackage{comment}
\usepackage{times}
\usepackage[dvipsnames]{xcolor}
\usepackage{color}		
\usepackage{graphicx}	

\usepackage{amsmath,amssymb,mathrsfs,bm, enumerate}
\usepackage{mathtools}

\usepackage{yhmath} 

\usepackage[colorlinks=true, linkcolor=black, citecolor=black, urlcolor=black]{hyperref}
\usepackage{tikz-cd} 
\usetikzlibrary{decorations.pathreplacing,calligraphy}
\usepackage[all,cmtip]{xy}
\xyoption{arrow}

\usepackage{stmaryrd}
\usepackage{tcolorbox}
\usepackage{graphicx}
\usepackage{amsmath,amsthm,amsfonts,amssymb}
\usepackage[mathscr]{euscript}

\usepackage{xargs}  

\usepackage[letterpaper, top=1.5cm, bottom=2.9cm, left=3cm, right=3cm, heightrounded,bindingoffset=0mm]{geometry}

\usepackage{enumitem}

\theoremstyle{plain}%
\newtheorem{maintheorem}{Theorem}
\newtheorem{main_proposition}[maintheorem]{Proposition}

\newtheorem{theorem}{Theorem}[section]
\newtheorem{remark}[theorem]{Remark}%

\newtheorem{proposition}[theorem]{Proposition}
\newtheorem{lemma}[theorem]{Lemma}
\newtheorem*{setup}{Setup}
\newtheorem{corollary}[theorem]{Corollary}
\newtheorem{example}[theorem]{Example}%

\newtheorem{question}[theorem]{Question}

\theoremstyle{definition}
\newtheorem{definition}[theorem]{Definition}
\newtheorem{main_definition}[maintheorem]{Definition}

\begin{document}

\title[A Matrix Rank Formula for vector bundles of VOA coinvariants and conformal blocks]{A Matrix Rank Formula for vector bundles of vertex operator\\ algebra coinvariants and conformal blocks}
{\tiny{\author[Xiangrui Luo]{Xiangrui Luo}
\address{Xiangrui Luo  \newline \indent   Department of Mathematics, University of Pennsylvania, Philadelphia, PA, 19104}
\email{xiangrui@sas.upenn.edu} }}

\keywords{Vertex operator algebras, vector bundles, global generation, moduli space of curves, tautological classes.}
\vspace{-2.0 cm}

\begin{abstract} 
We introduce FA-matrices for computing ranks of vector bundles of coinvariants and conformal blocks associated with modules over vertex operator algebras on the moduli space of stable pointed curves, unifying the notions of fusion and averaging matrices and generalizing Ueno's work. To illustrate, we compute ranks of vector bundles determined by pointed VOAs and the tensor product of certain VOAs, as well as other examples. As an application, positivity properties of their first Chern classes are analyzed.
\end{abstract}

\vspace{-5.0 cm}

\maketitle

\section{Introduction}
Given a stable n-pointed coordinatized curve of genus $g$, together with n admissible modules over a vertex operator algebra (VOA) of CFT--type, one can construct a vector space of coinvariants and its dual space of conformal blocks \cite{TUY, NT, DGT22b}. These fit together to form a sheaf on the stack parameterizing families of such curves, called the \textit{sheaf of coinvariants} $\mathbb{V}_{g,n}(V,\{W^{\bullet}\})$. If the VOA satisfies certain mild assumptions, then the sheaf supports a projectively flat connection, with singularities on the boundary, and it will have finite-dimensional fibres. 
The connection allows one to conclude that the dimensions of the fibers are uniform at all smooth curves. To determine whether such a sheaf forms a vector bundle, one must check that the dimensions of the fibers do not increase at curves with singularities. In other words, the sheaf is locally free of finite rank. The rank of a vector bundle is important for a number of other reasons. For instance, it is an invariant, and is a crucial ingredient in computing Chern classes. In genus zero, rank encodes fusion rules. Moreover, there are many applications of knowing the rank for particular VOAs.

For affine VOAs at positive integer levels, sheaves of coinvariants and conformal blocks form vector bundles. More generally, if the sheaf is coherent and $V$ is rational, one can replace the vector spaces of coinvariants at nodal curves by sums of analogous spaces on the (partial) normalization of the curve \cite{TUY,NT,DGT22a}. This factorization theorem gives a recursive algorithm to reduce the rank computations to fusion rules.

One of our main results, Theorem \ref{main_cor_2}, gives a closed rank formula in all settings where factorization can be applied and fusion rules are understood. 

\begin{maintheorem}\label{main_cor_2}(Theorem \ref{cor_2}, matrix rank formula)
    Let $V$ be a strongly rational VOA (c.f., Definition \ref{def_strongly_rational}), $\{W_{1},\cdots,W_{l}\}$ be the collection of all irreducible admissible $V$-modules, up to isomorphism. For each irreducible module $W$ and $g\ge 0$, define an $l\times l$ matrix $R_{W,g}=(\mathrm{rank}\mathbb{V}_{g,3}(V,\{W,W_{i},W_{j}'\}))_{(i,j)}$. Let $S^{\bullet}=(S_{1},\cdots,S_{n})$ be an $n$-tuple of irreducible admissible modules. Then,
$$
    \mathrm{rank}\mathbb{V}_{g,n+2}\left(V,\left\{S^{\bullet}, W_{i}, W_{j}'\right\}\right)=\left(\left(\prod_{i=1}^{n}R_{S_{i},0}\right)R_{V,1}^{g}\right)_{(i,j)}.
$$
Moreover, if $g\ge 1$, one also has that
$$
\mathrm{rank}\mathbb{V}_{g,n}\left(V,\left\{S^{\bullet}\right\}\right)= \mathrm{Tr} \left( \left(\prod_{i=1}^{n}R_{S_{i},0}\right)R_{V,1}^{g-1}\right),
$$
where $\mathrm{Tr}$ denotes the trace of a matrix.
\end{maintheorem}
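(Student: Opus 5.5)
We argue by degenerating the curve to a maximally convenient stable curve and applying the factorization theorem for strongly rational VOAs \cite{TUY,NT,DGT22a}. Throughout we use three standard facts. First, the sheaves $\mathbb{V}_{g,n}(V,\{W^\bullet\})$ are vector bundles when $V$ is strongly rational, so the rank can be computed at any stable curve, in particular a nodal one. Second, the factorization theorem: at a curve with a separating node splitting it into components of genus $g_1,g_2$ carrying markings $S^{(1)},S^{(2)}$,
$$\mathrm{rank}\,\mathbb{V}_{g,n}(V,\{S^\bullet\})=\sum_{k=1}^l \mathrm{rank}\,\mathbb{V}_{g_1,|S^{(1)}|+1}(V,\{S^{(1)},W_k\})\cdot \mathrm{rank}\,\mathbb{V}_{g_2,|S^{(2)}|+1}(V,\{S^{(2)},W_k'\}).$$
At a non-separating node one instead gets $\sum_k \mathrm{rank}\,\mathbb{V}_{g-1,n+2}(V,\{S^\bullet,W_k,W_k'\})$. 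Third, the propagation of vacua, $\mathrm{rank}\,\mathbb{V}_{g,n+1}(V,\{S^\bullet,V\})=\mathrm{rank}\,\mathbb{V}_{g,n}(V,\{S^\bullet\})$, together with invariance under permuting the marked points and $W_k''\cong W_k$.

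For the first formula, consider a ``caterpillar'' stable curve. It is a chain of $n+g$ components $C_1,\dots,C_{n+g}$, each a rational curve with three special points. Two of these are the nodes to its neighbours; at the ends they are the marked points carrying $W_i$ and $W_j'$. The third special point on $C_m$ is the marked point $S_m$ for $m\le n$. For $n<m\le n+g$, we instead attach an elliptic tail, equivalently a genus one component with three points, two of which are glued to the chain. Cleaner still: let the components for $m>n$ be genus-one curves with two chain points and no extra marking. These are genus-one $3$-pointed curves with the third point carrying $V$ by propagation of vacua, so their rank is $(R_{V,1})_{(a,b)}$. We fully factorize along all $n+g-1$ separating nodes. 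Each node contributes a sum over an index $k$, with module $W_k$ on one side and $W_k'$ on the other. The component $C_m$ then contributes $\mathrm{rank}\,\mathbb{V}_{0,3}(V,\{S_m,W_a,W_b'\})=(R_{S_m,0})_{(a,b)}$, or $(R_{V,1})_{(a,b)}$ for the genus-one pieces. To get this we orient each node so that the left component sees $W_b'$ and the right sees $W_b$, which is permitted since the sum over $k$ is symmetric under $k\mapsto k'$ reindexing. The sum over all internal indices is exactly the matrix product $\left(\prod_i R_{S_i,0}\right)R_{V,1}^g$ evaluated at $(i,j)$. Since $R_{S,0}$ and $R_{V,1}$ need not be assumed to commute, the ordering of components must match the stated product. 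Permutation invariance of marked points lets us choose this order.

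For the trace formula with $g\ge1$, we degenerate one handle to a non-separating node. The non-separating case of factorization gives $\mathrm{rank}\,\mathbb{V}_{g,n}(V,\{S^\bullet\})=\sum_k \mathrm{rank}\,\mathbb{V}_{g-1,n+2}(V,\{S^\bullet,W_k,W_k'\})$. By the first formula with $g-1$, this equals $\sum_k\bigl((\prod R_{S_i,0})R_{V,1}^{g-1}\bigr)_{(k,k)}$, which is the trace.

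The main obstacle is bookkeeping of duals at each node. We must ensure that the factor on each component is literally the $(a,b)$ entry of $R$, rather than of its transpose or of a version conjugated by the permutation $k\mapsto k'$. We would handle this by using $W''\cong W$ and the $S_3$-symmetry of 3-point ranks to rewrite each factor in the form $\mathrm{rank}\,\mathbb{V}(\{S,W_a,W_b'\})$ with the incoming node index first. We also need to justify that the chosen caterpillar curve (with genus-one pieces) is stable and lies in the relevant moduli space, which is immediate.
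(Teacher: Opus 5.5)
Your argument is correct. It rests on the same two inputs as the paper: factorization at a separating node gives a matrix product, and factorization at a non-separating node gives a trace. The difference is in how the argument is organized.

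The paper never chooses a particular degenerate curve. It deduces Theorem~\ref{cor_2} from the formal Theorem~\ref{thm_1}. There, the one-node factorization identity for $N_g(\beta)=\mathrm{rank}\,\mathbb{V}$ is shown to be exactly the entrywise form of $R_{S^{\bullet},g_1}R_{T^{\bullet},g_2}=R_{S^{\bullet}T^{\bullet},g_1+g_2}$. So $(\beta,g)\mapsto R_{\beta,g}$ is a monoid morphism out of $\mathbb{N}^{(I)}\oplus\mathbb{N}$. The non-separating identity becomes $\mathrm{Tr}(R_{\beta,g})=N_{g+1}(\beta)$. The product formula then follows by writing $(\beta,g)$ as a sum of the generators $(W,0)$ and $(V,1)$.

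Your caterpillar curve performs all $n+g-1$ of these one-node steps at once. This is closer to the geometric degeneration argument of Gao--Liu \cite{GL}, which the paper describes as a different method.

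\begin{itemize}
\item What your version buys: it is direct, and it makes the geometric origin of each factor $R_{S_m,0}$ or $R_{V,1}$ visible. You settle the order of the factors through permutation invariance of the marked points. Your handling of the duals, orienting each node and reindexing $k\mapsto k'$, is exactly what is needed.
\item What the formal version buys: since the source monoid is commutative, all the $R_{W,0}$ and $R_{V,1}$ commute automatically, so ordering never arises. It also yields the converse implications and the identity $R_{V,1}=\sum_{W}\mathrm{Tr}(R_{W',0})R_{W,0}$. That identity expresses the genus-one input through genus-zero fusion data. Finally, it applies to any system of integers satisfying the abstract factorization rules.
\end{itemize}

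One cosmetic point: drop the ``elliptic tail'' phrasing, since it describes a different curve. The version you settle on is the right one: genus-one components with exactly two chain points, whose rank $\mathrm{rank}\,\mathbb{V}_{1,2}(V,\{W_a,W_b'\})$ equals $(R_{V,1})_{(a,b)}$ by propagation of vacua.
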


When $V$ is an affine VOA $L(\mathfrak{g})_\ell$, where $\mathfrak{g}$ is a simple Lie algebra and $\ell$ is a positive integer, the rank of a vector bundle of coinvariants, or equivalently, the rank of its dual bundle (i.e., the vector bundle of conformal blocks), is typically referred to as the \textit{Verlinde formula}. Since vector spaces of conformal blocks for such affine VOAs are canonically isomorphic to vector spaces of generalized theta functions\footnote{\cite[Remark 10.0.1]{DG} contains a more detailed historical overview of this topic.}, their dimensions are of wide interest. In fact, work on this subject has been carried out by a number of researchers, including \cite{TUY,BL, F3,KN,T,Bea96, DGT22a}. The ranks of vector bundles of coinvariants and conformal blocks for more general VOAs are sometimes referred to as the \textit{generalized Verlinde formula}, and Theorem \ref{main_cor_2} gives a new tool for computing them.

There are other approaches for obtaining closed rank formulae for these sheaves. Gao-Liu proved a similar version of Theorem \ref{main_cor_2} using a different method, by applying the factorization theorem to the degenerating process of the corresponding geometric picture underlying the formula \cite[Theorem 3.10]{GL}. Before that, Beauville's work for affine Lie algebras $\mathfrak{g}$ of type A,B,C,D or G gives rank formulas for sheaves of coinvariants defined by modules over $L(\mathfrak{g})_\ell$ \cite[Corollary 9.8]{Bea96}. Ueno computed the rank of sheaves defined by simple affine VOAs $L(\mathfrak{g})_\ell$ at positive integer levels $\ell$ via fusion matrix computations \cite[Theorem 5.21]{U08}. For affine VOAs $L(sl_{r+1})_{l}$ in type $A_{r}$, one can compute the ranks via cohomological computations using Witten's Dictionary \cite{BG}.  Damiolini--Gibney--Tarasca and Damiolini--Gibney computed the rank for a number of examples \cite{DGT3,DG}. .

Theorem \ref{main_cor_2} is proved here via a new tool, called the \textit{FA-matrix} (Definition \ref{main_def_FA_matrix}). Our main applications are to compute ranks of vector bundles of coinvariants for certain types of VOAs, and to analyze their positivity. For instance, the generating functions for the rank of the bundles determined by certain discrete series Virasoro VOAs are given by continued fractions. These continued fractions exhibit surprising symmetry (Theorem \ref{theorem_7}). The first Chern classes of vector bundles on $\overline{\mathcal{M}}_{0,n}$ defined by modules over $\text{Vir}_{p,q}$ with maximal conformal weight are positive, in the sense that they non-negatively intersect all effective curves. Similar results about other more general VOAs are also given.

We next describe the FA-matrices.

\begin{main_definition}\label{main_def_FA_matrix} (Definition \ref{def_FA_matrix})
    Let $V$ be a strongly rational VOA, $W_{1}$, $\ldots$, $W_{l}$ be the collection of all irreducible admissible $V$-modules, up to isomorphism. Let  $S^{\bullet}$ be an $n$-tuple of  irreducible admissible $V$-modules. Define the \textit{FA-matrix associated with $S^{\bullet}$ of genus $g$} as an $l\times l$-matrix, $R_{V, S^{\bullet},g}$, by
    $$
    \left(R_{V, S^{\bullet},g}\right)_{(i,j)}:=
    \mathrm{rank}\mathbb{V}_{g,n+2}\left(V,\left\{S^{\bullet}, W_{i}, W_{j}'\right\}\right), 
    $$
    where $i,j$ run through all $i=1,...,l, j=1,...,l$. If $V$ is clear from context, we write $R_{S^{\bullet},g}$ instead.
\end{main_definition}

The FA-matrix unites the notions of fusion and averaging matrices. The FA-matrix defined by a single irreducible module at genus zero is equal to the fusion matrix, since the space of coinvariants on $\overline{\mathcal{M}}_{0,3}$ is isomorphic to the space of intertwining operators. Ueno defined the \textit{averaging matrix} as $\sum_{W\in \mathcal{W}} \mathrm{Tr}(R_{W',0})R_{W,0}$  \footnote{Ueno did not give such matrix a name. Nowadays, it is sometimes referred to as the \textit{averaging matrix}.}, where $W$ runs through all irreducible $V$-modules, up to isomorphism \cite[Theorem 5.21]{U08}. We prove that the averaging matrix is equal to $R_{V,1}$ (Proposition \ref{proposition_1}).

FA-matrices behave well with respect to the usual matrix operations, and such operations often have geometric interpretations. For instance, the matrix multiplication of FA-matrices corresponds to applying the factorization theorem to coinvariants at points in the image of clutching maps $\overline{\mathcal{M}}_{g_{1},n_{1}+1}\times \overline{\mathcal{M}}_{g_{2},n_{2}+1} \rightarrow \overline{\mathcal{M}}_{g_{1}+g_{2},n_{1}+n_{2}}$ (Lemma \ref{cor_1}). Taking the trace of FA-matrices corresponds to computing ranks of coinvariants at curves of genus one higher (Theorem \ref{thm_5}). These two observations lead to Theorem \ref{main_cor_2}, which generalizes \cite[Theorem 5.21]{U08}, giving a new proof.

 \begin{maintheorem}\label{main_thm_5}(Lemma \ref{cor_1}, Theorem \ref{thm_5})
    Let $V$ be a strongly rational VOA, and $S^{\bullet}, T^{\bullet}$ be finite tuples of irreducible admissible $V$-modules, which need not have the same length. Then,
$$R_{S^{\bullet},g_{1}}R_{T^{\bullet},g_{2}}=R_{S^{\bullet}T^{\bullet},g_{1}+g_{2}},$$
    $$
     \mathrm{rank}\mathbb{V}_{g+1,n}\left(V,\{S^{\bullet}\}\right)= \mathrm{Tr}\left(R_{S^{\bullet},g}\right),
    $$

    for all $g_{1}, g_{2}, g\ge 0$, and $\mathrm{Tr}$ denotes the trace of a matrix.
\end{maintheorem}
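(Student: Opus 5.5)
Both identities will follow from the factorization theorem for strongly rational $V$ (\cite{TUY,NT,DGT22a}), together with the fact that $\mathbb{V}_{g,n}$ is a vector bundle on $\overline{\mathcal{M}}_{g,n}$, so its rank can be computed at any stable curve, in particular a nodal one. For a curve with a single node, factorization gives
$$\mathbb{V}(C)\cong\bigoplus_{k=1}^{l}\mathbb{V}(\widetilde{C},\{\ldots,W_k,W_k'\}),$$
where $\widetilde{C}$ is the normalization of the node and the two preimages of the node carry $W_k$ and its contragredient $W_k'$. I will also use that contragredience permutes the irreducible modules and that $W''\cong W$.

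\emph{Multiplicativity.} Let $S^\bullet$ have length $n$ and $T^\bullet$ have length $m$. Consider the clutching map
$$\overline{\mathcal{M}}_{g_1,n+3}\times\overline{\mathcal{M}}_{g_2,m+3}\to\overline{\mathcal{M}}_{g_1+g_2,n+m+4},$$
gluing the $(n+3)$-rd point of the first curve to the first point of the second. Mark the first curve with $S^\bullet, W_i, \ast$ and the second with $\ast, T^\bullet, W_j'$, so the glued curve carries $S^\bullet T^\bullet, W_i, W_j'$. Factorization at the node gives
$$\mathrm{rank}\,\mathbb{V}_{g_1+g_2,n+m+2}(\{S^\bullet T^\bullet,W_i,W_j'\})=\sum_k \mathrm{rank}\,\mathbb{V}_{g_1,n+2}(\{S^\bullet,W_i,W_k'\})\cdot \mathrm{rank}\,\mathbb{V}_{g_2,m+2}(\{W_k,T^\bullet,W_j'\}).$$
Here I choose which branch carries $W_k'$ versus $W_k$; the sum over $k$ is symmetric under $k\mapsto k'$, so either choice works.

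The second factor must be rewritten as $\mathrm{rank}\,\mathbb{V}(\{T^\bullet,W_k,W_j'\})$. This uses invariance of the rank under permuting the marked points, which holds because the $S_n$-action on $\overline{\mathcal{M}}_{g,n}$ lifts to the bundles. The sum is then exactly $(R_{S^\bullet,g_1}R_{T^\bullet,g_2})_{(i,j)}$. One subtlety is stability when $n=0$ and $g=0$ (a 2-pointed genus-0 curve). I will handle it by setting $R_{\emptyset,0}=I$ by convention, justified by propagation of vacua: $\mathrm{rank}\,\mathbb{V}_{0,3}(V,W_i,W_j')=\delta_{ij}$.

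\emph{Trace.} Take a curve in the image of the non-separating clutching map
$$\overline{\mathcal{M}}_{g,n+2}\to\overline{\mathcal{M}}_{g+1,n},$$
which glues the last two points. Factorization gives
$$\mathrm{rank}\,\mathbb{V}_{g+1,n}(\{S^\bullet\})=\sum_k \mathrm{rank}\,\mathbb{V}_{g,n+2}(\{S^\bullet,W_k,W_k'\})=\mathrm{Tr}(R_{S^\bullet,g}).$$
The stability condition $2g+n+2>2$ requires $n\ge1$ or $g\ge1$. The degenerate case $g=0,n=0$ can be excluded or treated by the same convention.

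The main obstacle is bookkeeping rather than depth. Care is needed over which branch receives $W_k$ versus $W_k'$, over the ordering of points (which requires the $S_n$-symmetry of ranks), and over the unstable edge cases. The geometric input is entirely the factorization theorem plus local freeness. Those I take as given for strongly rational $V$ from the earlier sections.
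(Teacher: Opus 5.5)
Your argument is correct and is essentially the paper's proof: the paper deduces Lemma~\ref{cor_1} and Theorem~\ref{thm_5} from the direction (1)$\Rightarrow$(2) of Theorem~\ref{thm_1}, which is exactly your entrywise computation — Factorization Property I (separating node, with $x=\beta_1+\lambda_i$, $y=\beta_2+\lambda_j'$) for the product, Factorization Property II (non-separating node) for the trace, and your convention $R_{\emptyset,0}=\mathrm{Id}$ left implicit there. One small indexing slip: to match your markings and your displayed rank identity, the clutching map should be $\overline{\mathcal{M}}_{g_1,n+2}\times\overline{\mathcal{M}}_{g_2,m+2}\to\overline{\mathcal{M}}_{g_1+g_2,n+m+2}$, gluing the $(n+2)$-nd point of the first curve to the first point of the second.
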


We also show that taking the tensor product (or Kronecker product) of two FA-matrices corresponds to taking the tensor product of two VOAs (Lemma \ref{lem_1}). This result leads to Theorem \ref{main_thm_3}.

\begin{maintheorem}\label{main_thm_3} (Theorem \ref{thm_3}, \ref{thm_4}), 
    Let $V_{1}, V_{2}$ be two VOAs of CohFT-type. Let $S^{\bullet}=(S_{1},\cdots,S_{n})$ be an $n$-tuple of $V_{1}$-modules, $T^{\bullet}=(T_{1},\cdots,T_{n})$ be an $n$-tuple of $V_{2}$-modules. Then, for all $g,n\ge 0$,
    $$
\begin{aligned}\mathrm{rank}\mathbb{V}_{g,n}\left(V_{1}\otimes V_{2},\left\{S^{\bullet}\otimes T^{\bullet}\right\}\right)=&\mathrm{rank}\left(\mathbb{V}_{g,n}\left(V_{1},\left\{S^{\bullet}\right\}\right)\otimes \mathbb{V}_{g,n}\left(V_{2},\left\{T^{\bullet}\right\}\right)\right),\\
c_{1}\mathbb{V}_{g,n}\left(V_{1}\otimes V_{2},\left\{S^{\bullet}\otimes T^{\bullet}\right\}\right)=&c_{1}\left(\mathbb{V}_{g,n}\left(V_{1},\left\{S^{\bullet}\right\}\right)\otimes \mathbb{V}_{g,n}\left(V_{2},\left\{T^{\bullet}\right\}\right)\right).
    \end{aligned}
    $$

\end{maintheorem}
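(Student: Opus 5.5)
The plan is to reduce both identities to the case of stable curves with a single node, and then to the genus-zero three-pointed case, using the factorization theorem and the FA-matrix calculus (Lemma \ref{cor_1}, Theorem \ref{thm_5}, Lemma \ref{lem_1}).

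\textbf{Step 1 (modules and fusion).} First I fix the algebraic input. For VOAs of CohFT-type (strongly rational, so $V_{1}\otimes V_{2}$ is again strongly rational), the irreducible $V_{1}\otimes V_{2}$-modules are exactly the $W_{i}\otimes M_{a}$, with $W_{i}$ irreducible over $V_{1}$ and $M_{a}$ irreducible over $V_{2}$. Contragredients behave as $(W\otimes M)'=W'\otimes M'$. Spaces of intertwining operators factor as tensor products, which is the Dong--Mason--Zhu / Frenkel--Huang--Lepowsky result. In coinvariant language this gives
$$\mathrm{rank}\,\mathbb{V}_{0,3}(V_{1}\otimes V_{2},\{S\otimes T\})=\mathrm{rank}\,\mathbb{V}_{0,3}(V_{1},\{S\})\cdot\mathrm{rank}\,\mathbb{V}_{0,3}(V_{2},\{T\}).$$
Put differently, $R_{V_{1}\otimes V_{2},S\otimes T,0}=R_{V_{1},S,0}\otimes R_{V_{2},T,0}$ as Kronecker products, once the index set $\{(i,a)\}$ is ordered lexicographically. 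This is the content of Lemma \ref{lem_1}, which I invoke.

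\textbf{Step 2 (rank identity).} I apply Theorem \ref{main_cor_2} to $V_{1}\otimes V_{2}$ and use the mixed-product rule $(A\otimes B)(C\otimes D)=AC\otimes BD$ together with $\mathrm{Tr}(A\otimes B)=\mathrm{Tr}A\,\mathrm{Tr}B$. The averaging matrix also splits as $R_{V_{1}\otimes V_{2},1}=R_{V_{1},1}\otimes R_{V_{2},1}$. To see this, use Proposition \ref{proposition_1}: $R_{V,1}=\sum_{W}\mathrm{Tr}(R_{W',0})R_{W,0}$, and the sum over $W\otimes M$ factors as a product of sums. Then for $g\ge1$,
$$\mathrm{rank}=\mathrm{Tr}\Big(\prod_i (R_{S_i,0}\otimes R_{T_i,0})(R_{V_1,1}\otimes R_{V_2,1})^{g-1}\Big)=\mathrm{rank}\,\mathbb{V}(V_1)\cdot\mathrm{rank}\,\mathbb{V}(V_2).$$
For $g=0$ and $n\ge3$, I use the first formula of Theorem \ref{main_cor_2} with $S_{n-1},S_{n}$ playing the roles of $W_i,W_j'$, and read off a matrix entry of a Kronecker product, which is again a product. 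Since the rank of a tensor product of bundles is the product of the ranks, this gives the first line.

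\textbf{Step 3 (Chern class).} For $c_{1}$ I need more than ranks: I want the bundle identity $\mathbb{V}_{g,n}(V_1\otimes V_2,\{S^\bullet\otimes T^\bullet\})\cong\mathbb{V}_{g,n}(V_1,\{S^\bullet\})\otimes\mathbb{V}_{g,n}(V_2,\{T^\bullet\})$ on $\overline{\mathcal M}_{g,n}$. I would construct the natural map directly. Over the coordinatized family, the Lie algebra $\mathcal{L}_{C\setminus P}(V_1\otimes V_2)$ contains the images of $\mathcal{L}(V_1)\otimes 1$ and $1\otimes\mathcal{L}(V_2)$, and these generate the relevant action. Hence the quotient of $\bigotimes(S_i\otimes T_i)=(\bigotimes S_i)\otimes(\bigotimes T_i)$ surjects onto the tensor product of the two coinvariant spaces. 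Both sides are vector bundles of equal rank by Step 2, so a fibrewise surjection between them is an isomorphism. The construction is equivariant for the $\mathrm{Aut}\,\mathcal{O}$-torsor descent, so it descends to $\overline{\mathcal M}_{g,n}$, and $c_1$ agrees. Alternatively, one can compare via the Chern class formula of Damiolini--Gibney--Tarasca, using $c_{V_1\otimes V_2}=c_1+c_2$ and additive conformal weights. The expression $\mathrm{rank}(B)c_1(A)+\mathrm{rank}(A)c_1(B)$ then matches, with boundary coefficients handled by Step 2 applied on the clutched pieces.

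\textbf{Main obstacle.} The hardest point is the generation claim in Step 3: that the chiral Lie algebra of $V_1\otimes V_2$ is spanned, modulo what acts trivially, by the two factor subalgebras. That is what makes the map a well-defined surjection. If it proves delicate, I would fall back on the DGT $c_1$ formula comparison, which only requires the ranks already established.
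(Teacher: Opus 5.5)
Your Steps 1 and 2 are the paper's argument. The Kronecker splitting of fusion matrices via Milas, the splitting $R_{V_1\otimes V_2,1}=R_{V_1,1}\otimes R_{V_2,1}$ through Proposition \ref{proposition_1}, and the mixed-product rule are Claims 1--6 in the proof of Lemma \ref{lem_1}. The proof of Theorem \ref{thm_3} then reads off the $(1,1)$-entry by propagation of vacua, where you use the trace or another entry; that is the same thing.

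Your fallback for $c_1$ is also exactly the paper's proof of Theorem \ref{thm_4}: a coefficient-by-coefficient comparison in the Damiolini--Gibney--Tarasca formula, using $c_{V_1\otimes V_2}=c_{V_1}+c_{V_2}$ and $a_{W\otimes M}=a_W+a_M$. Your sketch leaves out one step. After splitting $a_{W_i\otimes M_j}=a_{W_i}+a_{M_j}$ in the $\delta_{irr}$ and $\delta_{i:I}$ coefficients and applying Step 2 to the pieces, the sum over the other index is still free. You have to collapse it with the factorization theorem, e.g. $\sum_j\mathrm{rank}\,\mathbb{V}_{g-1,n+2}(V_2,\{T^\bullet,M_j,M_j'\})=\mathrm{rank}\,\mathbb{V}_{g,n}(V_2,\{T^\bullet\})$, and Factorization Property I for $\delta_{i:I}$. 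Only then do the coefficients match those of $\mathrm{rank}\,\mathbb{V}_{g,n}(V_2,\{T^\bullet\})\,c_1\mathbb{V}_{g,n}(V_1,\{S^\bullet\})+\mathrm{rank}\,\mathbb{V}_{g,n}(V_1,\{S^\bullet\})\,c_1\mathbb{V}_{g,n}(V_2,\{T^\bullet\})$.

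Your primary route is genuinely different, but as written the map points the wrong way. Write $X=\bigotimes_i S_i$, $Y=\bigotimes_i T_i$, and $\mathfrak{g}_1,\mathfrak{g}_2,\mathfrak{g}_{12}$ for the three chiral Lie algebras.
\begin{itemize}
\item The containment you state gives $\mathfrak{g}_1X\otimes Y+X\otimes\mathfrak{g}_2Y\subseteq\mathfrak{g}_{12}(X\otimes Y)$. That is a natural surjection $\mathbb{V}_{g,n}(V_1,\{S^\bullet\})\otimes\mathbb{V}_{g,n}(V_2,\{T^\bullet\})\twoheadrightarrow\mathbb{V}_{g,n}(V_1\otimes V_2,\{S^\bullet\otimes T^\bullet\})$.
\item The map you wrote, from the coinvariants of $V_1\otimes V_2$ onto the tensor product, needs the reverse inclusion. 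That is precisely your unproved generation claim (dually: a product of conformal blocks is a conformal block).
\item You do not need it. The easy surjection is a surjection of locally free sheaves of equal rank by Step 2, hence an isomorphism.
\end{itemize}
What you must check is that $a\mapsto a\otimes\mathbf{1}$, although not conformal, commutes with $L_n$ for $n\ge -1$, because $L_n\mathbf{1}=0$ there. So it is compatible with the $(\mathrm{Der}\,\mathcal{O},\mathrm{Aut}\,\mathcal{O})$-action, with the connection on $\mathcal{V}_C$, with the chiral Lie algebras and their actions, and with the conformal-weight twist used to descend to $\overline{\mathcal{M}}_{g,n}$ (the weights add).

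Corrected this way, your route proves more than the stated theorem: a bundle isomorphism, i.e.\ a positive answer to Question \ref{question_1} in the paper's setting. The $c_1$ identity, Corollary \ref{cor_4}, and the Chern character identity of Corollary \ref{cor_5} on all of $\overline{\mathcal{M}}_{g,n}$ then follow at once. The paper's route stays purely numerical and never touches the sheaf-level construction, and accordingly it only obtains equality of ranks and first Chern classes.
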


In this paper, we assume that vertex operator algebras are strongly rational, and thus the conformal weights of the admissible modules are rational numbers \cite{Miy}. Therefore, the vector bundles of coinvariants are defined on $\overline{\mathcal{M}}_{g,n}$ \cite[VB corollary]{DGT22a}. As stated, the rank of a vector bundle of coinvariants is a key ingredient in the formula for its Chern classes. If the vector bundle of coinvariants is globally generated, then its first Chern class will be base-point free and give rise to a morphism from $\overline{\mathcal{M}}_{g,n}$ to another projective scheme. Any potentially base-point divisor must be \textit{nef} (i.e., it must intersect all curves non-negatively). Among all the curves on $\overline{\mathcal{M}}_{g,n}$, there is a collection of curves corresponding to the numerical equivalence classes of irreducible components of one-dimensional strata of $\overline{\mathcal{M}}_{g,n}$, called F-curves, and one says that a divisor is \textit{F-nef} if it intersects all F-curves non-negatively. The F-conjecture predicts that a divisor is nef if and only if it is F-nef \cite{GKM}.

Damiolini--Gibney showed that if the VOA is strongly generated in degree 1, the vector bundle of coinvariants is globally generated on $\overline{\mathcal{M}}_{0,n}$ \cite{DG}, and hence the coinvariant divisor is base-point free, and is hence nef and F-nef. However, the F-nefness of coinvariant divisors is not understood in positive genus cases, or when the vertex operator algebra is not strongly generated in degree 1.

A \textit{pointed VOA} is a strongly rational VOA whose category of admissible modules is a pointed category \cite{GR}. There are many examples of pointed VOAs including lattice VOAs for positive definite even lattices, holomorphic VOAs, parafermion algebras for $\mathfrak{sl}_{2}$, as well as tensor products of pointed VOAs. The discrete series Virasoro algebras $V_{2,q}$ for $q\ne 3$ are not pointed. These examples are discussed in Section \ref{PointedExamples}.

 \begin{main_proposition}\label{main_prop_5}(Proposition \ref{prop_6})
    Let $V$ be a pointed vertex operator algebra, with fusion ring $\mathbb{Z}[G]$. Let $W^{\bullet}=(W_{x_{1}},\cdots,W_{x_{n}})$ be an $n$-tuple of irreducible $V$-modules, where $x_{i}\in G$.  Then,
    $$
    \mathrm{rank}\mathbb{V}_{g,n}\left(V,\left\{W^{\bullet}\right\}\right)=|G|^{g}\delta_{\prod_{i=1}^{n}x_{i},e},
    $$
    where the multiplication is in $G$, and $e\in G$ denotes the identity element.
\end{main_proposition}

We prove that for a pointed VOA, testing F-nefness on $\overline{\mathcal{M}}_{g,n}$, $g\ge 3$, can be reduced to testing F-nefness on $\overline{\mathcal{M}}_{3,n}$. Moreover, if the VOA is also unitary, it suffices to test F-nefness on $\overline{\mathcal{M}}_{2,n}$.
 
 \begin{maintheorem}\label{main_thm_6}(Theorem \ref{thm_6})
    Let $V$ be a pointed VOA, and $S^{\bullet}$ be an $n$-tuple of irreducible modules. Then, the following are equivalent:
    \begin{enumerate}
        \item $\forall \, g\ge 3, \mathbb{D}_{g,n}(V,\{S^{\bullet}\}) \text{ is F-nef}$,
        \item $\exists\,  g\ge 3, \mathbb{D}_{g,n}(V,\{S^{\bullet}\})\text{ is F-nef.}$
    \end{enumerate}

    F-nefness has been tested for bundles of coinvariants constructed by affine VOAs \cite{F2},  VOAs that are strongly generated in degree one \cite{DG}, Parafermions \cite{Cha}, and for bundles of conformal blocks constructed by discrete series Virasoro VOAs \cite{Choi}.   Theorem \ref{main_thm_6} gives extends our understanding of such positivity to a broader class. 
    
    Integral to this study of positivity is finding a closed formulae for the ranks of vector bundles of coinvariants or conformal blocks defined by modules over a pointed VOA. In particular, Proposition \ref{main_prop_5} extends the rank formula in \cite[Example 5.2.5]{DGT3} from a lattice VOA with cyclic discriminant group to a pointed VOA:

    If the averaging conformal weight of all irreducible $V$-modules, up to isomorphism, is non-negative (e.g., if $V$ is unitary), then the following are equivalent:
    \begin{enumerate}
        \item $\forall\, g\ge 2, \mathbb{D}_{g,n}(V,\{S^{\bullet}\}) \text{ is F-nef}$,
        \item $\exists\,  g\ge 2, \mathbb{D}_{g,n}(V,\{S^{\bullet}\})\text{ is F-nef.}$
    \end{enumerate}
\end{maintheorem}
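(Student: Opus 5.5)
The plan is to reduce F-nefness of $\mathbb{D}_{g,n}(V,\{S^{\bullet}\})$ to a finite list of intersection numbers with F-curves. For each number we will show that its sign does not depend on $g$ once $g\ge 3$, and once $g\ge 2$ when the averaging conformal weight is non-negative.

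First we recall how the coinvariant divisor meets an F-curve. By the Fakhruddin-type formula for VOA coinvariants (as in \cite{DGT3,DG}), an F-curve comes from a partition of the marked points and attaching data into four "legs". These legs are glued via clutching maps to a $\overline{\mathcal{M}}_{0,4}$ or a $\overline{\mathcal{M}}_{1,1}$ factor, and the intersection number is determined by two kinds of data: the degrees of $\mathbb{D}_{0,4}$ or $\mathbb{D}_{1,1}$, and the ranks of the bundles on the leg factors $\overline{\mathcal{M}}_{g_i,n_i+1}$ for all intermediate irreducible modules $W$. For a pointed VOA the fusion ring is $\mathbb{Z}[G]$, so each intermediate module is forced to be a single $W_x$. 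Proposition \ref{main_prop_5} then gives each leg rank as $|G|^{g_i}\delta_{\cdot,e}$. Consequently every F-curve intersection number equals $|G|^{\sum g_i}$, which is a positive constant, times an expression built only from conformal weights and group-theoretic data. For the $\overline{\mathcal{M}}_{1,1}$-type curves the relevant degree involves $\mathrm{Tr}$ and the conformal weights summed over the group, that is, the averaging conformal weight.

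Second, we classify the F-curves in $\overline{\mathcal{M}}_{g,n}$ by their combinatorial type, ignoring how the genus is distributed among the legs. Let $F$ be an F-curve in genus $g\ge 3$. After dividing out the positive factor $|G|^{\sum g_i}$, its intersection number depends only on which legs have positive genus, on whether the $\overline{\mathcal{M}}_{1,1}$ component appears, and on the group elements attached to the legs. The reason is that the rank formula depends on $g_i$ only through $|G|^{g_i}$, apart from the support conditions. The main obstacle is to check that every such "type" is already realized in genus $3$. The worst case is an F-curve of the shape $\overline{\mathcal{M}}_{0,4}$ with several legs that each carry genus or a self-node, and genus $3$ should suffice for these configurations. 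We also have to show the converse: a genus-$3$ type appears again in every $g\ge 3$, which holds because extra genus can be pushed into any leg of positive genus. For this step we compare the list of F-curve types with the standard description (partitions of $\{1,\dots,n\}$ together with genus data), and we should also use the Mumford-relation identities for $\lambda,\psi,\delta$ that are implicit in the $\overline{\mathcal{M}}_{1,1}$ formula.

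Third, the unitary refinement. Only a few F-curve types require genus $3$ rather than genus $2$, namely those in which a genus-$0$ leg would need to become a genus-$1$ leg carrying the self-node or $\overline{\mathcal{M}}_{1,1}$ factor. We will show that the intersection number for these types equals a positive multiple of the averaging conformal weight plus terms already tested in genus $2$. When the averaging conformal weight is non-negative these numbers are automatically non-negative, so testing in genus $2$ is enough. In both statements the implication (1)$\Rightarrow$(2) is trivial, and (2)$\Rightarrow$(1) follows from the type comparison.
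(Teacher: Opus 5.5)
Your overall strategy is sound: compute every F-curve intersection number, show it is $|G|^g$ times a $g$-independent quantity, then compare which conditions occur in which genus. It differs from the paper's route, which reads off the class from Proposition \ref{prop_5} and inserts it into the six inequalities of \cite[Theorem 2.1]{GKM} (Lemma \ref{lem_3}). Proposition \ref{prop_5} gives the $\lambda$-coefficient $|G|^g c/2$, $b_{irr}=|G|^g a_{\mathrm{average}}$, and $b_{i:I}=|G|^g a_{\prod_{j\in I}\beta_j}$, independent of $i$.

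Step 1 as written has a genuine gap. You model every $\overline{\mathcal{M}}_{0,4}$-type F-curve as four separate legs $\overline{\mathcal{M}}_{g_i,n_i+1}$, each attached at one point, and that is only type 6. In types 2--5, two, three or all four attaching points of the moving rational component lie on one connected fixed subcurve, so the corresponding nodes are non-separating. Take such a subcurve of genus $h$ with markings $J$ and $k$ attaching points. Proposition \ref{prop_6} gives its rank as $|G|^h\delta_{y_1\cdots y_k,\prod_{j\in J}\beta_j}$, which fixes only the product of the labels. So the intermediate modules are \emph{not} forced to be single $W_x$, and you must sum over $|G|^{k-1}$ labellings. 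That sum is exactly where $a_{\mathrm{average}}$ enters for $\overline{\mathcal{M}}_{0,4}$-curves. For type 2 (all four points on one piece of genus $g-3$) you get
\[
|G|^{g-3}\sum_{y_1y_2y_3y_4=e}\Bigl(\sum_{k=1}^{4}a_{y_k}-a_{y_1y_2}-a_{y_1y_3}-a_{y_1y_4}\Bigr)=|G|^{g}a_{\mathrm{average}}.
\]
For type 4 with one side unmarked you get $2|G|^g a_{\mathrm{average}}$. Neither curve appears in your model. There, the only conditions new in genus $3$ would be identities, and the hypothesis $a_{\mathrm{average}}\ge 0$ would play no role. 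That is a sign the model misses the mechanism behind the theorem.

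The bookkeeping in Steps 2--3 also needs repair. The claim ``every type is realized in genus $3$'' is false: for $n=0$, type 4 curves and type 6 curves with four unmarked legs first appear in genus $4$. What you need instead is that every condition arising in some genus $g\ge 3$ falls into one of three cases:
\begin{enumerate}
\item it is an identity: type 3, 5 and 6 curves with an unmarked singly attached leg give $0\ge 0$ or an equality, since $a_{x^{-1}}=a_x$ and $\prod_i\beta_i=e$;
\item it already occurs in every genus $\ge 3$;
\item it is implied by one that does: the unmarked type-4 condition $2a_{\mathrm{average}}\ge 0$ follows from type 2.
\end{enumerate}
For the refinement, the non-trivial conditions present for $g\ge 3$ but absent for $g=2$ are precisely the type 2 condition $a_{\mathrm{average}}\ge 0$ and the unmarked type-4 condition. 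Both are automatic under the hypothesis. The type 3 and type 5 condition sets already stabilize at $g=2$; in particular, the type-5 condition $a_x+a_{x^{-1}}\ge 0$ with unmarked connecting piece already appears in genus $2$. So the genus-$3$ threshold is not about ``a genus-$0$ leg becoming genus $1$''.

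Once corrected, your factorization computation re-derives the specialization of \cite[Theorem 2.1]{GKM} that the paper obtains in one step from Proposition \ref{prop_5}. Its merit is that it makes explicit the genus bookkeeping that the paper's proof of Lemma \ref{lem_3} leaves implicit.
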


Unlike \cite{DG} or Proposition \ref{main_thm_6}, we next take an alternative approach. Instead of imposing extra conditions on the VOA, we impose extra conditions on the modules. We say an irreducible module, $W$, is \textit{ of order two} if $W\boxtimes W = V$. We consider the positivity of the symmetric coinvariant divisor obtained by assigning an irreducible module of order two at each marked point of $\overline{\mathcal{M}}_{0,n}$. We prove that the vector bundle of coinvariants is either a line bundle or zero on $\overline{\mathcal{M}}_{0,n}$, and the (F-)nefness of the coinvariant divisor is uniquely determined by the conformal weight.

\begin{main_proposition}\label{main_prop_3}(Lemma \ref{lemma_1}, Proposition \ref{prop_3})
    Let $V$ be a strongly rational VOA, $S$ be an irreducible module of order two, with $a_{S}$ the conformal weight of $S$. Then,
    $$
\mathrm{rank}\mathbb{V}_{0,n}\left(V,\left\{S^{n}\right\}\right)=\delta_{n,\text{even}},
    $$
where $\delta_{n,\text{even}}$ equals $0$ if $n$ is odd, and equals $1$ when $n$ is even. Suppose that $V$ is of CohFT-type, then when $n$ is even, we also have
$$
a_{S} \ge 0 \iff \mathbb{D}_{0,n}\left(V,\left\{S^{n}\right\}\right) \text{ is nef.}
$$
In particular, when $a_{S}<0$ and $n$ is even, $\mathbb{D}_{0,n}\left(V,\left\{S^{n}\right\}\right)$ is not F-nef.
\end{main_proposition}
In particular, we identify several examples of such irreducible modules. For instance, let $V$ be a pointed VOA (e.g., lattice VOA), with fusion ring $\mathbb{Z}[G]$, and $S\in G$ be an element of order two. Moreover, we can take $V$ to be the affine VOA associated with $sl_{2}$, or a Virasoro VOA in the discrete series, and take $S$ as the irreducible module of the maximal conformal weight.

\section{Acknowledgement}
I would like to express my gratitude to my PhD advisor Prof. Gibney. The paper would not have been possible without her support and guidance. I also would like to extend my gratitude to Prof. Damiolini, Prof. Harbater, Prof. Krashen, and Prof. Liu (and in particular, his help with \cite{Bea96}). I am grateful to Brandon Rayhaun for his comments and suggestions to consider pointed VOAs. I would also like to express my thankfulness to my classmates Victor Alekseev, Avik Chakravarty, Daebeom Choi, Zixuan Qu, and Ruofan Zeng. I am grateful to my wife, Ziqing Lu. She helped me write code to compute the rank of the vector bundle of coinvariants of the Virasoro VOAs in the discrete series.

\section{Notation and Conventions}\label{sec:background}


In this paper, we always work with strongly rational, non-negatively graded vertex operator algebras. We follow \cite[Definition 1.1]{DGT22a} for the definition of \textit{non-negatively graded vertex operator algebra (VOA)}, and \cite[Definition 1.2]{DGT22a} for the definition of \textit{admissible modules} over a VOA.

\begin{definition}\cite[Definition 2.3]{Wang93}
    A
     vertex operator algebra is called \textit{rational} if it has only finitely many irreducible modules, and every finitely generated module is a direct sum of irreducibles.
\end{definition}

\begin{definition}\cite[Definition 2.2]{DG}
A vertex operator algebra, $V$, is called \textit{$C_{2}$-cofinite} if the subspace
$
C_{2}(V):=\mathrm{span}_{\mathbb{C}}(A_{(-2)B:A,b\in V})
$
has finite codimension in $V$.
\end{definition}

\begin{definition}\label{def_strongly_rational}\cite[Definition 2.1]{DGT3}
    A VOA is \textit{strongly rational} if it is rational and $C_{2}$-cofinite. If it is also self-contragredient and of CFT-type, then it is called of \textit{CohFT-type}.
\end{definition}

\begin{definition}
    A VOA is \textit{holomorphic} if it is strongly rational and $V$ is the only irreducible module\footnote{Here, we follow the definition in \cite[Section 2.2]{GR}. Some authors only require $V$ to be rational, instead of strongly rational, for example \cite[Section 9.3]{DGT22a}.}.
\end{definition}

    We follow \cite[Section 4]{Wang93} for the definition of \textit{Virasoro VOAs}, and the \textit{discrete series Virasoro VOAs}, \cite{FZ92} for \textit{affine VOAs}, \cite{Dong93} for \textit{lattice VOAs}, and \cite{GR} for \textit{pointed VOAs}.

\begin{definition}
    Let $V$ be a strongly rational vertex operator algebra, and $W^{\bullet}=(W_{1}, \cdots, W_{n})$ be an $n$-tuple of admissible $V$-modules. For any $g,n\ge 0$ such that $2g-2+n>0$, $(V,W^{\bullet})$ defines a vector bundle on $\overline{\mathcal{M}}_{g,n}$, called \textit{vector bundle of coinvariants} \footnote{Under weaker conditions, we may still obtain a sheaf, but it need not be a vector bundle (see \cite{DGT22b, DGT22a, DGK1}). We call it \textit{sheaf of coinvariants}. In this paper, we assume that $V$ is strongly rational, so it is indeed a vector bundle.}, denoted by $\mathbb{V}_{g,n}(V,\{W^{\bullet}\})$ (c.f., \cite[VB Corollary]{DGT22a}). When $V$ is of CohFT-type, we denote its first Chern class by $\mathbb{D}_{g,n}(V,\{W^{\bullet}\}):=c_{1}\mathbb{V}_{g,n}(V,\{W^{\bullet}\})$, and call it the \textit{coinvariant divisor} \cite{DGT3}.
\end{definition}
    We refer to \cite{GKM} for the definition of a divisor on $\overline{\mathcal{M}}_{g,n}$ to be \textit{nef} and \textit{F-nef}. We also refer to \cite{GKM} for the \textit{F-curves} of type 1-6.

We list the standing assumptions in this paper:
\begin{enumerate}
    \item A vertex operator algebra is assumed to be simple, strongly rational and of CFT-type. When we compute its first Chern class, we assume that it is of CohFT-type,
    \item $V$-modules are assumed to be admissible,
    \item Without loss of generality, we assume that the modules are irreducible,
    \item When we write $\overline{\mathcal{M}}_{g,n}$, we assume that $2g+n-2>0$,
    \item We assume $0\in \mathbb{N}$.
\end{enumerate}
By \cite{FZ92,Wang93,DLM, Ara12}, Virasoro VOA satisfies assumption (a) if and only if it lies in the discrete series. For lattice VOA, we always assume that the lattice is positive-definite and even.

When only the rank or the first Chern class of vector bundle of coinvariants is considered, we also use another notation. For a permutation $\sigma\in S_{n}$, $\mathbb{V}_{g,n}(V,\{S_{1},\cdots, S_{n}\})$ and $\mathbb{V}_{g,n}(V,\{S_{\sigma(1)},\cdots, S_{\sigma(n)}\})$ need not be isomorphic as vector bundles on $\overline{\mathcal{M}}_{g,n}$. However, they have the same rank and first Chern class. Thus, we may ignore the order of the modules in these cases. Since $V$ is strongly rational, it only has finitely many irreducible modules, up to isomorphism, denoted by $W_{1}, \cdots, W_{l}$. Thus, it suffices to count the number of times each $W_{i}$ appears in the tuple $S^{\bullet}$, and we denote it by $n_{i}$. We then use the notation $ \mathrm{rank}\mathbb{V}_{g,n}\left(V,\left\{W_{1}^{n_{1}}, \cdots, W_{l}^{n_{l}}\right\}\right)$ and $\mathbb{D}_{g,n}\left(V,\left\{W_{1}^{n_{1}}, \cdots, W_{l}^{n_{l}}\right\}\right)$, where $n=n_{1}+\cdots+n_{l}$.




\section{Formal Factorization Property and Formal FA-Matrix}\label{sec_Algebraic_and_Formal_Verlinde_Formula}

In this section, we employ the framework of formal factorization property, formal FA-property and formal matrix rank formula. We introduce two new tools, called the formal FA-matrix and formal matrix rank formula, which translates many problems of rank computation into the study of matrix multiplication, allowing us to employ tools in linear algebra, matrix theory and combinatorics.

    In this paper, we let $I=\{\lambda_{1},\cdots,\lambda_{l}\}$ be a finite set, with an involution $\lambda \mapsto \lambda'$, for all $\lambda\in I$. Let 
    $$
    \mathbb{N}^{(I)}=\left\{\sum_{i=1}^{l}n_{i}\lambda_{i} \mid n_{i}\in\mathbb{N} \right\}
    $$
    be the free monoid generated by $I$. We assume $0\in I$, but we do not assume $0'=0$.

\begin{definition}\label{def_FA_Matrix}
    Let $\{N_{g}: \mathbb{N}^{(I)}\rightarrow \mathbb{Z}\}_{g\in \mathbb{N}}$ be a sequence of set-theoretic maps.
For $\beta\in \mathbb{N}^{(I)}$, define an $l\times l$-matrix, $\mathcal{N}_{\beta,g}\in \mathrm{M}(l,\mathbb{Z})$, by
    $$
\left(\mathcal{N}_{\beta,g}\right)_{(i,j)}:=N_{g}\left(\beta+\lambda_{i}+\lambda_{j}'\right).
$$
We call $\mathcal{N}_{\beta,g}$ the \textit{formal FA-matrix associated with $\beta$ of genus $g$}. When $\beta\in I$ and $g=0$, we call $\mathcal{N}_{\beta,0}$ the \textit{formal fusion matrix} of $\lambda$, and we call $\mathcal{N}_{0,1}$ the \textit{formal averaging matrix}.
\end{definition}

\begin{definition}\label{def_factorization_property}\cite{Bea96}
Let $\{N_{g}:\mathbb{N}^{(I)}\rightarrow \mathbb{Z}\}_{g\in \mathbb{N}}$ be a sequence of set-theoretic maps. We say that $\{N_{g}\}_{g\in \mathbb{N}}$ satisfies the \textit{formal factorization property} if the following holds:
\begin{enumerate}
        \item (Factorization Property I) For all $x,y\in \mathbb{N}^{(I)}$, for $g=g_{1}+g_{2}$,
        \begin{equation}\label{equ_6}
    N_{g}(x+y)=\sum_{\lambda\in I}N_{g_{1}}\left(x+\lambda\right)N_{g_{2}}\left(y+\lambda'\right).
\end{equation}
        \item (Factorization Property II) For all $x\in \mathbb{N}^{(I)}$, for all $g\ge 1$,
\begin{equation}\label{equ_7}
    N_{g}(x)=\sum_{\lambda\in I}N_{g-1}\left(x+\lambda+\lambda'\right).
\end{equation}    
    \end{enumerate}
\end{definition}

\begin{definition}\label{def_FA_Property}
    Let $\{N_{g}: \mathbb{N}^{(I)}\rightarrow \mathbb{Z}\}_{g\in \mathbb{N}}$ be a sequence of set-theoretic maps, and $M(l,\mathbb{Z})$ be the set of all $l\times l$ matrices over $\mathbb{Z}$. $\{N_{g}\}_{g\ge 0}$ is said to satisfy the \textit{formal FA-property} if the following hold:
    \begin{enumerate}
        \item (FA 1) The following map is a morphism of monoids:
    \begin{equation}\label{equ_3_1}
    \begin{aligned}
    \mathcal{N}:\mathbb{N}^{(I)}\oplus \mathbb{N}&\longrightarrow \mathrm{M}(l,\mathbb{Z})\\
    (\beta,g) &\longmapsto \mathcal{N}_{\beta,g}.
    \end{aligned}
    \end{equation}
    
    \item (FA 2) For all $\beta\in \mathbb{N}^{(I)}, g\ge 0$,
    \begin{equation}\label{equ_4_1}\mathrm{Tr}\left(\mathcal{N}_{\beta,g}\right)=N_{g+1}\left(\beta\right),\end{equation}
     where $\mathrm{Tr}$ stands for the trace of a matrix.
    \end{enumerate}
\end{definition}

\begin{definition}\label{def_Verlinde_Property}
        Let $\{N_{g}: \mathbb{N}^{(I)}\rightarrow \mathbb{Z}\}_{g\in \mathbb{N}}$ be a sequence of set-theoretic maps. For $\beta\in \mathbb{N}^{(I)}$, let $\mathcal{N}_{\beta,g}$ be the FA-matrix. The \textit{formal matrix rank formula holds} on $\{N_{g}\}_{g\ge 0}$ if:
        \begin{enumerate}
            \item (V1) for all $\beta=\sum_{\lambda\in I}c_{\lambda}\lambda\in\mathbb{N}^{(I)}$, for all $\lambda_{i},\lambda_{j}\in I$, and for all $g\ge 0$,    \begin{equation}
    N_{g}\left(\beta+\lambda_{i}+\lambda_{j}'\right)=\left(\left(\prod_{\lambda\in I} \mathcal{N}_{\lambda,0}^{c_{\lambda}}\right)\mathcal{N}_{0,1}^{g}\right)_{(i,j)},
\end{equation}
    \item (V2)  for all $\beta=\sum_{\lambda\in I}c_{\lambda}\lambda\in\mathbb{N}^{(I)}$, and for all $g\ge 1$, 
\begin{equation}
    N_{g}(\beta)=\mathrm{Tr}\left(\left(\prod_{\lambda\in I} \mathcal{N}_{\lambda,0}^{c_{\lambda}}\right)\mathcal{N}_{0,1}^{g-1}\right),
\end{equation}
    \item (V3)
\begin{equation}\label{equ_15}
    \mathcal{N}_{0,1}=\sum_{\lambda\in I}\mathrm{Tr}\left(\mathcal{N}_{\lambda',0}\right)\mathcal{N}_{\lambda,0}.
\end{equation}
        \end{enumerate}
\end{definition}

Now, we present a key result in this paper.

\begin{theorem}\label{thm_1} (matrix rank formula)
    Conditions in Definitions \ref{def_factorization_property}, \ref{def_FA_Property}, and \ref{def_Verlinde_Property} are equivalent.
\end{theorem}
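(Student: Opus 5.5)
The proof will go around the cycle (Factorization) $\Rightarrow$ (FA) $\Rightarrow$ (V1--V3) $\Rightarrow$ (Factorization). The basic computation throughout is index bookkeeping. Because $\lambda\mapsto\lambda'$ is an involution on $I$, summing over $\lambda\in I$ is the same as summing over $\lambda=\lambda_k'$ for $k=1,\dots,l$, and then $\lambda'=\lambda_k$. This turns each factorization sum into a matrix product or a trace.

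\emph{Factorization $\Rightarrow$ FA.} For (FA 1), I compute
\[
(\mathcal{N}_{\beta,g_1}\mathcal{N}_{\gamma,g_2})_{(i,j)}=\sum_k N_{g_1}(\beta+\lambda_i+\lambda_k')\,N_{g_2}(\gamma+\lambda_k+\lambda_j').
\]
Apply (\ref{equ_6}) with $x=\beta+\lambda_i$, $y=\gamma+\lambda_j'$ and $\lambda=\lambda_k'$. The sum equals $N_{g_1+g_2}(\beta+\gamma+\lambda_i+\lambda_j')=(\mathcal{N}_{\beta+\gamma,g_1+g_2})_{(i,j)}$, so $\mathcal{N}$ is multiplicative. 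For (FA 2), $\mathrm{Tr}\,\mathcal{N}_{\beta,g}=\sum_i N_g(\beta+\lambda_i+\lambda_i')$, which is $N_{g+1}(\beta)$ by (\ref{equ_7}).

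\emph{FA $\Rightarrow$ V.} Since the source monoid is commutative and $(\beta,g)=\sum_\lambda c_\lambda(\lambda,0)+g(0,1)$, (FA 1) gives
\[
\mathcal{N}_{\beta,g}=\Bigl(\prod_\lambda\mathcal{N}_{\lambda,0}^{c_\lambda}\Bigr)\mathcal{N}_{0,1}^g .
\]
Reading off entries gives (V1). Taking traces at genus $g-1$ and using (FA 2) gives (V2). For (V3), first note that (FA 1), read entrywise as above, recovers (\ref{equ_6}) whenever $x$ and $y$ each contain at least one element of $I$. Then by (FA 2),
\[
(\mathcal{N}_{0,1})_{(i,j)}=N_1(\lambda_i+\lambda_j')=\sum_k N_0(\lambda_i+\lambda_j'+\lambda_k+\lambda_k').
\]
Splitting each summand with $x=\lambda_i+\lambda_j'$, $y=\lambda_k+\lambda_k'$ gives $\sum_{\mu\in I}(\mathcal{N}_{\mu,0})_{(i,j)}\sum_k N_0(\lambda_k+\lambda_k'+\mu')$. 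The inner sum is $\mathrm{Tr}(\mathcal{N}_{\mu',0})$, which is exactly (\ref{equ_15}).

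\emph{V $\Rightarrow$ Factorization.} Given $x,y$ with $g=g_1+g_2$, write $x=x_0+\lambda_i$ and $y=y_0+\lambda_j'$. Then $\sum_\lambda N_{g_1}(x+\lambda)N_{g_2}(y+\lambda')$ is the $(i,j)$ entry of $\mathcal{N}_{x_0,g_1}\mathcal{N}_{y_0,g_2}$. By (V1) both factors are products of the commuting-order expressions $\prod\mathcal{N}_{\lambda,0}^{c_\lambda}\mathcal{N}_{0,1}^{g}$. Here I must check that the $\mathcal{N}_{\lambda,0}$ and $\mathcal{N}_{0,1}$ commute. I would derive this from (V1) itself, since both orderings compute the same $N_g$-value; failing that, I would use (V3), which writes $\mathcal{N}_{0,1}$ as a combination of the $\mathcal{N}_{\lambda,0}$, together with the commutativity of fusion matrices that (V1) forces via $N_0(\lambda+\mu+\lambda_i+\lambda_j')$. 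The product then equals the right side of (V1) for $x_0+y_0$ at genus $g$, which is $N_g(x+y)$. Factorization II follows from (V2) and (V1): $\sum_i N_{g-1}(x+\lambda_i+\lambda_i')$ is the trace of $\mathcal{N}_{x,g-1}$.

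The main obstacle is the degenerate cases, where the arguments are too short to be written as $\beta+\lambda_i+\lambda_j'$. These are $x=0$ or $y=0$, and genus-$0$ values $N_0(\lambda)$ and $N_0(0)$, which (V1) never sees directly. A related issue is whether the unit $(0,0)$ must map to the identity, i.e.\ whether $N_0(\lambda_i+\lambda_j')=\delta_{ij}$. For these I would use $0\in I$ and pad the argument by $0$ and $0'$, replacing $x$ by $x+0+0'$ in (V1). I would then use (V3) and (V2) to pin down the short values, and if needed read ``morphism of monoids'' as including this normalization. I expect this boundary bookkeeping, rather than the main matrix identities, to be where care is required.
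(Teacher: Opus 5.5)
Your argument is the paper's, reorganized as a cycle. Your Factorization $\Rightarrow$ FA is Part I of Appendix~\ref{app_thm_1}, and your derivation of (V1), (V2) is Part III. Your V $\Rightarrow$ Factorization is Part IV (Claims 1--2, commutativity) composed with Part II, including the same padding by $0\in I$ for short arguments.

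The one genuinely different step is (V3). The paper gets it from $\sum_{\lambda}N_1(\lambda')N_0(\lambda+\lambda_i+\lambda_j')=N_1(\lambda_i+\lambda_j')$. That is Factorization I with $x$ empty, which the paper only has through padding. You instead apply (FA 2) to $\beta=\lambda_i+\lambda_j'$ and split $N_0(\lambda_i+\lambda_j'+\lambda_k+\lambda_k')$ with both pieces nonempty, so you use (FA 1) only where it literally applies. This is cleaner, and it confines every boundary issue to your last implication.

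Two small corrections:
\begin{itemize}
\item The ``both orderings give the same value'' argument only covers the fusion matrices (the paper's Claim 1). It does not reach $\mathcal{N}_{0,1}$, because (V1) always puts $\mathcal{N}_{0,1}^{g}$ on the right. For $\mathcal{N}_{0,1}$, your fallback via (V3), which is the paper's Claim 2, is the argument to use.
\item (V2) and (V3) cannot pin down the values of $N_0$ on multisets with at most one element. Without padding, these values occur in none of (FA 1), (FA 2), (V1)--(V3).
\end{itemize}

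The boundary cases you flagged cannot be settled from the axioms as literally stated, and the paper does not settle them either. Padding works only if the generator $0\in I$ acts as the unit of $\mathbb{N}^{(I)}$, i.e.\ $N_g(x+0)=N_g(x)$.

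The normalization $N_0(\lambda_i+\lambda_j')=\delta_{ij}$, which is exactly (V1) at $\beta=0$ (the empty sum) and $g=0$, also does not follow from factorization. The sequence $N_g\equiv 0$ satisfies \eqref{equ_6}, \eqref{equ_7} and the multiplicativity in (FA 1). Yet it violates (V1) at $\beta=0$, $g=0$, and (V2) at $\beta=0$, $g=1$, where (V2) forces $N_1(0)=l$. So building the normalization into ``morphism of monoids'' only moves the problem into your Factorization $\Rightarrow$ FA step.

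The honest fix is to add both facts as hypotheses. In the Setup they hold by propagation of vacua, the second being $\mathrm{rank}\,\mathbb{V}_{0,3}(V,\{V,W_i,W_j'\})=\delta_{ij}$. The paper uses them tacitly in Parts II and III.
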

\noindent\textit{Proof.} The proof is in Appendix \ref{app_thm_1}.

In practice, there are many existing results that enable us to check that $\{N_{g}\}_{g\ge 0}$  satisfies the factorization property (see Section \ref{sec_5}). However, the factorization property (i.e. Equation \eqref{equ_6} and Equation \eqref{equ_7}) is recursive and does not provide a closed formula for compute $N_{g}(\alpha)$. The matrix rank formula, on the other hand, is a closed and explicit formula, realizing it as a matrix multiplication.

\begin{definition}\label{def_indexing_function}
    Let $I=\{\lambda_{1},\cdots,\lambda_{l}\}$ be a finite set, with an involution. Let $\phi: \mathbb{N}^{(I)}\rightarrow \mathbb{Z}$ be a set-theoretic map. For $\alpha,\beta\in \mathbb{N}^{(I)}$, define the \textit{indexing function of $\alpha$, with deviation $\beta$} as
    $$
    f_{\phi, \beta,\alpha,(i,j)}(z):=\sum_{k=0}^{\infty}\phi\left(\beta+(n+3)\alpha+\lambda_{i}+\lambda_{j}'\right)z^{n}\in \mathbb{Z}[[n]].
    $$
    If $\phi$ is clear from context, then we write $f_{\beta,\alpha,(i,j)}(z)$. 
    Define
    $$
    f_{\alpha}(z):=\sum_{k=0}^{\infty}\phi((n+3)\alpha)z^{n}.
    $$
    We define $f_{\alpha}(z)$ by taking $\beta=\lambda_{i}=\lambda_{j}=0$. If we are giving a sequence of functions $\{f_{g}\}_{g\in \Lambda}$, we write $f_{\phi, \beta,\alpha,(i,j),g}$. We call it \textit{indexing function of $\alpha$, with deviation $\beta$, of genus $g$}.
\end{definition}
Intuitively, the function will enable us to see how the value changes if we add more and more copies of $\alpha$. The constant $n+3$ exists because for the moduli space of curves, the minimal case is $\overline{\mathcal{M}}_{0,3}$.

\begin{proposition}\label{prop_2}
    Let $I=\{\lambda_{1},...,\lambda_{l}\}$ be a finite set. Let $\beta,\alpha\in \mathbb{N}^{(I)}$ be arbitrary. Suppose that the equivalent conditions in Theorem \ref{thm_1} hold. Then, $f_{I,\beta,\alpha,(\lambda_{i},\lambda_{j}'),g}(z)$ is a rational function.
\end{proposition}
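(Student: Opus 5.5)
We read the indexing function as $f(z)=\sum_{n\ge 0}N_{g}\bigl(\beta+(n+3)\alpha+\lambda_{i}+\lambda_{j}'\bigr)z^{n}$; the summation index in Definition \ref{def_indexing_function} should be $n$. The plan is to write each coefficient of $f$ as a matrix entry using condition (V1) of Theorem \ref{thm_1}, and then sum the resulting geometric series of matrices.

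\textbf{Step 1: matrix form of the coefficients.} Write $\alpha=\sum_{\lambda\in I}a_{\lambda}\lambda$ and $\beta=\sum_{\lambda\in I}b_{\lambda}\lambda$, and set
$$A:=\prod_{\lambda\in I}\mathcal{N}_{\lambda,0}^{a_{\lambda}},\qquad B:=\prod_{\lambda\in I}\mathcal{N}_{\lambda,0}^{b_{\lambda}},\qquad C:=\mathcal{N}_{0,1}^{g}.$$
These products can be written in any order. By (FA 1), $\mathcal{N}$ is a monoid morphism from the commutative monoid $\mathbb{N}^{(I)}\oplus\mathbb{N}$. Hence all the matrices $\mathcal{N}_{\lambda,0}$ and $\mathcal{N}_{0,1}$ lie in a commutative image and commute pairwise. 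The same morphism property gives
$$\prod_{\lambda\in I}\mathcal{N}_{\lambda,0}^{b_{\lambda}+(n+3)a_{\lambda}}=\mathcal{N}_{\beta+(n+3)\alpha,0}=B\,A^{n+3}.$$
Applying (V1) to the element $\beta+(n+3)\alpha$ then gives
$$N_{g}\bigl(\beta+(n+3)\alpha+\lambda_{i}+\lambda_{j}'\bigr)=\bigl(B A^{n+3} C\bigr)_{(i,j)}.$$

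\textbf{Step 2: summing the series.} In $\mathrm{M}(l,\mathbb{Z}[[z]])$ we have
$$f(z)=\Bigl(B A^{3}\Bigl(\sum_{n\ge 0}A^{n}z^{n}\Bigr)C\Bigr)_{(i,j)}=\bigl(B A^{3}(\mathrm{Id}-zA)^{-1}C\bigr)_{(i,j)}.$$
The inverse exists over $\mathbb{Z}[[z]]$ because $\det(\mathrm{Id}-zA)$ has constant term $1$.

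\textbf{Step 3: rationality.} By Cramer's rule, $(\mathrm{Id}-zA)^{-1}=\operatorname{adj}(\mathrm{Id}-zA)/\det(\mathrm{Id}-zA)$. Here the adjugate has polynomial entries in $\mathbb{Z}[z]$, and the determinant is a polynomial with constant term $1$. So every entry of $BA^{3}(\mathrm{Id}-zA)^{-1}C$ is a ratio $p(z)/q(z)$ with $p,q\in\mathbb{Z}[z]$ and $q(0)=1$. In particular $f$ is rational, and its denominator divides $\det(\mathrm{Id}-zA)$. This denominator depends only on $\alpha$, not on $\beta$, $i$, $j$ or $g$.

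The only point needing care is Step 1. It requires using the monoid morphism in (FA 1), together with (V1), to replace $\beta+(n+3)\alpha$ by the matrix product $BA^{n+3}$; this is also where commutativity is used. Once the coefficients are in that form, the remaining steps are formal power series algebra.
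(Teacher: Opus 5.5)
Your proposal is correct and is essentially the paper's argument. The paper likewise uses (FA 1) from Theorem~\ref{thm_1} to identify $\mathcal{N}_{\beta,0}\mathcal{N}_{0,1}^{g}\mathcal{N}_{\alpha,0}^{n+3}$ with $\mathcal{N}_{\beta+(n+3)\alpha,g}$, sums the Neumann series $(\mathrm{Id}_{l}-z\mathcal{N}_{\alpha,0})^{-1}$, and reads off the $(i,j)$-entry; your Cramer's-rule step just makes explicit the rationality, with denominator dividing $\det(\mathrm{Id}_{l}-z\mathcal{N}_{\alpha,0})$, which the paper leaves implicit.
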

\noindent\textit{Proof.} $$
\begin{aligned}
\frac{\mathcal{N}_{\beta,0}\mathcal{N}_{0,1}^{g} \mathcal{N}_{\alpha,0}^{3}}{\mathrm{Id}_{l}-\mathcal{N}_{\alpha,0}z}
=&\left(\mathcal{N}_{\beta,0}\mathcal{N}_{0,1}^{g}\mathcal{N}_{\alpha,0}^{3}\right)\left(\sum_{n=0}^{\infty}\mathcal{N}_{\alpha,0}^{n}z^{n}\right)= \sum_{n=0}^{\infty}\mathcal{N}_{\beta,0}\mathcal{N}_{0,1}^{g}\mathcal{N}_{\alpha,0}^{3}\mathcal{N}_{\alpha,0}^{n}z^{n} \\
=&\sum_{n=0}^{\infty} \mathcal{N}_{\beta+(3+n) \alpha,g}z^{n} \ \ \ \text{(Theorem \ref{thm_1})} .
\end{aligned}
$$
Taking the $(i,j)$-entry on both size, we have

\begin{equation}\label{equ_3}
\begin{aligned}
\left(\frac{\mathcal{N}_{\beta,0}\mathcal{N}_{0,1}^{g} \mathcal{N}_{\alpha,0}^{3}}{\mathrm{Id}_{l}-\mathcal{N}_{\alpha,0}z}\right)_{(i,j)}=&\sum_{n=0}^{\infty} N_{g}(\beta+(3+n)\alpha+\lambda_{i}+\lambda_{j}')z^{n}= f_{I,\beta,\alpha,(\lambda_{i},\lambda_{j}'),g}(z).
\end{aligned}
\end{equation}
Thus, we conclude that $f_{I,\beta,\alpha,(\lambda_{i},\lambda_{j}'),g}(z)$ is a rational function.
\begin{flushright}
    Q.E.D.
\end{flushright}

\begin{remark}
    Notice that the proof of Proposition \ref{prop_2} is constructive. In practice, we can directly compute $f_{I,\beta,\alpha,(\lambda_{i},\lambda_{j}'),g}(z)$ using the data on $\overline{\mathcal{M}}_{0,3}$.
\end{remark}
Motivated by Equation \eqref{equ_3}, we form the following definition.

\begin{definition}\label{def_Analytic_Base_Matrix} 
    Let $I=\{\lambda_{1},...,\lambda_{l}\}$ be a finite set. Let $\beta,\alpha\in \mathbb{N}^{(I)}$ be arbitrary. Define the $l\times l$ \textit{analytic FA-matrix}, $L_{S^{\bullet}}$ associated with $\beta,\alpha)$ by
    $$
   \left(L_{I,\beta,\alpha}\right)_{(i,j)}:=
    f_{I,\beta,\alpha,(\lambda_{i},\lambda_{j}'),g}(z).
    $$
\end{definition}

\begin{theorem}\label{thm_2} (formal matrix rank formula)
Let $I=\{\lambda_{1},...,\lambda_{l}\}$ be a finite set. Let $\beta,\alpha\in \mathbb{N}^{(I)}$ be arbitrary. Suppose that the equivalent conditions in Theorem \ref{thm_1} hold.
    Then,
\begin{equation}\label{equ_10}
N_{g}(\beta+(n+3)\alpha+\lambda_{i}+\lambda_{j}')=\frac{1}{n!}\left(\left(\frac{\mathcal{N}_{\lambda_{ fo1},0}^{b_{1}}\cdots \mathcal{N}_{\lambda_{l},0}^{b_{l}}\mathcal{N}_{0,1}^{g} \mathcal{N}_{\alpha,0}^{3}}{\mathrm{Id}_{l}-\mathcal{N}_{\alpha,0}z}\right)_{(i,j)}\right)^{(n)}(0),
\end{equation}
for all $n\ge 0$. We call Equation \eqref{equ_10}  the \textit{formal matrix rank formula}.
\end{theorem}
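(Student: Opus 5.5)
The plan is to deduce Equation \eqref{equ_10} from the generating-function identity \eqref{equ_3} in the proof of Proposition \ref{prop_2}, combined with the closed formula (V1) of Theorem \ref{thm_1}. Throughout, $\beta=\sum_{k}b_{k}\lambda_{k}$, so that $\mathcal{N}_{\lambda_{1},0}^{b_{1}}\cdots\mathcal{N}_{\lambda_{l},0}^{b_{l}}$ is the matrix appearing in the numerator.

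First, I identify this product with $\mathcal{N}_{\beta,0}$. Since the equivalent conditions of Theorem \ref{thm_1} hold, (FA 1) says $\mathcal{N}$ is a monoid morphism. Hence
\[
\mathcal{N}_{\beta,0}=\prod_{k}\mathcal{N}_{\lambda_{k},0}^{b_{k}}.
\]
The factors commute because $\mathbb{N}^{(I)}\oplus\mathbb{N}$ is commutative, so their order does not matter. The same argument gives $\mathcal{N}_{\beta,0}\mathcal{N}_{0,1}^{g}\mathcal{N}_{\alpha,0}^{3+n}=\mathcal{N}_{\beta+(3+n)\alpha,g}$.

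Next, I expand $(\mathrm{Id}_{l}-\mathcal{N}_{\alpha,0}z)^{-1}=\sum_{n\ge0}\mathcal{N}_{\alpha,0}^{n}z^{n}$. This is legitimate either as a formal power series in $z$, or analytically for $|z|$ smaller than the inverse spectral radius of $\mathcal{N}_{\alpha,0}$. Multiplying on the left by the numerator then gives
\[
\sum_{n\ge0}\mathcal{N}_{\beta+(3+n)\alpha,g}\,z^{n}.
\]
Taking the $(i,j)$ entry and applying (V1), or directly Definition \ref{def_FA_Matrix}, the coefficient of $z^{n}$ is $N_{g}(\beta+(n+3)\alpha+\lambda_{i}+\lambda_{j}')$.

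Finally, the $(i,j)$ entry of the left side is a rational function of $z$ with no pole at $0$; its denominator divides $\det(\mathrm{Id}_{l}-\mathcal{N}_{\alpha,0}z)$, which equals $1$ at $z=0$. It is therefore holomorphic near $0$ and equal to its Taylor series there. By Taylor's formula its $n$-th coefficient is $\frac{1}{n!}$ times the $n$-th derivative at $0$, which is Equation \eqref{equ_10}.

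The only point needing care is this last identification of the formal expansion with the analytic Taylor expansion, together with the commutativity used to reorder the matrix product. Both are routine, so I expect no serious obstacle.
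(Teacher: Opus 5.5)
Your proposal is correct and follows the same route as the paper. The paper expands $(\mathrm{Id}_{l}-\mathcal{N}_{\alpha,0}z)^{-1}$ as a geometric series, collapses each term to $\mathcal{N}_{\beta+(3+n)\alpha,g}$ via Theorem \ref{thm_1} (this is Equation \eqref{equ_3} in the proof of Proposition \ref{prop_2}), and reads off Taylor coefficients at $z=0$; your extra remarks --- that (FA 1) makes the factors commute, and that each entry is holomorphic at $0$ because $\det(\mathrm{Id}_{l}-\mathcal{N}_{\alpha,0}z)$ equals $1$ there --- make explicit details the paper leaves implicit.
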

\noindent\textit{Proof.} By Equation \eqref{equ_3},
$$
\begin{aligned}
\left(\frac{\mathcal{N}_{\lambda_{1},0}^{b_{1}}\cdots \mathcal{N}_{\lambda_{l},0}^{b_{l}}\mathcal{N}_{0,1}^{g} \mathcal{N}_{\alpha,0}^{3}}{\mathrm{Id}_{l}-\mathcal{N}_{\alpha,0}z}\right)_{(i,j)}=&f_{I,\beta,\alpha,(\lambda_{i},\lambda_{j}'),g}(z)= \sum_{n=0}^{\infty} N_{g}(\beta+(3+n)\alpha+\lambda_{i}+\lambda_{j}')z^{n}.
\end{aligned}
$$
Applying the Taylor expansion of $f_{I,\beta,\alpha,(\lambda_{i},\lambda_{j}'),g}(z)$ at $z=0$, we get
$$
N_{g}(\beta+(n+3)\alpha+\lambda_{i}+\lambda_{j}')=\frac{1}{n!}\left(\left(\frac{\mathcal{N}_{\lambda_{1},0}^{b_{1}}\cdots \mathcal{N}_{\lambda_{l},0}^{b_{l}}\mathcal{N}_{0,1}^{g} \mathcal{N}_{\alpha,0}^{3}}{\mathrm{Id}_{l}-\mathcal{N}_{\alpha,0}z}\right)_{(i,j)}\right)^{(n)}(0).
$$
\begin{flushright}
    Q.E.D.
\end{flushright}

\section{Vector Bundle of Coinvariants on $\overline{\mathcal{M}}_{g,n}$ and FA-Matrix}\label{sec_5}

In this section, we apply this framework to vector bundles of coinvariants and conformal blocks.

\begin{setup}\label{set_up}
    Let $V$ be a strongly rational VOA, $I=\{W_{1},\cdots,W_{l}\}$ be the collection of irreducible $V$-modules, up to isomorphism. For $n=n_{1}+\cdots+n_{l}$, let
    $$
    N_{g}\left(\sum_{i=1}^{l}n_{i}W_{i}\right) := \mathrm{rank}\mathbb{V}_{g,n}\left(V,\left\{W_{1}^{n_{1}}, \cdots , W_{l}^{ n_{l}}\right\}\right).
    $$
     The factorization theorem holds under this assumption \cite[Theorem 
    7.0.1]{DGT22a}. 
\end{setup}

\begin{remark}
    We may replace the conditions in Setup by any conditions sufficient to invoke the factorization theorem.
\end{remark}

\begin{remark}
    In this set-up, we use $R_{W_{i},g}$ to denote the FA-matrix $\mathcal{N}_{W_{i},g}$.
\end{remark}

Given two tuples of irreducible modules $S^{\bullet}=(S_{1},\cdots,S_{n})$ and $T^{\bullet}=(T_{1},\cdots,T_{m})$, we can define the concatenation $S^{\bullet}T^{\bullet}=(S_{1},\cdots,S_{n}, T_{1},\cdots,T_{n})$. Equivalently, given $S^{\bullet}=(W_{1}^{n_{1}},\cdots, W_{l}^{n_{l}})$ and $T^{\bullet}=(W_{1}^{m_{1}},\cdots, W_{l}^{m_{l}})$, $S^{\bullet}T^{\bullet}=(W_{1}^{n_{1}+m_{1}},\cdots, W_{l}^{n_{l}+m_{l}})$.

\begin{definition}\label{def_FA_matrix}
    Let $V$ be a strongly rational VOA. Let $W_{1}$, $\ldots$, $W_{l}$ all irreducible admissible $V$-modules, up to isomorphism. Let  $S^{\bullet}$ be an $n$-tuple of  irreducible $V$-modules, and $\mathbb{V}_{g,n+2}(V,\{S^{\bullet}\})$ be the vector bundle of coinvariants on $\overline{\mathcal{M}}_{g,n+2}$. Define an $l\times l$-matrix, $R_{V, S^{\bullet},g}$, by
    $$
    \left(R_{V, W^{\bullet},g}\right)_{(i,j)}:=
    \mathrm{rank}\mathbb{V}_{g,n+2}\left(V,\left\{S^{\bullet}, W_{i}, W_{j}'\right\}\right), 
    $$
    where $i,j$ run through all $i=1,...,l, j=1,...,l$. If $V$ is clear from context, we write $R_{S^{\bullet},g}$ instead.
\end{definition}

\begin{lemma}\label{cor_1}
    Let $V$ be a strongly rational VOA, $W_{1},\cdots,W_{l}$ be the collection of irreducible $V$-modules, up to isomorphism. Let $S^{\bullet}, T^{\bullet}$ be two finite tuples of irreducible $V$-modules (not necessarily of the same length). Then, for all $g_{1},g_{2}\ge 0$,
    $$
    R_{S^{\bullet},g_{1}}R_{T^{\bullet},g_{2}}=R_{S^{\bullet} T^{\bullet},g_{1}+g_{2}}.
    $$
\end{lemma}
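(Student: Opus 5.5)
The plan is to reduce Lemma \ref{cor_1} to the formal statement (FA 1) of Definition \ref{def_FA_Property}, applied to the functions $N_{g}$ of the Setup. First I check that $\{N_{g}\}_{g\ge 0}$ satisfies the formal factorization property of Definition \ref{def_factorization_property}. Then Theorem \ref{thm_1} gives the formal FA-property. Since $R_{S^{\bullet},g}=\mathcal{N}_{\beta,g}$, where $\beta\in\mathbb{N}^{(I)}$ is the multiset of modules in $S^{\bullet}$, and concatenation $S^{\bullet}T^{\bullet}$ corresponds to addition in $\mathbb{N}^{(I)}$, (FA 1) is exactly the claim. Here we use that rank ignores the order of the modules, as noted in Section \ref{sec:background}.

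Concretely, I would also verify (FA 1) directly, since it is a single line. For $\beta,\gamma$ the multisets of $S^{\bullet},T^{\bullet}$,
$$
\left(R_{S^{\bullet},g_{1}}R_{T^{\bullet},g_{2}}\right)_{(i,j)}=\sum_{k=1}^{l}N_{g_{1}}\left(\beta+W_{i}+W_{k}'\right)N_{g_{2}}\left(\gamma+W_{k}+W_{j}'\right).
$$
Apply Factorization Property I with $x=\beta+W_{i}$, $y=\gamma+W_{j}'$ and $\lambda=W_{k}'$, which runs over $I$ as $k$ does because $'$ is an involution. The sum becomes $N_{g_{1}+g_{2}}(\beta+\gamma+W_{i}+W_{j}')=(R_{S^{\bullet}T^{\bullet},g_{1}+g_{2}})_{(i,j)}$.

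The substantive step is therefore Factorization Property I for ranks of coinvariants. I would take a point of $\overline{\mathcal{M}}_{g_{1}+g_{2},n_{1}+n_{2}}$ in the image of the clutching map $\overline{\mathcal{M}}_{g_{1},n_{1}+1}\times\overline{\mathcal{M}}_{g_{2},n_{2}+1}\to\overline{\mathcal{M}}_{g_{1}+g_{2},n_{1}+n_{2}}$. At such a point the factorization theorem \cite[Theorem 7.0.1]{DGT22a} identifies the fiber of coinvariants with
$$
\bigoplus_{W\in I}\mathbb{V}(V,\{x,W\})\otimes\mathbb{V}(V,\{y,W'\}).
$$
Because the sheaves are vector bundles \cite[VB Corollary]{DGT22a}, fiber dimensions equal ranks, and taking dimensions yields Equation \eqref{equ_6}.

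The main obstacles are the edge cases and the bookkeeping of duals. When $x$ or $y$ is empty, a component may fall into an unstable range such as $(g,n)=(0,1)$ or $(0,2)$. Here one needs the conventions $N_{0}(\lambda+\mu')=\delta_{\lambda,\mu}$ and $N_{0}(\lambda)=\delta_{\lambda,V}$, which are consistent with propagation of vacua. In the present lemma each factor already carries the two extra points $W_{i},W_{k}'$, so the components are of type $(g_{1},|\beta|+3)$ and $(g_{2},|\gamma|+3)$, and the stability condition $2g-2+n>0$ holds automatically. I would also confirm that the $0\in I$ convention, with $0$ playing the role of $V$, matches the paper's, and that the node's pairing of $W$ with $W'$ is the one used in \cite{DGT22a}.
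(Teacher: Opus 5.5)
Your proposal is correct and is essentially the paper's argument: the paper proves the lemma by citing Theorem \ref{thm_1}, whose Part I, $(1)\Rightarrow(2)$, is exactly your displayed computation with $\lambda=W_k'$, and the factorization property comes from the Setup's appeal to \cite[Theorem 7.0.1]{DGT22a}. One small slip: the components are of type $(g_1,|\beta|+2)$ and $(g_2,|\gamma|+2)$, not $+3$, so stability fails exactly when a tuple is empty and its genus is $0$; there $R_{\emptyset,0}$ is only defined through your convention $N_0(\lambda+\mu')=\delta_{\lambda,\mu}$, i.e.\ $R_{\emptyset,0}=\mathrm{Id}$, and in that case the identity is trivial.
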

\noindent\textit{Proof.} This directly follows from Theorem \ref{thm_1}.
\begin{flushright}
    Q.E.D.
\end{flushright}

\begin{theorem}\label{thm_5}
    Let $V$ be a strongly rational VOA,  $S^{\bullet}$ be a finite tuple of irreducible $V$-modules of length $n$. Then, for all $g\ge 0$,
    $$
\mathrm{Tr}\left(R_{S^{\bullet},g}\right) = \mathrm{rank}\mathbb{V}_{g+1,n}(V,\{S^{\bullet}\})
    $$
\end{theorem}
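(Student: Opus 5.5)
The plan is to realize the statement as an instance of condition (FA 2) of Definition \ref{def_FA_Property}, which Theorem \ref{thm_1} gives us once the formal factorization property is known. We work in the Setup of this section: $I=\{W_{1},\cdots,W_{l}\}$ with the involution $W\mapsto W'$ given by the contragredient, $0\in I$ identified with $V$, and $N_{g}$ the rank function on $\mathbb{N}^{(I)}$. We write the tuple $S^{\bullet}$ as $\beta=\sum_{i}n_{i}W_{i}$, where $n_{i}$ counts the occurrences of $W_{i}$. Since ranks do not depend on the ordering of the modules, the matrix $R_{S^{\bullet},g}$ coincides with the formal FA-matrix $\mathcal{N}_{\beta,g}$ of Definition \ref{def_FA_Matrix}. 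The claimed identity is then literally $\mathrm{Tr}(\mathcal{N}_{\beta,g})=N_{g+1}(\beta)$.

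The first step is to check Factorization Property II, Equation \eqref{equ_7}, for $\{N_{g}\}$. Fix a point of $\overline{\mathcal{M}}_{g+1,n}$ given by an irreducible curve with one non-separating node whose normalization is a smooth curve of genus $g$ with $n+2$ marked points. The factorization theorem \cite[Theorem 7.0.1]{DGT22a} identifies the coinvariants at this curve with
$$\bigoplus_{W\in I}\mathbb{V}_{g,n+2}\left(V,\left\{S^{\bullet},W,W'\right\}\right)$$
at the normalization. Because the bundle is locally free, the rank can be read off at this fiber. This yields $N_{g+1}(\beta)=\sum_{\lambda\in I}N_{g}(\beta+\lambda+\lambda')$.

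The second step is to check Factorization Property I, Equation \eqref{equ_6}, in the same way, using a curve with a separating node in the image of the clutching map $\overline{\mathcal{M}}_{g_{1},n_{1}+1}\times\overline{\mathcal{M}}_{g_{2},n_{2}+1}\to\overline{\mathcal{M}}_{g,n}$. Once both properties hold, Theorem \ref{thm_1} gives (FA 2). Taking $\beta$ as above, the diagonal entries are $(\mathcal{N}_{\beta,g})_{(i,i)}=N_{g}(\beta+W_{i}+W_{i}')$, so the trace equals $\sum_{i}N_{g}(\beta+W_{i}+W_{i}')$, which is $N_{g+1}(\beta)$ by the first step. In fact this one-line computation proves the theorem directly from Factorization Property II, without passing through the full equivalence.

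The main obstacle is the boundary cases of the stability condition. If $2g-2+(n+2)\le 0$, for example $g=0$ and $n=0$, then $\mathbb{V}_{g,n+2}$ is not a bundle on a stable moduli space. In that case one needs the convention that $N_{0}(\lambda+\mu')=\delta_{\lambda,\mu}$, so that $R_{\emptyset,0}=\mathrm{Id}$ and the trace becomes $l$, which should equal $\mathrm{rank}\,\mathbb{V}_{1,0}$. The plan is to adopt this convention, consistent with how Theorem \ref{thm_1} treats $\mathcal{N}_{0,0}$ as the identity of the monoid morphism. If needed, we would instead reduce to the stable range by inserting the vacuum module $V$ at an extra point, which leaves ranks unchanged by propagation of vacua.

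A second point to verify is that the index matching $W_{j}'$ versus $W_{j}$ in the factorization sum is consistent with the contragredient convention used in the definition of $R$.
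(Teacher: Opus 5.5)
Your proposal is correct and follows the paper's argument: the paper simply invokes (FA 2) from Theorem \ref{thm_1}, and the proof of that implication in Appendix \ref{app_thm_1} is your one-line computation $\mathrm{Tr}(\mathcal{N}_{\beta,g})=\sum_{i}N_{g}(\beta+\lambda_{i}+\lambda_{i}')=N_{g+1}(\beta)$ from Factorization Property II, which in turn comes from the factorization theorem at a curve with one non-separating node. Your treatment of the unstable case $(g,n)=(0,0)$, which the paper does not address, is a reasonable extra precaution.
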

\noindent\textit{Proof.} This directly follows from Theorem \ref{thm_1}.
\begin{flushright}
    Q.E.D.
\end{flushright}

\begin{proposition}\label{proposition_1}
    Let $V$ be a strongly rational VOA, $\mathcal{W}$ be the collection of all irreducible modules, up to isomorphism. Then,
    \begin{equation}\label{equ_25}
    R_{V,1} = \sum_{W\in \mathcal{W}} \mathrm{Tr}(R_{W',0})R_{W,0}.
    \end{equation}
\end{proposition}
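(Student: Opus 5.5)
Under Setup~\ref{set_up}, the sequence $N_g$ satisfies the formal factorization property (Definition~\ref{def_factorization_property}) by the factorization theorem \cite[Theorem 7.0.1]{DGT22a}. By Theorem~\ref{thm_1} it therefore satisfies (V3) of Definition~\ref{def_Verlinde_Property}, which reads $\mathcal{N}_{0,1}=\sum_{\lambda\in I}\mathrm{Tr}(\mathcal{N}_{\lambda',0})\mathcal{N}_{\lambda,0}$. Here $I=\mathcal{W}$, the distinguished element $0\in I$ is $V$ itself, and the involution is $W\mapsto W'$. Translating notation ($\mathcal{N}_{W,g}=R_{W,g}$) turns this into \eqref{equ_25}. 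So the proof is essentially a dictionary check. I also want a direct derivation from the factorization property, so that the statement does not rest solely on the appendix.

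For the direct derivation I compute the $(i,j)$ entry of both sides. By definition,
\[
(R_{V,1})_{(i,j)}=N_1(V+W_i+W_j').
\]
Propagation of vacua says that inserting $V$ at an extra point does not change ranks, so this equals $N_1(W_i+W_j')$. Applying Factorization Property II \eqref{equ_7} with $g=1$ gives
\[
N_1(W_i+W_j')=\sum_{W}N_0(W_i+W_j'+W+W').
\]
Next I apply Factorization Property I \eqref{equ_6} with $g_1=g_2=0$. I split the four points as $x=W+W_i$ and $y=W'+W_j'$, so
\[
N_0(W_i+W_j'+W+W')=\sum_{k}N_0(W+W_i+W_k')\,N_0(W'+W_k+W_j').
\]
Here I use that the sum over $\lambda$ may be reindexed by $\lambda\mapsto\lambda'$ and that $W''=W$.

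On the right-hand side of \eqref{equ_25}, $\mathrm{Tr}(R_{W',0})=\sum_k N_0(W'+W_k+W_k')$, and $(R_{W,0})_{(i,j)}=N_0(W+W_i+W_j')$. The two sides therefore do not match term by term; they must be matched via associativity of the fusion. Regrouping the four-point genus-zero count in the other channel, with $x=W_i+W_j'$ and $y=W+W'$, gives
\[
N_0(W_i+W_j'+W+W')=\sum_{\mu}N_0(W_i+W_j'+\mu)\,N_0(W+W'+\mu').
\]
Summing over $W$ and using the $S_3$-symmetry of three-point ranks, $\sum_W N_0(W+W'+\mu')=\mathrm{Tr}(R_{\mu',0})$ after relabeling. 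Likewise $N_0(W_i+W_j'+\mu)=(R_{\mu,0})_{(i,j)}$. Summing over $\mu$ then gives exactly $\sum_\mu \mathrm{Tr}(R_{\mu',0})(R_{\mu,0})_{(i,j)}$, which is the claim.

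The main obstacle is bookkeeping with the involution. One must check that the rank is invariant under permuting points and that the dual of the dual is the module itself, so that $N_0(W+W'+\mu')$ summed over $W$ is really the trace $\sum_k N_0(\mu'+W_k+W_k')$. One must also handle the case $0'\neq 0$ allowed in the formal setting; in the VOA case this is harmless since $V'\cong V$ for CohFT-type, and in general it is covered by Theorem~\ref{thm_1}. Propagation of vacua, needed to drop the $V$ insertion, is the other input I would cite explicitly from \cite{DGT22a}.
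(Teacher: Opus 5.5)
Your proposal is correct and matches the paper, which proves this by invoking (V3) of Theorem~\ref{thm_1}. Your direct check is the paper's appendix computation of (V3) with the two factorization steps taken in the opposite order: you cut the non-separating node first and then split the four-pointed rational curve, whereas the paper first does the genus $1+0$ split and then uses (FA 2) for $\mathrm{Tr}(R_{W',0})$. Both routes arrive at the same double sum $\sum_{\mu,k}N_0(\mu'+W_k+W_k')\,N_0(\mu+W_i+W_j')$. Your explicit use of propagation of vacua to identify $(R_{V,1})_{(i,j)}$ with $N_1(W_i+W_j')$ spells out a step the paper leaves implicit, and your remark about $V'\cong V$ is never actually used in your argument.
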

\noindent\textit{Proof.} This directly follows from Theorem \ref{thm_1}.
\begin{flushright}
    Q.E.D.
\end{flushright}

Ueno defined the right-hand side of Equation \eqref{equ_25} as the \textit{averaging matrix}. Proposition \ref{proposition_1} shows that the averaging matrix is an FA-matrix. The FA-matrix associated with a single irreducible module at genus zero equals the fusion matrix, since the space of coinvariants on $\overline{\mathcal{M}}_{0,3}$ is isomorphic to the space of intertwining operators. Thus, FA-matrices unites the notion of fusion and averaging matrices.

\begin{theorem}\label{cor_2} (matrix rank formula)
    Let $V$ be a strongly rational VOA. Let $W_{1},\cdots,W_{l}$ be the collection of irreducible $V$-modules, up to isomorphism. Let $W^{\bullet}=(W_{1}^{ n_{1}}, \cdots, W_{l}^{ n_{l}})$. Let $n=n_{1}+\cdots+n_{l}$ Then,
\begin{equation}\label{equ_9}
\begin{aligned}\mathrm{rank}\mathbb{V}_{g,n+2}\left(V,\left\{W_{1}^{ n_{1}} \cdots W_{l}^{ n_{l}},W_{i}, W_{j}'\right\}\right)
= \left(R_{W^{\bullet},g}\right)_{i,j}
=\left(\left(\prod_{i=1}^{n}R_{W_{i},0}^{n_{i}}\right)R_{V,1}^{g}\right)_{(i,j)}.
\end{aligned}
\end{equation} 
Moreover, if $g\ge 1$, we also have
\begin{equation}\label{equ_11}
\begin{aligned}
\mathrm{rank}\mathbb{V}_{g,n}\left(V,\left\{W_{1}^{ n_{1}},\cdots, W_{l}^{ n_{l}}\right\}\right)=\mathrm{Tr}(R_{W^{\bullet},g-1})=\mathrm{Tr} \left( \left(\prod_{i=1}^{n}R_{W_{i},0}^{n_{i}}\right)R_{V,1}^{g-1}\right),
\end{aligned}
\end{equation}
where $\mathrm{Tr}$ denotes the trace of a matrix. We call Equation \eqref{equ_9} and Equation \eqref{equ_11} the \textit{matrix rank formula} for vector bundle of coinvariants on $\overline{\mathcal{M}}_{g,n}$.
\end{theorem}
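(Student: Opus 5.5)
The plan is to reduce Theorem \ref{cor_2} to the formal statement Theorem \ref{thm_1}, applied in the situation of the Setup. Take $I=\{W_{1},\dots,W_{l}\}$ with the involution $W\mapsto W'$ (contragredient) and the distinguished element $0:=V$. Let $N_{g}(\sum n_{i}W_{i})=\mathrm{rank}\,\mathbb{V}_{g,n}(V,\{W_{1}^{n_{1}},\dots,W_{l}^{n_{l}}\})$. With these choices the formal FA-matrices $\mathcal{N}_{\beta,g}$ of Definition \ref{def_FA_Matrix} are exactly the FA-matrices $R_{W^{\bullet},g}$ of Definition \ref{def_FA_matrix}. In particular $\mathcal{N}_{W_{i},0}=R_{W_{i},0}$ and $\mathcal{N}_{0,1}=R_{V,1}$.

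The first step is to verify the formal factorization property of Definition \ref{def_factorization_property} for this $N_{g}$. For Factorization Property I, I would pick a point of $\overline{\mathcal{M}}_{g,n}$ in the image of a clutching map $\overline{\mathcal{M}}_{g_{1},|x|+1}\times\overline{\mathcal{M}}_{g_{2},|y|+1}\to\overline{\mathcal{M}}_{g,n}$. Since $\mathbb{V}_{g,n}$ is a vector bundle \cite[VB Corollary]{DGT22a}, its rank can be read off on this fiber. The factorization theorem \cite[Theorem 7.0.1]{DGT22a} then identifies that fiber with $\bigoplus_{W\in I}$ of tensor products of coinvariants on the two components, with $W$ and $W'$ inserted at the two preimages of the node. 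Taking ranks gives \eqref{equ_6}.

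For Factorization Property II, I would use instead an irreducible nodal curve obtained from $\overline{\mathcal{M}}_{g-1,n+2}\to\overline{\mathcal{M}}_{g,n}$, and factorization gives \eqref{equ_7}. I would take care with low-dimensional cases such as $g_{i}=0$ with few marked points, where a factor may be unstable. These should be handled by the propagation of vacua convention: inserting $V$ does not change the rank, and the two-pointed genus-zero rank equals $\delta_{W_{i},W_{j}}$. This is also what makes $\mathcal{N}_{0,0}=\mathrm{Id}$, as FA 1 requires.

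With the factorization property established, Theorem \ref{thm_1} gives (V1) and (V2). Specializing $\beta=\sum n_{i}W_{i}$, (V1) reads $N_{g}(\beta+W_{i}+W_{j}')=\bigl((\prod R_{W_{i},0}^{n_{i}})R_{V,1}^{g}\bigr)_{(i,j)}$, which is \eqref{equ_9}; its left side is $(R_{W^{\bullet},g})_{i,j}$ by definition. Likewise (V2) yields the trace formula \eqref{equ_11}. The middle equality $\mathrm{Tr}(R_{W^{\bullet},g-1})$ comes from FA 2, i.e.\ Theorem \ref{thm_5}.

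The main obstacle is the bookkeeping in the verification of the factorization property. One must match the formal indexing in $\mathbb{N}^{(I)}$, where the order of points is ignored, with the geometric factorization. The point is that rank is independent of the ordering and of the choice of nodal curve, which is justified by local freeness on the connected space $\overline{\mathcal{M}}_{g,n}$. Everything after that is formal.
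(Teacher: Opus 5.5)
Your proposal is correct and follows the paper's route. The paper's proof of Theorem~\ref{cor_2} is the one-line reduction to Theorem~\ref{thm_1}, with the formal factorization property supplied by the factorization theorem invoked in the Setup; this matches your choices of $I$ as the simple modules, $W\mapsto W'$ as the involution and $0=V$. What you add is the explicit verification of Factorization Properties I and II via separating and non-separating clutching maps, together with the propagation-of-vacua conventions in the unstable range (notably $N_{0}(W_{i}+W_{j}')=\delta_{i,j}$, which gives $\mathcal{N}_{0,0}=\mathrm{Id}$); the paper leaves these implicit.
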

\noindent\textit{Proof.} This directly follows from Theorem \ref{thm_1}.
\begin{flushright}
    Q.E.D.
\end{flushright}

\begin{remark}
    In case $V$ is the affine VOA $L(\mathfrak{g})_\ell$, where $\mathfrak{g}$ is a simple Lie algebra, and $\ell$ is a positive integer, the rank of a vector bundles of associated coinvariants (or dual bundles of conformal blocks), is typically referred to as the Verlinde formula. Since vector spaces of conformal blocks for such affine VOAs are canonically isomorphic to vector spaces of generalized theta functions, their dimensions are of wide interest. In fact, work on this subject has been carried out by a number of researchers, including \cite{TUY,BL, F3,KN,T,Bea96, DGT22a}. The ranks of vector bundles of coinvariants and conformal blocks for more general VOAs are sometimes referred to as generalized Verlinde formula and Theorem \ref{cor_2} gives a new tool for computing them.
\end{remark}

\begin{proposition}\label{prop_11}
    Let $V$ be a strongly rational VOA. Let $S,T$ be two irreducible modules, and let $S\boxtimes T$ be the tensor product. Then, for all $g\ge 0$,
    $$
    R_{(S,T),g} = R_{S\boxtimes T,g}.
    $$
\end{proposition}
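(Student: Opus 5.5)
The plan is to reduce to genus zero using the multiplicativity of FA-matrices, and then to prove the genus-zero identity by factorizing a four-pointed genus-zero rank along a different boundary divisor than the one that gives the matrix product.

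Since $S\boxtimes T$ need not be irreducible, I first fix what $R_{S\boxtimes T,g}$ means. Write $S\boxtimes T\cong\bigoplus_{\mu\in I}N_{S,T}^{\mu}\,\mu$, where $N_{S,T}^{\mu}$ is the fusion rule. Coinvariants are additive in each module, so the rank of coinvariants with $S\boxtimes T$ inserted equals the corresponding sum of ranks with the $\mu$'s inserted. I therefore take
$$R_{S\boxtimes T,g}:=\sum_{\mu\in I}N_{S,T}^{\mu}R_{\mu,g}.$$

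\emph{Reduction to $g=0$.} By Lemma \ref{cor_1} with $S^{\bullet}=(S,T)$, $g_1=0$, $T^{\bullet}=\emptyset$, $g_2=g$, we get $R_{(S,T),g}=R_{(S,T),0}R_{V,1}^{g}$; here $R_{\emptyset,g}$ is the genus-$g$ matrix with no extra points, which equals $R_{V,1}^{g}$ by Theorem \ref{cor_2}. Similarly $R_{\mu,g}=R_{\mu,0}R_{V,1}^{g}$ for each $\mu$. Hence it suffices to show
$$R_{(S,T),0}=\sum_{\mu}N_{S,T}^{\mu}R_{\mu,0}.$$

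\emph{The genus-zero identity.} The $(i,j)$ entry of the left side is $N_0(S+T+W_i+W_j')$. I apply Factorization Property I (Definition \ref{def_factorization_property}, valid here by Setup \ref{set_up} and Theorem \ref{thm_1}) with $x=S+T$, $y=W_i+W_j'$ and $g_1=g_2=0$. This corresponds to the boundary point of $\overline{\mathcal{M}}_{0,4}$ separating $\{S,T\}$ from $\{W_i,W_j'\}$, and gives
$$N_0(S+T+W_i+W_j')=\sum_{\lambda\in I}N_0(S+T+\lambda)\,N_0(\lambda'+W_i+W_j').$$
The second factor is $(R_{\lambda',0})_{(i,j)}$ by Definition \ref{def_FA_matrix}. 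After reindexing by $\mu=\lambda'$, it remains to identify
$$N_0(S+T+\mu')=\mathrm{rank}\,\mathbb{V}_{0,3}(V,\{S,T,\mu'\})$$
with $N_{S,T}^{\mu}$.

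\emph{Main obstacle.} The only non-formal step is this last identification and its conventions. The space of coinvariants on $\overline{\mathcal{M}}_{0,3}$ is dual to the space of intertwining operators of type $\binom{\mu}{S\,T}$ (equivalently, invariant trilinear forms on $S\otimes T\otimes\mu'$), as the paper already notes for fusion matrices. Its dimension is the fusion rule $N_{S,T}^{\mu}$, which is the multiplicity of $\mu$ in $S\boxtimes T$ for a strongly rational $V$ (Huang's theory of the tensor product).

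I would check carefully that the contragredient lands on the correct slot, i.e.\ that it is $\mu'$ and not $\mu$ paired with $S,T$. If the convention turned out reversed, I would fix it using the symmetry $\mathrm{rank}\,\mathbb{V}_{0,3}(S,T,\mu')=\mathrm{rank}\,\mathbb{V}_{0,3}(S',T',\mu)$ together with the fact that ranks ignore the ordering of points. With the identification in hand, the $(i,j)$ entry becomes $\sum_{\mu}N_{S,T}^{\mu}(R_{\mu,0})_{(i,j)}$, which completes the proof.
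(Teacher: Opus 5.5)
Your proposal is correct and follows essentially the same route as the paper. Both arguments decompose $S\boxtimes T$ by the fusion multiplicities $N_{S,T}^{\mu}$ and identify those with ranks on $\overline{\mathcal{M}}_{0,3}$, factorize the four-pointed genus-zero rank along the boundary separating $\{S,T\}$ from $\{W_i,W_j'\}$, and pass to higher genus by right-multiplying by $R_{V,1}^{g}$. The only differences are the order of presentation and that you justify $R_{S\boxtimes T,g}=R_{S\boxtimes T,0}R_{V,1}^{g}$ explicitly by linearity, where the paper applies Theorem \ref{thm_1} directly to the reducible module.
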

\noindent\textit{Proof.} let $W_{1},\cdots, W_{l}$ be an isomorphism class of all irreducible $V$-modules. Consider the $(i,j)$-entry of $R_{(S,T),0}$.
$$
\begin{aligned}
    \left(R_{S\boxtimes T,0}\right)_{(i,j)} =& \left(R_{\sum_{k=1}^{l}N_{S,T}^{W_{k}}W_{k}}\right)_{(i,j)}
    = \left(\sum_{k=1}^{l}N_{S,T}^{W_{k}} R_{W_{k},0}\right)_{(i,j)}= \sum_{k=1}^{l}N_{S,T}^{W_{k}} \left(R_{W_{k},0}\right)_{(i,j)}\\
    =& \sum_{k=1}^{l}\mathrm{rank}\mathbb{V}_{0,3}\left(V,\left\{S,T,W_{k}'\right\}\right) \mathrm{rank}\mathbb{V}_{0,3}\left(V,\left\{W_{k},W_{i},W_{j}'\right\}\right)\\
    =& \mathrm{rank}\mathbb{V}_{0,4}\left(V,\left\{S,T,W_{i},W_{j}'\right\}\right) \ \ \ \text{(\cite[Factorization Theorem]{DGT22a})}\\
    =& \left(R_{(S,T),0}\right)_{(i,j)}.
\end{aligned}
$$
Thus, we proved the case for $g=0$. For $g\ge 1$, we can apply Theorem \ref{thm_1}:
$$
\begin{aligned}
    R_{S\boxtimes T,g} = R_{S\boxtimes T,0}R_{V,1}^{g}
    = R_{(S,T),0}R_{V,1}^{g}
    = R_{(S,T),g}.
\end{aligned}
$$
\begin{flushright}
    Q.E.D.
\end{flushright}

\begin{proposition}\label{cor_3} (formal matrix rank formula)
    Let $V$ be a strongly rational VOA. Let $W_{1},...,W_{l}
    $ be the collection of irreducible modules, up to isomorphism. Let $S$ be an irreducible module. Let $W^{\bullet}=(W_{i_{1}},\cdots,W_{i_{k}})$ be a finite tuple of irreducible modules. 
    Then,
    \begin{equation}
    f_{W^{\bullet},S^{\bullet},(W_{i},W_{j}'),g}(z) := \sum_{n=0}^{\infty} \mathrm{rank}\mathbb{V}_{g,k+n+5}\left(V,\left\{W^{\bullet}, \left(S^{\bullet}\right)^{ (n+3)}, W_{i} , W_{j}'\right\}\right) z^{n} \in \mathbb{C}[[z]],
    \end{equation}
    is a rational function, for all $g,n\ge 0$. Moreover,
\begin{equation}
\mathrm{rank}\mathbb{V}_{g,k+n+5}\left(V,\left\{W^{\bullet}, S^{n+3}, W_{i}, W_{j}'\right\}\right)=\frac{1}{n!}\left(\left(\frac{R_{W_{i_{1}},0}\cdots R_{W_{i_{k}},0}R_{V,1}^{g} R_{S,0}^{3}}{\mathrm{Id}_{l}-R_{S,0}z}\right)_{(i,j)}\right)^{(n)}(0).
\end{equation}

\end{proposition}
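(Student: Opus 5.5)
The plan is to specialize Proposition \ref{prop_2} and Theorem \ref{thm_2} to the Setup of Section \ref{sec_5}. First, I will check that the hypotheses of Theorem \ref{thm_1} hold there. Take $I=\{W_{1},\cdots,W_{l}\}$ with the involution $W\mapsto W'$, and set $N_{g}(\sum n_{i}W_{i})=\mathrm{rank}\mathbb{V}_{g,n}(V,\{W_{1}^{n_{1}},\cdots,W_{l}^{n_{l}}\})$. By the Factorization Theorem \cite[Theorem 7.0.1]{DGT22a}, applied at the two kinds of boundary points, Factorization Properties I and II of Definition \ref{def_factorization_property} hold:
\begin{itemize}
\item for a separating node (the image of $\overline{\mathcal{M}}_{g_1,n_1+1}\times\overline{\mathcal{M}}_{g_2,n_2+1}$), the rank is $\sum_{\lambda} N_{g_1}(x+\lambda)N_{g_2}(y+\lambda')$;
\item for a non-separating node (the image of $\overline{\mathcal{M}}_{g-1,n+2}$), the rank is $\sum_\lambda N_{g-1}(x+\lambda+\lambda')$.
\end{itemize}
Both identities hold because the rank is constant on $\overline{\mathcal{M}}_{g,n}$, the sheaf being a vector bundle. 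The vacuum module $V$ plays the role of $0\in I$, and inserting it does not change rank by propagation of vacua. The one point to verify is that this propagation-of-vacua input matches the formal convention, i.e. that $N_g(x+0)$ in the formal setup is consistent with $N_g(x)$. Once that is checked, Theorem \ref{thm_1} applies, and in particular the matrix rank formula (V1) holds with $\mathcal{N}_{\lambda,0}=R_{\lambda,0}$ and $\mathcal{N}_{0,1}=R_{V,1}$. This is exactly what Theorem \ref{cor_2} records.

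Next I reproduce the computation of Proposition \ref{prop_2}, with $\beta=W_{i_1}+\cdots+W_{i_k}$ and $\alpha=S$. By (V1), equivalently Lemma \ref{cor_1} together with Theorem \ref{cor_2}, we have
\[
R_{W_{i_1},0}\cdots R_{W_{i_k},0}R_{V,1}^{g}R_{S,0}^{3}R_{S,0}^{n}=R_{\beta+(n+3)S,g}.
\]
Its $(i,j)$ entry is $\mathrm{rank}\mathbb{V}_{g,k+n+5}(V,\{W^{\bullet},S^{n+3},W_i,W_j'\})$. Multiplying by $z^n$ and summing over $n$ gives the matrix identity in $M(l,\mathbb{Z}[[z]])$
\[
\sum_{n\ge0}R_{\beta+(n+3)S,g}z^n=R_{W_{i_1},0}\cdots R_{W_{i_k},0}R_{V,1}^{g}R_{S,0}^{3}\,(\mathrm{Id}_l-R_{S,0}z)^{-1}.
\]
Here $(\mathrm{Id}_l-R_{S,0}z)^{-1}=\sum_n R_{S,0}^nz^n$ is a legitimate inverse in $M(l,\mathbb{Z}[[z]])$, since the constant term is $\mathrm{Id}_l$.

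To get rationality, note that by Cramer's rule $(\mathrm{Id}_l-R_{S,0}z)^{-1}=\mathrm{adj}(\mathrm{Id}_l-R_{S,0}z)/\det(\mathrm{Id}_l-R_{S,0}z)$. The adjugate has polynomial entries, and the determinant is a polynomial with constant term $1$. Hence every entry of the right-hand side is a rational function, and so $f_{W^\bullet,S^\bullet,(W_i,W_j'),g}(z)$ is rational. The closed formula then follows as in Theorem \ref{thm_2}: a rational function regular at $0$ equals its Taylor series, so its $n$th coefficient is $\frac{1}{n!}$ times its $n$th derivative at $0$.

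I do not expect a genuine obstacle, since the analytic part is routine. The only step requiring care is the first one: confirming that the Setup genuinely satisfies the formal factorization property, including the propagation-of-vacua normalization for the index $0\in I$. If that normalization were off, I would instead work directly with the FA-matrix identities of Lemma \ref{cor_1}, which already encode what is needed.
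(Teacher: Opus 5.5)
Your proposal is correct and follows the paper's route: the paper proves this by specializing Proposition~\ref{prop_2} and Theorem~\ref{thm_2} to the Setup via Theorem~\ref{thm_1}, which is exactly what you do with $\beta=W_{i_1}+\cdots+W_{i_k}$ and $\alpha=S$. Your Cramer's-rule justification of rationality, and your flagging of the propagation-of-vacua normalization (identifying $V\in I$ with the unit of $\mathbb{N}^{(I)}$), spell out details that the paper's one-line proof leaves implicit.
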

\noindent\textit{Proof.} This directly follows from Theorem \ref{thm_1}.
\begin{flushright}
    Q.E.D.
\end{flushright}

\begin{example}\label{ex_1}
    Let $V$ be a strongly generated VOA. Let $S^{\bullet}=(S_{1}, \cdots, S_{n})$ be an $n$-tuple of irreducible modules. Suppose that $R_{S^{\bullet},0}=\mathrm{Id}$ (equivalently, by Lemma \ref{cor_1}, the product of the fusion matrices of $S_{i}$'s is the identity matrix). Then, $$\mathrm{rank}\mathbb{V}_{1,n}(V,\{S^{\bullet}\})=\text{ the number of irreducible modules, up to isomorphism}.$$
\end{example}
Proof: Let $l$ be the number of irreducible modules, up to isomorphism. Then, the FA-matrices are of size $l\times l$. Then, $\mathrm{rank}\mathbb{V}_{1,n}(V,\{S^{\bullet}\})=\mathrm{Tr}(R_{S^{\bullet},0}) = \mathrm{Tr}(\mathrm{Id}_{l})= l$.
\begin{flushright}
    Q.E.D.
\end{flushright}

Example \ref{ex_1} was first proved in \cite[Example 2.5.1]{DGT3} for the case where $S^{\bullet}$ consists of trivial modules (i.e., $S^{\bullet}=(V, \cdots,V)$).

\section{Vector Bundle of Coinvariants on Tensor Product of VOAs \label{sec_Vector_Bundle_of_Coinvariants_on_Tensor_Product_of_VOAs}}

In this section, we consider the tensor product of two VOAs. Let $V_{1}, V_{2}$ be two strongly rational VOAs. Following \cite{B1,FHL, Milas96}, there is a natural VOA structure on vector space $V_{1}\otimes V_{2}$. Specifically, $V_{1}\otimes V_{2}$ is a natural $\mathbb{N}$-graded vector space, with natural graduation. We define the field associated with $a\otimes b\in V_{1}\otimes V_{2}$ by $Y(a\otimes b,z)=Y(a,z)\otimes Y(b,z)$, with Virasoro element $\omega_{1}\otimes \mathds{1}+\mathds{1}\otimes \omega_{2}$, where $\omega_{1},\omega_{2}$ are the Virasoro elements in $V_{1},V_{2}$, respectively. If $V_{1},V_{2}$ are strongly rational, then $V_{1}\otimes V_{2}$ is also strongly rational \cite{DMZ94,DLM2,Milas96}.

Let $W_{1},...,W_{l}$ be the collection of all irreducible $V_{1}$-modules, and $M_{1},...,M_{t}$ be the collection of all irreducible $V_{2}$-modules, up to isomorphism. $\{W_{i}\otimes M_{j}|1\le i\le l, 1\le j\le t\}\}$ is the collection of all irreducible modules of $V_{1}\otimes V_{2}$, up to isomorphism \cite[Lemma 5.1]{Milas96}. We arrange the set $\{W_{i}\otimes M_{j}|1\le i\le l, 1\le j\le t\}\}$ by lexicographic order on the index. 

Before we formulate the motivating question for this section, we first introduce a notation.

\begin{definition}
    Let $V_{1}, V_{2}$ be strongly rational VOAs, and $S^{\bullet}=(S_{1},\cdots,S_{n})$ (resp. $T^{\bullet}=(T_{1},\cdots,T_{n})$) be an $n$-tuple of $V_{1}$-modules (resp. $V_{2}$-modules). Define $$
    S^{\bullet}\otimes T^{\bullet}:=\left(S_{1}\otimes T_{1},\cdots,S_{n}\otimes T_{n}\right),
    $$
    which is an $n$-tuple of irreducible $V_{1}\otimes V_{2}$-module.
\end{definition}

Our motivating question was raised in \cite{DG} for affine VOAs at positive integral levels. Here, we consider a more general case.
\begin{question}\label{question_1}\cite[Question 1]{DG}
    Let $V_{1}, V_{2}$ be two strongly rational VOAs. Let $S^{\bullet}$ be an $n$-tuple of simple $V_{1}$-modules, $T^{\bullet}$ be an $n$-tuple of simple $V_{2}$-modules. Do we have 
    $$
    \mathbb{V}_{g,n}\left(V_{1}\otimes V_{2},\left\{S^{\bullet}\otimes T^{\bullet}\right\}\right)\simeq \mathbb{V}_{g,n}\left(V_{1},\left\{S^{\bullet}\right\}\right)\otimes \mathbb{V}_{g,n}\left(V_{2},\left\{T^{\bullet}\right\}\right)
    $$ 
    as vector bundles on $\overline{\mathcal{M}}_{g,n}$?
\end{question}

In this section, we prove some special cases and provide evidence for a positive answer.

\begin{lemma}\label{lem_1}
    Let $V_{1}, V_{2}$ be strongly rational VOAs. Let $S^{\bullet}$ (resp. $T^{\bullet})$ be a finite tuple of irreducible $V_{1}$-modules (resp. $V_{2}$-modules). Suppose that $S^{\bullet}$ and $T^{\bullet}$ have the same length. Then,
\begin{equation}\label{equ_24}
    R_{V_{1}\otimes V_{2}, S^{\bullet}\otimes T^{\bullet},g}=R_{V_{1}, S^{\bullet},g}\otimes R_{V_{2}, T^{\bullet},g},
\end{equation}
    for all $g\ge 0$, where the right-hand side of Equation \eqref{equ_24} denotes the usual tensor product of matrices.
\end{lemma}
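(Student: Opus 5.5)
The plan is to reduce everything to genus zero three-point data via the matrix rank formula (Theorem \ref{cor_2}), and then use the mixed-product property of the Kronecker product, $(A\otimes B)(C\otimes D)=AC\otimes BD$. We index the irreducible $V_{1}\otimes V_{2}$-modules $W_{a}\otimes M_{b}$ lexicographically, as fixed above. With this ordering, the $((a,b),(c,d))$-entry of $A\otimes B$ is $A_{(a,c)}B_{(b,d)}$, so the Kronecker product is compatible with the indexing of FA-matrices.

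\emph{Step 1 (genus zero, one module).} We show that $R_{V_{1}\otimes V_{2},S\otimes T,0}=R_{V_{1},S,0}\otimes R_{V_{2},T,0}$ for irreducible $S$ and $T$. Since $\mathbb{V}_{0,3}$ is dual to the space of intertwining operators, this amounts to two facts.
\begin{enumerate}
\item Contragredients factor: $(W_{c}\otimes M_{d})'\cong W_{c}'\otimes M_{d}'$. This follows from the definition of the contragredient module on the graded dual.
\item Fusion rules factor, $N_{S\otimes T,\,W_{a}\otimes M_{b}}^{W_{c}\otimes M_{d}}=N_{S,W_{a}}^{W_{c}}\,N_{T,M_{b}}^{M_{d}}$. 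I would cite the known result on intertwining operators for tensor products of rational $C_{2}$-cofinite VOAs (Abe--Dong--Li, building on \cite{FHL,DMZ94}): the space of intertwining operators of type $\binom{W_{3}\otimes M_{3}}{W_{1}\otimes M_{1}\ W_{2}\otimes M_{2}}$ is the tensor product of the two factor spaces.
\end{enumerate}
I expect this step to be the main obstacle, since it is the only place where genuine VOA input enters. If that citation does not apply cleanly, the fallback is to compare Zhu algebras: $A(V_{1}\otimes V_{2})\cong A(V_{1})\otimes A(V_{2})$, and fusion rules are computed via the bimodules $A(W)$ through the Frenkel--Zhu/Li formula. The dimension of the resulting Hom space then factors because the modules involved are finite-dimensional over $\mathbb{C}$.

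\emph{Step 2 (genus zero, tuples).} By Lemma \ref{cor_1} (with $g_{1}=g_{2}=0$), $R_{S^{\bullet}\otimes T^{\bullet},0}=\prod_{k}R_{S_{k}\otimes T_{k},0}$. By Step 1 this equals $\prod_{k}(R_{S_{k},0}\otimes R_{T_{k},0})$. The mixed-product property turns it into $\bigl(\prod_{k}R_{S_{k},0}\bigr)\otimes\bigl(\prod_{k}R_{T_{k},0}\bigr)=R_{S^{\bullet},0}\otimes R_{T^{\bullet},0}$. For the empty tuple, $R_{V,0}=\mathrm{Id}$ in both sides (as $V_{1}\otimes V_{2}$ is the vacuum), so the identity $\mathrm{Id}_{lt}=\mathrm{Id}_{l}\otimes\mathrm{Id}_{t}$ covers it.

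\emph{Step 3 (averaging matrix and higher genus).} By Proposition \ref{proposition_1},
\begin{equation*}
R_{V_{1}\otimes V_{2},1}=\sum_{a,b}\mathrm{Tr}\bigl(R_{(W_{a}\otimes M_{b})',0}\bigr)\,R_{W_{a}\otimes M_{b},0}.
\end{equation*}
Step 1 gives $\mathrm{Tr}(R_{(W_{a}\otimes M_{b})',0})=\mathrm{Tr}(R_{W_{a}',0})\,\mathrm{Tr}(R_{M_{b}',0})$, because the trace of a Kronecker product is the product of traces. Substituting and using bilinearity of $\otimes$, the double sum splits as
\begin{equation*}
\Bigl(\sum_{a}\mathrm{Tr}(R_{W_{a}',0})R_{W_{a},0}\Bigr)\otimes\Bigl(\sum_{b}\mathrm{Tr}(R_{M_{b}',0})R_{M_{b},0}\Bigr)=R_{V_{1},1}\otimes R_{V_{2},1}.
\end{equation*}
Finally, Theorem \ref{cor_2} (or Lemma \ref{cor_1}) gives $R_{S^{\bullet}\otimes T^{\bullet},g}=R_{S^{\bullet}\otimes T^{\bullet},0}\,R_{V_{1}\otimes V_{2},1}^{g}$. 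Applying Steps 2 and 3 together with the mixed-product property once more yields $R_{S^{\bullet},0}R_{V_{1},1}^{g}\otimes R_{T^{\bullet},0}R_{V_{2},1}^{g}$, which is $R_{V_{1},S^{\bullet},g}\otimes R_{V_{2},T^{\bullet},g}$ as claimed in \eqref{equ_24}.
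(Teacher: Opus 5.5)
Your proposal is correct and follows essentially the same route as the paper's proof in Appendix \ref{app_lem_1}. Both arguments take the genus-zero single-module case from the factorization of fusion rules for tensor product VOAs (the paper cites \cite[Theorem 5.5]{Milas96} where you cite Abe--Dong--Li), obtain $R_{V_{1}\otimes V_{2},1}=R_{V_{1},1}\otimes R_{V_{2},1}$ from the averaging-matrix formula using bilinearity of $\otimes$ and $\mathrm{Tr}(A\otimes B)=\mathrm{Tr}(A)\mathrm{Tr}(B)$, and then use the mixed-product property to pass to tuples and to higher genus; the only differences are the order in which you assemble tuples and genus, and your explicit remark that contragredients factor, which the paper uses implicitly.
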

\noindent\textit{Proof.} The proof is in Appendix \ref{app_lem_1}.

\begin{remark}
    When $S^{\bullet}$ and $T^{\bullet}$ have different lengths, we may add sufficiently many trivial modules to the shorter tuple until they have the same length and then apply the propagation of vacua.
\end{remark}

\begin{theorem}\label{thm_3}
    Let $V_{1}, V_{2}$ be strongly rational VOAs. Let $S^{\bullet}$ (resp. $T^{\bullet})$ be a finite tuple of irreducible $V_{1}$-modules (resp. $V_{2}$-modules). Suppose that $S^{\bullet}$ and $T^{\bullet}$ have the same length. Then, for all $g,n\ge 0$,
    $$
\begin{aligned}\mathrm{rank}\mathbb{V}_{g,n}\left(V_{1}\otimes V_{2},\left\{S^{\bullet}\otimes T^{\bullet}\right\}\right)=&\mathrm{rank}\left(\mathbb{V}_{g,n}\left(V_{1},\left\{S^{\bullet}\right\}\right)\otimes \mathbb{V}_{g,n}\left(V_{2},\left\{T^{\bullet}\right\}\right)\right)\\
=&\mathrm{rank}\mathbb{V}_{g,n}\left(V_{1},\left\{S^{\bullet}\right\}\right)\cdot \mathrm{rank} \mathbb{V}_{g,n}\left(V_{2},\left\{T^{\bullet}\right\}\right).
    \end{aligned}
    $$
\end{theorem}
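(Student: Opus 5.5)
The plan is to reduce Theorem \ref{thm_3} to Lemma \ref{lem_1} together with the trace formula (Theorem \ref{thm_5}) and propagation of vacua. The key observation is that a rank on $\overline{\mathcal{M}}_{g,n}$ can be read off from an FA-matrix in two ways. One can take the trace of a genus $g-1$ FA-matrix when $g\ge 1$. Alternatively, one can take a particular matrix entry, the one indexed by the vacuum modules, of a genus $g$ FA-matrix associated with a shorter tuple. Both operations are compatible with Kronecker products of matrices.

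\emph{Case $g\ge 1$.} By Theorem \ref{thm_5} applied to $V_1\otimes V_2$,
$$\mathrm{rank}\,\mathbb{V}_{g,n}\left(V_1\otimes V_2,\left\{S^{\bullet}\otimes T^{\bullet}\right\}\right)=\mathrm{Tr}\left(R_{V_1\otimes V_2,S^{\bullet}\otimes T^{\bullet},g-1}\right).$$
By Lemma \ref{lem_1} this equals $\mathrm{Tr}\left(R_{V_1,S^{\bullet},g-1}\otimes R_{V_2,T^{\bullet},g-1}\right)$. Since $\mathrm{Tr}(A\otimes B)=\mathrm{Tr}(A)\,\mathrm{Tr}(B)$, this is $\mathrm{Tr}\left(R_{V_1,S^{\bullet},g-1}\right)\mathrm{Tr}\left(R_{V_2,T^{\bullet},g-1}\right)$. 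Applying Theorem \ref{thm_5} again, for $V_1$ and for $V_2$, gives the product of the two ranks.

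\emph{Case $g=0$.} Here $n\ge 3$. I would realize the rank as a matrix entry using the vacuum. Write $S^{\bullet}=(S_1,\dots,S_{n-2},S_{n-1},S_n)$ and use propagation of vacua, or simply Definition \ref{def_FA_matrix}, to obtain
$$\mathrm{rank}\,\mathbb{V}_{0,n}(V_1,\{S^{\bullet}\})=\left(R_{V_1,(S_1,\dots,S_{n-2}),0}\right)_{(i,j)},$$
where $W_i=S_{n-1}$ and $W_j'=S_n$. For $V_1\otimes V_2$, the irreducible $S_{n-1}\otimes T_{n-1}$ and the contragredient $(S_n\otimes T_n)'=S_n'\otimes T_n'$ sit at the lexicographic index $((i,k),(j,m))$, where $k,m$ are the corresponding indices for $V_2$.

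For this step I need two facts about irreducible modules of the tensor product VOA. The first is that contragredients factor as $(S\otimes T)'\cong S'\otimes T'$. The second is that the lexicographic ordering makes the $((i,k),(j,m))$ entry of a Kronecker product $A\otimes B$ equal to $A_{ij}B_{km}$. Granting these, Lemma \ref{lem_1} applied to the tuples of length $n-2$ gives the factorization of the entry, and hence of the rank. If $n-2=0$ is problematic for Lemma \ref{lem_1}, I would instead pad by a vacuum module, which propagation of vacua permits. Alternatively, I could use the entry of $R_{(S_1),0}$ and $n\ge 3$ directly.

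Finally, the first equality in the statement, with the rank of the tensor product bundle, is immediate because ranks of vector bundles multiply under tensor product.

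I expect the main obstacle to be the bookkeeping in the genus-zero case. One has to verify that the contragredient of an irreducible $V_1\otimes V_2$-module is the tensor product of the contragredients, so that the index $j'$ for the tensor product is matched correctly with the pair of indices for $V_1$ and $V_2$. Everything else is a formal consequence of Lemma \ref{lem_1} and Theorem \ref{thm_5}.
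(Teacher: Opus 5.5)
Your proof is correct. It differs from the paper's only in how the rank is extracted from the FA-matrices.

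\textbf{The paper's route.} It uses one computation valid for all $g$. It reads $\mathrm{rank}\,\mathbb{V}_{g,n}(V_1,\{S^{\bullet}\})$ as the vacuum entry $(R_{V_1,S^{\bullet},g})_{(1,1)}$ of the genus-$g$ FA-matrix of the full tuple. The paper cites Theorem \ref{thm_1} for this; in substance it is propagation of vacua. It then notes that the $(1,1)$ entry of a Kronecker product is the product of the $(1,1)$ entries, and finishes with Lemma \ref{lem_1}.

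\textbf{Your route.} You split by genus.
\begin{itemize}
\item For $g\ge 1$ you use the trace formula of Theorem \ref{thm_5} together with $\mathrm{Tr}(A\otimes B)=\mathrm{Tr}(A)\,\mathrm{Tr}(B)$. This needs no index bookkeeping at all.
\item For $g=0$ you use the entry of the genus-zero matrix of $(S_1,\dots,S_{n-2})$ indexed by $S_{n-1}$ and $S_n'$. This is just Definition \ref{def_FA_matrix} and needs no propagation of vacua.
\end{itemize}

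\textbf{What each buys.} The paper's argument is shorter and uniform in $g$. Yours has the modest advantage of never having to locate the vacuum entry. Strictly, $(R_{V,S^{\bullet},g})_{(1,1)}=\mathrm{rank}\,\mathbb{V}_{g,n+2}(V,\{S^{\bullet},V,V'\})$, so the paper's use of the $(1,1)$ entry tacitly assumes $V\cong V'$. This is harmless: the entry in row $V$ and column $V'$ factors just as well, because $(V_1\otimes V_2)'\cong V_1'\otimes V_2'$.

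\textbf{Two smaller remarks.}
\begin{itemize}
\item The identity $(S\otimes T)'\cong S'\otimes T'$ that you flag is genuinely needed in your genus-zero step. The paper also uses it without comment, in the proofs of Lemma \ref{lem_1} and Theorem \ref{thm_4}.
\item Your padding concern is misplaced. For $g=0$, stability forces $n\ge 3$, so $n-2\ge 1$ and Lemma \ref{lem_1} applies directly. Padding by a vacuum module matters only in your trace case with $n=0$. There $R_{\emptyset,g-1}$ should be read as the matrix of the tuple $(V)$, which is $R_{V,g-1}$ by propagation of vacua.
\end{itemize}
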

\noindent\textit{Proof.} Let $V_{1}=W_{1}, \cdots, W_{l}$ be the collection of all irreducible $V_{1}$-modules, up to isomorphism. Let $V_{2}=M_{1}, \cdots, M_{l'}$ be the collection of all irreducible $V_{2}$-modules, up to isomorphism. Then,
$$
\begin{aligned}
\mathrm{rank}\left(\mathbb{V}_{g}\left(V_{1},\left\{S^{\bullet}\right\}\right)\otimes \mathbb{V}_{g,n}\left(V_{2},\left\{T^{\bullet}\right\}\right)\right) =& \mathrm{rank}\mathbb{V}_{g,n}\left(V_{1},\left\{S^{\bullet}\right\}\right) \cdot \mathrm{rank}\mathbb{V}_{g,n}\left(V_{2},\left\{T^{\bullet}\right\}\right)\\
    =& \left(R_{V_{1},S^{\bullet},g}\right)_{(1,1)}\cdot \left(R_{V_{2},T^{\bullet},g}\right)_{(1,1)}\ \ \ \ \text{(Theorem \ref{thm_1})}\\
    =& \left(R_{V_{1},S^{\bullet},g}\otimes R_{V_{2},T^{\bullet},g}\right)_{1,1} \ \ \ \  \text{(matrix Kronecker product formula)}\\
    =& \left(R_{V_{1}\otimes V_{2},S^{\bullet}\otimes T^{\bullet},g}\right)_{1,1} \ \ \ \text{(Lemma \ref{lem_1})}\\
    =& \mathrm{rank}\mathbb{V}_{g,n}\left(V_{1}\otimes V_{2},\left\{S^{\bullet}\otimes T^{\bullet}\right\}\right)\ \ \ \ \text{(Theorem \ref{thm_1}).}
\end{aligned}
$$
\begin{flushright}
    Q.E.D.
\end{flushright}

\begin{theorem}\label{thm_4}
    Let $V_{1}, V_{2}$ be VOAs of CohFT-type. Let $S^{\bullet}$ (resp. $T^{\bullet})$ be a finite tuple of irreducible $V_{1}$-modules (resp. $V_{2}$-modules). Suppose that $S^{\bullet}$ and $T^{\bullet}$ have the same length. Then,
    $$
    c_{1}\mathbb{V}_{g,n}\left(V_{1}\otimes V_{2}, \left\{S^{\bullet}\otimes T^{\bullet}\right\}\right) = c_{1}\left(\mathbb{V}_{g,n}\left(V_{1},\left\{S^{\bullet}\right\}\right)\otimes \mathbb{V}_{g,n}\left(V_{2},\left\{T^{\bullet}\right\}\right)\right).
    $$
    
\end{theorem}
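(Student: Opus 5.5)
We need $c_1$ of the tensor product bundle, which is $c_1(A\otimes B)=\mathrm{rank}(B)\,c_1(A)+\mathrm{rank}(A)\,c_1(B)$. The plan is to use an explicit formula for the coinvariant divisor on $\overline{\mathcal{M}}_{g,n}$ for VOAs of CohFT-type, namely the formula of \cite{DGT3}:
$$
\mathbb{D}_{g,n}(V,\{W^\bullet\}) = \mathrm{rank}\,\mathbb{V}_{g,n}\cdot\Big(\tfrac{c}{2}\lambda+\sum_{i} a_{W_i}\psi_i\Big) - b_{\mathrm{irr}}\delta_{\mathrm{irr}} - \sum b_{i,I}\delta_{i,I}.
$$
Here $c$ is the central charge and $a_{W}$ the conformal weight. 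The boundary coefficients are sums of the shape $\sum_{\mu} (a_\mu)\,\mathrm{rank}(\text{normalized coinvariants})$: $b_{\mathrm{irr}}=\sum_{W} a_W\,\mathrm{rank}\mathbb{V}_{g-1,n+2}(V,\{W^\bullet,W,W'\})$, and $b_{i,I}=\sum_W a_W\,\mathrm{rank}\mathbb{V}_{i,|I|+1}(\{W^\bullet_I,W\})\,\mathrm{rank}\mathbb{V}_{g-i,|I^c|+1}(\{W^\bullet_{I^c},W'\})$. The strategy is to compare the coefficients of $\lambda$, $\psi_i$, $\delta_{\mathrm{irr}}$ and $\delta_{i,I}$ term by term on both sides.

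The inputs for $V_1\otimes V_2$ are as follows.
\begin{itemize}
\item The central charge is additive: $c=c_1+c_2$.
\item Conformal weights add, $a_{W\otimes M}=a_W+a_M$, since $L_0=L_0^{(1)}\otimes 1+1\otimes L_0^{(2)}$.
\item The irreducibles are exactly the $W_k\otimes M_m$ \cite{Milas96}, with contragredient $(W\otimes M)'=W'\otimes M'$.
\item $V_1\otimes V_2$ is again of CohFT-type.
\end{itemize}
By Theorem \ref{thm_3}, every rank factor splits as a product $r^{(1)}r^{(2)}$. This holds for the rank of the whole bundle as well as for each normalized rank appearing in the boundary coefficients; for nodal factors with $g-1$ or split data, we apply Theorem \ref{thm_3} with the enlarged tuples.

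Write $r_1,r_2$ for the two ranks. The $\lambda$ and $\psi$ coefficients are then
$$
r_1r_2\Big(\tfrac{c_1+c_2}{2}\lambda+\sum_i (a_{S_i}+a_{T_i})\psi_i\Big),
$$
which is exactly $r_2(\ldots)_1+r_1(\ldots)_2$. For $\delta_{\mathrm{irr}}$,
$$
\sum_{k,m}(a_{W_k}+a_{M_m})\,\rho^{(1)}_k\rho^{(2)}_m .
$$
This splits into $\big(\sum_k a_{W_k}\rho^{(1)}_k\big)\big(\sum_m\rho^{(2)}_m\big)$ plus the symmetric term. Factorization (Definition \ref{def_factorization_property}, property II) identifies $\sum_m\rho^{(2)}_m$ with $\mathrm{rank}\,\mathbb{V}_{g,n}(V_2,\{T^\bullet\})=r_2$, so the coefficient is $r_2b^{(1)}_{\mathrm{irr}}+r_1b^{(2)}_{\mathrm{irr}}$. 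The same argument, using factorization property I, handles each $\delta_{i,I}$. Since the two sides have identical expansions in the generators $\lambda,\psi_i,\delta$, the first Chern classes agree; expressing both sides in this form suffices, with no independence of the classes needed.

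The main obstacle is pinning down the precise shape and normalization of the divisor formula from \cite{DGT3}. That includes the sign conventions, whether $a_W$ or $a_W$ mod $\mathbb{Z}$ appears, and the factor $1/2$ or symmetry in $\delta_{\mathrm{irr}}$. The additivity argument is linear in $a_W$, so it survives any such normalization as long as the coefficient is affine-linear in conformal weights and central charge with rank-weighted sums. One should also verify that the fixed coordinates used to define the bundle are compatible with the tensor product structure, but for Chern class computation only the formula is needed.
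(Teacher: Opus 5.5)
Your proposal is correct and follows essentially the same route as the paper's proof in Appendix~\ref{Appendix_B}. Both arguments write $c_1(A\otimes B)=\mathrm{rk}(B)\,c_1(A)+\mathrm{rk}(A)\,c_1(B)$ and then compare the $\lambda$, $\delta_{\mathrm{irr}}$ and $\delta_{i:I}$ coefficients of the formula from \cite{DGT3}, using additivity of central charges and conformal weights, Theorem~\ref{thm_3} to split each rank, and factorization to collapse the sums back to the total ranks. The only addition on your side is that you check the $\psi_i$ coefficients explicitly, which the paper omits; that check is identical to the $\lambda$ step.
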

\noindent\textit{Proof.} The proof is in Appendix \ref{Appendix_B}.

\begin{corollary}\label{cor_4}
Let $V_{1},V_{2}$ be two VOAs of CohFT-type. Let $S^{\bullet}$ be an $n$-tuple of irreducible $V_{1}$-module, $T^{\bullet}$ be an $n$-tuple of irreducible $V_{2}$-modules. Suppose that $\mathbb{V}_{g,n}\left(V_{1},\left\{S^{\bullet}\right\}\right),\mathbb{V}_{g,n}\left(V_{2},\left\{T^{\bullet}\right\}\right) $ are both line bundles on $\overline{\mathcal{M}}_{g,n}$. Then,
    $$
    \mathbb{V}_{g,n}\left(V_{1}\otimes V_{2}, \left\{S^{\bullet}\otimes T^{\bullet}\right\}\right) \simeq  \mathbb{V}_{g,n}\left(V_{1},\left\{S^{\bullet}\right\}\right)\otimes \mathbb{V}_{g,n}\left(V_{2},\left\{T^{\bullet}\right\}\right).
    $$
\end{corollary}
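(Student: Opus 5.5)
The plan is to reduce the isomorphism to an equality of first Chern classes. On $\overline{\mathcal{M}}_{g,n}$ (more precisely on the smooth Deligne--Mumford stack), line bundles are determined up to isomorphism by their class in the Picard group. The first step is to check that $\mathbb{V}_{g,n}(V_{1}\otimes V_{2},\{S^{\bullet}\otimes T^{\bullet}\})$ is itself a line bundle. By Theorem \ref{thm_3}, its rank equals $\mathrm{rank}\,\mathbb{V}_{g,n}(V_{1},\{S^{\bullet}\})\cdot\mathrm{rank}\,\mathbb{V}_{g,n}(V_{2},\{T^{\bullet}\})=1\cdot 1=1$. So both sides of the claimed isomorphism are line bundles: the right-hand side is a tensor product of two line bundles.

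Next, I will apply Theorem \ref{thm_4}, which gives
$$
c_{1}\mathbb{V}_{g,n}\left(V_{1}\otimes V_{2},\left\{S^{\bullet}\otimes T^{\bullet}\right\}\right)=c_{1}\left(\mathbb{V}_{g,n}\left(V_{1},\left\{S^{\bullet}\right\}\right)\otimes\mathbb{V}_{g,n}\left(V_{2},\left\{T^{\bullet}\right\}\right)\right).
$$
For line bundles, $c_{1}$ is the map $\mathrm{Pic}\to A^{1}$ (or $H^{2}$). To conclude, I need this map to be injective on $\overline{\mathcal{M}}_{g,n}$, so that equality of $c_{1}$ forces an isomorphism.

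This injectivity is the main obstacle. Rationally, $\mathrm{Pic}(\overline{\mathcal{M}}_{g,n})\otimes\mathbb{Q}\cong A^{1}(\overline{\mathcal{M}}_{g,n})_{\mathbb{Q}}$ is standard, since the moduli space is a smooth proper DM stack and $H^{1}(\overline{\mathcal{M}}_{g,n},\mathcal{O})=0$, so numerical, homological and linear equivalence agree. Integrally, one needs the Picard group of the stack $\overline{\mathcal{M}}_{g,n}$ to be torsion-free. This is known: Arbarello--Cornalba compute the Picard group as free, generated by $\lambda$, the $\psi_{i}$ and the boundary classes, and in genus $0$ the space $\overline{M}_{0,n}$ is a smooth rational variety with $\mathrm{Pic}\cong H^{2}(\cdot,\mathbb{Z})$ free. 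If Theorem \ref{thm_4} is only available as an identity of classes in $\mathrm{Pic}\otimes\mathbb{Q}$, as is typical for the Chern character formulas of \cite{DGT3}, then torsion-freeness lets me lift the equality to $\mathrm{Pic}$ itself.

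I would therefore state the conclusion as: the two line bundles have equal class in the torsion-free group $\mathrm{Pic}(\overline{\mathcal{M}}_{g,n})$, hence are isomorphic. I would add a remark that, where only the rational statement is available (for instance for small genus on the coarse space), the isomorphism holds up to torsion.
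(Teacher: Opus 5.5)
Your proposal is correct and follows the paper's route. The paper deduces the corollary directly from Theorem \ref{thm_4}, using that line bundles on $\overline{\mathcal{M}}_{g,n}$ are determined up to isomorphism by their first Chern class. Your two extra checks fill in steps the paper leaves implicit: that the left-hand side has rank one by Theorem \ref{thm_3}, and that $\mathrm{Pic}(\overline{\mathcal{M}}_{g,n})$ is torsion-free, which is what lets the rational identity of Theorem \ref{thm_4} give an actual isomorphism. On the second point, note that in genus $1$ and $2$ the Arbarello--Cornalba generators satisfy relations, but these relations are primitive, so the group is still free.
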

\noindent\textit{Proof.} This directly follows from Theorem \ref{thm_4}, since on $\overline{\mathcal{M}}_{g,n}$, two line bundles are isomorphic if and only if their first Chern classes are the same.
\begin{flushright}
    Q.E.D.
\end{flushright}

\begin{corollary}\label{cor_5}
Let $V_{1},V_{2}$ be two VOAs of CohFT-type. Let $S^{\bullet}$ be an $n$-tuple of irreducible $V_{1}$-modules, $T^{\bullet}$ be an $n$-tuple of irreducible $V_{2}$-modules. Then, on $\mathcal{M}_{g,n}$, we have
 $$
\mathrm{Ch}\left(\mathbb{V}_{g,n}\left(V_{1}\otimes V_{2}, \left\{S^{\bullet}\otimes T^{\bullet}\right\}\right)\right) = \mathrm{Ch}\left(\mathbb{V}_{g,n}\left(V_{1},\left\{S^{\bullet}\right\}\right)\otimes \mathbb{V}_{g,n}\left(V_{2},\left\{T^{\bullet}\right\}\right)\right),
 $$
 for all $g,n \ge 0$, where $\mathrm{Ch}$ is the Chern character.
\end{corollary}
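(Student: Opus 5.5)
The Chern character of a vector bundle on the interior $\mathcal{M}_{g,n}$ is determined by its rank and its Chern classes. For bundles of coinvariants, the Chern classes restricted to $\mathcal{M}_{g,n}$ are governed by the projectively flat connection. So the plan is to show that the two sides agree in rank and in first Chern class, and then to use the fact that a projectively flat bundle has $\mathrm{Ch}$ determined by these two data.

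\textbf{Step 1: reduce to rank and $c_1$.} On $\mathcal{M}_{g,n}$ (the smooth locus), $\mathbb{V}_{g,n}(V,\{W^\bullet\})$ carries a projectively flat connection whose curvature is a scalar $2$-form; this is the WZW/Sugawara connection \cite{DGT22a, DGT3}. Let $E$ be a vector bundle of rank $r$ with a projectively flat connection. By Chern--Weil, the curvature is $\Omega=\omega\cdot\mathrm{Id}$, hence
$$
\mathrm{Ch}(E)=r\exp\!\left(\tfrac{c_{1}(E)}{r}\right).
$$
The tensor product of two projectively flat bundles is again projectively flat, and its curvature is the sum of the two scalar curvatures. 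So both sides of the claimed identity are projectively flat on $\mathcal{M}_{g,n}$, and it suffices to show they have the same rank and the same $c_1$.

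\textbf{Step 2: match rank and $c_1$.} Theorem \ref{thm_3} gives equality of ranks. Theorem \ref{thm_4} gives equality of first Chern classes on $\overline{\mathcal{M}}_{g,n}$, and restricting to $\mathcal{M}_{g,n}$ preserves this equality. The formula $\mathrm{Ch}(E)=r\exp(c_{1}(E)/r)$ then finishes the proof. If the rank is $0$, both Chern characters vanish and there is nothing to prove.

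\textbf{Main obstacle.} The delicate point is to make sure that the projective flatness used in Step 1 applies rationally to the bundle of coinvariants itself. It does not suffice to know it for a twist, because the Sugawara connection involves the central charge and the conformal weights. There are two ways to settle this.

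The first is to cite the explicit formula of \cite{DGT3}:
$$
\mathrm{Ch}\bigl(\mathbb{V}_{g,n}\bigr)\big|_{\mathcal{M}_{g,n}}=\mathrm{rank}\cdot\exp\Bigl(\tfrac{c}{2}\lambda-\sum_i a_{W_i}\psi_i\Bigr).
$$
Since central charges and conformal weights are additive under tensor products of VOAs, $c=c_{1}+c_{2}$ and $a_{S_i\otimes T_i}=a_{S_i}+a_{T_i}$. Combining this additivity with Theorem \ref{thm_3} gives the result directly.

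The second is to argue via the Chern--Weil reasoning of Step 1. I would present the first route as the main argument and use the second as the conceptual justification.
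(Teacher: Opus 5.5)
Your first route is the paper's proof. It applies the Chern character formula $\mathrm{Ch}=\mathrm{rank}\cdot\exp\bigl(\tfrac{c}{2}\lambda\pm\sum_i a_i\psi_i\bigr)$ on $\mathcal{M}_{g,n}$ (cited in the paper as \cite[Cor.~9.1]{DGT22b}) to all three bundles, then uses $c=c_{V_1}+c_{V_2}$, $a_{S_i\otimes T_i}=a_{S_i}+a_{T_i}$, Theorem \ref{thm_3} and multiplicativity of $\mathrm{Ch}$ under tensor products. Your Chern--Weil alternative via Theorems \ref{thm_3} and \ref{thm_4} is also sound, and the sign in front of the $\psi_i$ does not matter, since only additivity of the coefficients is used.
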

\noindent\textit{Proof.}
Let $c_{1},c_{2}$ be the central charge of $V_{1},V_{2}$, respectively. Let $S^{\bullet} = (S_{1}, \cdots, S_{n}), T^{\bullet}=(T_{1}, \cdots, T_{n})$. Let $a_{1},\cdots,a_{n}$ be the conformal weights of $S_{1},\cdots,S_{n}$, and $b_{1},\cdots,b_{n}$ be the conformal weights of $T_{1},\cdots,T_{n}$. Thus, the central charge of $V_{1}\otimes V_{2}$ is $c_{1}+c_{2}$, and the conformal weight of $S_{i}\otimes T_{i}$ is $a_{i}+b_{i}$, for all $i\in \{1,\cdots,n\}$. In the following proof, we use $V_{12}=V_{1}\otimes V_{2}$, and $\mathbb{V}$ for $\mathbb{V}_{g,n}$, and $\mathrm{rk}$ for $\mathrm{rank}$.
$$
\begin{aligned}
\mathrm{Ch}\left(\mathbb{V}\left(V_{12}, \left\{S^{\bullet}\otimes T^{\bullet}\right\}\right)\right)\overset{\text{\cite[Cor. 9.1]{DGT22b}}}{=}&\mathrm{rk}\mathbb{V}\left(V_{12}, \left\{S^{\bullet}\otimes T^{\bullet}\right\}\right)\exp\left(\frac{c_{1}+c_{2}}{2}\lambda+\sum_{i=1}^{n}(a_{i}+b_{i})\right)\\
 \overset{\text{Thm \ref{thm_3}}}{=}& \mathrm{rk}\mathbb{V}\left(V_{1},\left\{S^{\bullet}\right\}\right) \mathrm{rk} \mathbb{V}\left(V_{2},\left\{T^{\bullet}\right\}\right)\exp\left(\frac{c_{1}}{2}\lambda+\sum_{i=1}^{n}a_{i}\right)\\
 &\exp\left(\frac{c_{2}}{2}\lambda+\sum_{i=1}^{n}b_{i}\right)\\
=&\left(\mathrm{rk}\mathbb{V}\left(V_{1},\left\{S^{\bullet}\right\}\right)\exp\left(\frac{c_{1}}{2}\lambda+\sum_{i=1}^{n}a_{i}\right)\right)\\
 &\left(\mathrm{rk}\mathbb{V}\left(V_{2},\left\{T^{\bullet}\right\}\right)\exp\left(\frac{c_{2}}{2}\lambda+\sum_{i=1}^{n}b_{i}\right)\right)\\
 \overset{\text{\cite[Cor. 9.1]{DGT22b}}}{=}&\mathrm{Ch}\left(\mathbb{V}_{g,n}\left(V_{1},\left\{S^{\bullet}\right\}\right)\right)\mathrm{Ch}\left( \mathbb{V}_{g,n}\left(V_{2},\left\{T^{\bullet}\right\}\right)\right)\\
=&\mathrm{Ch}\left(\mathbb{V}_{g,n}\left(V_{1},\left\{S^{\bullet}\right\}\right)\otimes \mathbb{V}_{g,n}\left(V_{2},\left\{T^{\bullet}\right\}\right)\right)
\end{aligned}
$$
\begin{flushright}
 Q.E.D.
\end{flushright}
 

\begin{corollary}\label{cor_7}
    Let $V_{1}, V_{2}$ be two VOAs of CohFT-type. Let $S^{\bullet}$ be an $n$-tuple of irreducible $V_{1}$-module, and $T^{\bullet}$ be an $n$-tuple of irreducible $V_{2}$-modules. Suppose that $\mathbb{D}_{g,n}(V_{1},\{S^{\bullet}\})$ and $\mathbb{D}_{g,n}(V_{2},\{T^{\bullet}\})$ are both nef (resp. F-nef), then $\mathbb{D}_{g,n}(V_{1}\otimes V_{2},\{S^{\bullet}\otimes T^{\bullet}\})$ is nef (resp. F-nef).
\end{corollary}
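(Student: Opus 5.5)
Write $\mathbb{V}_{1}:=\mathbb{V}_{g,n}(V_{1},\{S^{\bullet}\})$ and $\mathbb{V}_{2}:=\mathbb{V}_{g,n}(V_{2},\{T^{\bullet}\})$, with ranks $r_{1}=\mathrm{rank}\,\mathbb{V}_{1}$ and $r_{2}=\mathrm{rank}\,\mathbb{V}_{2}$. The plan is to reduce the statement to Theorem \ref{thm_4}, which expresses $\mathbb{D}_{g,n}(V_{1}\otimes V_{2},\{S^{\bullet}\otimes T^{\bullet}\})$ as the first Chern class of the tensor product $\mathbb{V}_{1}\otimes\mathbb{V}_{2}$. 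We then use the standard identity
$$
c_{1}(E\otimes F)=\mathrm{rank}(F)\,c_{1}(E)+\mathrm{rank}(E)\,c_{1}(F)
$$
for vector bundles $E,F$ on $\overline{\mathcal{M}}_{g,n}$, valid in the Picard group (or its rational tensor) of the moduli stack/coarse space. Combining the two gives
$$
\mathbb{D}_{g,n}(V_{1}\otimes V_{2},\{S^{\bullet}\otimes T^{\bullet}\}) = r_{2}\,\mathbb{D}_{g,n}(V_{1},\{S^{\bullet}\}) + r_{1}\,\mathbb{D}_{g,n}(V_{2},\{T^{\bullet}\}).
$$
The ranks $r_{1},r_{2}$ are non-negative integers, and by Theorem \ref{thm_3} their product is the rank of the bundle for $V_{1}\otimes V_{2}$, though only their non-negativity is needed.

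Given this expression, positivity is immediate. Take any irreducible curve $C$ on $\overline{\mathcal{M}}_{g,n}$; in the F-nef case take $C$ to be an F-curve. By linearity of intersection numbers, $C\cdot\mathbb{D}_{g,n}(V_{1}\otimes V_{2},\{S^{\bullet}\otimes T^{\bullet}\})=r_{2}\,(C\cdot\mathbb{D}_{g,n}(V_{1},\{S^{\bullet}\}))+r_{1}\,(C\cdot\mathbb{D}_{g,n}(V_{2},\{T^{\bullet}\}))$. Both intersection numbers on the right are $\ge 0$ by hypothesis, and the coefficients are $\ge 0$. Hence the divisor is nef (resp. F-nef). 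The nef cone and the F-nef cone are both convex cones, so any non-negative combination of members stays in the cone.

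The only point needing care is the tensor identity for $c_{1}$ and its validity in the setting where the paper defines $\mathbb{D}_{g,n}$ (rational Picard group of the stack $\overline{\mathcal{M}}_{g,n}$). This follows from the splitting principle, or directly from the Chern character: $\mathrm{Ch}(E\otimes F)=\mathrm{Ch}(E)\mathrm{Ch}(F)$, and taking degree-one parts gives the formula. We also note the degenerate case: if $r_{1}=0$ or $r_{2}=0$, the tensor bundle has rank zero and $c_{1}=0$, which is trivially nef, consistent with the formula. I expect no real obstacle beyond correctly invoking Theorem \ref{thm_4}.
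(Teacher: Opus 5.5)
Your proposal is correct and matches the paper's proof: apply Theorem \ref{thm_4}, expand the first Chern class of the tensor product as $r_{2}\,\mathbb{D}_{g,n}(V_{1},\{S^{\bullet}\})+r_{1}\,\mathbb{D}_{g,n}(V_{2},\{T^{\bullet}\})$, and conclude because a non-negative combination of nef (resp.\ F-nef) divisors is again nef (resp.\ F-nef).
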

\noindent\textit{Proof.}
$$
\begin{aligned}
    \mathbb{D}_{g,n}(V_{1}\otimes V_{2},\{S^{\bullet}\otimes T^{\bullet}\}) =& c_{1}\mathbb{V}_{g,n}(V_{1}\otimes V_{2},\{S^{\bullet}\otimes T^{\bullet}\})\\
    =& c_{1}\left(\mathbb{V}_{g,n}(V_{1},\{S^{\bullet}\})\otimes \mathbb{V}_{g,n}(V_{2},\{T^{\bullet}\})\right) (\text{Theorem \ref{thm_4}})\\
    =&\mathrm{rk}\mathbb{V}_{g,n}(V_{1},\{S^{\bullet}\})\mathbb{D}_{g,n}(V_{2},\{T^{\bullet}\})+\mathrm{rk}\mathbb{V}_{g,n}(V_{2},\{T^{\bullet}\})\mathbb{D}_{g,n}(V_{1},\{S^{\bullet}\}).
\end{aligned}
$$
A linear combination of nef (resp. F-nef) divisors, with non-negative coefficients, is nef (resp. F-nef).
\begin{flushright}
    Q.E.D.
\end{flushright}

\section{Pointed vertex operator algebra}

\begin{definition}
    A finite tensor category is called \textit{pointed} if all its simple objects are invertible.
\end{definition}

\begin{definition}\cite{GR}
    A \textit{pointed VOA} is a strongly rational VOA such that $\mathbf{Mod}_{V}$ is a pointed category. Equivalently, it is a strongly rational VOA whose fusion ring is a group ring.
\end{definition}
Let $V$ be a pointed VOA, with fusion ring $\mathbb{Z}[G]$.  There is a bijection between isomorphism classes of irreducible $V$-modules and elements in $G$. For each $g\in G$, we write $W_{g}$ as isomorphism class of irreducible modules corresponding to $g$. We write the operation in $G$ multiplicatively.

\subsection{Ranks of vector bundles of coinvariants associated with pointed VOAs}

\begin{definition}
    Let $C_{p}$ be a cyclic group of $p\ge 2$, and $\sigma\in C_{p}$ be a permutation. Let $\mathrm{Id}=(E_{1},\cdots, E_{n})$ be the identity matrix of size $p$, written in column vector form. Define $R_{\sigma}=(E_{\sigma(1)},\cdots,E_{\sigma(n)})$.
\end{definition}

\begin{proposition}\label{prop_6}
    Let $V$ be a pointed vertex operator algebra, with fusion ring $\mathbb{Z}[G]$. Let $W^{\bullet}=(W_{x_{1}},\cdots,W_{x_{n}})$ be an $n$-tuple of irreducible $V$-modules, where $x_{i}\in G$.  Then,
    $$
    \mathrm{rank}\mathbb{V}_{g,n}\left(V,\left\{W^{\bullet}\right\}\right)=|G|^{g}\delta_{\prod_{i=1}^{n}x_{i},e},
    $$
    where the multiplication is in $G$, and $e\in G$ is the multiplicative identity.
\end{proposition}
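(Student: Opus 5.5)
We will deduce this from the matrix rank formula (Theorem \ref{cor_2}) together with Lemma \ref{cor_1} and Theorem \ref{thm_5}, by making the FA-matrices of a pointed VOA explicit. Index the irreducible modules by $G$, so all FA-matrices are $|G|\times|G|$ matrices with rows and columns indexed by $G$.

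\textbf{Step 1: the fusion matrices.} For $x\in G$, the entry $(R_{W_x,0})_{(a,b)}=\mathrm{rank}\,\mathbb{V}_{0,3}(V,\{W_x,W_a,W_b'\})$ is the fusion coefficient $N_{W_x,W_a}^{W_b}$. Since the fusion ring is $\mathbb{Z}[G]$, we have $W_x\boxtimes W_a=W_{xa}$, so this coefficient is $\delta_{b,xa}$. Hence $R_{W_x,0}$ is the permutation matrix $P_x$ of left multiplication by $x$. I will also record that $W_a'=W_{a^{-1}}$, since $W_a\boxtimes W_a'$ contains $V=W_e$. Some care with the identification of $\mathbb{V}_{0,3}$ with intertwining operators and with dualization is needed, but the content is exactly $\delta_{b,xa}$, up to whether one uses left or right multiplication; either convention works below.

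\textbf{Step 2: the averaging matrix.} By Proposition \ref{proposition_1},
\[
R_{V,1}=\sum_{x\in G}\mathrm{Tr}(R_{W_{x}',0})\,R_{W_x,0}=\sum_{x}\mathrm{Tr}(P_{x^{-1}})P_x .
\]
Left multiplication by $x^{-1}$ has a fixed point only when $x=e$, in which case it fixes all $|G|$ elements. So $\mathrm{Tr}(P_{x^{-1}})=|G|\delta_{x,e}$, and therefore $R_{V,1}=|G|\,\mathrm{Id}$.

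\textbf{Step 3: assemble.} By Lemma \ref{cor_1} (equivalently Theorem \ref{cor_2}), $R_{W^\bullet,0}=\prod_i P_{x_i}=P_{x}$ with $x=\prod_i x_i$; the order does not matter for the trace or for the $(e,e)$ entry. For $g\ge1$, equation \eqref{equ_11} gives
\[
\mathrm{rank}=\mathrm{Tr}\bigl(P_x\,(|G|\mathrm{Id})^{g-1}\bigr)=|G|^{g-1}\cdot|G|\,\delta_{x,e}=|G|^{g}\delta_{x,e}.
\]
For $g=0$, I will add two copies of $V=W_e$ using propagation of vacua, so that
\[
\mathrm{rank}\,\mathbb{V}_{0,n}(V,\{W^\bullet\})=\mathrm{rank}\,\mathbb{V}_{0,n+2}(V,\{W^\bullet,W_e,W_e'\})=(P_x)_{(e,e)}=\delta_{x,e}.
\]
This also covers small $n$, where $\overline{\mathcal{M}}_{0,n}$ itself would be unstable, via the stated convention.

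The main obstacle is Step 1: justifying carefully that $\mathrm{rank}\,\mathbb{V}_{0,3}$ equals the fusion coefficient with the correct dual convention, and that $W_a'=W_{a^{-1}}$. Once $R_{W_x,0}$ is known to be a permutation matrix, the rest is linear algebra.
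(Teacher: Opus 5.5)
Your proposal is correct and follows essentially the same route as the paper: identify the genus-zero FA-matrices with the regular representation of $G$ (the paper phrases this as $R_{W_x,0}R_{W_y,0}=R_{W_{xy},0}$ via Proposition \ref{prop_11}), deduce $R_{V,1}=|G|\,\mathrm{Id}$ from Proposition \ref{proposition_1}, and read off the rank from the matrix rank formula, using propagation of vacua for $g=0$. Your explicit description of $R_{W_x,0}$ as the permutation matrix of left multiplication makes precise the trace computation that the paper only calls a standard linear algebra result.
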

\noindent\textit{Proof.} By Theorem \ref{thm_1}, the FA-matrix defines a representation of the group ring $\mathbb{Z}$. That is, for $x\in G$, we have a ring homomorphism
$
    R_{-,0}: \mathbb{Z}[G] \longrightarrow \mathrm{Mn}(\mathbb{R}),
$ by $x \longmapsto R_{W_{x},0}$.
For $x,y\in G$, 
$
R_{W_{x},0}R_{W_{y},0} = R_{W_{x}\boxtimes W_{y},0}=R_{W_{xy},0}.
$
Thus,
$
\begin{aligned}
    \prod_{i=1}^{n} R_{W_{i},0} = R_{W_{\prod_{i=1}^{n}x_{i}},0}.
\end{aligned}
$
Thus, we have
$$
\begin{aligned}
\mathrm{rank}\mathbb{V}_{0,n}\left(V,\left\{W_{x_{1}},\cdots,W_{x_{n}}\right\}\right)=&
\mathrm{rank}\mathbb{V}_{0,n+2}\left(V,\left\{W_{x_{1}},\cdots,W_{x_{n}},V,V\right\}\right)\\
=&\mathrm{rank}\mathbb{V}_{0,3}\left(V,\left\{W_{x_{i}\cdots x_{n}},V,V\right\}\right)\\
=&\delta_{W_{x_{i}\cdots x_{n},V}}= \delta_{x_{1}\cdots x_{n},e}.
\end{aligned}
$$
Therefore, we proved the case for $g=0$. 

Using standard linear algebra result, we may check
$
\begin{aligned}
R_{V,1} =& \sum_{s\in G}\mathrm{Tr}\left(R_{W_{s^{-1}},0}\right)R_{W_{s},0}= |G|\cdot \mathrm{Id}_{|G|}.
\end{aligned}
$
Therefore, by Theorem \ref{thm_1}, 
$
\mathrm{rank}\mathbb{V}_{g,n}\left(V,\left\{W_{x_{1}},\cdots,W_{x_{n}}\right\}\right)=|G|^{g}\delta_{x_{1}\cdots x_{n},e}.
$
\begin{flushright}
    Q.E.D.
\end{flushright}

For a pointed VOA, the rank is either $|G|^{g}$ or zero, depending whether $\prod_{i}^{n} x_{i} = e$. Without loss of generality, from now on, we assume $\prod_{i}^{n} x_{i} = e$. When $V$ is a lattice VOA associated with a positive-definite, even lattice $L$, $G=L'/L$ is the discriminant group and is often written additively.

\begin{corollary}\label{cor_6}
    The vector bundle of coinvariants for a pointed VOA on $\overline{\mathcal{M}}_{g,n}$ is either zero, or a vector bundle of rank $m^{g}$, where $m$ is the number of irreducible modules, up to isomorphism. Thus, on $\overline{\mathcal{M}}_{0,n}$, it is either zero or a line bundle.
\end{corollary}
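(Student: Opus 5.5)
This is immediate from Proposition \ref{prop_6}, with one identification added.

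First, for a pointed VOA with fusion ring $\mathbb{Z}[G]$, the irreducible $V$-modules up to isomorphism are in bijection with the elements of $G$. We write this as $g\mapsto W_{g}$. Hence the number $m$ of irreducible modules equals $|G|$.

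Second, Proposition \ref{prop_6} gives, for any $n$-tuple $W^{\bullet}=(W_{x_{1}},\cdots,W_{x_{n}})$,
$$
\mathrm{rank}\,\mathbb{V}_{g,n}\left(V,\left\{W^{\bullet}\right\}\right)=|G|^{g}\,\delta_{x_{1}\cdots x_{n},e}=m^{g}\,\delta_{x_{1}\cdots x_{n},e}.
$$
Under the standing assumptions $V$ is strongly rational, so $\mathbb{V}_{g,n}(V,\{W^{\bullet}\})$ is a vector bundle on $\overline{\mathcal{M}}_{g,n}$ by \cite[VB Corollary]{DGT22a}, and it has a constant rank. The argument then splits into two cases.
\begin{enumerate}
\item If $x_{1}\cdots x_{n}\neq e$, the rank is $0$. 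A locally free sheaf of rank zero is the zero sheaf.
\item If $x_{1}\cdots x_{n}=e$, it is a vector bundle of rank $m^{g}$.
\end{enumerate}
Specializing to $g=0$ gives rank $m^{0}=1$ in the second case, so on $\overline{\mathcal{M}}_{0,n}$ the bundle is either zero or a line bundle.

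There is essentially no obstacle here. The only point to check is that the rank given by Proposition \ref{prop_6} is the rank of a genuine locally free sheaf, so that rank zero means the bundle is zero; this is supplied by the vector bundle statement of \cite{DGT22a}. For arbitrary (not necessarily irreducible) modules one reduces to the irreducible case by additivity, as in the standing assumptions. In that case the general statement is about irreducible tuples.
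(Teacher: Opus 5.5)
Your argument is correct and is the same as the paper's, which states the corollary as a direct consequence of Proposition~\ref{prop_6}, using $m=|G|$ and specializing to $g=0$. One caution about your closing remark: additivity does not preserve the zero-or-$m^{g}$ dichotomy for reducible modules (e.g.\ placing $W_{e}\oplus W_{e}$ at one point gives rank $2m^{g}$), so the statement should be read, as in the paper's standing assumptions, for tuples of irreducible modules only.
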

\noindent\textit{Proof.} This directly follows from Proposition \ref{prop_6}.
\begin{flushright}
    Q.E.D.
\end{flushright}

    Damiolini--Gibney--Tarasca computed a similar formula for a lattice VOA whose discriminant group is cyclic
    \cite[Example 5.2.5]{DGT3}. Since all lattice VOAs are pointed, we generalize this result.

\subsection{Coinvariants and conformal block divisor associated with a pointed VOA.}

\begin{definition}
    Let $V$ be a pointed vertex operator algebra. Let $W_{1},\cdots,W_{m}$ be the collection of irreducible $V$-modules, up to isomorphism. Let $a_{W_{i}}$ be the conformal weight of $W_{i}$, for all $i$. Define 
    $$
    a_{\mathrm{average}} = \frac{1}{m}\left(\sum_{i=1}^{m}a_{W_{i}}\right),
    $$
    and we call it the \textit{average conformal weight}. If the conformal weights are all real numbers, define
    $$
    a_{\max} = \max \left\{a_{W_{i}}\right\}_{i=1}^{m}
    $$
    and we call it the \textit{maximal conformal weight}.
\end{definition}

\begin{proposition}\label{prop_5}
   Let $V$ be a pointed VOA, with central charge $c$ and fusion ring $\mathbb{Z}[G]$. Let $m=|G|$, and $\beta_{1},\cdots,\beta_{n}\in G$ be arbitrary such that $\prod_{i=1}^{n}\beta_{i}=e$. Then,
    \begin{equation}\label{equ_12}
    \mathbb{D}_{g,n}(V,\{W_{\beta_{1}},\cdots,W_{\beta_{n}}\}) = m^{g}\left(\frac{c}{2}\lambda+\sum_{i=1}^{n}a_{\beta_{i}}\psi_{i}\right)-b_{irr}\delta_{irr}-\sum_{i,I}b_{i:I}\delta_{i:I},
    \end{equation}
    where
    \begin{equation}
    b_{irr}=m^{g}a_{\mathrm{average}},
    \end{equation}
    and
    \begin{equation}
    b_{i:I}=m^{g}a_{\prod_{j\in I} \beta{j}}.
    \end{equation}
\end{proposition}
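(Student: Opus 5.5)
The plan is to feed the ranks from Proposition \ref{prop_6} into the general first Chern class formula of Damiolini--Gibney--Tarasca for bundles of coinvariants on $\overline{\mathcal{M}}_{g,n}$ \cite{DGT22b}. In the CohFT-type setting this formula reads
$$
\mathbb{D}_{g,n}(V,\{W^{\bullet}\})=\mathrm{rank}\,\mathbb{V}_{g,n}(V,\{W^{\bullet}\})\left(\frac{c}{2}\lambda+\sum_{i=1}^{n}a_{W_i}\psi_i\right)-b_{irr}\delta_{irr}-\sum_{i,I}b_{i:I}\delta_{i:I}.
$$
The boundary coefficients are weighted sums of ranks on the normalizations:
$$
b_{irr}=\sum_{W}a_W\,\mathrm{rank}\,\mathbb{V}_{g-1,n+2}(V,\{W^{\bullet},W,W'\}),
$$
$$
b_{i:I}=\sum_{W}a_W\,\mathrm{rank}\,\mathbb{V}_{i,|I|+1}(V,\{W_I,W\})\,\mathrm{rank}\,\mathbb{V}_{g-i,|I^c|+1}(V,\{W_{I^c},W'\}).
$$
The leading term is immediate: since $\prod\beta_i=e$, Proposition \ref{prop_6} gives rank $m^g$.

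For $b_{irr}$, each $W=W_s$ with $s\in G$ contributes the rank $\mathrm{rank}\,\mathbb{V}_{g-1,n+2}(\{W_{\beta_\bullet},W_s,W_{s^{-1}}\})$. The product of the labels is $\prod\beta_i\cdot s s^{-1}=e$, so by Proposition \ref{prop_6} every such rank equals $m^{g-1}$. This gives
$$
b_{irr}=m^{g-1}\sum_{s\in G}a_{W_s}=m^{g}a_{\mathrm{average}}.
$$
Here one uses $W_s'=W_{s^{-1}}$, which holds because the contragredient is the fusion inverse in a pointed category.

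For $b_{i:I}$, the first factor $\mathrm{rank}\,\mathbb{V}_{i,|I|+1}(\{W_{\beta_I},W_s\})$ equals $m^{i}\delta_{s,(\prod_{j\in I}\beta_j)^{-1}}$. The second factor then has labels multiplying to $\prod_{j\notin I}\beta_j\cdot\prod_{j\in I}\beta_j=e$, so it equals $m^{g-i}$. Only one $s$ survives, and
$$
b_{i:I}=m^{g}a_{W_s}\quad\text{with } s=\Bigl(\prod_{j\in I}\beta_j\Bigr)^{-1}.
$$

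The one step needing care is matching conventions. One must decide whether the DGT formula weights by the module glued on the $I$-side or on the complementary side, equivalently whether the weight is $a_{\prod_I\beta_j}$ or $a_{(\prod_I\beta_j)^{-1}}$. This is settled by $a_{W}=a_{W'}$: contragredient modules have equal conformal weights. Commutativity of $G$ (fusion is braided) also lets us write $\prod_{j\in I}\beta_j$ without worrying about order. With these two facts the expression reduces to $m^{g}a_{\prod_{j\in I}\beta_j}$, as stated.
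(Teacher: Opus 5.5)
Your proposal is correct and follows essentially the same route as the paper. Both plug the ranks from Proposition~\ref{prop_6} into the Damiolini--Gibney--Tarasca Chern class formula (the paper cites \cite[Corollary 2]{DGT3}), obtain $m^{g-1}$ for every summand of $b_{irr}$ and a single surviving term $m^{i}\cdot m^{g-i}$ in $b_{i:I}$, and finish with $a_{W}=a_{W'}$.
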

\noindent\textit{Proof.} 
Equation \eqref{equ_12} is the formula for the coinvariant divisor in \cite[Corollary 2]{DGT3}. Now, we compute the coefficient $b_{irr}$ using the rank formula in Proposition \ref{prop_6}.

$$
\begin{aligned}
    b_{irr} =& \sum_{\gamma\in G}a_{\gamma}\mathrm{rank}\mathbb{V}_{g-1,n}\left(V,\{W_{\beta_{1}},\cdots,W_{\beta_{n}},W_{\gamma},W_{\gamma}'\}\right) \ \ \ \ \text{\cite[Corollary 2]{DGT3}}\\
    =& \sum_{\gamma\in G}a_{\gamma}\mathrm{rank}\mathbb{V}_{g-1,n}\left(V,\{W_{\beta_{1}},\cdots,W_{\beta_{n}},W_{\gamma},W_{\gamma^{-1}}\}\right)\\
    =& \sum_{\gamma\in G}a_{\gamma}m^{g-1}= m^{g-1}\left(\sum_{\gamma\in G}a_{\gamma}\right)= m^{g}a_{\mathrm{average}}.
\end{aligned}
$$

$$
    \begin{aligned}
        b_{i:I} =& \sum_{\gamma\in G}a_{\gamma}\mathrm{rank}\mathbb{V}_{i,|I|+1}\left(V,\left\{W_{\beta}^{I},W_{\gamma}\right\}\right)\mathrm{rank}\mathbb{V}_{g-i,|I|+1}\left(V,\left\{W^{I^{c}},W_{\gamma}'\right\}\right) \text{\cite{DGT3}}\\
        =& a_{\prod_{j\in I} \beta_{j}^{-1}}\cdot m^{i} \cdot m^{g-i}= m^{g}a_{\prod_{j\in I} \beta_{j}^{-1}} = m^{g}a_{\prod_{j\in I} \beta_{j}}.
    \end{aligned}
$$
The last equality is because an irreducible module and its dual module has the same conformal weight.
\begin{flushright}
    Q.E.D.
\end{flushright}

\subsection{Positivity of coinvariant divisors associated with a pointed VOA}
On $\overline{\mathcal{M}}_{g,n}$, there are six types of F-curves, where some of them do not occur in low genus cases. Following \cite{GKM}, we call them \textit{type $t$ F-curves} or \textit{F-curves of type $t$}, where $t\in \{1,\cdots,6\}$.

\begin{definition}
    For $t\in \{1,\cdots,6\}$, let $F_{t}(\overline{\mathcal{M}}_{g,n})$ be the collection of divisors that intersect all type $t$ F-curves non-negatively. Let $
    F\left(\overline{\mathcal{M}}_{g,n}\right)=\bigcap_{t=1}^{6}F_{t}\left(\overline{\mathcal{M}}_{g,n}\right). 
    $
    We call divisors in $F_{t}\left(\overline{\mathcal{M}}_{g,n}\right)$ the \textit{$F_{t}$-divisors}, and we call divisors in $F\left(\overline{\mathcal{M}}_{g,n}\right)$ the \textit{F-divisor}.
\end{definition}

\begin{lemma}\label{lem_3}
    Let $V$ be a pointed VOA. Let $S^{\bullet}$ be an $n$-tuple of irreducible modules. Then,
    \begin{enumerate}
        \item $\forall g\ge 1, \mathbb{D}_{g,n}(V,\{S^{\bullet}\})\in F_{1}(\overline{\mathcal{M}}_{g,n}) \iff \exists g\ge 1, \mathbb{D}_{g,n}(V,\{S^{\bullet}\})\in F_{1}(\overline{\mathcal{M}}_{g,n}),$
        \item $\forall g\ge 3, \mathbb{D}_{g,n}(V,\{S^{\bullet}\})\in F_{2}(\overline{\mathcal{M}}_{g,n}) \iff \exists g\ge 3, \mathbb{D}_{g,n}(V,\{S^{\bullet}\})\in F_{2}(\overline{\mathcal{M}}_{g,n}),$
        \item $\forall g\ge 2, \mathbb{D}_{g,n}(V,\{S^{\bullet}\})\in F_{3}(\overline{\mathcal{M}}_{g,n}) \iff \exists g\ge 2, \mathbb{D}_{g,n}(V,\{S^{\bullet}\})\in F_{3}(\overline{\mathcal{M}}_{g,n}),$
        \item $\forall g\ge 2, \mathbb{D}_{g,n}(V,\{S^{\bullet}\})\in F_{4}(\overline{\mathcal{M}}_{g,n}) \iff \exists g\ge 2, \mathbb{D}_{g,n}(V,\{S^{\bullet}\})\in F_{4}(\overline{\mathcal{M}}_{g,n}),$
        \item $\forall g\ge 1, \mathbb{D}_{g,n}(V,\{S^{\bullet}\})\in F_{5}(\overline{\mathcal{M}}_{g,n}) \iff \exists g\ge 1, \mathbb{D}_{g,n}(V,\{S^{\bullet}\})\in F_{5}(\overline{\mathcal{M}}_{g,n})$,
        \item $\forall g\ge 0, \mathbb{D}_{g,n}(V,\{S^{\bullet}\})\in F_{6}(\overline{\mathcal{M}}_{g,n}) \iff \exists g\ge 0, \mathbb{D}_{g,n}(V,\{S^{\bullet}\})\in F_{6}(\overline{\mathcal{M}}_{g,n}).$
    \end{enumerate}
\end{lemma}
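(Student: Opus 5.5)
The plan is to show that, for each type $t$, the intersection number of $\mathbb{D}_{g,n}(V,\{S^{\bullet}\})$ with each type $t$ F-curve equals $m^{g}$ times a quantity independent of $g$. Here $m=|G|$. The equivalences then follow because $m^{g}>0$. The starting point is Proposition \ref{prop_5}, which gives
\[
\mathbb{D}_{g,n} = m^{g}\Bigl(\tfrac{c}{2}\lambda+\sum_{i}a_{\beta_{i}}\psi_{i} - a_{\mathrm{average}}\delta_{irr}-\sum_{i,I}a_{\prod_{j\in I}\beta_{j}}\delta_{i:I}\Bigr).
\]
The coefficients inside the bracket depend on $g$ only through the genus index $i$ of the boundary divisors $\delta_{i:I}$, and even there the coefficient $a_{\prod_{j\in I}\beta_j}$ does not involve $i$. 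If $\prod x_i\neq e$, the divisor is zero in every genus, and both sides of each equivalence hold trivially. So I assume $\prod x_i=e$.

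Next I go through the six F-curve types in \cite{GKM} and write down their intersection numbers with $\lambda,\psi_{i},\delta_{irr},\delta_{i:I}$. A type $t$ F-curve is obtained by gluing a fixed ``moving'' piece (for instance a $4$-pointed rational curve, or a $1$-pointed elliptic curve varying in $\overline{\mathcal{M}}_{1,1}$) to tails of arbitrary genera $g_{1},\dots,g_{k}$ carrying a partition of the marked points. For each type, the intersection with the bracket is a linear combination of $c$, $a_{\beta_i}$, $a_{\mathrm{average}}$ and $a_{\gamma}$. Here $\gamma$ ranges over products of the group elements attached to the tails; each tail's attaching point gets the element inverse to the product of its markings, and this depends only on the partition of points. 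The tail genera $g_{j}$ enter only through which boundary divisors $\delta_{g_j:I_j}$ are met, and those coefficients are independent of $g_j$. Also, $\lambda$ and $\psi$ pair with F-curves in a way that depends only on the moving piece. So the number attached to an F-curve depends only on its ``combinatorial shape'': the type, the partition of markings, and which tails are irreducible-node loops versus genus tails. It does not depend on how the genus is distributed.

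The main step is to compare these shapes across genera. For type $t$, and for any $g$ at least the stated threshold ($1$ for types 1 and 5, $3$ for type 2, $2$ for types 3 and 4, $0$ for type 6), I will check that the set of realizable shapes is the same for every such $g$. Raising the genus only lets one add genus to some tail, and at the threshold every shape is already realizable. I expect this to be the main obstacle. One must verify, type by type, that at the minimal genus every configuration appears that can appear in higher genus; for example, in type 2 all three or four tails can carry arbitrary marked-point partitions already when $g=3$. One must also watch for degenerate shapes, such as tails of genus $0$ with few points, which are allowed only in larger genus.

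Granting this, the set of numbers $\{\mathbb{D}_{g,n}\cdot F\}$ over type $t$ F-curves $F$ is $m^{g}$ times a fixed finite set of reals independent of $g$ above the threshold. Hence nonnegativity holds for one such $g$ if and only if it holds for all, which proves each item of Lemma \ref{lem_3}.
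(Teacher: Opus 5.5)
\textbf{The gap.} Your strategy is the paper's. By Proposition~\ref{prop_5}, $\mathbb{D}_{g,n}$ is $m^{g}$ times a class whose coefficients do not involve the genus. So each inequality of \cite[Theorem 2.1]{GKM} becomes $m^{g}$ times a $g$-free expression. For (a), (b) and (d) this already completes the argument; the paper writes out only (a) and calls the rest ``similar''.

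The gap is in the step you yourself flag as the main obstacle. Your claim that the set of realizable shapes is independent of $g$ above the threshold is false for types 3, 5 and 6. A tail with no marked points must have positive genus, so configurations with an empty marking block exist only in larger genus. Concretely:
\begin{itemize}
\item Type 6: on $\overline{\mathcal{M}}_{0,n}$ all four blocks $I,J,K,L$ must be nonempty. On $\overline{\mathcal{M}}_{1,4}$ there is also the F-curve with tails $(1,\emptyset),(0,\{1,2\}),(0,\{3\}),(0,\{4\})$.
\item Type 3: the condition $b_{i,\emptyset}\ge 0$ with $i\ge 1$ first appears at $g=3$.
\item Type 5: the configuration $(i,\emptyset),(j,J)$ with $i\ge 1$ first appears at $g=2$.
\end{itemize}
So knowing $F_{t}$-nefness at the threshold genus does not, by your argument, give it in higher genus. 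Likewise, your assertion that $\{\mathbb{D}_{g,n}\cdot F\}/m^{g}$ is a fixed finite set is wrong as stated. Your type 2 example is also off: there all four spine points attach to a single component, and the pairing is just $b_{irr}$, with no marking partition involved.

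\textbf{The missing idea.} The new configurations contribute exactly $0$, for a reason specific to the pointed case. The coefficient of $\delta_{h:X}$ is $m^{g}a_{\prod_{j\in X}\beta_j}$. It depends only on $X$, and it vanishes for $X=\emptyset$ because $a_{e}=a_{V}=0$. Hence:
\begin{itemize}
\item Type 3: $b_{i,\emptyset}=0$.
\item Type 5: $b_{i,\emptyset}+b_{j,J}-b_{i+j,J}=0$.
\item Type 6 with $L=\emptyset$: rewrite each of the three pairing terms using the block containing $L$, via $\delta_{h:X}=\delta_{g-h:X^{c}}$. This gives $b_{i,I}+b_{j,J}+b_{k,K}+0-b_{i+l,I}-b_{j+l,J}-b_{k+l,K}=0$. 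The same computation works whenever at least one block is empty.
\end{itemize}
Every configuration with all blocks nonempty is realizable at the threshold genus and at every larger genus, since the extra genus can be placed on a component already present. So the set of nonzero values is the same for all $g$ above the threshold, the additional values are $0$, and the equivalences follow. With this observation added, your argument goes through. The paper's ``similar'' does not address this genus-dependence of the index ranges either.
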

\noindent\textit{Proof.} 
The proof of all six statements are similar. To illustrate, we prove (a).
Let $g\ge 1$ be arbitrary, and $S^{\bullet}$ be an arbitrary $n$-tuple of irreducible $V$-modules. We first write the coinvariant divisor in the form of Proposition \ref{prop_5}. Let $a$ be the coefficient of $\lambda$ class, and $c$ be the central charge of $V$. Then,
$$
\begin{aligned}
    a-12b_{irr}+b_{1,\emptyset} = \frac{m^{g}c}{2} -12m^{g}a_{\mathrm{average}}+0= \frac{m^{g}}{2}\left(c-24a_{\mathrm{average}}\right).
\end{aligned}
$$
By \cite[Theorem 2.1]{GKM}, for $g\ge 1$,
$$
a-12b_{irr}+b_{1,\emptyset} \ge 0 \iff \mathbb{D}_{g,n}(V,\{S^{\bullet}\})\in F_{1}(\overline{\mathcal{M}}_{g,n}).
$$
The statement follows since the positivity of the left-hand side is independent of $g$.
\begin{flushright}
    Q.E.D.
\end{flushright}

\begin{theorem}\label{thm_6}
    Let $V$ be a pointed VOA, and $S^{\bullet}$ be an $n$-tuple of irreducible modules. Then, the following are equivalent:
    \begin{enumerate}
        \item $\forall \, g\ge 3, \mathbb{D}_{g,n}(V,\{S^{\bullet}\}) \text{ is F-nef}$,
        \item $\exists\,  g\ge 3, \mathbb{D}_{g,n}(V,\{S^{\bullet}\})\text{ is F-nef.}$
    \end{enumerate}
    If the averaging conformal weight of all irreducible $V$-modules, up to isomorphism, is non-negative (e.g., if $V$ is unitary), then the following are equivalent:
    \begin{enumerate}
        \item $\forall\, g\ge 2, \mathbb{D}_{g,n}(V,\{S^{\bullet}\}) \text{ is F-nef}$,
        \item $\exists\,  g\ge 2, \mathbb{D}_{g,n}(V,\{S^{\bullet}\})\text{ is F-nef.}$
    \end{enumerate}
\end{theorem}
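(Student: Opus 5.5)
F-nefness on $\overline{\mathcal{M}}_{g,n}$ means lying in $F(\overline{\mathcal{M}}_{g,n})=\bigcap_{t=1}^{6}F_{t}(\overline{\mathcal{M}}_{g,n})$, so I will deduce Theorem \ref{thm_6} from Lemma \ref{lem_3}. The implication (a)$\Rightarrow$(b) is trivial in both parts. For (b)$\Rightarrow$(a) in the first part, suppose $\mathbb{D}_{g_{0},n}(V,\{S^{\bullet}\})$ is F-nef for some $g_{0}\ge 3$. Then $\mathbb{D}_{g_{0},n}(V,\{S^{\bullet}\})\in F_{t}(\overline{\mathcal{M}}_{g_{0},n})$ for every $t$. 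Each range in Lemma \ref{lem_3} ($g\ge 1$, $g\ge 3$, $g\ge 2$, $g\ge 2$, $g\ge 1$, $g\ge 0$) contains $g_{0}$, so each "exists" clause holds. Lemma \ref{lem_3} then gives membership in $F_{t}(\overline{\mathcal{M}}_{g,n})$ for all $g$ in the respective range, hence for all $g\ge 3$ and every $t$. Intersecting over $t$ gives (a). The threshold $3$ is forced by item (b) of Lemma \ref{lem_3}, the type $2$ curves, whose statement starts at $g\ge 3$.

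For the second part the only change concerns type $2$ F-curves. In the GKM parametrisation, type 2 F-curves exist only for $g\ge 3$; on $\overline{\mathcal{M}}_{2,n}$ the class $F_{2}$ imposes no condition. So for $g_{0}=2$, F-nefness at genus $2$ gives only types $1,3,4,5,6$, and propagating to all $g\ge 2$ through Lemma \ref{lem_3} requires showing separately that $\mathbb{D}_{g,n}\in F_{2}(\overline{\mathcal{M}}_{g,n})$ for all $g\ge 3$.

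This is where the hypothesis $a_{\mathrm{average}}\ge 0$ is used. I will write the type 2 inequality from \cite[Theorem 2.1]{GKM}: intersecting with a type 2 F-curve gives a combination of the form $b_{irr}-b_{i:I}$ plus boundary coefficients indexed by how the marked points split (for example $b_{irr}-b_{i,I}$ or $2b_{irr}-b_{i,I}$ depending on the curve). I will substitute the coefficients from Proposition \ref{prop_5}, namely $b_{irr}=m^{g}a_{\mathrm{average}}$ and $b_{i:I}=m^{g}a_{\prod_{j\in I}\beta_{j}}$. These depend on $I$ but not on $i$. After factoring out $m^{g}$, the inequality is a $g$-independent comparison among conformal weights and $a_{\mathrm{average}}$. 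I then need to show that it follows from $a_{\mathrm{average}}\ge 0$ together with the inequalities already secured from types $3$–$6$ at genus $2$. The type 3 and 4 inequalities, of shape $b_{i:I}+b_{j:J}-b_{i+j:I\cup J}\ge 0$-type conditions, should combine with $b_{irr}\ge 0$ to give the type 2 one.

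The main obstacle is the case analysis in this last step. I must write out the precise GKM type 2 inequality and check that, after the $i$-independence from Proposition \ref{prop_5}, it reduces exactly to a sum of a nonnegative multiple of $a_{\mathrm{average}}$ and genus-2-available F-inequalities. If this reduction fails for some splitting, my fallback is to use the $\lambda$ coefficient and the type 1 inequality $c\ge 24a_{\mathrm{average}}$ from the proof of Lemma \ref{lem_3}, which is available at genus $2$, to close the gap.
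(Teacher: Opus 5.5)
Your first part is complete, and you are right that the second part needs a separate input for type 2 F-curves. The type 2 item of Lemma \ref{lem_3} only transfers information among genera $g\ge 3$, and F-nefness at $g_{0}=2$ says nothing about type 2 curves, since none exist in genus $2$. The paper's one-line proof leaves this implicit; the input it relies on is Proposition \ref{prop_10}.

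The gap is that you never supply this input. You state the GKM type 2 inequality as something involving the $b_{i:I}$. You then leave its derivation, from the type 3--4 inequalities or from the type 1 inequality, as a hope, with a fallback. So the second part is not actually proved, and the reduction you sketch is aimed at the wrong inequality.

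The missing fact is that the type 2 condition of \cite[Theorem 2.1]{GKM} is simply $b_{irr}\ge 0$. A type 2 F-curve is the family in which a $4$-pointed rational curve moves in $\overline{\mathcal{M}}_{0,4}$, with all four points glued to one fixed connected curve of genus $g-3$ carrying all $n$ markings; this is why it needs $g\ge 3$. Every node created is non-separating: the four attaching nodes, and the three nodes appearing over the boundary points of $\overline{\mathcal{M}}_{0,4}$. Hence the curve has degree $3-4=-1$ on $\delta_{irr}$, and degree $0$ on $\lambda$, on the $\psi_{i}$, and on every $\delta_{i:I}$.

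Its intersection with $\mathbb{D}_{g,n}(V,\{S^{\bullet}\})$ is therefore $b_{irr}=m^{g}a_{\mathrm{average}}$, by Proposition \ref{prop_5}. This is non-negative for every $g\ge 3$ exactly when $a_{\mathrm{average}}\ge 0$. No case analysis over splittings is needed, and neither the $\lambda$-coefficient nor type 1 plays any role. The expressions $2b_{irr}-b_{i:I}$ you wrote down are the type 4 conditions, which the type 4 item of Lemma \ref{lem_3} already propagates.

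With this fact your outline closes. Types 1, 3, 4, 5 and 6 propagate from any $g_{0}\ge 2$ to all $g\ge 2$ by Lemma \ref{lem_3}. Type 2 holds for all $g\ge 3$ by the hypothesis $a_{\mathrm{average}}\ge 0$.
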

\noindent\textit{Proof.} This directly follows from Lemma \ref{lem_3}.
\begin{flushright}
    Q.E.D.
\end{flushright}
\begin{remark}
    For Theorem \ref{thm_6}, the same statement is true for conformal block divisors $-\mathbb{D}_{g,n}(V,\{S^{\bullet}\})$. 
\end{remark}

Next, we prove some equivalent and sufficient conditions for a coinvariant divisor to intersect a particular type of F-curves non-negatively.

\begin{proposition}\label{prop_9}
Let $V$ be a pointed VOA, with central charge $c$. The following are equivalent:
    \begin{enumerate}
        \item $c\ge 24 \cdot a_{\mathrm{average}}$,
        \item  There exists a coinvariant divisor that intersects type 1 F-curves non-negatively \footnote{Type 1 F-curves exist only on $\overline{\mathcal{M}}_{g,n}$ for $g\ge 1$. Thus, this statement implicitly assumes that $g\ge 1$.},
       \item  All coinvariant divisors intersect type 1 F-curves non-negatively.
    \end{enumerate}
\end{proposition}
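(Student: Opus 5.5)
The argument reuses the computation in the proof of Lemma \ref{lem_3}(a). Take $g\ge 1$, any $n$ with $2g-2+n>0$, and any $n$-tuple $S^{\bullet}=(W_{\beta_1},\dots,W_{\beta_n})$ with $\prod_i \beta_i=e$. (If the product is not $e$, Proposition \ref{prop_6} gives $\mathbb{V}_{g,n}=0$, so the divisor is $0$ and meets every curve trivially.) Proposition \ref{prop_5} writes $\mathbb{D}_{g,n}(V,\{S^{\bullet}\})$ in the standard basis $\lambda,\psi_i,\delta_{irr},\delta_{i:I}$, with
$$a=\tfrac{m^{g}c}{2},\qquad b_{irr}=m^{g}a_{\mathrm{average}},\qquad b_{1,\emptyset}=m^{g}a_{e}=0,$$
where $a_e=0$ because the conformal weight of $V=W_e$ is zero. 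By \cite[Theorem 2.1]{GKM}, the intersection with a type 1 F-curve is (up to the sign conventions there) $a-12b_{irr}+b_{1,\emptyset}$. Substituting gives
$$\tfrac{m^{g}}{2}\left(c-24a_{\mathrm{average}}\right).$$
Since $m^{g}/2>0$, this is non-negative exactly when $c\ge 24a_{\mathrm{average}}$. The key point is that the condition depends on neither $g$, $n$, nor the tuple $S^{\bullet}$.

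The equivalences then follow directly.
\begin{enumerate}
\item[(1)$\Rightarrow$(3):] Assume $c\ge 24a_{\mathrm{average}}$. Every nonzero coinvariant divisor then has non-negative intersection with type 1 F-curves, and the zero divisor does trivially.
\item[(3)$\Rightarrow$(2):] This holds because some coinvariant divisor exists, for example $S^{\bullet}=(V)$ on $\overline{\mathcal{M}}_{1,1}$.
\item[(2)$\Rightarrow$(1):] Argue by contrapositive. If $c<24a_{\mathrm{average}}$, every nonzero coinvariant divisor meets a type 1 F-curve negatively. So if the divisor in (2) is nonzero, we get a contradiction.
\end{enumerate}

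The main obstacle is the degenerate case in (2)$\Rightarrow$(1). If the witness divisor is zero, because $\prod\beta_i\neq e$, it satisfies (2) vacuously and gives no information about $c$. I would handle this by reading ``coinvariant divisor'' in (2) as one with $\prod\beta_i=e$, as the paper stipulates right after Proposition \ref{prop_6}. With that convention $\mathbb{D}_{g,n}$ is given by Proposition \ref{prop_5}, has rank $m^{g}\neq 0$, and the sign argument goes through.

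I also need to check two details. First, that the GKM type 1 intersection number is $a-12b_{irr}+b_{1,\emptyset}$ with the paper's sign convention, in which boundary coefficients enter with a minus sign. Second, that $\psi$-coefficients do not contribute to type 1 curves, which holds because the marked points lie off the elliptic tail component.
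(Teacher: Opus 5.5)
Your proposal is correct and follows the paper's own proof: you compute $a-12b_{irr}+b_{1,\emptyset}=\frac{m^{g}}{2}\left(c-24a_{\mathrm{average}}\right)$ from Proposition \ref{prop_5}, invoke \cite[Theorem 2.1]{GKM}, and observe that the sign does not depend on $g$, $n$, or $S^{\bullet}$. Your explicit treatment of the zero-divisor case $\prod_i\beta_i\neq e$ in (2)$\Rightarrow$(1) is a real improvement. Read literally, (2) would otherwise hold vacuously for any pointed VOA with nontrivial $G$, and the paper covers this only implicitly through its standing assumption after Proposition \ref{prop_6}.
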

\noindent\textit{Proof.} Let $g\ge 1$ be arbitrary, and $S^{\bullet}$ be an arbitrary $n$-tuple of irreducible $V$-modules. We first write the coinvariant divisor $\mathbb{D}_{g,n}(V,\{S^{\bullet}\})$ in the form of Proposition \ref{prop_5}. Let $a$ be the coefficient of $\lambda$ class, and $c$ be the central charge of $V$. Then,
$$
\begin{aligned}
    a-12b_{irr}+b_{1,\emptyset} = \frac{m^{g}c}{2} -12m^{g}a_{\mathrm{average}}+0= \frac{m^{g}}{2}\left(c-24a_{\mathrm{average}}\right).
\end{aligned}
$$
By \cite[Theorem 2.1]{GKM}, for $g\ge 1$, we have
$$
\mathbb{D}_{g,n}(V,\{S^{\bullet}\})\in F_{1}(\overline{\mathcal{M}}_{g,n})\iff a-12b_{irr}+b_{1,\emptyset} \ge 0 \iff c\ge 24a_{\mathrm{average}}.
$$
 The statement follows since the inequality $c\ge 24 a_{\mathrm{average}}$ only depends on the VOA, and does not depend on genus or the choice of irreducible modules on each marked point.
\begin{flushright}
    Q.E.D.
\end{flushright}

\begin{proposition}\label{prop_10}
For a $V$ be a pointed VOA, the following are equivalent:
    \begin{enumerate}
        \item $ a_{\mathrm{average}}\ge 0$,
        \item  There exists a coinvariant divisor that intersects type 2 F-curves non-negatively\footnote{Type 2 F-curves exist only on $\overline{\mathcal{M}}_{g,n}$ for $g\ge 3$. Thus, this statement implicitly assumes that $g\ge 3$.},
       \item  All coinvariant divisors intersect type 2 F-curves non-negatively.
    \end{enumerate}
\end{proposition}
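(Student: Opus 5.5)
I will follow the template of Proposition \ref{prop_9}. Let $g\ge 3$ and let $S^{\bullet}=(W_{\beta_1},\dots,W_{\beta_n})$ be an arbitrary $n$-tuple with $\prod_i\beta_i=e$. If the product is not $e$, the bundle is zero by Proposition \ref{prop_6}, its divisor vanishes and trivially lies in $F_2$, so this case can be set aside. Write $\mathbb{D}_{g,n}(V,\{S^{\bullet}\})$ in the form of Proposition \ref{prop_5}: the coefficient of $\lambda$ is $a=m^{g}c/2$, the coefficient of $\delta_{irr}$ is $b_{irr}=m^{g}a_{\mathrm{average}}$, and the boundary coefficients are $b_{i:I}=m^{g}a_{\prod_{j\in I}\beta_j}$. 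In particular $b_{i,\emptyset}=m^{g}a_{e}=0$, since $W_e=V$ has conformal weight $0$.

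Next I will recall the intersection numbers with type 2 F-curves from \cite[Theorem 2.1]{GKM}. A type 2 F-curve is obtained by gluing a moving $4$-pointed rational curve to a genus $g-3$ curve at one point and to three further points of attaching type. Its intersection with a divisor $a\lambda-b_{irr}\delta_{irr}-\sum b_{i:I}\delta_{i:I}$ is a linear combination of $b_{irr}$ and the $b_{i,\emptyset}$ with $i\in\{g-3,g-2,g-1\}$ (or their complements). The key point is that the set of marked points $I$ is empty on every relevant boundary divisor, so only coefficients $b_{i,\emptyset}$ appear, and these all vanish here. I expect the GKM condition to reduce to $b_{irr}\ge 0$, that is, $b_{irr}\ge b_{g-1,\emptyset}$ with the other terms cancelling or vanishing. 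The type 2 intersection number should therefore equal $m^{g}a_{\mathrm{average}}$, possibly times a positive constant.

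This gives $\mathbb{D}_{g,n}(V,\{S^{\bullet}\})\in F_2(\overline{\mathcal{M}}_{g,n})$ if and only if $a_{\mathrm{average}}\ge 0$. The condition depends only on $V$, and not on $g\ge 3$, $n$, or $S^{\bullet}$. Hence (a) implies (c), (c) implies (b) trivially, and (b) implies (a) by applying the equivalence to the particular divisor that exists. The zero-bundle case does not interfere, since the direction (b) to (a) can use a nonzero example such as $S^{\bullet}=(V,\dots,V)$. Alternatively, (b) should be read as holding for a nonzero bundle, as in Proposition \ref{prop_9}.

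The main obstacle is extracting the exact type 2 intersection formula from \cite{GKM} and checking which $b_{i:I}$ enter. The claim that only $I=\emptyset$ terms appear depends on the marked points lying on the genus $g-3$ component, and I must verify this against the GKM description of type 2 curves. If some nonempty $I$ did appear, the resulting $a_{\prod\beta_j}$ terms would need to cancel, and I would check this using the symmetry $a_\gamma=a_{\gamma^{-1}}$ noted in Proposition \ref{prop_5}.
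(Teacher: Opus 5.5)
Your proposal is correct and follows the paper's proof: write the divisor via Proposition \ref{prop_5}, use $b_{irr}=m^{g}a_{\mathrm{average}}$, and invoke the type 2 condition $b_{irr}\ge 0$ of \cite[Theorem 2.1]{GKM}, which does not depend on $g\ge 3$, $n$, or $S^{\bullet}$. Two corrections to the details. First, in the type 2 F-curve all four points of the moving rational component are attached to a single genus $g-3$ curve carrying all the marked points, so its intersection with the divisor is exactly $b_{irr}$ and no $b_{i,I}$ terms appear. Second, the zero-bundle issue is handled by the paper's standing assumption $\prod_i\beta_i=e$, which is your reading of (b) for nonzero bundles; choosing $S^{\bullet}=(V,\dots,V)$ does not help, because in (b) the divisor is given to you rather than chosen.
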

\noindent\textit{Proof.} Let $g\ge 3$ be arbitrary, and $S^{\bullet}$ be an arbitrary $n$-tuple of irreducible $V$-modules. Since
$
    b_{irr} = m^{g}a_{\mathrm{average}},
$
by \cite[Theorem 2.1]{GKM}, for $g\ge 3$, we have
$$
\mathbb{D}_{g,n}(V,\{S^{\bullet}\})\in F_{2}(\overline{\mathcal{M}}_{g,n})\iff b_{\mathrm{irr}} \ge 0 \iff a_{\mathrm{average}}\ge 0.
$$
 The statement follows since the inequality $a_{\mathrm{average}}\ge 0$ only depends on the VOA, and does not depend on genus or the choice of irreducible modules on each marked point.
\begin{flushright}
    Q.E.D.
\end{flushright}

\begin{proposition}
Let $V$ be a pointed VOA such that the conformal weight of all irreducible modules are non-negative. Then, all coinvariant divisors intersect type 3 F-curves non-negatively on $\overline{\mathcal{M}}_{g,n}$, for all $g\ge 2$, and all $n\ge 0$.
\end{proposition}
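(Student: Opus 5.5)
The plan is to mirror the proofs of Propositions \ref{prop_9} and \ref{prop_10}. First write $\mathbb{D}_{g,n}(V,\{S^{\bullet}\})$ in the normal form of Proposition \ref{prop_5}. Then read off the inequality from \cite[Theorem 2.1]{GKM} that characterizes non-negative intersection with type 3 F-curves. Finally, check that inequality using the explicit coefficients $b_{irr}=m^{g}a_{\mathrm{average}}$ and $b_{i:I}=m^{g}a_{\prod_{j\in I}\beta_{j}}$.

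Concretely, fix $g\ge 2$, $n\ge 0$ and an $n$-tuple $S^{\bullet}=(W_{\beta_{1}},\cdots,W_{\beta_{n}})$. If $\prod_{i}\beta_{i}\neq e$, then Proposition \ref{prop_6} and Corollary \ref{cor_6} show the bundle is zero. The divisor is then $0$, and there is nothing to prove. Otherwise we assume $\prod_i\beta_i=e$ and apply Proposition \ref{prop_5}. The coefficients of the boundary divisors are then
$$
b_{irr}=m^{g}a_{\mathrm{average}},\qquad b_{i:I}=m^{g}a_{\prod_{j\in I}\beta_{j}},
$$
with $m=|G|$. The hypothesis that every irreducible module has non-negative conformal weight gives $a_{\gamma}\ge 0$ for all $\gamma\in G$. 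Hence $a_{\mathrm{average}}\ge 0$ and $b_{i:I}\ge 0$ for every admissible pair $(i,I)$.

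The main step is to identify precisely which linear inequality in $(a,b_{irr},b_{i:I})$ \cite[Theorem 2.1]{GKM} attaches to type 3 F-curves for $g\ge 2$. I expect it to be the condition that the relevant $b_{i:I}$ (those with $1\le i\le g-1$, together with the combination involving $b_{irr}$ coming from the genus-one tail/self-glued component) are non-negative. In the form in which I expect it, the condition is stated purely in terms of $b$-coefficients built from conformal weights. Non-negativity of all $a_{\gamma}$, hence of $b_{irr}$ and every $b_{i:I}$, then gives the inequality immediately.

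If instead the type 3 inequality turns out to be a difference, such as $2b_{irr}-b_{i:I}\ge 0$, this step becomes the real obstacle. I would then rewrite it as $2a_{\mathrm{average}}\ge a_{x}$ with $x=\prod_{j\in I}\beta_j$, and try to bound $a_x$ using the group structure of the fusion ring. The first thing to try is the pairing of $\gamma$ with $x\gamma^{-1}$ in the average, using that $W_{\gamma}$ and its dual have equal conformal weight. Since the constant $m^{g}$ factors out of every coefficient, the conclusion is independent of $g\ge 2$, $n$ and the choice of modules, exactly as in Lemma \ref{lem_3}.
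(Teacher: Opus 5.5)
Your route is the paper's: its proof is just the coefficients from Lemma~\ref{lem_3}/Proposition~\ref{prop_5} plus \cite[Theorem 2.1]{GKM}, and the case you expect is the actual one. There, the type 3 inequality is simply $b_{i,I}\ge 0$. It comes from the F-curve whose moving $\mathbb{P}^1$ meets one connected component in three points and, in its fourth point, a tail of genus $i\le g-2$ carrying the points $I$. On that curve $\lambda$ and $\delta_{irr}$ have degree $0$, so no $b_{irr}$ term enters. The case $i=0$ is allowed, including a bare marked point $p$, where the relevant coefficient is the $\psi_p$-coefficient $m^{g}a_{\beta_p}$; your non-negativity of every $b_{i:I}$ covers all of this. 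Your fallback $2b_{irr}\ge b_{i:I}$ is the type 4 condition, which is not needed here and does not follow from non-negative weights alone: it fails for the lattice VOA of $\sqrt{8}\,\mathbb{Z}$, where $2a_{\mathrm{average}}=11/16<1=a_{\max}$.
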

\noindent\textit{Proof.} This directly follows from Lemma \ref{lem_3} and \cite[Theorem 2.1]{GKM}.
\begin{flushright}
    Q.E.D.
\end{flushright}

\begin{proposition}
Let $V$ be a pointed VOA, with $2a_{\mathrm{average}} \ge  a_{\max}$. Then, all coinvariant divisors intersect type 4 F-curves non-negatively on $\overline{\mathcal{M}}_{g,n}$, for $g\ge 2$, for all $n\ge 0$.
\end{proposition}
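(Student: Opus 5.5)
The plan is to write $\mathbb{D}_{g,n}(V,\{S^{\bullet}\})$ in the form of Proposition \ref{prop_5} and plug its coefficients into the type 4 F-curve inequality of \cite[Theorem 2.1]{GKM}, as in the proofs of Propositions \ref{prop_9} and \ref{prop_10}. Fix $g\ge 2$, $n\ge 0$ and $S^{\bullet}=(W_{\beta_1},\dots,W_{\beta_n})$. If $\prod\beta_i\neq e$, then by Proposition \ref{prop_6} the bundle is zero, the divisor is $0$, and there is nothing to prove. So assume $\prod_i\beta_i=e$. Then Proposition \ref{prop_5} gives
$$
b_{irr}=m^{g}a_{\mathrm{average}},\qquad b_{i:I}=m^{g}a_{\prod_{j\in I}\beta_j}.
$$

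Next I will recall the GKM criterion for type 4 F-curves. These curves come from a genus $g-2$ curve with two pairs of points glued to an elliptic tail and a marked rational component, or similar data. The criterion says the divisor meets every type 4 F-curve non-negatively if and only if, for all $0\le i\le g-2$ and all $I\subseteq\{1,\dots,n\}$,
$$
2b_{irr}-b_{i:I}\ \ge\ 0
$$
(up to the exact combinatorial form in \cite{GKM}). I expect this to be the expression $b_{irr}-b_{i:I}+ b_{irr}\cdots$ collapsing to $2b_{irr}-b_{i+1:I}$ or an analogous form. The first concrete step is to extract the precise inequality from \cite[Theorem 2.1]{GKM} and check that it involves only $b_{irr}$ (with coefficient $2$) and a single boundary coefficient $b_{i:I}$ with coefficient $-1$.

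Substituting gives
$$
2b_{irr}-b_{i:I}=m^{g}\bigl(2a_{\mathrm{average}}-a_{\prod_{j\in I}\beta_j}\bigr).
$$
Since $a_{\prod_{j\in I}\beta_j}\le a_{\max}\le 2a_{\mathrm{average}}$ and $m^g>0$, this is non-negative for every $i$ and $I$, which proves the claim for all $g\ge 2$ and $n\ge 0$. Because the bound uses $a_{\max}$ uniformly, it does not depend on which group element $\prod_{j\in I}\beta_j$ occurs, or on $g$. This mirrors Lemma \ref{lem_3}(d).

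The main obstacle is getting the exact shape of the type 4 inequality in \cite{GKM}: the index conventions for $b_{i:I}$, and whether the inequality is symmetric in $I\leftrightarrow I^c$. If it turns out to involve extra terms, for example a combination $b_{irr}+b_{i:I}-b_{i+1:I}$ or similar, I would still bound each $b_{i:I}$ term above by $m^{g}a_{\max}$ or below by using non-negativity of the boundary coefficient. If needed I would use that $a_{\mathrm{average}}\ge a_{\max}/2$ forces $a_{\mathrm{average}}$ to dominate, and that the identity module contributes $a_e=0$ when $I$ is empty.
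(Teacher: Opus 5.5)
Your proposal is correct and is essentially the paper's argument: the type 4 condition in \cite[Theorem 2.1]{GKM} is indeed $2b_{irr}-b_{i,I}\ge 0$, and substituting the coefficients from Proposition \ref{prop_5} gives $m^{g}\bigl(2a_{\mathrm{average}}-a_{\prod_{j\in I}\beta_j}\bigr)\ge m^{g}\bigl(2a_{\mathrm{average}}-a_{\max}\bigr)\ge 0$. Your description of the type 4 curves is inaccurate: they come from a moving four-pointed rational component with two points attached to a genus-$i$ curve and two points attached to a genus-$(g-2-i)$ curve. This does not affect the argument, because $b_{i:I}$ is independent of the genus index here, so the index-convention worries you raise are harmless.
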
  
\noindent\textit{Proof.} This directly follows from Lemma \ref{lem_3} and \cite[Theorem 2.1]{GKM}.
\begin{flushright}
    Q.E.D.
\end{flushright}

\begin{proposition}
    Let $V$ be a pointed VOA, with fusion ring $\mathbb{Z}[G]$. Suppose the conformal weight map
    $$
    \begin{aligned}
    a: G \longrightarrow& \mathbb{R}\\
    \gamma \longmapsto& a_{\gamma}
    \end{aligned}
    $$
    is sub-additive. That is,  $a_{\gamma_{1}}+a_{\gamma_{2}}\ge a_{\gamma_{1}\gamma_{2}}$, for all $\gamma_{1},\gamma_{2}\in G$. 
    Then, all coinvariant divisors intersect type 5 F-curves non-negatively on $\overline{\mathcal{M}}_{g,n}$, for $g\ge 1$, for all $n$.
\end{proposition}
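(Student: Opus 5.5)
We would follow the pattern of the preceding propositions: write the coinvariant divisor in the form of Proposition \ref{prop_5}, and then invoke the type 5 inequality of \cite[Theorem 2.1]{GKM}. Fix $g\ge 1$ and an $n$-tuple $S^{\bullet}=(W_{\beta_1},\dots,W_{\beta_n})$ with $\prod_i\beta_i=e$. If the product is not $e$, then by Proposition \ref{prop_6} the bundle is zero and the divisor is $0$, which is trivially F-nef. By Proposition \ref{prop_5},
$$
\mathbb{D}_{g,n}=m^{g}\Big(\tfrac{c}{2}\lambda+\sum_i a_{\beta_i}\psi_i\Big)-b_{irr}\delta_{irr}-\sum b_{i:I}\delta_{i:I},\qquad b_{i:I}=m^{g}a_{\beta_I},
$$
where we write $\beta_I:=\prod_{j\in I}\beta_j$. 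Since $G$ is abelian (the fusion ring is commutative), the order of the product does not matter. Note that $b_{i:I}$ depends only on $\beta_I$ and not on the genus label $i$.

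The type 5 F-curves on $\overline{\mathcal{M}}_{g,n}$ for $g\ge1$ are the families obtained by gluing a moving $4$-pointed rational component to a fixed curve. Following \cite[Theorem 2.1]{GKM}, the intersection condition for the type 5 curve indexed by $(i,I)$, with $I\subseteq\{1,\dots,n\}$ and $0\le i\le g-1$ arranged so that the relevant boundary divisors make sense, is
$$
b_{i:I}+b_{0:\{j\}}\ \ \ge\ \ b_{i:I\cup\{j\}}\quad\text{or, in the GKM normalization,}\quad a_{\beta_j}\psi\text{-coefficient}-b_{i:I\cup\{j\}}+b_{i:I}\ge 0.
$$
We would copy this inequality verbatim from \cite{GKM}, using the convention $b_{0:\{j\}}=-\psi_j$-coefficient. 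This is the only step where care is needed: one must match the GKM sign and indexing conventions (in particular, that the $\psi_j$ coefficient plays the role of $b_{0:\{j\}}$ with a sign) with the form of Proposition \ref{prop_5}.

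After substitution and division by $m^{g}>0$, the condition becomes
$$
a_{\beta_j}+a_{\beta_I}-a_{\beta_I\beta_j}\ge 0.
$$
This is exactly sub-additivity of $a$ applied to $\gamma_1=\beta_I$ and $\gamma_2=\beta_j$. If the GKM inequality instead involves complements, we would use $a_{\gamma}=a_{\gamma^{-1}}$ (dual modules have equal conformal weight, as in the proof of Proposition \ref{prop_5}) together with $\prod\beta_i=e$ to rewrite $\beta_{I^c}=\beta_I^{-1}$ and reduce to the same inequality. Since the inequality holds for all $\gamma_1,\gamma_2\in G$, every type 5 F-curve is intersected non-negatively, for all $g\ge1$ and all $n$.

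The main obstacle is thus purely bookkeeping: identifying the exact type 5 inequality in \cite{GKM}, including the edge cases $I=\emptyset$ and $i=0$ where $b_{0:\emptyset}=0$. In these cases the inequality degenerates to $a_{\beta_j}+a_e\ge a_{\beta_j}$, which holds because $a_e=a_V=0$.
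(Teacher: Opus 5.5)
Your approach is the paper's: its proof simply combines the form of the divisor from Proposition \ref{prop_5} (as used in Lemma \ref{lem_3}) with \cite[Theorem 2.1]{GKM}. Your key observation, that every $b_{k:K}$ equals $m^{g}a_{\beta_K}$ regardless of the genus label, is exactly what makes this work.

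The one thing to correct is the inequality you wrote for type 5. The condition $b_{i:I}+b_{0:\{j\}}\ge b_{i:I\cup\{j\}}$ is only the special case in which one of the two ``single'' legs of the spine is a bare marked point.

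A general type 5 F-curve looks as follows. The spine $\overline{\mathcal{M}}_{0,4}$ has two legs attached to one connected curve of genus $g-1-i-j$, or glued to each other. The other two legs are attached to tails of type $(i,I)$ and $(j,J)$. The GKM condition for these curves is
\[
b_{i:I}+b_{j:J}\ \ge\ b_{i+j:I\cup J},\qquad i,j\ge 0,\quad i+j\le g-1,\quad I\cap J=\emptyset .
\]
Here the convention is $\delta_{0:\{k\}}=-\psi_k$. So $b_{0:\{k\}}$ equals plus the $\psi_k$-coefficient $m^{g}a_{\beta_k}$, which is the sign you in fact use, despite how you phrased the convention. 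As written, your argument checks only a subfamily of type 5 curves.

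The repair is immediate. Substituting $b_{k:K}=m^{g}a_{\beta_K}$ turns the general inequality into $a_{\beta_I}+a_{\beta_J}\ge a_{\beta_I\beta_J}$. This is sub-additivity with $\gamma_1=\beta_I$ and $\gamma_2=\beta_J$.

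No manipulation with complements is needed. The extreme case $i+j=g-1$, $I\cup J=[n]$ is also consistent: there the right-hand side is $b_{g-1:[n]}=b_{1:\emptyset}=m^{g}a_{e}=0$, and $\beta_I\beta_J=e$.
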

\noindent\textit{Proof.} This directly follows from Lemma \ref{lem_3} and \cite[Theorem 2.1]{GKM}.
\begin{flushright}
    Q.E.D.
\end{flushright}

\begin{proposition}
    Let $V$ be a pointed VOA that is strongly generated in degree 1. Then, all coinvariant divisors intersects type 6 F-curves non-negatively on $\overline{\mathcal{M}}_{g,n}$, for $g\ge 0, n\ge 0$.
\end{proposition}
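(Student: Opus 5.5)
The plan is to reduce the statement to genus zero, where positivity is already known, and then transport it to all genera using Lemma \ref{lem_3}(f). Fix $n$ and an $n$-tuple $S^{\bullet}=(W_{\beta_{1}},\cdots,W_{\beta_{n}})$ of irreducible $V$-modules. If $\prod_{i}\beta_{i}\neq e$, then by Proposition \ref{prop_6} the bundle $\mathbb{V}_{g,n}(V,\{S^{\bullet}\})$ is zero for every $g$. Its first Chern class is then $0$, which trivially lies in $F_{6}(\overline{\mathcal{M}}_{g,n})$. So we may assume $\prod_{i}\beta_{i}=e$.

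\textbf{Genus zero.} Since $V$ is strongly generated in degree $1$, Damiolini--Gibney \cite{DG} show that $\mathbb{V}_{0,n}(V,\{S^{\bullet}\})$ is globally generated on $\overline{\mathcal{M}}_{0,n}$. Hence $\det\mathbb{V}_{0,n}(V,\{S^{\bullet}\})$ is globally generated, and $\mathbb{D}_{0,n}(V,\{S^{\bullet}\})$ is base-point free, hence nef. By Corollary \ref{cor_6} this divisor is in fact the first Chern class of a globally generated line bundle. A nef divisor intersects every effective curve non-negatively, in particular every type $6$ F-curve. Therefore $\mathbb{D}_{0,n}(V,\{S^{\bullet}\})\in F_{6}(\overline{\mathcal{M}}_{0,n})$.

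\textbf{All genera.} Lemma \ref{lem_3}(f) says that, for a fixed tuple $S^{\bullet}$, membership in $F_{6}(\overline{\mathcal{M}}_{g,n})$ holds for some $g\ge 0$ if and only if it holds for all $g\ge 0$. Taking $g=0$ from the previous step gives the claim for every $g\ge 0$. Since $n$ and $S^{\bullet}$ were arbitrary, the statement follows.

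\textbf{Main obstacle.} The delicate point is Lemma \ref{lem_3}(f) itself, whose proof was only illustrated for type $1$. If it needs justification, I would verify it directly. By \cite[Theorem 2.1]{GKM}, the type $6$ inequality is a linear inequality in the boundary coefficients $b_{i:I}$. By Proposition \ref{prop_5}, each such coefficient equals $m^{g}a_{\prod_{j\in I}\beta_{j}}$. This depends only on the subset $I$ and not on the genus labels $i$, apart from the uniform factor $m^{g}$. Hence the type $6$ inequality for genus $g$ is $m^{g}$ times an inequality among conformal weights $a_{\gamma}$ that is independent of $g$. That inequality is exactly the genus-zero one, namely the four-point intersection number on $\overline{\mathcal{M}}_{0,4}$ for the modules $W_{\gamma_{1}},\dots,W_{\gamma_{4}}$, where $\gamma_{k}$ is the product of the $\beta_{j}$ on the $k$-th tail.

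\textbf{Alternative route.} One could instead pull $\mathbb{D}_{g,n}$ back along the clutching map $\overline{\mathcal{M}}_{0,4}\to\overline{\mathcal{M}}_{g,n}$ defining the F-curve. Combining the factorization theorem with Proposition \ref{prop_6}, the pullback equals $m^{g}\,\mathbb{D}_{0,4}(V,\{W_{\gamma_{1}},\dots,W_{\gamma_{4}}\})$. This divisor is nef by \cite{DG}, which gives the same conclusion.
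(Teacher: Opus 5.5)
Your proposal is correct and matches the paper's proof: global generation from \cite{DG} puts $\mathbb{D}_{0,n}(V,\{S^{\bullet}\})$ in $F_{6}(\overline{\mathcal{M}}_{0,n})$, and Lemma \ref{lem_3}(f) transports this to every genus. Your direct check, which identifies each genus-$g$ type $6$ inequality with $m^{g}\deg\mathbb{D}_{0,4}(V,\{W_{\gamma_{1}},\dots,W_{\gamma_{4}}\})$, is a useful supplement: it also covers $n\le 2$, where $\overline{\mathcal{M}}_{0,n}$ does not exist, and the positive-genus type $6$ curves with unmarked tails ($\gamma_{k}=e$), which have no counterpart on $\overline{\mathcal{M}}_{0,n}$.
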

\noindent\textit{Proof.} This directly follows from Lemma \ref{lem_3} and \cite[Theorem 1]{DG}.
\begin{flushright}
    Q.E.D.
\end{flushright}

Among all the coinvariant divisors, $\mathbb{D}_{g,1}(V,\{V\})$ is of particular interest. It is obtained by assigning the trivial module to the unique marked point of $\overline{\mathcal{M}}_{g,1}$. It is of interest because coinvariant divisors on $\overline{\mathcal{M}}_{g}$ are defined this way via propagation of vacua.

\begin{proposition}\label{prop_8}
Let $V$ be pointed VOA, with central charge $c$. Then, for $g\ge 1$, $$\mathbb{D}_{g,1}(V,\{V\}) \text{ is F-nef} \iff c \ge 24a_{\mathrm{average}}\ge 0.$$
\end{proposition}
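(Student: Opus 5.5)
The plan is to write $\mathbb{D}_{g,1}(V,\{V\})$ explicitly using Proposition \ref{prop_5}, and then test it against the GKM F-curve inequalities \cite[Theorem 2.1]{GKM} type by type. Here $n=1$ and $\beta_1=e$, so $a_{\beta_1}=a_V=0$. Proposition \ref{prop_5} therefore gives
$$
\mathbb{D}_{g,1}(V,\{V\})=m^{g}\frac{c}{2}\lambda-m^{g}a_{\mathrm{average}}\delta_{irr}-\sum_{i,I}b_{i:I}\delta_{i:I}.
$$
For $I=\emptyset$ we have $b_{i:\emptyset}=m^{g}a_{e}=0$. For $I=\{1\}$ we have $b_{i:\{1\}}=m^{g}a_{\beta_1}=0$ as well. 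So every boundary coefficient $b_{i:I}$ vanishes, the $\psi$-coefficient is zero, and the divisor reduces to
$$
\frac{m^{g}}{2}\left(c\lambda-2a_{\mathrm{average}}\delta_{irr}\right).
$$
The overall positive factor $m^g$ can be dropped throughout.

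Next, I will substitute $a=\tfrac{c}{2}$, $b_{irr}=a_{\mathrm{average}}$, $b_{i:I}=0$, $\psi$-coefficients $0$ into each of the six GKM inequalities and read off the conditions.
\begin{itemize}
\item[] Type 1 ($g\ge1$): $a-12b_{irr}+b_{1,\emptyset}\ge0$, i.e.\ $c\ge24a_{\mathrm{average}}$, exactly as in Proposition \ref{prop_9}.
\item[] Type 2 ($g\ge3$): $b_{irr}\ge0$, i.e.\ $a_{\mathrm{average}}\ge0$, as in Proposition \ref{prop_10}.
\item[] Types 3 and 4: these have the shape $b_{irr}-b_{i:I}\ge0$ or $2b_{irr}-b_{i:I}\ge0$ (with $b_{i:I}=0$), so they also reduce to $a_{\mathrm{average}}\ge0$.
\item[] Types 5 and 6: these involve only $\psi$-coefficients and the $b_{i:I}$, all of which are zero, so they give $0\ge0$ and hold automatically.
\end{itemize}
Hence F-nefness is equivalent to $c\ge 24a_{\mathrm{average}}$ together with $a_{\mathrm{average}}\ge0$, which is the condition $c\ge24a_{\mathrm{average}}\ge0$ in the statement. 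Both implications follow at once, since each F-curve inequality is one of these two linear conditions or is trivially satisfied.

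The main obstacle is low genus. For $g=1,2$ the type 2 curves do not exist, so the condition $a_{\mathrm{average}}\ge0$ has to come from a type 3 or type 4 curve instead; only type 1 exists when $g=1$. For $g=2$, I will check that a type 3 or type 4 inequality, with all $b_{i:I}=0$, still forces $b_{irr}\ge0$. For $g=1$, $n=1$, the only F-curves are type 1 and type 6 (degenerate cases). The type 6 curves come from $\overline{\mathcal{M}}_{0,4}$ configurations with irreducible boundary, and their intersection numbers are linear combinations of $b_{irr}$ and $b_{1,\emptyset}$. My first attempt will be to show that one of these equals a positive multiple of $b_{irr}$, which would give $a_{\mathrm{average}}\ge0$ there too. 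If that fails, I would also use that $\lambda$ on $\overline{\mathcal{M}}_{1,1}$ is proportional to $\delta_{irr}$, namely $12\lambda=\delta_{irr}$, and compare degrees directly. Throughout, I will carefully track the coefficient conventions of \cite{GKM} against those of Proposition \ref{prop_5}.
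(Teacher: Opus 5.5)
Your route is the paper's: write $\mathbb{D}_{g,1}(V,\{V\})$ via Proposition \ref{prop_5}, so that all $b_{i:I}=0$ and $b_{irr}=m^{g}a_{\mathrm{average}}$, then check the inequalities of \cite[Theorem 2.1]{GKM} type by type.

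\textbf{Genus $g\ge 2$.} For $g\ge 3$ this works as you say. For $g=2$ your repair is correct, and it is actually needed: the paper gets $a_{\mathrm{average}}\ge 0$ only from type 2 curves, which exist only for $g\ge 3$. The type 4 curves do the job instead. Take a moving $\overline{\mathcal{M}}_{0,4}$ with one self-node, and glue its other two legs to a fixed curve of genus $g-2$. These curves exist for every $g\ge 2$, including $(g,n)=(2,1)$. They give $2b_{irr}-b_{1,\emptyset}=2m^{g}a_{\mathrm{average}}\ge 0$.

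A minor correction: the type 3 inequality is $b_{i,I}\ge 0$ and contains no $b_{irr}$. Here it holds automatically, so it is not a second source of $a_{\mathrm{average}}\ge 0$.

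\textbf{Genus $g=1$: this is a genuine gap, and your plan cannot close it.} $\overline{\mathcal{M}}_{1,1}$ is one-dimensional, and its only F-curve is $\overline{\mathcal{M}}_{1,1}$ itself, the degenerate type 1 curve. No stable graph of genus $1$ with one leg contains a $(0,4)$ vertex, so there are no type 5 or type 6 curves. Your first attempt therefore has nothing to work with.

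Your fallback is the right computation, but it proves less than you need. Using $\delta_{irr}=12\lambda$ on $\overline{\mathcal{M}}_{1,1}$, you get $\mathbb{D}_{1,1}(V,\{V\})=m\left(\frac{c}{2}\lambda-a_{\mathrm{average}}\delta_{irr}\right)=\frac{m}{2}\left(c-24a_{\mathrm{average}}\right)\lambda$. Since $\deg\lambda>0$ there, F-nefness (which equals nefness here) is equivalent to $c\ge 24a_{\mathrm{average}}$ alone.

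So at $g=1$ the implication ``F-nef $\Rightarrow a_{\mathrm{average}}\ge 0$'' amounts to claiming that $c\ge 24a_{\mathrm{average}}$ forces $a_{\mathrm{average}}\ge 0$ for every pointed VOA. No F-curve computation can give this. Nothing in the paper supplies it either: the paper treats $a_{\mathrm{average}}\ge 0$ as a separate hypothesis in Theorem \ref{thm_6} and Proposition \ref{prop_10}. The paper's own proof has the same hole.

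\textbf{What you can actually prove.} You can prove the stated equivalence for $g\ge 2$. For $g=1$ you can prove that $\mathbb{D}_{1,1}(V,\{V\})$ is F-nef if and only if $c\ge 24a_{\mathrm{average}}$. This means only the reverse implication of the stated equivalence holds at $g=1$.
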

\noindent\textit{Proof.}
By Lemma \ref{lem_3} and \cite[Theorem 2.1]{GKM}, $\mathbb{D}_{g,1}(V,\{V\}) $ intersects F-curves of type 3-6 non-negatively, when they exist. By Proposition \ref{prop_10}, $\mathbb{D}_{g,1}(V,\{V\}) $ intersects type 2 F-curves non-negatively if and only if $a_{\mathrm{average}}\ge 0$. By Proposition \ref{prop_9}, $\mathbb{D}_{g,1}(V,\{V\}) $ intersects type 1 F-curves non-negatively if and only if $c \ge 24a_{\mathrm{average}}$.
\begin{flushright}
    Q.E.D.
\end{flushright}

Next, we consider the case where the coinvariant divisor associated is not F-nef. We assume that $a_{\mathrm{average}}\ge 0$. In this case, $\mathbb{D}_{g,1}(V,\{V\})$ intersects F-curves of type 2-6 non-negatively, whenever such curves exist. By \cite{GKM}, $\mathbb{D}_{g,1}(V,\{V\}) +r\lambda$ is F-nef, for $r$ sufficiently large. However, if we arbitrarily add the Hodge class $\lambda$, then the resulting divisor may not be coinvariant divisor, and thus may not have geometric interpretation. The next proposition gives us a way to add $\lambda$ so that the resulting divisor remains a coinvariant divisor.

\begin{proposition}
    Let $V$ be a pointed VOA with $a_{\mathrm{average}}\ge 0$. Let $H$ be a holomorphic VOA. Then, $\exists r_{0}\ge 0,\forall r\ge r_{0}, \forall g\ge 1, \mathbb{D}_{g,1}(V\otimes H^{\otimes r},\{V\otimes H^{\otimes r}\})$ is F-nef. In particular, $r_{0} = \left\lceil\frac{24a_{\mathrm{average}}-c_{V}}{c_{H}}\right\rceil$ is the best lower bound.
\end{proposition}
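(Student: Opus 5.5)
The plan is to reduce everything to Proposition \ref{prop_8}, applied to the VOA $U_r:=V\otimes H^{\otimes r}$. The first step is to check that $U_r$ is again a pointed VOA of CohFT-type. Since $H$ is holomorphic, its only irreducible module is $H$. By \cite[Lemma 5.1]{Milas96}, the irreducible $U_r$-modules are exactly $W_x\otimes H^{\otimes r}$ for $x\in G$. Their fusion is computed componentwise; for instance, via the Kronecker-product description of FA-matrices in Lemma \ref{lem_1}, since $R_{H,H,0}=(1)$. Hence the fusion ring of $U_r$ is still $\mathbb{Z}[G]$, so $U_r$ is pointed with the same group $G$. Being of CohFT-type is preserved under tensor products, as recalled at the start of Section \ref{sec_Vector_Bundle_of_Coinvariants_on_Tensor_Product_of_VOAs}.

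The second step is to compute the invariants that enter Proposition \ref{prop_8}. The central charge is additive, so $c_{U_r}=c_V+r\,c_H$. The conformal weight of $W_x\otimes H^{\otimes r}$ is $a_{W_x}+r\cdot a_H=a_{W_x}$, because $H$ is of CFT-type and so has conformal weight $0$. Therefore $a_{\mathrm{average}}(U_r)=a_{\mathrm{average}}(V)\ge 0$. Proposition \ref{prop_8} then gives, for every $g\ge 1$,
$$\mathbb{D}_{g,1}(U_r,\{U_r\})\text{ is F-nef}\iff c_V+r\,c_H\ge 24a_{\mathrm{average}}.$$
The right-hand condition does not depend on $g$. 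Since $c_H>0$, it is equivalent to $r\ge (24a_{\mathrm{average}}-c_V)/c_H$.

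So the set of admissible $r$ is exactly $\{r\in\mathbb{N}: r\ge \lceil (24a_{\mathrm{average}}-c_V)/c_H\rceil\}$, and the bound $r_0$ is sharp by the "only if" direction of Proposition \ref{prop_8}. If that ceiling is negative, one takes $r_0=0$ instead, consistent with the requirement $r_0\ge 0$.

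The main obstacle is justifying $c_H>0$; it is needed both for dividing and for sharpness. Here I would use that a holomorphic VOA of CFT-type has central charge $c_H\in 8\mathbb{Z}_{>0}$, by modular invariance of its character (Zhu's theorem). Failing that, I would add positivity of $c_H$ as a harmless hypothesis, noting that it holds, e.g., for unitary $H$. A secondary point is to confirm that Proposition \ref{prop_8} applies uniformly for all $g\ge 1$, which is automatic since its criterion does not involve $g$.
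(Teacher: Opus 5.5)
Your argument is correct and follows the paper's own proof: central charge is additive, $c_{V\otimes H^{\otimes r}}=c_V+rc_H$, the average conformal weight is unchanged, and one applies the genus-independent criterion $c\ge 24a_{\mathrm{average}}\ge 0$. You rightly cite Proposition \ref{prop_8} here, whereas the paper cites only the type-1 criterion, Proposition \ref{prop_9}, and drops the factor $24$; your worry about $c_H>0$ is also justified, since the trivial holomorphic VOA $\mathbb{C}\cong V_{2,3}$ (which the paper itself lists as holomorphic) has $c_H=0$, so the statement tacitly needs $H\neq\mathbb{C}$, exactly the extra hypothesis you propose.
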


\noindent\textit{Proof.} Let $c_{V}$ be the central charge of $V$. Then, the central charge of $V\otimes H^{\otimes r}$ is $c_{V}+rc_{H}$. By Proposition \ref{prop_9}, $\mathbb{D}_{g,1}(V\otimes H^{\otimes r},\{V\otimes H^{\otimes r}\})$ is F-nef if and only if $c_{V\otimes H^{\otimes r}}\ge a_{\mathrm{average}}$, which is equivalent to $r\ge \left\lceil\frac{24a_{\mathrm{average}}-c_{V}}{c_{H}}\right\rceil$. 
\begin{flushright}
    Q.E.D.
\end{flushright}

A root lattice VOA is a pointed VOA that is strongly generated in degree $1$, so its coinvariant divisors are globally generated on $\overline{\mathcal{M}}_{0,n}$ \cite{DG}. Therefore, we next consider its F-nefness in the positive genus case. Any positive-definite, even, integral root lattice is a direct sum of root lattices of type $A_{r} (r\ge 1), D_{k} (k\ge 4), E_{l} (l=6,7,8)$\footnote{This is a classis result in lattice theory, which can be found in many standard textbooks (e.g., \cite[Theorem 2.25]{SS}).}. We test the F-nefness on each class.

\begin{example}\label{example_2}
    Let $L$ be an $A_{r}, D_{k} $ or $E_{l} $ root lattices other than $E_{8}$. Then, all coinvariant divisors associated with the lattice VOA $V_{L}$ on $\overline{\mathcal{M}}_{g,n}$ is not F-nef, in the positive genus case. In particular, they all intersect type 1 F-curves negatively.
\end{example}
\noindent\textit{Proof.} This directly follows from Proposition \ref{prop_9}, since the central charge of root lattice VOA and the conformal weight of all its irreducible modules have been computed.
\begin{flushright}
    Q.E.D.
\end{flushright}

\begin{remark}
    $E_{8}$ root lattice has trivial discriminant group, so the lattice VOA is holomorphic. By \cite[Example 5.1.1]{DGT3}, the coinvariant divisors is F-nef. Example \ref{example_2} may have been already known. Here, we gave an alternative proof.
\end{remark}

\subsection{Examples of Pointed VOAs}\label{PointedExamples}
In this subsection, we present examples of pointed VOAs.

\begin{example}\label{exHol}
    Holomorphic VOAs, such as the moonshine module, are pointed.
\end{example}

\begin{example}\label{exLattice}
    Lattice VOAs are pointed \cite{FLM2,B1,DL93,Dong93,DLM}.
\end{example}

\begin{example}
    For $k\ge 2$, the parafermion algebra $K(\mathfrak{sl}_{2},k)$ is a pointed VOA. If $k\not=4$, $K(\mathfrak{sl}_{2},k)$ is neither a lattice VOA nor is it holomorphic.
By \cite{ALY}, for $k\ge 2$, $K(\mathfrak{sl}_{2},k)$ is simple, self-dual, strongly rational, and of CFT-type, and by \cite{AYY} are pointed, with fusion ring $\mathbb{Z}[\mathbb{Z}/k\mathbb{Z}]$. Moreover, since $K(\mathfrak{sl}_{2},k)$ has $k$ irreducible modules, it is never holomorphic. Moreover, since for a lattice VOA, the central charge is the rank of lattice. Since the central charge of $K(\mathfrak{sl}_{2},k)$ is $\frac{2(k-1)}{k+2}$ \cite[Section 3]{DLWY}.  Since this  is not an integer when $k\not=4$, these parafermions are not lattice VOAs either.
\end{example}

\begin{lemma}\label{prop_14}
    Let $V_{1}, V_{2}$ be pointed VOAs, with central charge $c_{1}$ and $c_{2}$, respectively. Then, 
    \begin{enumerate}
        \item $V_{1}\otimes V_{2}$ is a pointed VOA,
        \item $V_{1}\otimes V_{2}$ is holomorphic if and only if both $V_{1}$ and $V_{2}$ are holomorphic,
        \item if $c_{1}+c_{2}\not\in \mathbb{N}$, then $V_{1}\otimes V_{2}$ is not a lattice VOA.   
    \end{enumerate}
\end{lemma}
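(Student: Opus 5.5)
For (a) the plan is to use the description of irreducible modules of a tensor product recalled in Section \ref{sec_Vector_Bundle_of_Coinvariants_on_Tensor_Product_of_VOAs}. First, $V_{1}\otimes V_{2}$ is strongly rational \cite{DMZ94,DLM2,Milas96}. Its irreducible modules are exactly $W\otimes M$ with $W$ an irreducible $V_{1}$-module and $M$ an irreducible $V_{2}$-module \cite[Lemma 5.1]{Milas96}. Next I need the fusion rules to factor:
$$N_{W_{1}\otimes M_{1},\,W_{2}\otimes M_{2}}^{W_{3}\otimes M_{3}}=N_{W_{1},W_{2}}^{W_{3}}\,N_{M_{1},M_{2}}^{M_{3}}.$$
This is the genus-zero, three-point case of Theorem \ref{thm_3}, because these fusion rules are the ranks of $\mathbb{V}_{0,3}$ (intertwining operators). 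Alternatively it is the $g=0$ case of Lemma \ref{lem_1} applied to single-entry tuples. It follows that $(W\otimes M)\boxtimes(W'\otimes M')\cong (W\boxtimes W')\otimes(M\boxtimes M')$. So the fusion ring of $V_{1}\otimes V_{2}$ is $\mathbb{Z}[G_{1}]\otimes\mathbb{Z}[G_{2}]\cong\mathbb{Z}[G_{1}\times G_{2}]$, with $W_{x}\otimes M_{y}\leftrightarrow(x,y)$. Concretely, for $x\in G_{1}$ and $y\in G_{2}$, the module $(W_{x}\otimes M_{y})\boxtimes(W_{x^{-1}}\otimes M_{y^{-1}})$ is $V_{1}\otimes V_{2}$, so every simple object is invertible and $V_{1}\otimes V_{2}$ is pointed.

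For (b) I will count irreducible modules. By the first step, the number of irreducibles of $V_{1}\otimes V_{2}$ is $|G_{1}|\cdot|G_{2}|$. This product equals $1$ if and only if $|G_{1}|=|G_{2}|=1$, i.e.\ if and only if both $V_{i}$ are holomorphic. Strong rationality of the tensor product, already used in (a), handles the remaining part of the definition of holomorphic.

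For (c) I will use that the central charge of $V_{1}\otimes V_{2}$ is $c_{1}+c_{2}$, since the conformal vector is $\omega_{1}\otimes\mathds{1}+\mathds{1}\otimes\omega_{2}$. A lattice VOA $V_{L}$ has central charge $\mathrm{rank}\,L\in\mathbb{N}$, as was already used in the parafermion example. So $c_{1}+c_{2}\notin\mathbb{N}$ rules out $V_{1}\otimes V_{2}\cong V_{L}$: an isomorphism of VOAs preserves the conformal vector and hence the central charge.

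The main obstacle is the factorization of fusion rules (equivalently, of the tensor product $\boxtimes$) in (a). Everything else is bookkeeping. I would deduce it from Theorem \ref{thm_3} at $(g,n)=(0,3)$, taking care to identify contragredients via $(W\otimes M)'\cong W'\otimes M'$.
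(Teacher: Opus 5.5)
Your proposal is correct and follows essentially the paper's route. For (a), the paper also combines strong rationality of $V_{1}\otimes V_{2}$ with the factorization of fusion rules from Lemma \ref{lem_1}; it phrases this as ``a Kronecker product of permutation fusion matrices is again a permutation matrix,'' which is equivalent to your identification of the fusion ring with $\mathbb{Z}[G_{1}\times G_{2}]$, and (b) and (c) are handled exactly as you do, via the classification of irreducible $V_{1}\otimes V_{2}$-modules and the fact that a lattice VOA has central charge equal to the rank of its lattice.
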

\noindent\textit{Proof.} For $V_{1},V_{2}$ strongly rational, we know that $V_{1}\otimes V_{2}$ is also strongly rational \cite{DMZ94,DLM2,Milas96}. Since the tensor product (or Kronecker product) of two permutation matrices is still a permutation matrix, the claim follows from
Lemma \ref{lem_1}. (b) follows from the classification theorem on the irreducible modules on tensor product of VOAs \cite{FHL, Milas96}. The central charge for $V_{1}\otimes V_{2}$ is $c_{1}+c_{2}$. However, the central charge of a lattice VOA must be a natural number, since the central charge of a lattice VOA is the rank of a lattice.
\begin{flushright}
    Q.E.D.
\end{flushright}

Lastly, we present some examples of strongly rational VOAs that are not pointed.
\begin{example}
    Let $V_{2,2k+1}$ be a Virasoro VOA in the boundary of discrete series (i.e., of central charge $c_{2,2k+1}$). Then, $V_{2,2k+1}$ is pointed if and only if $k = 1$.  Indeed, $V_{2,3}$ is holomorphic \cite[Theorem 4.2]{Wang93}. In all other cases, one can check that the fusion matrix for the irreducible module with the minimal conformal weight is an upper-left triangular matrix (See Proposition \ref{lem}). However, the fusion matrix of any irreducible module of a pointed matrix is a permutation matrix.
\end{example}

\section{Symmetric Coinvariant Divisors and Irreducible Module of Order two}\label{sec_Permutation_Module_of_Order_2} 
In the previous section, we consider the case where VOA is pointed. In this section, we no longer impose any additional assumptions on the VOA. Instead, we impose conditions on the modules, and study the symmetric coinvariant divisor on $\overline{\mathcal{M}}_{0,n}$.

\begin{definition}
    A coinvariant divisor on $\overline{\mathcal{M}}_{0,n}$ is called \textit{symmetric} if the irreducible modules associated to each marked point are the same.
\end{definition}
Symmetric coinvariant divisors are important and have been studied because they could be viewed as a divisor on $\tilde{M}_{0,n}/\mathrm{Sym}(n)$, where the group $\mathrm{Sym}(n)$ acts by permuting the $n$ marked points.

\begin{definition}
    Let $V$ be a strongly rational VOA, $W$ be an irreducible module. We say that $W$ is an \textit{irreducible module of order two} if $W\neq V$ and $W\boxtimes W = V$.
\end{definition}
We observe that this condition implies that $W=W'$, where $W'$ denotes the dual module of $W$.

\begin{lemma}
    For an irreducible module, $W$, of a strongly rational VOA, the following are equivalent:
    \begin{enumerate}
        \item $W$ is an irreducible module of order two.
        \item $R_{W,0}$ is a permutation matrix of order two.
    \end{enumerate}
\end{lemma}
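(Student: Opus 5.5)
The plan is to pass between the fusion rules and the fusion matrix $R_{W,0}$, whose $(i,j)$-entry is $N_{W,W_i}^{W_j}=\mathrm{rank}\,\mathbb{V}_{0,3}(V,\{W,W_i,W_j'\})$. The tool is the identity from Lemma \ref{cor_1} and Proposition \ref{prop_11}:
$$R_{W,0}R_{W,0}=R_{(W,W),0}=R_{W\boxtimes W,0}.$$
We also use that $R_{V,0}=\mathrm{Id}_l$, which follows from propagation of vacua: $\mathrm{rank}\,\mathbb{V}_{0,3}(V,\{V,W_i,W_j'\})=\delta_{ij}$. Finally, the assignment $M\mapsto R_{M,0}$ is injective on isomorphism classes of irreducible modules. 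To see this, read off the column or row indexed by $V$: taking $W_i=V$ gives $(R_{M,0})_{(V,j)}=\mathrm{rank}\,\mathbb{V}_{0,3}(V,\{M,V,W_j'\})=\delta_{M,W_j}$. So the $V$-row of $R_{M,0}$ is the indicator vector of $M$.

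\textbf{(a)$\Rightarrow$(b).} Suppose $W\boxtimes W=V$ and $W\neq V$. Then $R_{W,0}^2=R_{V,0}=\mathrm{Id}$. Since $R_{W,0}$ has entries in $\mathbb{N}$ and its square is the identity, it must be a permutation matrix. To show this, note that the $i$-th row of $R_{W,0}$ is nonzero because $R_{W,0}$ is invertible. If row $i$ had two nonzero entries, or one entry at least $2$, then nonnegativity would make some row of $R_{W,0}^2$ have too much mass. Concretely, $\sum_j (R^2)_{ij}=\sum_k R_{ik}(\sum_j R_{kj})\ge \sum_k R_{ik}$, and every row sum of $R$ is at least $1$. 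Combining this with the analogous column argument forces every row and column sum to equal $1$, so $R_{W,0}$ is a permutation matrix. Its order is $2$ rather than $1$ because $R_{W,0}\neq \mathrm{Id}=R_{V,0}$, by injectivity and $W\neq V$.

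\textbf{(b)$\Rightarrow$(a).} Suppose $R_{W,0}$ is a permutation matrix with $R_{W,0}^2=\mathrm{Id}\neq R_{W,0}$. Then
$$R_{W\boxtimes W,0}=R_{W,0}^2=\mathrm{Id}=R_{V,0}.$$
Write $W\boxtimes W=\sum_k N_{W,W}^{W_k}W_k$, so that $\sum_k N_{W,W}^{W_k}R_{W_k,0}=R_{V,0}$. Reading off the $V$-row, which is the indicator of each summand, gives $\sum_k N_{W,W}^{W_k}e_{W_k}=e_V$, hence $N_{W,W}^{W_k}=\delta_{W_k,V}$, that is, $W\boxtimes W=V$. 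Also $W\neq V$, since $R_{W,0}\neq \mathrm{Id}$.

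The main obstacle is the nonnegative-integer-matrix step in (a)$\Rightarrow$(b), namely showing that a $\mathbb{N}$-matrix which is its own inverse is a permutation matrix. I would handle it cleanly as follows. Both $R$ and $R^{-1}=R$ are nonnegative. For a nonnegative matrix $A$ with nonnegative inverse, each row of $A$ has exactly one nonzero entry: if $A_{ik},A_{ik'}>0$ with $k\neq k'$, then $(AA^{-1})_{ij}=0$ for all $j\neq i$ forces $(A^{-1})_{kj}=(A^{-1})_{k'j}=0$ for those $j$. So rows $k$ and $k'$ of $A^{-1}$ are both supported only in column $i$, which makes them proportional and contradicts invertibility. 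Thus $R$ is a monomial matrix with positive integer entries. Since $R^{-1}=R$ is also integral, the product of the nonzero entry in row $i$ and the corresponding entry of the inverse equals $1$ for positive integers, so each nonzero entry is $1$.
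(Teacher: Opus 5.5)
Your proof is correct and follows the same route as the paper. Both arguments use $R_{W,0}^2=R_{W\boxtimes W,0}$, $R_{V,0}=\mathrm{Id}$, and the fact that an $\mathbb{N}$-matrix squaring to the identity is a permutation matrix. You prove that last fact by a row-sum or monomial-matrix argument, whereas the paper chases the diagonal and off-diagonal entries of $A^2$ directly. Your observation that the $V$-row of $R_{M,0}$ is the indicator vector of $M$ supplies two points the paper leaves implicit: that $R_{W\boxtimes W,0}=R_{V,0}$ really forces $W\boxtimes W=V$, and that $W\neq V$ corresponds exactly to $R_{W,0}\neq\mathrm{Id}$.
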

\noindent\textit{Proof.} Let $W_{1},\cdots,W_{l}$ be the collection of all irreducible $V$-modules, up to isomorphism.

Part 1: $(a) \Rightarrow (b)$.

By Proposition \ref{prop_11}, we have
$
\begin{aligned}
    R_{W,0}R_{W,0} = R_{W\boxtimes W,0}
    = R_{V,0}
    =\mathrm{Id},
\end{aligned}
$
so $R_{W,0}$ is a matrix of order two. To show that it is a permutation matrix, it suffices to prove the following linear-algebraic claim.

Claim: let $A\in \mathrm{M}_{n\times n}(\mathbb{N})$ such that $A^{2}=\mathrm{Id}$, then $A$ is a permutation matrix of order $2$.

We consider the $(i,j)$-entry of $A^{2}$.
$
\begin{aligned}
    \delta_{i,j} = \mathrm{Id}_{(i,j)}=\left(A^{2}\right)_{(i,j)} =& \sum_{k=1}^{l} A_{(i,k)}A_{(k,j)}.
\end{aligned}
$
Take $i=j=1$, we have $1 = \sum_{k=1}^{l} A_{(1,k)}A_{(k,1)}$. Thus, there is only one term that is non-zero, and we call such $k$ by $k_{1}$. That is, $A_{(1,k_{1})} = A_{(k_{1},1)}=1$.

Moreover, $0 = \sum_{k=1}^{l} A_{(1,k)}A_{(k,j)}$, for $j\not=1$. Since $A_{(1,k_{1})}=1$, we conclude that $A_{(k_{1},j)}=0$. Since such $j$ is arbitrary, we conclude that $A_{(k_{1},j)}=\delta_{j,1}$. That is, we showed that for the $k_{1}$'s row of $A$, the $(k_{1},1)$-entry is $1$, and the rest entries are all zero. By the same argument, we can prove that $A_{(j,k_{1})}=\delta_{j,1}$. Thus, we show that for the $k_{1}$-th row or $k_{1}$-th column, there is only one $1$, and the rest are zero. Instead of considering $(1,1)$-entry, we can apply the same argument to to all $(i,i)$-entry, and conclude that $A$ is a permutation matrix. Thus, we proved the claim.

Part 2: $(b) \Rightarrow (a)$.

By Proposition \ref{prop_11}, we have $R_{V,0}=\mathrm{Id}=R_{W,0}R_{W,0} = R_{W\boxtimes W,0}.$
Therefore, $W\boxtimes W = V$.
\begin{flushright}
    Q.E.D.
\end{flushright}

\begin{lemma}\label{lemma_1}
    Let $V$ be a strongly rational VOA, $W$ be an irreducible module of order $2$. Then,
    $$
    \begin{aligned}
    \mathrm{rank}\mathbb{V}_{0,2l+2}\left(V,\left\{W^{2l}, S, T'\right\}\right)=&\delta_{S,T},\ 
    \mathrm{rank}\mathbb{V}_{0,2l+2}\left(V,\left\{W^{2l+1}, T\right\}\right)= \delta_{W,T'},\\
    \mathrm{rank}\mathbb{V}_{0,2l+2}\left(V,\left\{W^{2l+1}, T'\right\}\right)=& \delta_{W,T},\ 
    \mathrm{rank}\mathbb{V}_{0,n}\left(V,\left\{W^{n}\right\}\right)= \delta_{n,\text{even}}.
    \end{aligned}
    $$
\end{lemma}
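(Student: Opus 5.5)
The plan is to reduce every identity to a matrix computation with FA-matrices, using Lemma \ref{cor_1} (multiplicativity of $R_{\bullet,g}$) and the preceding lemma. That lemma says $R_{W,0}$ is a permutation matrix with $R_{W,0}^{2}=\mathrm{Id}$. All ranks here are genus zero, so only $R_{\bullet,0}$ enters.

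For the first identity, the tuple $W^{2l}$ has FA-matrix
$$R_{W^{2l},0}=R_{W,0}^{2l}=(R_{W,0}^{2})^{l}=\mathrm{Id}$$
by Lemma \ref{cor_1}. Indexing $S=W_i$ and $T=W_j$, the $(i,j)$-entry gives $\mathrm{rank}\,\mathbb{V}_{0,2l+2}(V,\{W^{2l},S,T'\})=\delta_{S,T}$. For $l=0$ this is the statement $R_{\emptyset,0}=R_{V,0}=\mathrm{Id}$, which is the rank of $\mathbb{V}_{0,2}$ understood via propagation of vacua, i.e. $\mathrm{rank}\,\mathbb{V}_{0,3}(V,\{V,S,T'\})=\delta_{S,T}$. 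I will note that $R_{V,0}=\mathrm{Id}$ was already used in the previous lemma.

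For the odd cases, Lemma \ref{cor_1} gives $R_{W^{2l+1},0}=R_{W,0}$. I will rewrite $\mathrm{rank}\,\mathbb{V}_{0,2l+2}(V,\{W^{2l+1},T\})$ by propagation of vacua as the entry of $R_{W^{2l+1},0}$ at the row of $V$ and the column $j$ with $W_j'=T$. This entry equals $(R_{W,0})_{(V,W_j)}=\mathrm{rank}\,\mathbb{V}_{0,3}(V,\{W,V,W_j'\})=\delta_{W,W_j}$, where the last equality is the fusion rule $N_{W,V}^{W_j}=\delta_{W,W_j}$. Hence the rank is $\delta_{W,T'}$. The third identity follows from the second by replacing $T$ with $T'$ and using $T''=T$.

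The last identity then splits by parity. If $n=2l+1$ is odd, add a vacuum to get $\mathrm{rank}\,\mathbb{V}_{0,n+1}(V,\{W^{2l+1},V\})=\delta_{W,V'}=\delta_{W,V}=0$, since $W\neq V$ by definition. If $n=2l$ is even, use the first identity with $S=T=V$ after adding two vacua, giving rank $1$. The small cases need care: for $n=1,2$ one must interpret $\mathbb{V}_{0,n}$ through propagation of vacua, consistent with the standing assumption $2g-2+n>0$.

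I expect the main obstacle to be bookkeeping rather than substance. One point is making sure the index conventions (rows labeled by $W_i$, columns by $W_j'$) match the placement of $T$ versus $T'$. The other is justifying the propagation-of-vacua step that inserts copies of $V$, including the identification $R_{V,0}=\mathrm{Id}$ (the fusion rule $N_{V,W_i}^{W_j}=\delta_{i,j}$).
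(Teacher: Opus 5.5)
Your proposal is correct and matches the paper's proof: the first identity is the $(i,j)$-entry of $R_{W^{2l},0}=R_{W,0}^{2l}=\mathrm{Id}$, and the remaining identities are deduced from it. The one small difference is in the odd cases, where the paper's ``the rest follow from the first equation'' just means substituting $S=W$ into the first identity. That gives the third identity directly and the second by $T\mapsto T'$, so your extra vacuum insertion and the fusion rule $N_{W,V}^{W_j}=\delta_{W,W_j}$ are harmless but not needed.
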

\noindent\textit{Proof.} 
Let $W_{1},\cdots, W_{l}$ be the collection of irreducible modules, up to isomorphism. Let's write $S=W_{i},T=W_{j}$, we have
$$
\begin{aligned}
 \mathrm{rank}\mathbb{V}_{0,2l+2}\left(V,\left\{W^{2l}\right\}, S, T'\right)=& (R_{W^{2l},0})_{i,j}= \left(R_{W,0}^{2l}\right)_{i,j}= \left(\mathrm{Id}_{l}\right)_{i,j}
 = \delta_{i,j}
 = \delta_{S,T}.
 \end{aligned}
$$
Thus, we have proved the first equation. The rest equations follow from the first equation.
\begin{flushright}
    Q.E.D.
\end{flushright}

\begin{remark}
    By Lemma \ref{lemma_1}, if $n$ is odd, then $\mathrm{rank}\mathbb{V}_{0,n}\left(V,\left\{W^{n}\right\}\right)=0$. Therefore, without loss of generality, in the remaining section, we assume $n$ is even.
\end{remark}

\begin{lemma}\label{lemma_2}
    Let $V$ be a VOA of CohFT-type, $W$ be an irreducible module of order $2$, with conformal weight $a_{W}$, $b_{0,I}$ be the coefficient of $\delta_{0:I}$, where $I \subset \{1,2,...,n\}$. Then, for all $1\le |I| \le l-1$, for all even $n\ge 4$,
    $
    \begin{aligned}
b_{0,I}=a_{W} \delta_{|I|, \text{odd}}.
\end{aligned}
$
\end{lemma}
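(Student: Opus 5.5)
The plan is to read off $b_{0,I}$ from the DGT formula for coinvariant divisors on $\overline{\mathcal{M}}_{0,n}$ \cite[Corollary 2]{DGT3}, exactly as in the proof of Proposition \ref{prop_5}. There the coefficient of the boundary divisor $\delta_{0:I}$ is a sum over all irreducible modules $W_{k}$ of the attaching conformal weight times a product of two genus-zero ranks:
$$
b_{0,I}=\sum_{k}a_{W_{k}}\,\mathrm{rank}\mathbb{V}_{0,|I|+1}\left(V,\left\{W^{|I|},W_{k}\right\}\right)\mathrm{rank}\mathbb{V}_{0,n-|I|+1}\left(V,\left\{W^{n-|I|},W_{k}'\right\}\right).
$$
Here $W^{m}$ denotes $m$ copies of $W$. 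The only remaining work is to evaluate these ranks using Lemma \ref{lemma_1}.

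Since $n$ is even, $|I|$ and $n-|I|$ have the same parity, so I split into two cases.

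\emph{Case $|I|$ odd.} Write $|I|=2l+1$. Lemma \ref{lemma_1} gives $\mathrm{rank}\mathbb{V}_{0,2l+2}(V,\{W^{2l+1},W_{k}\})=\delta_{W,W_{k}'}$. Because $W$ has order two, $W=W'$, so this equals $\delta_{W_{k},W}$. Likewise $n-|I|$ is odd, and the second factor equals $\delta_{W,W_{k}}$. Exactly one term survives, namely $W_{k}=W$, and $b_{0,I}=a_{W}$.

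\emph{Case $|I|$ even.} The first factor is $\mathrm{rank}\mathbb{V}_{0,|I|+1}(V,\{W^{|I|},W_{k}\})$, which by propagation of vacua equals $\mathrm{rank}\mathbb{V}_{0,|I|+2}(V,\{W^{|I|},W_{k},V'\})$. By Lemma \ref{lemma_1} (first formula, using $V'=V$) this is $\delta_{W_{k},V}$. The second factor similarly forces $W_{k}=V$. So only the vacuum term survives, and it contributes $a_{V}=0$ because $V$ is of CFT-type. Hence $b_{0,I}=0$.

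Combining the two cases gives $b_{0,I}=a_{W}\delta_{|I|,\text{odd}}$. One edge case needs care: if $|I|=1$ or $n-|I|=1$, the factor lives on $\overline{\mathcal{M}}_{0,2}$. I would handle this via the convention that the rank of $\mathbb{V}_{0,2}(V,\{A,B\})$ is $\delta_{A,B'}$, or directly by propagation of vacua to $\overline{\mathcal{M}}_{0,3}$; this is consistent with Lemma \ref{lemma_1} at $l=0$.

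The main obstacle is bookkeeping rather than mathematics. One must state the DGT coefficient formula with the correct placement of $W_{k}$ versus $W_{k}'$ and the correct weight, and note that $a_{W_{k}}=a_{W_{k}'}$, so the dualization convention does not matter. One must also read the bound $1\le|I|\le l-1$ in the statement as $1\le|I|\le n-1$.
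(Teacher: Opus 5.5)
Your proposal is correct and follows the paper's route: read off $b_{0,I}$ from the DGT coefficient formula and evaluate the two rank factors by parity using Lemma \ref{lemma_1}. For the edge case $|I|\in\{1,n-1\}$ the paper uses the convention $\delta_{0:\{i\}}=-\psi_i$, so $b_{0,\{i\}}$ is the $\psi_i$-coefficient $a_W\,\mathrm{rank}\mathbb{V}_{0,n}(V,\{W^{n}\})=a_W$, which is exactly what your formal $\overline{\mathcal{M}}_{0,2}$ convention reproduces. In the even case, your observation that the surviving term is the vacuum, contributing $a_V=0$, is more careful than the paper's written computation, which only tracks the $S=W$ term.
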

\noindent\textit{Proof.} Case 1: $2\le |I| \le n-2$.

Case 1.1: $|I|$ is odd.
$$
    \begin{aligned}
    b_{0,I}=&\sum_{S\in \mathcal{S}}a_{S}\mathrm{rank}\mathbb{V}_{0,|I|+1}\left(V,\left\{W^{|I|}, S\right\}\right)\mathrm{rank}\mathbb{V}_{0,n-|I|+1}\left(V,\left\{W^{I^{c}}, S'\right\}\right) \text{\cite[Corollary 2]{DGT3}}\\
    =& a_{W}\mathrm{rank}\mathbb{V}_{0,|I|+1}\left(V,\left\{W^{|I|},W'\right\}\right)\mathrm{rank}\mathbb{V}_{0,|I|+1}\left(V,\left\{W^{n-|I|},W'\right\}\right) \ \ \ (\text{Lemma }\ref{lemma_1})\\
    =& a_{W}\mathrm{rank}\mathbb{V}_{0,|I|+1}\left(V,\left\{W^{|I|+1}\right\}\right)\mathrm{rank}\mathbb{V}_{0,|I|+1}\left(V,\left\{W^{n-|I|+1}\right\}\right)= a_{W}\ \ \ (\text{Lemma }\ref{lemma_1}).
    \end{aligned} 
$$

Case 1.2: $|I|$ is even.
$$
    \begin{aligned}
    b_{0,I}=&\sum_{S\in \mathcal{S}}a_{S}\mathrm{rank}\mathbb{V}_{0,|I|+1}\left(V,\left\{W^{|I|}, S\right\}\right)\mathrm{rank}\mathbb{V}_{0,n-|I|+1}\left(V,\left\{W^{I^{c}}, S'\right\}\right) \text{\cite[Corollary 2]{DGT3}}\\
    =& a_{W}\mathrm{rank}\mathbb{V}_{0,|I|+1}\left(V,\left\{W^{|I|},W'\right\}\right)\mathrm{rank}\mathbb{V}_{0,|I|+1}\left(V,\left\{W^{n-|I|},W'\right\}\right) \ \ \ (\text{Lemma }\ref{lemma_1})\\
    =& a_{W}\mathrm{rank}\mathbb{V}_{0,|I|+1}\left(V,\left\{W^{|I|+1}\right\}\right)\mathrm{rank}\mathbb{V}_{0,|I|+1}\left(V,\left\{W^{n-|I|+1}\right\}\right)= 0\ \ \ (\text{Lemma }\ref{lemma_1}).
    \end{aligned} 
$$

Case 2: $|I|=1$ or $|I|=n-1$.

In this case, $\delta_{0,I}=-\psi_{i}$. Let $|I|=\{i\}$, for some $i\in \{1,\cdots,n\}$. By \text{\cite[Corollary 2]{DGT3}},
$$
\begin{aligned}
b_{0,\{i\}} =& a_{W}\mathrm{rank}\mathbb{V}_{0,n}\left(V,\left\{W^{n}\right\}\right)
= a_{W}\ \ \ (\text{Lemma }\ref{lemma_1}).
\end{aligned}
$$

\begin{flushright}
    Q.E.D.
\end{flushright}

\begin{example}
    Let $V$ be a VOA of CohFT-type, $W$ be an irreducible module of order two, with conformal weight $a_{W}$. Then, 
    $$
    \deg \mathbb{V}_{0,4}\left(V,\left\{W^{4}\right\}\right) = 4a_{W}.
    $$
    Thus, $\mathbb{V}_{0,4}\left(V,\left\{W^{4}\right\}\right)$ is ample if and only if $a_{W}>0$, and is nef if and only if $a_{W}\ge 0$.
\end{example}
\noindent\textit{Proof.} $$
\begin{aligned}
    \deg \mathbb{V}_{0,4}\left(V,\left\{W^{4}\right\}\right) =& \mathrm{rank}\mathbb{V}_{0,4}\left(V,\left\{W^{4}\right\}\right)(4a_{W})-\left(b_{0,\{1,2\}}+b_{0,\{1,3\}}+b_{0,\{1,4\}}\right) \text{\cite{DGT3}},\\
    =&4a_{W} (\text{Lemma \ref{lemma_2}})
\end{aligned}
$$
\begin{flushright}
    Q.E.D.
\end{flushright}

\begin{proposition}\label{prop_3}
    Let $V$ be a VOA of CohFT-type, $W$ be a irreducible module of order two, and $a_{W}$ be the conformal weight. Then, for $n$ even,
    $$
a_{W} \ge 0 \iff \mathbb{D}_{0,n}\left(V,\left\{W^{n}\right\}\right) \text{ is nef.}
$$
In particular, when $a_{W}<0$, $\mathbb{D}_{0,n}\left(V,\left\{W^{n}\right\}\right)$ is not F-nef.
\end{proposition}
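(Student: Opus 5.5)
We first write $\mathbb{D}_{0,n}(V,\{W^{n}\})$ explicitly in the standard basis of $\mathrm{Pic}(\overline{\mathcal{M}}_{0,n})\otimes\mathbb{Q}$, using the divisor formula of \cite[Corollary 2]{DGT3} (on $\overline{\mathcal{M}}_{0,n}$ the $\lambda$ term vanishes). By Lemma \ref{lemma_1} the rank is $1$ for $n$ even, so the $\psi$-part is $a_{W}\sum_{i}\psi_{i}$. By Lemma \ref{lemma_2} the boundary coefficients are $b_{0,I}=a_{W}\delta_{|I|,\text{odd}}$. Hence
$$
\mathbb{D}_{0,n}\left(V,\left\{W^{n}\right\}\right)=a_{W}\Big(\sum_{i=1}^{n}\psi_{i}-\sum_{|I|\ \text{odd},\, 3\le |I|\le n-3}\delta_{0,I}\Big)=a_{W}\,D_{n},
$$
where $D_{n}$ is a divisor that depends only on $n$, not on $V$ or $W$.

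Both directions of the equivalence then reduce to two facts about $D_{n}$: that it is nef, and that it is nonzero, ideally with a strictly positive intersection against some F-curve. Granting these, $a_{W}\ge 0$ makes $a_{W}D_{n}$ nef. Conversely, if $a_{W}<0$, then for any F-curve $C$ with $D_{n}\cdot C>0$ we get $\mathbb{D}_{0,n}\cdot C<0$. This shows the divisor is not F-nef, and a fortiori not nef, which gives the "in particular" clause.

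To show $D_{n}$ is nef, the cleanest route is to exhibit a $V$ and $W$ with $a_{W}>0$ for which the coinvariant bundle is known to be globally generated. A lattice VOA $V_{L}$ works: it is strongly generated in degree $1$, so by \cite{DG} its coinvariant divisors on $\overline{\mathcal{M}}_{0,n}$ are base-point free, hence nef. Take $L=\sqrt{2}\mathbb{Z}$, i.e.\ $V_{L}=L(\mathfrak{sl}_2)_1$. Its nontrivial module $W$ has order two with $a_{W}=1/4>0$. Then $\tfrac14 D_{n}$ is nef, so $D_{n}$ is nef. Alternatively, one could apply the Keel--McKernan/F-curve computation directly to $D_{n}$, but the universality argument avoids computation.

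For strict positivity on some F-curve, use the F-curve with four legs given by the partition $\{1\},\{2\},\{3\},\{4,\dots,n\}$. Intersecting with $\psi$ and boundary classes via the standard formula (\cite{GKM}), in terms of the legs' odd/even parities, should give $a_{W}$ times a positive number. This matches the $n=4$ example, where the degree is $4a_{W}$. Concretely, the F-curve intersection is $\sum_{i}b(N_i)-\sum_{\text{pairs}}b(N_i\cup N_j)$, with $b=1$ on odd sets (complement odd too, since $n$ is even) and $b=0$ on even sets. For leg sizes $1,1,1,n-3$ (all odd) this equals $4-0=4>0$.

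The main obstacle is this bookkeeping. One must make sure the $\psi_{i}$ terms are correctly absorbed as the $|I|=1$ boundary coefficients in the F-curve formula, and that the sign conventions of \cite{DGT3} line up. The nefness half is essentially free once the universality observation is made.
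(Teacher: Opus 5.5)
Your proof is correct, but it is organized differently from the paper's.

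The paper proves F-nef $\iff a_W\ge 0$ by running through every parity pattern of a four-part partition $I\sqcup J\sqcup K\sqcup L=[n]$: all legs even, two odd, or all odd. It finds equality in the first two cases and $4a_W$ against $0$ in the last. It then upgrades to nefness for $a_W\ge 0$ by writing the divisor as $b_1\psi-\sum_i b_iB_i$ and checking the subadditivity $b_i+b_j\ge b_{i+j}$ required by Chakravarty's criterion for symmetric divisors \cite[Theorem 3.6]{Cha}.

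You instead observe that Lemmas \ref{lemma_1} and \ref{lemma_2} give $\mathbb{D}_{0,n}(V,\{W^{n}\})=a_W D_n$, with $D_n$ independent of $(V,W)$. So nefness of $D_n$ can be imported from a single instance where global generation \cite{DG} applies, namely $V_{\sqrt{2}\mathbb{Z}}\cong L(\mathfrak{sl}_2)_1$ with $a_W=1/4$. The converse then needs only the one F-curve with legs of sizes $1,1,1,n-3$.

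What each route buys:
\begin{itemize}
\item Yours removes the case analysis and the dependence on \cite{Cha}. It also gives a stronger conclusion: since $a_W$ is rational, for $a_W>0$ the divisor is a positive rational multiple of a base-point-free class, hence semiample rather than merely nef.
\item The paper's computation gives the complete list of F-curve intersections, $D_n\cdot C=4$ if all four legs are odd and $0$ otherwise, i.e.\ it tells you which F-curves are contracted.
\end{itemize}

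Two small corrections:
\begin{itemize}
\item The F-curve formula subtracts only the three terms $b(N_1\cup N_2)+b(N_1\cup N_3)+b(N_1\cup N_4)$, not a sum over all six pairs of legs. This is harmless here, since every union of two of your legs is even.
\item Strong generation in degree one holds for root-lattice VOAs such as $V_{\sqrt{2}\mathbb{Z}}$, not for lattice VOAs in general. The justification should therefore go through the identification with $L(\mathfrak{sl}_2)_1$.
\end{itemize}
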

\noindent\textit{Proof.}
Let $I,J,K,L$ be a non-empty partition of $[n]$. 

Case 1: all of $|I|,|J|,|K|,|L|$ are even.
$$
b_{0,I} =b_{0,J} =b_{0,K} =b_{0,L} =0,
b_{0,I\cup J}=b_{0,I\cup K}=b_{0,I\cup L}=0.
$$
Thus, in this case, we have
$
b_{0,I} + b_{0,J} +b_{0,K} + b_{0,L} = b_{0,I\cup J}+b_{0,I\cup K}+b_{0,I\cup L}.
$

Case 2: two of $|I|,|J|,|K|,|L|$ are odd, and two are even.

Case 2.1: $|I|, |J|$ are odd, $|K|,|L|$ are even.
$$
\begin{aligned}
b_{0,I} + b_{0,J} +b_{0,K} + b_{0,L} =& 2a_{W}, \  b_{0,I\cup J}+b_{0,I\cup K}+b_{0,I\cup L} = 2a_{W}. 
\end{aligned}
$$
Thus, in this case, we have
$
b_{0,I} + b_{0,J} +b_{0,K} + b_{0,L} = b_{0,I\cup J}+b_{0,I\cup K}+b_{0,I\cup L}.
$

Case 2.2: $|I|, |J|$ are even, $|K|,|L|$ are odd.
$$
\begin{aligned}
b_{0,I} + b_{0,J} +b_{0,K} + b_{0,L} =& 2a_{W}, \  b_{0,I\cup J}+b_{0,I\cup K}+b_{0,I\cup L} =& 2a_{W}. 
\end{aligned}
$$
Thus, in this case, we have
$
b_{0,I} + b_{0,J} +b_{0,K} + b_{0,L} = b_{0,I\cup J}+b_{0,I\cup K}+b_{0,I\cup L}.
$

Case 3: all of $|I|,|J|,|K|,|L|$ are odd.
$$
\begin{aligned}
b_{0,I} + b_{0,J} +b_{0,K} + b_{0,L} =& 4a_{W}, \  b_{0,I\cup J}+b_{0,I\cup K}+b_{0,I\cup L} = 0. 
\end{aligned}
$$

By Case 1 - 3, 
$
b_{0,I} + b_{0,J} +b_{0,K} + b_{0,L} \ge b_{0,I\cup J}+b_{0,I\cup K}+b_{0,I\cup L} \iff a_{W}\ge 0.
$
Therefore,
$$
\mathbb{D}_{0,n}\left(V,\left\{W^{n}\right\}\right) \text{ is F-nef} \iff a_{W}\ge 0.
$$

Claim: if $a_{W}\ge 0$, $\mathbb{D}_{0,n}\left(V,\left\{W^{n}\right\}\right)$ is nef.

Let $\mathcal{R}$ be the fusion ring of $V$, and let $\mathcal{S}$ be the subring generated by $W$. Since $W$ is an irreducible module of order two (i.e., $W\boxtimes W=V$), the simple modules contained in $\mathcal{S}$ are $W$ and $V$, both of which has non-negative conformal weight. Let $\psi=\sum_{i=1}^{n}\psi_{i}$, and for each $i\in\{2,\cdots,\lfloor\frac{n}{2}\rfloor\}$, define $B_{i}=\sum_{I\subset [n],|I|=i}\delta_{0,I}$. Then, we can write $$\mathbb{D}_{0,n}\left(V,\left\{W^{n}\right\}\right)=b_{1}\psi -\sum_{2\le i \le\lfloor\frac{n}{2}\rfloor }b_{i}B_{i},$$
where $b_{1} = a_{W}$ and $b_{i}=n \cdot a_{W}\delta_{i,\text{odd}}$. Therefore, for all $i,j\ge 2$, $b_{i}+b_{j}\ge b_{i+j}$, so $\mathbb{D}_{0,n}\left(V,\left\{W^{n}\right\}\right)$ is nef \cite[Theorem 3.6]{Cha}. Thus, we proved the claim.

\begin{flushright}
    Q.E.D.
\end{flushright}

Next, we consider examples of irreducible modules of order two.
\begin{example}\label{lemma_10}
    Let $V_{p,q}$ be a Virasoro VOA in the discrete series ($p\not=2$). Let $W_{\max}$ be the module of maximal conformal weight. Then, $W_{\max}$ is a irreducible module of order two.
\end{example}
\noindent\textit{Proof.} Let $V=V_{p,q}$, and $W_{1},\cdots, W_{l}$ be the collection of irreducible modules, up to isomorphism. 

Claim: $\forall$ irreducible module $W_{i}$, $\exists!$ a unique $W_{\tilde{i}}$, $\mathrm{rank}\mathbb{V}_{0,3}(V,\{W_{\max},W_{i},W_{\tilde{j}}')=\delta_{j,\tilde{i}}$.

Part I: existence.

By \cite{Wang93}, the conformal weight of irreducible modules is 
$
h_{p,q,m,n} = \frac{(np-mq)^{2}-(p-q)^{2}}{4pq},
$ so $W_{\max}=L_{c,h_{1,q-1}}$.
Let $W=L_{c,h_{m,n}}$ be an arbitrary minimal module of conformal weight $h_{m,n}$. Consider $\tilde{W}=L_{c,h_{m,q-n}}$. We have $\mathrm{rank}\mathbb{V}_{0}\left(V,\left\{W_{\max},W,\widetilde{W}\right\}\right)=1$ \cite{Wang93}.

Part II: uniqueness.

Suppose that $\mathrm{rank}\mathbb{V}_{0}(V,\{W_{\max},L_{c,h_{m,n}},L_{c,h_{m',n'}})=1$. Then, one of the condition for $m$ and $m'$ is
$
m<m'+1, m'<m+1
$
which is equivalent to $m=m'$. One of the condition for $n$ and $n'$ is
$
q-1<n+n', q-1+n+n'<2q,
$
which is equivalent to
$
q-1<n+n', n+n'<q+1,
$
and is equivalent to
$
n'=q-n.
$
We can see that the solution is unique. Thus, we have proved the Claim.

By the claim, $R_{W_{\max}}$ has exactly one $1$ in each row, and the rest are $0$'s, and $R_{W_{\max}}$ has exactly one $1$ in each column, and the rest are $0$'s. That is, $R_{W\max,0}$ is a permutation matrix, so $R_{\max,0}^{-1}=R_{\max,0}^{T}$, where $R_{\max,0}^{T}$ is the transpose. Notice that for Virasoro VOAs in the discrete series, all the irreducible modules are self-dual, so $R_{\max,0}$ is symmetric (i.e., $R_{\max,0}=R_{\max,0}^{T}$). Thus, $R_{\max,0}=R_{\max,0}^{-1}$ (i.e., $W_{\max}$ is a irreducible module of order two.
\begin{flushright}
    Q.E.D.
    \end{flushright}
\begin{remark}
    When $p=2$, $W_{\max} = V$, which is not a irreducible module of order two. However, the symmetric coinvariant divisor is still nef, by the propagation of vacua.
\end{remark}

\begin{example}
    Let $V_{L}$ be a lattice VOA. Suppose that there exists a $\lambda\in L'/L$ such that $\lambda+\lambda=0$ (e.g. $L'/L\simeq \mathbb{Z}/2m\mathbb{Z}$). Then, $W_{\lambda}$ is a irreducible module of order two.
\end{example}

\begin{example}\label{example_1}
    Let $V$ be an affine VOA associated with $sl_{2}$, with arbitrary level $l$. Let $W$ be the module of maximal conformal weight. Then, $W$ is a irreducible module of order two.
\end{example}


\section{Symmetric coinvariant divisors and Virasoro VOAs}

Virasoro VOAs form an important class of examples of vertex operator algebras. They correspond to the representations of the Virasoro Lie algebra. A Virasoro VOA is strongly rational if and only if its central charge is $c_{p,q}:=1-\frac{6(p-q)^{2}}{pq}$, for some relatively prime integers $p,q\ge 2$ \cite{FZ92,Wang93}, and we denote it by $V_{p,q}$. We call such VOAs the \textit{Virasoro VOAs in the discrete series}. If $p=2$ or $q = 2$, we say that the VOA \textit{lies on the boundary of the discrete series}. Since $V_{p,q}\cong V_{q,p}$, without loss of generality, we may assume $p<q$.

 In this section, we study how the vector bundles of coinvariants change as the Virasoro VOA moves through (the boundary of) the discrete series.

\begin{definition}
    Let $V_{p,q}$ be the Virasoro VOA of central charge $c_{p,q}$. Let $W_{\min}$ (resp. $W_{\max}$) be the irreducible module of the minimal (resp. maximal) conformal weight. Define 
    $$
f_{V_{p,q},W_{\min}}(z)=\sum_{n=0}^{\infty}\mathrm{rank} \mathbb{V}_{0,n+3}\left(V_{p,q},\left\{W_{\min}^{n+3}\right\}\right)z^{n}.
    $$
    We call it the \textit{generating function associated with $W_{\min}$}. We define $f_{V_{p,q},W_{\max}}(z)$ similarly.
\end{definition}

For $W_{\min}$, the generating function gradually changes as one moves along the boundary of discrete series. On the other hand, the pattern is more uniform for $W_{\max}$.

\begin{theorem}\label{theorem_7}
    Let $V_{2,2l+1}$ be a Virasoro VOA on the boundary of discrete series. Then,\\
when $l$ is even,
$$
f_{V_{2,2l+1},W_{\min}}(z)=\frac{1}{-z-\frac{1}{-z-\frac{1}{\frac{\ddots}{-z+1}}}},
$$
when $l$ is odd,
$$
f_{V_{2,2l+1},W_{\min}}(z)=\frac{1}{-z-\frac{1}{-z-\frac{1}{\frac{\ddots}{-z-1}}}},
$$
where each finite continued fraction on the right-hand side has $l$ layers.
\end{theorem}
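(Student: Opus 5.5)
The plan is to reduce the statement to a resolvent computation for one explicit fusion matrix, and then expand that resolvent as a continued fraction.

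\textbf{Step 1: express $f$ through the fusion matrix.} Write $R:=R_{W_{\min},0}$ for the fusion matrix of $W_{\min}$ over $V=V_{2,2l+1}$. By propagation of vacua, $\mathrm{rank}\mathbb{V}_{0,n+3}(V,\{W_{\min}^{n+3}\})$ equals $\mathrm{rank}\mathbb{V}_{0,n+5}(V,\{W_{\min}^{n+3},V,V'\})$. Theorem \ref{cor_2} (or Proposition \ref{cor_3} with $W^{\bullet}$ empty and $g=0$) then gives
$$f_{V_{2,2l+1},W_{\min}}(z)=\left(\frac{R^{3}}{\mathrm{Id}_{l}-zR}\right)_{(V,V)}=\sum_{n\ge 0}(R^{n+3})_{(V,V)}z^{n}.$$
So everything is controlled by the $(V,V)$-entry of the resolvent $G(z):=((\mathrm{Id}_{l}-zR)^{-1})_{(V,V)}=\sum_{k\ge0}(R^{k})_{(V,V)}z^{k}$. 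Expanding the geometric series, $f=(G(z)-1-(R)_{(V,V)}z-(R^{2})_{(V,V)}z^{2})/z^{3}$.

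\textbf{Step 2: identify $R$ explicitly.} The irreducible modules of $V_{2,2l+1}$ are $L_{c,h_{1,s}}$ for $s=1,\dots,l$ (after the identification $h_{1,s}=h_{1,2l+1-s}$), and all are self-dual. By Wang's fusion rules \cite{Wang93}, $\phi_{1,s}\boxtimes\phi_{1,t}$ is the sum of $\phi_{1,u}$ over $|s-t|+1\le u\le \min(s+t-1,\,2(2l+1)-s-t-1)$, with $u\equiv s+t-1 \pmod 2$, folded by $u\mapsto 2l+1-u$. From the formula $h_{1,s}=\frac{(2s-(2l+1))^{2}-(2l-1)^{2}}{8(2l+1)}$, the minimal weight is $W_{\min}=\phi_{1,l}$ (with $s=l$ closest to $(2l+1)/2$).

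The key structural fact I would verify is the following. After reindexing the modules along the chain $V=\phi_{1,1},\ \phi_{1,2l-1}\!\sim\!\phi_{1,2},\dots$ (or $V=\phi_{1,1}, \phi_{1,3},\dots$ by parity), $R$ becomes the adjacency matrix of a path of length $l$, with ones on the sub- and superdiagonal, plus a single loop (diagonal entry $1$) at the far end of the path. This is the familiar ``tadpole'' fusion graph $T_{l}$, and $V$ sits at the end without the loop. I expect this check to be direct from the fusion rules, and the Lee--Yang case $l=2$, where $R=\begin{pmatrix}0&1\\1&1\end{pmatrix}$, serves as a test.

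\textbf{Step 3: continued fraction for the resolvent.} For a tridiagonal (Jacobi) matrix with unit off-diagonal entries and diagonal $(0,\dots,0,1)$, a Schur complement computation gives
$$G(z)=\cfrac{1}{1-\cfrac{z^{2}}{1-\cfrac{z^{2}}{\ddots\, 1-z}}}$$
with $l$ levels. Dividing numerators and denominators by $z$ turns each level into a term of the form $\frac{1}{z}-\ldots$. I would then substitute $G$ into the formula for $f$ from Step 1. The natural approach is an induction on $l$ via the recursion $G_{l}=1/(1-z^{2}G'_{l-1})$, where $G'$ is the corresponding truncated resolvent. The goal is to show that subtracting the first three Taylor terms and dividing by $z^{3}$ reproduces the same continued fraction shape with entries $-z$ and terminal entry $-z\pm1$. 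Concretely, I would look for the algebraic identity satisfied by the tail $T_{k}:=1/(-z-T_{k-1})$ and match $f$ against it.

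\textbf{Main obstacle.} The difficult step is this final algebraic matching, and in particular getting the terminal sign right: $-z+1$ for even $l$ and $-z-1$ for odd $l$. This sign depends on the parity bookkeeping of where the loop vertex sits relative to $V$ after the reindexing of Step 2, and on the sign flips introduced each time a $z^{-1}$ is cleared. I would first confirm the formula for $l=1,2,3$ by direct computation of $R^{3}(\mathrm{Id}-zR)^{-1}$. Then I would set up the induction so that each step prepends exactly one layer $-z-\frac{1}{\cdot}$, and track the sign of the terminal entry as it alternates with $l$.
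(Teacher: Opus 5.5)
Your Step 1 is fine. The gap is in Step 2: the ``key structural fact'' is false for $l\ge 3$. The tadpole you describe (a path starting at $V$ with one loop at the far end) is the fusion graph of $\phi_{1,2}$, not of $W_{\min}=\phi_{1,l}$. These two modules coincide only for $l\le 2$, which is exactly why the Lee--Yang test passes.

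Already for $l=3$ one has $\phi_{1,3}\boxtimes\phi_{1,3}=\phi_{1,1}+\phi_{1,2}+\phi_{1,3}$ and $\phi_{1,3}\boxtimes\phi_{1,2}=\phi_{1,2}+\phi_{1,3}$. So $R=R_{W_{\min},0}$ has trace $2$ and six nonzero entries, whereas the tadpole matrix $T$ has trace $1$ and five. No reindexing relates them; their characteristic polynomials $x^{3}-2x^{2}-x+1$ and $x^{3}-x^{2}-2x+1$ differ. In general, ordering the modules $W_{1}=W_{\min},\dots,W_{l}$ by increasing conformal weight, $R_{(i,j)}=1$ iff $i+j\le l+1$ (Lemma \ref{lem}). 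So your Steps 2--3 would compute $\sum_{n}(T^{n+3})_{(V,V)}z^{n}$, which is the generating function for $\phi_{1,2}$, not for $W_{\min}$. For $l=3$ its constant term is $0$, while $f(0)=N_{W_{\min},W_{\min}}^{W_{\min}}=1$.

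The matrix that does have the path-with-one-loop shape is $R^{-1}$. It has entries $1$ on $i+j=l+1$, $-1$ on $i+j=l+2$, and $0$ elsewhere. Reorder the basis as $W_{1}=W_{\min},W_{l},W_{2},W_{l-1},\ldots$ and conjugate by a diagonal sign matrix (this leaves diagonal entries of the resolvent unchanged). Then $R^{-1}$ becomes a Jacobi matrix with unit off-diagonal entries, starting at $W_{\min}$, whose only nonzero diagonal entry sits at the far end.

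The useful identity is then the one the paper uses:
$f(z)=\bigl(R(\mathrm{Id}-zR)^{-1}\bigr)_{(W_{\min},W_{\min})}=\bigl((R^{-1}-z\,\mathrm{Id})^{-1}\bigr)_{(W_{\min},W_{\min})}$.
This holds because $(R^{n+1})_{(W_{\min},W_{\min})}$ already involves $n+3$ copies of $W_{\min}$. The standard continued fraction for the $(1,1)$-entry of a Jacobi resolvent then gives the $l$-layer fraction directly, with no Taylor terms to strip off. The paper phrases this as a determinant recursion, using that the central block of $R_{l}^{-1}-z$ is $R_{l-2}^{-1}-z$.

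As for the sign you single out as the main obstacle: the loop entry is $+1$ for odd $l$ (it lies on the antidiagonal) and $-1$ for even $l$. So the terminal entry is $-z+1$ for odd $l$ and $-z-1$ for even $l$, which is the reverse of the parities as printed. Your planned checks will reveal this. For $l=1$ one gets $f=1/(1-z)$. For $l=2$ one gets $f=(1+z)/(1-z-z^{2})=1/\bigl(-z-1/(-z-1)\bigr)$, whereas the printed even-case fraction $1/\bigl(-z-1/(-z+1)\bigr)$ has constant term $-1$. So the statement can only be proved after exchanging the two parity cases.
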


\noindent\textit{Proof.} The proof is in Appendix \ref{app_theorem_7}.

\begin{proposition}
    Let $V_{p,q}$ be a Virasoro VOA in the discrete series. If $p=2$ or $q=2$ (i.e., $V_{p,q}$ lies on the boundary of the discrete series), then
    $$
f_{V_{p,q},W_{\max}}(z)=\frac{1}{1-z}.
    $$
    If $p,q\neq 2$ (i.e., $V_{p,q}$ lies in the interior of the discrete series), then
    $$
f_{V_{p,q},W_{\max}}(z)=\frac{z}{1-z^{2}}.
    $$
\end{proposition}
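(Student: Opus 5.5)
The plan is to split into the two cases in the statement. In both cases I reduce to rank computations already available in the paper, namely propagation of vacua and Lemma \ref{lemma_1}. Once the ranks $\mathrm{rank}\mathbb{V}_{0,n+3}(V_{p,q},\{W_{\max}^{n+3}\})$ are known for every $n$, the generating function is just a geometric series summed directly. Alternatively, Proposition \ref{cor_3} gives it as the $(1,1)$-entry of $R_{W_{\max},0}^{3}(\mathrm{Id}-R_{W_{\max},0}z)^{-1}$, since $W_1=V$ and $V'=V$.

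\emph{Boundary case.} Since $V_{p,q}\cong V_{q,p}$, assume $p=2$ and $q=2k+1$ odd. I first identify $W_{\max}$ using Wang's formula $h_{2,q,1,n}=\frac{(2n-q)^2-(q-2)^2}{8q}$ for $1\le n\le q-1$. Because $q$ is odd, $|2n-q|\le q-2$ on this range, with equality exactly for $n=1$ and $n=q-1$. Hence every conformal weight is $\le 0$, and the maximum $0$ is attained only by the vacuum module $L_{c,0}=V$; this is the content of the remark following Example \ref{lemma_10}. So $W_{\max}=V$. Propagation of vacua then gives $\mathrm{rank}\mathbb{V}_{0,n+3}(V,\{V^{n+3}\})=\mathrm{rank}\mathbb{V}_{0,3}(V,\{V,V,V\})=1$ for all $n\ge0$. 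Equivalently, $R_{V,0}=\mathrm{Id}$, so every $(1,1)$-entry is $1$. Therefore $f=\sum_n z^n=\frac{1}{1-z}$.

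\emph{Interior case.} When $p,q\ne2$, Example \ref{lemma_10} shows that $W_{\max}$ is an irreducible module of order two. Lemma \ref{lemma_1} then gives $\mathrm{rank}\mathbb{V}_{0,m}(V,\{W_{\max}^{m}\})=\delta_{m,\text{even}}$. With $m=n+3$, the coefficient of $z^n$ is $1$ exactly when $n$ is odd, and $0$ otherwise. So $f=z+z^3+z^5+\cdots=\frac{z}{1-z^2}$. As a consistency check, the same answer comes from Proposition \ref{cor_3} using $R_{W_{\max},0}^2=\mathrm{Id}$ and $(R_{W_{\max},0})_{(1,1)}=0$ (because $W_{\max}\ne V$): the matrix $R^{3}(\mathrm{Id}-Rz)^{-1}=R(\mathrm{Id}+Rz)(1-z^2)^{-1}$ has $(1,1)$-entry $z/(1-z^2)$.

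The only step needing real care is identifying $W_{\max}$ in the boundary case. The point to verify is that for odd $q$ no $n$ with $1\le n\le q-1$ makes $|2n-q|$ exceed $q-2$, so the vacuum is the maximum. The interior case is immediate from the two cited results.
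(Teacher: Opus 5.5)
Your proposal is correct and follows the paper's proof. In the boundary case $W_{\max}=V$, so propagation of vacua makes every rank $1$. In the interior case Example \ref{lemma_10} with Lemma \ref{lemma_1} gives ranks $\delta_{n+3,\mathrm{even}}$, and both series are then summed directly. Your check via Wang's weight formula that $W_{\max}=V$ when $p=2$, and your cross-check through Proposition \ref{cor_3} using $R_{W_{\max},0}^2=\mathrm{Id}$, are both correct additions; the paper simply cites \cite{Wang93} and Example \ref{lemma_10} for these points.
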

\noindent\textit{Proof.} When $p=2$ or $q=2$, $W_{\max}=V_{p,q}$ \cite{Wang93}. By propagation of vacua, $\mathrm{rank} \mathbb{V}_{0,n}\left(V_{p,q},\left\{W_{\max}^{n}\right\}\right) = 1$, for all $n\ge 3$. Thus, the generating function is
$$
f_{V_{p,q},W_{\max}}(z)=\sum_{n=0}^{\infty}z^{n}=\frac{1}{1-z}.
$$

When $p,q\neq 2$, by Example \ref{lemma_10}, $\mathrm{rank} \mathbb{V}_{0,n}\left(V_{p,q},\left\{W_{\max}^{n}\right\}\right) = \delta_{n,\mathrm{even}}$, for all $n\ge 3$. Thus, the generating function is
$$
f_{V_{p,q},W_{\max}}(z)=\sum_{n=0}^{\infty}\delta_{n+3,\mathrm{even}}z^{n}=\sum_{n=0}^{\infty}z^{2n+1}=\frac{z}{1-z^{2}}.
$$
\begin{flushright}
    Q.E.D.
\end{flushright}

\section{Some examples of rank formulae}
\subsection{Virasoro VOAs in the discrete series.}
\begin{example}
    Let $V=V_{p,q}$ be the Virasoro VOA of central charge $p,q$. Let $W_{\max}$ be the module of maximal conformal weight. Then, we have
    $$
    \mathrm{rank}\mathbb{V}_{0,n}\left(V_{p,q},\left\{V^{ i}, W_{\max}^{ 2k}, W, S\right\}\right)=\left\{
    \begin{aligned}
        1,& \text{ if }W=S,\\
        0,& \text{ otherwise},
    \end{aligned}
    \right.
    $$ for all $i,k\ge 0$, on $\overline{\mathcal{M}}_{0,n}$, where $n=i+2k+2\ge 3$. And,
    $$
    \begin{aligned}\mathrm{rank}\mathbb{V}_{1,n}\left(V_{p,q},\left\{V^{ i}, W_{\max}^{ 2k}\right
    \}\right)=&\frac{(p-1)(q-1)}{2}
    =\# \text{ irreducible modules, up to isomorphism},
    \end{aligned}
    $$
    for all $i,k\ge 0$ on $\overline{\mathcal{M}}_{1,n}$, where $n=i+2k\ge 1$.
\end{example}
\noindent\textit{Proof.}
By Propagation of vacua, 
$
\mathrm{rank}\mathbb{V}_{0,n}\left(V,\left\{V^{ i}, W_{\max}^{ 2k}, W, S\right\}\right)=\mathrm{rank}\mathbb{V}_{0,n}\left(V,\left\{ W_{\max}^{ 2k}, W, S\right\}\right).
$ By Example \ref{lemma_10}, $R_{\max,0}^{2}=\mathrm{Id}$. The claim follows from Proposition \ref{cor_2}. Since all irreducible modules of a Virasoro VOA are self-dual, we can replace the condition $W=S'$ by $W=S$.

\begin{flushright}
    Q.E.D.
\end{flushright}

\begin{lemma}\label{lem_2}\cite{Wang93}
    Let $V=V_{2,2t+1}$ be the Virasoro VOA of central charge $c_{2,2t+1}$, for $t\ge 1$. $V$ has $t$-many irreducible modules \cite[Theorem 4.2]{Wang93}, labeled as $W_{1},...,W_{t}$ by increasing order of conformal weight.
    Then, the FA-matrices is the following:
    \begin{enumerate}
        \item if $t$ is odd, then
        \begin{equation}\label{equation_19}
    (R_{W_{k},0})_{(i,j)}=\left\{
    \begin{aligned}
        1,& \text{ if }i+j\le t-k+2, \\
        1,&\text{ if }i+j \equiv k+1 \mod 2, \text{ and } |i-j|\le t-k+1, \text{ and }i+j\le t+k+1,\\
        0,& \text{ otherwise}.
    \end{aligned}
    \right.
    \end{equation}

    \item if $t$ is even, then
        \begin{equation}\label{equation_20}
    (R_{W_{k},0})_{(i,j)}=\left\{
    \begin{aligned}
        1,& \text{ if }i+j\le t-k+2, \\
        1,&\text{ if }i+j \equiv k \mod 2, \text{ and } |i-j|\le t-k+1, \text{ and }i+j\le t+k+1,\\
        0,& \text{ otherwise}.
    \end{aligned}
    \right.
    \end{equation}
    \end{enumerate}
    
\end{lemma}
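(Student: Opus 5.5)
We obtain the fusion matrices directly from Wang's fusion rules for $V_{2,2t+1}$, followed by a change of labels. By \cite[Theorem 4.2]{Wang93}, the irreducible modules are $L_{c,h_{1,s}}$ for $1\le s\le 2t$, with $L_{c,h_{1,s}}\cong L_{c,h_{1,2t+1-s}}$. The fusion rules are the minimal-model rules:
$$N_{(1,r),(1,s)}^{(1,u)}=1 \quad\text{iff}\quad u\equiv r+s+1 \pmod 2,\ \ |r-s|+1\le u\le \min(r+s-1,\,2(2t+1)-r-s-1),$$
and $0$ otherwise. All modules are self-dual, so $W_j'=W_j$. Hence $(R_{W_k,0})_{(i,j)}=\mathrm{rank}\mathbb{V}_{0,3}(V,\{W_k,W_i,W_j\})$ is the fusion coefficient $N_{W_k,W_i}^{W_j}$, computed with any choice of representatives for the three labels.

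\textbf{Step 1: relabel by conformal weight.} From
$$h_{1,s}=\frac{(2s-(2t+1))^2-(2t-1)^2}{8(2t+1)},$$
on the range $1\le s\le t$ the weight is strictly decreasing in $s$. The labelling by increasing conformal weight is therefore $W_k=L_{c,h_{1,t+1-k}}$. In particular $W_t=V$ has weight $0$, which is the maximum, matching the earlier remark that $W_{\max}=V$ on the boundary.

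As a consistency check, I would verify first that the stated formulas give $R_{W_t,0}=\mathrm{Id}$. For $k=t$ the first clause only allows $(1,1)$. In the second clause, the condition $|i-j|\le 1$ together with the parity condition forces $i=j$.

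\textbf{Step 2: choose representatives and apply the rule.} Each class $\{s,\,2t+1-s\}$ contains exactly one odd and one even label. I will represent every module by its \emph{even} label $2m$ for $m=1,\dots,t$, or alternatively by its odd label, whichever makes the parity bookkeeping uniform. The label $t+1-k$ is odd or even according to the parities of $t$ and $k$. So passing from $W_k$ to its chosen representative costs either nothing or the reflection $s\mapsto 2t+1-s$, depending on these parities. This is exactly where the two cases $t$ odd and $t$ even, and the shift of the parity condition from $k+1$ to $k$, should come from.

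With representatives $r,s$ for $W_k,W_i$, I list the allowed $u$. Then I translate each allowed $u$ back to an index $j$, applying the reflection when $u>t$ in the chosen normalization.

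\textbf{Step 3: match the two regions.} The allowed $u$ translate into two sets of indices $j$:
\begin{enumerate}
\item the $u$ that, after reflection, land in the region $i+j\le t-k+2$; here both members of a class can occur, so every $j$ in this triangle is hit;
\item the $u$ in the arithmetic progression of step $2$, which give the parity band $|i-j|\le t-k+1$, $i+j\le t+k+1$.
\end{enumerate}
Three further things must be checked: the truncation $u\le 2(2t+1)-r-s-1$ becomes the inequality $i+j\le t+k+1$; the lower bound $|r-s|+1\le u$ becomes $|i-j|\le t-k+1$; and no index is counted twice, so all entries are $0$ or $1$.

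The main obstacle is this final bookkeeping. The folding $s\mapsto 2t+1-s$ makes a single arithmetic progression of $u$ produce both the full triangle and the parity band, and the parity of $t$ decides which case applies. I would first test the formulas against explicit fusion tables for $t=2,3,4$ to fix the conventions, and then do the general inequality manipulation separately for each combination of parities of $t$ and $k$.
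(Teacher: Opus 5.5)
Your route (Wang's fusion rules for $L(c,h_{1,s})$ plus the relabelling $W_k=L(c,h_{1,t+1-k})$) is the right one. It is also all the paper relies on: it gives no argument beyond citing \cite{Wang93} and the tables for $t\le 4$. The appendix proof of Lemma \ref{lem} ($k=1$) uses the same sum-over-representatives bookkeeping. The gap is that Step 3, which is the entire content of the lemma, is only announced, and the mechanism you predict for it is wrong in four places.

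(i) The entry cannot be ``computed with any choice of representatives'' in the output slot. The coefficient of the class $\{u,2t+1-u\}$ is $N^{u}_{r,s}+N^{2t+1-u}_{r,s}$. For example, for $t=2$, $N^{(1,2)}_{(1,2),(1,2)}=0$, although $W_1$ does occur in $W_1\boxtimes W_1$ via $u=3$. In the two input slots, changing representatives is harmless because $N^{q-u}_{q-r,s}=N^{u}_{r,s}$. Your Step 2 procedure handles this correctly; only that sentence is wrong.

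(ii) For fixed $(k,i)$ all allowed $u$ have parity $i+k+1$, while $u$ and $2t+1-u$ have opposite parity. So both members of a class \emph{never} occur. That is why the entries are $0$ or $1$, and it is not how the triangle gets filled.

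(iii) With the representatives $r=t+1-k$, $s=t+1-i\le t$ from Step 1, $r+s\le 2t<2t+1$. Hence the truncation $u\le 2(2t+1)-r-s-1$ never binds and cannot be the source of $i+j\le t+k+1$. Your even-label normalization only reactivates the truncation in some parity cases, while producing the same classes.

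(iv) The split into $t$ odd and $t$ even does not come from the choice of representatives. Both \eqref{equation_19} and \eqref{equation_20} say $i+j\equiv t+k \pmod 2$; this parity follows from $u\equiv i+k+1$ and $j=t+1-u$.

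To close the argument, take $r=t+1-k$ and $s=t+1-i$. The allowed outputs are exactly the integers
\[
|i-k|+1\le u\le 2t+1-i-k,\qquad u\equiv i+k+1 \pmod 2 .
\]
If $u\le t$, then $j=t+1-u$. The two bounds become $j+|i-k|\le t$ and $i-j\le t-k$, that is, $|i-j|\le t-k$ and $i+j\le t+k$, with $i+j\equiv t+k$.

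If $u\ge t+1$, then after reflection $j=u-t$. The lower bound is automatic, since $|i-k|\le t-1$. The upper bound becomes $i+j\le t+1-k$, with $i+j\equiv t+k+1$. The two sets of $j$ are disjoint by parity.

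Now compare with the statement, one parity class at a time.
\begin{itemize}
\item In the class $i+j\equiv t+k+1$ only the triangle applies, and there $i+j\le t-k+2$ is equivalent to $i+j\le t-k+1$. This matches the reflected outputs.
\item In the class $i+j\equiv t+k$ the band inequalities reduce by parity to $|i-j|\le t-k$ and $i+j\le t+k$, matching the unreflected outputs. The triangle adds nothing here: $i+j\le t-k+2$ already forces $|i-j|\le i+j-2\le t-k$ and $i+j\le t+k$.
\end{itemize}
This proves \eqref{equation_19} and \eqref{equation_20} simultaneously, with no case analysis on the parities of $t$ and $k$.
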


Equation \eqref{equation_19} and \eqref{equation_20} may appear to be complicated, but the pattern is very clear after computing several examples. It allows us to directly write down the FA-matrix on $\overline{\mathcal{M}}_{0,3}$, without computing the fusion rule for each entry.

\begin{example}
    Let $V_{2,q}$ be the Virasoro VOA of central charge $q=2t+1$. For each $t$, we label the irreducible modules are $W_{1},\cdots, W_{t}$. Then, $R_{W_{1},0},...,R_{W_{t},0}$ (from the left to right) are:
    \begin{enumerate}
        \item for $t=1$, we have
        $$
\left(\begin{matrix}
\boldsymbol{1}
\end{matrix}\right)
        $$
        \item for $t=2$, we have
        $$
\left(\begin{matrix}
1 & \boldsymbol{1} \\
\boldsymbol{1} & 0
\end{matrix}\right), \left(\begin{matrix}
\boldsymbol{1} & 0 \\
0 & 1
\end{matrix}\right)
        $$
        \item for $t=3$, we have
        $$
    \left(\begin{matrix}
1 & 1 & \boldsymbol{1} \\
1 & \boldsymbol{1} & 0 \\
\boldsymbol{1} & 0 & 0
\end{matrix}\right), 
\left(\begin{matrix}
1 & \boldsymbol{1} & 0 \\
\boldsymbol{1} & 0 & 1 \\
0 & 1 & 0
\end{matrix}\right), \left(\begin{matrix}
\boldsymbol{1} & 0 & 0 \\
0 & 1 & 0 \\
0 & 0 & 1
\end{matrix}\right)
        $$
    
    \item for $t=4$, we have
    $$
\left(\begin{matrix}
1 & 1 & 1 & \boldsymbol{1} \\
1 & 1 & \boldsymbol{1} & 0 \\
1 & \boldsymbol{1} & 0 & 0 \\
\boldsymbol{1} & 0 & 0 & 0
\end{matrix}\right), \left(\begin{matrix}
1 & 1 & \boldsymbol{1} & 0 \\
1 & \boldsymbol{1} & 0 & 1 \\
\boldsymbol{1} & 0 & 1 & 0 \\
0 & 1 & 0 & 0
\end{matrix}\right),\left(\begin{matrix}
1 & \boldsymbol{1} & 0 & 0 \\
\boldsymbol{1} & 0 & 1 & 0 \\
0 & 1 & 0 & 1 \\
0 & 0 & 1 & 0
\end{matrix}\right),\left(\begin{matrix}
\boldsymbol{1} & 0 & 0 & 0 \\
0 & 1 & 0 & 0 \\
0 & 0 & 1 & 0 \\
0 & 0 & 0 & 1
\end{matrix}\right).
    $$
    \end{enumerate}
\end{example}

Applying Theorem \ref{cor_2}, we obtain an explicit closed rank formula for Virasoro VOA of central charge $c_{2,2k+1}$ on $\overline{\mathcal{M}}_{g,n}$, for all $q\ge 3$, all $g,n$. Moreover, one can analyze the pattern of the multiplication of these matrices to further simplify the rank formula.

\begin{example}
    Let $V=V_{p,q}$ be the Virasoro VOA of central charge $c_{p,q}$. Let $S^{\bullet}$ be an $n$-tuple of irreducible modules such that $R_{S^{\bullet}}=0$. Then,
    $$
    \mathrm{rank}\mathbb{V}_{1,n}\left(V,\{S^{\bullet}\}\right) = \frac{(p-1)(q-1)}{2}.
    $$
    In particular,
    $$
    \mathrm{rank}\mathbb{V}_{1,1}\left(V,\{V\}\right) = \frac{(p-1)(q-1)}{2}.
    $$
\end{example}
\noindent\textit{Proof.} By Example \ref{ex_1}, $\mathrm{rank}\mathbb{V}_{1,n}\left(V,\{S^{\bullet}\}\right)= $ the number of irreducible modules, up to isomorphism. By \cite{Wang93}, that number is equal to $ \frac{(p-1)(q-1)}{2}$.
\begin{flushright}
    Q.E.D.
\end{flushright}

\begin{example}
    Let $V$ be the Virasoro VOA of central charge $c_{2,5}$ (i.e., Yang-Lee Model). By \cite{Wang93}, $V_{2,5}$ has only one non-trivial irreducible module $W$. Let $F(m)$ be the Fibonacci sequence, where $F(1) = 0, F(2) = 1$. Then,
    $$
    \mathrm{rank}\mathbb{V}_{0,n}\left(V,\left\{W^{n}\right\}\right) = F(n-1).
    $$
\end{example}
\noindent\textit{Proof.} By \cite{Wang93}, the fusion matrices are
$
R_{W,0}=\left(\begin{matrix}
1 & 1 \\
1 & 0
\end{matrix}\right),
$
and $R_{V,0}=\mathrm{Id}_{2}$. By Theorem \ref{cor_2}, 
$$ 
\mathrm{rank}\mathbb{V}_{0,n}\left(V,\left\{V^{i},W^{j}\right\}\right) = \left(\left(\begin{matrix}
1 & 1 \\
1 & 0
\end{matrix}\right)^{n}\right)_{(2,2)}
$$

The rest follows from analyzing matrix power of $R_{W,0}$.
\begin{flushright}
    Q.E.D.
\end{flushright}

\subsection{Affine VOA associated with $sl_{2}$}

Let  $L_{sl_{2}}(l,0)$ denote the affine VOA associated with $sl_{2}$ with level $l\ge 1$, and $W_{p}:=L_{sl_{2}}(l,p\frac{\alpha}{2})$, where $p\in \{0,1,...,l\}$. These $W_{p}$'s are exactly all the irreducible $L_{sl_{2}}(l,0)$-modules, up to isomorphism, and $W_{p}'=W_{p}$, for all $p$ \cite{FZ92}.

\begin{example}
    Let $V=L_{sl_{2}}(l,0)$ be affine VOA associated with $sl_{2}$ with level $l\ge 1$. Let $S^{\bullet}$ be an $n$-tuple of irreducible modules such that $R_{S^{\bullet}}=0$. Then,
    $$
    \mathrm{rank}\mathbb{V}_{1,n}\left(V,\{S^{\bullet}\}\right) = l+1.
    $$
    In particular,
    $$
    \mathrm{rank}\mathbb{V}_{1,1}\left(V,\{V\}\right) = l+1.
    $$
\end{example}
\noindent\textit{Proof.} By Example \ref{ex_1}, $\mathrm{rank}\mathbb{V}_{1,n}\left(V,\{S^{\bullet}\}\right)= $ the number of irreducible modules, up to isomorphism. By \cite{FZ92}, that number is equal to $ l+1$.
\begin{flushright}
    Q.E.D.
\end{flushright}

\begin{lemma}\label{lemma_2a}
    Let $V=L_{sl_{2}}(l,0)$ be the affine VOA associated with $sl_{2}$, with level $l\ge 1$. Let $W_{p}:=L_{sl_{2}}(l,p\frac{\alpha}{2})$, for all $p\in \{0,...,l\}$ be the collection of all isomorphism class of irreducible modules. Then,
\begin{equation}\label{equation_12}
(R_{W_{p},0})_{i+1,j+1}=
\left\{
\begin{aligned}
    1,& \text{ if }i+j \equiv p \mod 2 \text{, and }|i-j|\le p, \text{ and } p\le i+j\le 2l-p\\
    0,& \text{ otherwise.}
\end{aligned}
\right.
\end{equation}
\end{lemma}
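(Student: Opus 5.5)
The entry $(R_{W_{p},0})_{i+1,j+1}$ is, by Definition \ref{def_FA_matrix}, equal to $\mathrm{rank}\mathbb{V}_{0,3}(V,\{W_{p},W_{i},W_{j}'\})$. Since $W_{j}'=W_{j}$, this is the dimension of the space of coinvariants on $\overline{\mathcal{M}}_{0,3}$ for the triple $(W_p,W_i,W_j)$. That space is isomorphic to the space of intertwining operators, so the entry is the fusion rule $N_{W_p,W_i}^{W_j}$. The plan is therefore to reduce the lemma to the known $sl_{2}$ level-$l$ fusion rules, i.e.\ the quantum Clebsch--Gordan rule, and then rewrite that rule in the stated form.

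\textbf{Step 1.} Invoke the fusion rules for $L_{sl_{2}}(l,0)$ from \cite{FZ92}. For highest weights $p,i,j\in\{0,\dots,l\}$ (in units of $\alpha/2$), they give $N_{W_p,W_i}^{W_j}\in\{0,1\}$. The value is $1$ exactly when three conditions hold: $p+i+j$ is even, $|p-i|\le j\le p+i$ (the classical Clebsch--Gordan condition), and $p+i+j\le 2l$ (the level truncation).

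\textbf{Step 2.} Show this condition set is equivalent to the one in \eqref{equation_12}.
\begin{itemize}
\item Parity: $p+i+j$ even is the same as $i+j\equiv p \pmod 2$.
\item The two-sided condition $|p-i|\le j\le p+i$ is symmetric under permuting $p,i,j$; it is the triangle inequality. So it is equivalent to $|i-j|\le p\le i+j$.
\item The level condition $p+i+j\le 2l$ reads $i+j\le 2l-p$.
\end{itemize}
Combining these gives $i+j\equiv p \pmod 2$, $|i-j|\le p$, and $p\le i+j\le 2l-p$, which is exactly the stated criterion.

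\textbf{Step 3.} Observe that all other entries vanish, since the fusion rule is $0$ outside these conditions.

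The only real obstacle is Step 1: citing the fusion rules in the precise normalization used here, and confirming that the rank on $\overline{\mathcal{M}}_{0,3}$ agrees with the intertwining-operator count. The paper already asserts the latter right after Proposition \ref{proposition_1}. Alternatively, one can use the Verlinde formula with the $S$-matrix $S_{ab}\propto\sin\frac{\pi(a+1)(b+1)}{l+2}$ and a product-to-sum computation. I would cite the known rule directly rather than recompute it.
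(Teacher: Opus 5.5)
Your proposal is correct and follows essentially the same route as the paper: identify $(R_{W_p,0})_{i+1,j+1}$ with the fusion rule $N_{p,i}^{j}$, then match the known $\widehat{sl}_2$ level-$l$ fusion rule to the stated conditions by elementary inequalities. The only difference is cosmetic: the paper quotes the rule in Beauville's form ($p+i+j=2m$ with $m\le l$ and $p,i,j\le m$, \cite[Lemma 4.2]{Bea96}) and checks both implications by hand, whereas your Clebsch--Gordan form makes the equivalence immediate through the symmetry of the triangle inequality.
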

\noindent\textit{Proof.} 
Let $L_{\widehat{sl}_{2}}(l,0)$ be the affine VOA associated with $sl_{2}$ with level $l$, where we fix an arbitrary level $l\ge 1$. Let $W_{p}:=L_{\widehat{sl}_{2}}(l,p\frac{\alpha}{2}),W_{i}:=L_{\widehat{sl}_{2}}(l,i\frac{\alpha}{2}),W_{j}:=L_{\widehat{sl}_{2}}(l,j\frac{\alpha}{2})$ be three arbitrary irreducible modules, where $0\le p,i,j \le l$. Let $N_{p,i}^{j}:=N_{W_{p},W_{i}}^{W{j}}$ be the fusion rule. Then, by \cite[Lemma 4.2]{Bea96}, we have
\begin{equation}\label{equation_13}
N_{p,i}^{j}=\left\{
\begin{aligned}
    1&, \text{ if } p+i+j = 2m, \text{ for some }m \le l, \text{ and }p,i,j \le m\\
    0&, \text{ otherwise}.
\end{aligned}
\right.
\end{equation}
By the construction of vector bundle of coinvariants, $(R_{W_{p},0})_{i,j} = N_{p,i}^{j}$, since on $\overline{\mathcal{M}}_{0,3}$, the rank of vector bundle of coinvariants equals the fusion rule. Thus, all we need to show is that the right hand side of Equation \eqref{equation_12} equals the right hand side of Equation \eqref{equation_13}. Define
$$
f(i,j)=\left\{
\begin{aligned}
    1,& \text{ if }i+j \equiv p \mod 2 \text{, and }|i-j|\le p, \text{ and } p\le i+j\le 2l-p\\
    0,& \text{ otherwise.}
\end{aligned}
\right.
$$
$$
g(i,j)=\left\{
\begin{aligned}
    1&, \text{ if } p+i+j = 2m, \text{ for some }m \le l, \text{ and }p,i,j \le m\\
    0&, \text{ otherwise}.
\end{aligned}
\right.
$$
Claim: $f(i,j)=g(i,j)$, for all $0\le i,j \le l$.

Let $i,j\in \{0,1,...,l\}$ be arbitrary.\\
Case 1: $i+j \equiv p \mod 2 \text{, and }|i-j|\le p, \text{ and } p\le i+j\le 2l-p$.\\
In this case, $f(i,j)=1$. $i+j \equiv p \mod 2$, so $i+j=p+2t$, for some $t\in \mathbb{Z}$. Then, $i+j+p = 2p+2t=2(p+t)$. Let $m=p+t$. Notice that
$
2l \ge i+j+p = (i+j-p)+2p = 2t+2p = 2(t+p) = 2m.
$
Thus, we have $m\le l$. $p\le i+j$, so $t\ge 0$. Thus,
$
p\le p+t = m.
$
Lastly, $|i-j|<p$ implies $i\le p+j$. So
$
2i \le p+i+j = 2p+2t = 2(p+t) = 2m.
$
Thus, we have proved $i\le m$. By symmetry, $j\le m$. Thus, we checked that this $m = p+t$ satisfies all conditions of $g$, so $g(i,j) = 1$.

Case 2: $p+i+j = 2m, \text{ for some }m \le l, \text{ and }p,i,j \le m$.\\
In this case, $g(i,j)=1$. $p+i+j = 2m$ implies $i+j\equiv p \mod 2$. By $p\le m = \frac{p+i+j}{2}$, we have $p\le i+j.$ By $i\le m = \frac{p+i+j}{2}$, we have $i\le p+j$, so $i-j\le p$. Similarly, $j-i\le p$. Thus, we have
$
|i-j|\le p.
$
Notice that $\frac{p+i+j}{2}=m\le l$, we have
$
i+j\le 2l-p.
$
Therefore, such $(i,j)$ satisfies all conditions in $f$, so $f(i,j)=1$.

By Case 1, $f(i,j)=1$ implies $g(i,j) = 1$. By Case 2, $g(i,j)=1$ implies $f(i,j)=1$. Thus, $f(i,j)=1$ if and only if $g(i,j) = 1$, so $f=g$.
\begin{flushright}
    Q.E.D.
\end{flushright}

Lemma \ref{lemma_2a} differs slightly from \cite[Lemma 4.2]{Bea96} since it gives a pattern for fusion matrices. 

\begin{example}
    Let $V=L_{sl_{2}}(l,0)$ be the affine VOA associated with $sl_{2}$ of level $l$, then, the
    \begin{enumerate}
    \item for level $l=1$, the fusion matrix of $W_{0},W_{1}$ (from left to right) are
$$
\left(\begin{matrix}
\boldsymbol{1} & 0 \\
0 & 1
\end{matrix}\right),\left(\begin{matrix}
0 & \boldsymbol{1} \\
\boldsymbol{1} & 0
\end{matrix}\right)
$$
\item for level $l=2$, the fusion matrix of $W_{0},\cdots,W_{l}$ (from left to right) are
$$
\left(\begin{matrix}
\boldsymbol{1} & 0 & 0 \\
0 & 1 & 0 \\
0 & 0 & 1
\end{matrix}\right),
\left(\begin{matrix}
0 & \boldsymbol{1} & 0 \\
\boldsymbol{1} & 0 & 1 \\
0 & 1 & 0
\end{matrix}\right),
\left(\begin{matrix}
0 & 0 & \boldsymbol{1} \\
0 & \boldsymbol{1} & 0 \\
\boldsymbol{1} & 0 & 0
\end{matrix}\right).
$$
\item for level $l=3$, the fusion matrix of $W_{0},\cdots,W_{l}$ (from left to right) are
$$
\left(\begin{matrix}
\boldsymbol{1} & 0 & 0 & 0 \\
0 & 1 & 0 & 0 \\
0 & 0 & 1 & 0 \\
0 & 0 & 0 & 1
\end{matrix}\right),
\left(\begin{matrix}
0 & \boldsymbol{1} & 0 & 0 \\
\boldsymbol{1} & 0 & 1 & 0 \\
0 & 1 & 0 & 1 \\
0 & 0 & 1 & 0
\end{matrix}\right),
\left(\begin{matrix}
0 & 0 & \boldsymbol{1} & 0 \\
0 & \boldsymbol{1} & 0 & 1 \\
\boldsymbol{1} & 0 & 1 & 0 \\
0 & 1 & 0 & 0
\end{matrix}\right),
\left(\begin{matrix}
0 & 0 & 0 & \boldsymbol{1} \\
0 & 0 & \boldsymbol{1} & 0 \\
0 & \boldsymbol{1} & 0 & 0 \\
\boldsymbol{1} & 0 & 0 & 0
\end{matrix}\right).
$$
    \end{enumerate}
\end{example}

Applying Theorem \ref{cor_2}, we obtain an explicit closed rank formula for the vector bundle of coinvariants associated with the affine VOA $L_{sl_{2}}(l,0)$ on $\overline{\mathcal{M}}_{g,n}$. Moreover, one can analyze the multiplication of these matrices to further simplify the rank formula.

\subsection{Examples of line bundles.}\label{Examples_of_line_bundles}

When the vector bundle of coinvariants on $\overline{\mathcal{M}}_{g,n}$ is a line bundle, it often enjoys many special properties. For instance, Theorem \ref{main_thm_3} shows that the answer to \cite[Question 1]{DG} is positive for line bundles. 

\begin{example}\label{example_4}
    Let $V$ be a holomorphic VOA. Then, the vector bundle of coinvariant is a line bundle on $\overline{\mathcal{M}}_{g,n}$, for all $g,n\ge 0$ \footnote{Example \ref{example_4} was first proved in \cite[Example 5.1.1]{DGT3}. Here, we give an linear-algebraic proof.}.
\end{example}
\noindent\textit{Proof.} $V$ is holomorphic, so $R_{V,0}$ and $R_{V,1}$ are both identity matrix of size one. Therefore, by Theorem \ref{cor_2}, $\mathrm{rank}\mathbb{V}_{g,n}\left(V,\{V^{n}\}\right)= \left(R_{V,0}^{n} R_{V,1}^{g}\right)_{(1,1)} = 1$, for all $g,n\ge 0$.
\begin{flushright}
    Q.E.D.
\end{flushright}

\begin{example} 
    Let $V$ be a strongly rational VOA. Then, $\mathbb{V}_{0,n}(V,\{V^{n}\})$ is a line bundle, for all $n$.
\end{example}
\noindent\textit{Proof.} This directly follows from propagation of vacua \cite[Theorem 4.3]{DGT22a}.
\begin{flushright}
    Q.E.D.
\end{flushright}

\begin{example}
   For pointed VOA, the vector bundle of coinvariants on $\overline{\mathcal{M}}_{0,n}$ is zero or a line bundle.
\end{example}
\noindent\textit{Proof.} This directly follows from Theorem \ref{main_prop_5} (or Proposition \ref{prop_6}), by taking $g=0$.
\begin{flushright}
    Q.E.D.
\end{flushright}

\begin{example}
    Let $V$ be a strongly rational VOA, $S$ an irreducible module of order two. The vector bundle of coinvariants $\mathbb{V}_{0,n}(V,\{S^{n}\})$ is a line bundle when $n$ is even, and zero when $n$ is odd.
\end{example}
\noindent\textit{Proof.} This directly follows from Theorem \ref{main_prop_3} (or Lemma \ref{lemma_1}), by taking $g=0$.
\begin{flushright}
    Q.E.D.
\end{flushright}

\begin{example}
    Let $V=V_{p,q}$ be a Virasoro VOA in the discrete series of central charge $c_{p,q}$. Let $W_{\max}$ be the irreducible module of the maximal conformal weight. Then, for all $n$
    \begin{enumerate}
        \item if $p=2$, $\mathrm{rank}\mathbb{V}_{0,n}(V_{2,q},\{W_{\max}^{n}\}) = 1$,
        \item if $p\not=2$, $\mathrm{rank}\mathbb{V}_{0,n}(V_{p,q},\{W_{\max}^{n}\}) = \delta_{n,\mathrm{even}}$.
    \end{enumerate}
\end{example}
\noindent\textit{Proof.} The case for $p\not=2$ is proved in Example \ref{lemma_10}. When $p=2$, $W_{\max} = V$ \cite{Wang93}. Therefore, $\mathrm{rank}\mathbb{V}_{0,n}(V_{2,q},\{W_{\max}^{n}\}) = 1$, by propagation of vacua \cite[Theorem 4.3]{DGT22a}.
\begin{flushright}
    Q.E.D.
\end{flushright}

\begin{example}
    Let $V=L_{sl_{2}}(l,0)$ be the affine VOA associated with $sl_{2}$ at level $l\ge 1$. Let $W_{\max}$ be the irreducible module of the maximal conformal weight. Then, for all $n\ge 1$,
    $$
    \mathrm{rank}\mathbb{V}_{0,n}(V_{p,q},\{W_{\max}^{n}\}) = \delta_{n,\mathrm{even}}.
    $$
\end{example}
\noindent\textit{Proof.} $W_{\max}$ is an irreducible module of order two \cite{Bea96}, so the claim follows from Theorem \ref{cor_2}.
\begin{flushright}
    Q.E.D.
\end{flushright}

\appendix
\section{Proof of Theorem \ref{thm_1}}\label{app_thm_1}
\noindent\textit{Proof.} Part I: $(1)\Rightarrow (2)$.

Let $\beta_{1},\beta_{2}\in \mathbb{N}^{(I)}, g_{1},g_{2}\ge 0$ be arbitrary. Consider the $(i,j)$-entry of $\mathcal{N}_{\beta_{1},g_{1}}\mathcal{N}_{\beta_{2},g_{2}}$.
$$
\begin{aligned}
\left(\mathcal{N}_{\beta_{1},g_{1}}\mathcal{N}_{\beta_{2},g_{2}}\right)_{(i,j)}=
&\sum_{k=1}^{l} N_{g_{1}}(\beta_{1}+\lambda_{i}+\lambda_{k}')N_{g_{2}}(\beta_{2}+\lambda_{k}+\lambda_{j}') \\
=& \sum_{k=1}^{l} N_{g_{1}}((\beta_{1}+\lambda_{i})+\lambda_{k}')N_{g_{2}}((\beta_{2}+\lambda_{j}')+\lambda_{k})\\
=&N_{g_{1}+g_{2}}(\beta_{1}+\lambda_{i}+\beta_{2}+\lambda_{j}')=N_{g_{1}+g_{2}}((\beta_{1}+\beta_{2})+\lambda_{i}+\lambda_{j}')\\
=&(\mathcal{N}_{g_{1}+g_{2},\beta_{1}+\beta_{2}})_{(i,j)}.
\end{aligned}
$$
Let $\beta\in \mathbb{N}^{(I)}, g\ge 0$ be arbitrary. By assumption, we have the equality
$$
N_{g+1}(\beta) = \sum_{\lambda\in I}N_{g}\left(\beta+\lambda+\lambda'\right).
$$
Thus, we have
$$
\begin{aligned}
N_{g+1}(\beta) =& \sum_{\lambda\in I}N_{g}(\beta+\lambda+\lambda')= \sum_{i=1}^{l} \left(\mathcal{N}_{\beta,g}\right)_{(i,i)}= \mathrm{Tr}\left(\mathcal{N}_{\beta,g}\right).
\end{aligned}
$$

Part II: $(2) \Rightarrow (1)$.

Let $\beta\in \mathbb{N}^{(I)}, g\ge 0$ be arbitrary.
$$
\begin{aligned}
    N_{g+1}(\beta)=&\mathrm{Tr}(\mathcal{N}_{\beta,g})=\sum_{i=1}^{l}(\mathcal{N}_{\beta,g})_{(i,i)}=\sum_{i=1}^{l}N_{g}\left(\beta+\lambda_{i}+\lambda_{i}'\right).
\end{aligned}
$$
Thus, we have proved the Factorization Property II in Definition \ref{def_factorization_property}. Let $\alpha,\beta\in \mathbb{N}^{(I)}$ be arbitrary. By assumption, we have
$
\begin{aligned}
    \mathcal{N}_{\beta+\alpha,g_{1}+g_{2}}=\mathcal{N}_{\beta,g_{1}}\mathcal{N}_{\alpha,g_{2}}.
\end{aligned}
$
Taking $(i,j)$-entry on both side, 
$$
\begin{aligned}
    N_{g_{1}+g_{2}}(\beta+\alpha+\lambda_{i}+\lambda_{j}')=&\left(\mathcal{N}_{\beta,g_{1}}\mathcal{N}_{\alpha,g_{2}}\right)_{(i,j)}=\sum_{k=0}^{l}\left(\mathcal{N}_{\beta,g_{1}}\right)_{(i,k)}\left(\mathcal{N}_{\alpha,g_{2}}\right)_{(k,j)}\\
    =& \sum_{k=0}^{l}N_{g_{1}}(\beta+\lambda_{i}+\lambda_{k}')N_{g_{2}}(\alpha+\lambda_{k}+\lambda_{j}').
\end{aligned}
$$
This equality holds for all $i,j\in \{1,\cdots,l\}$. Since we assume $0\in I$, we take $\lambda_{i}=0,\lambda_{j}=0'$. 
$$
N_{g_{1}+g_{2}}(\beta+\alpha)=\sum_{k=0}^{l}N_{g_{1}}(\beta+\lambda_{k}')N_{g_{2}}(\alpha+\lambda_{k}).
$$
Thus, we have proved Factorization Property 1 in Definition \ref{def_factorization_property}.

Part III: $(2)\Rightarrow (3)$.

Let $\beta\in\mathbb{N}^{(I)}$, and for all $g\ge 0$ be arbitrary. Let's write $\beta=\sum_{\lambda\in I}c_{\lambda}\lambda$. Then,
$$
\begin{aligned}
    \left(\prod_{\lambda\in I} \mathcal{N}_{\lambda,0}^{n_{\lambda}}\right)\mathcal{N}_{0,1}^{g}=& \mathcal{N}_{\sum_{\lambda\in I}c_{\lambda}\lambda,\underbrace{1+\cdots+1}_{g \text{ times}}}\ \ \ \ \text{(FA 1)}=\mathcal{N}_{\beta,g}.
\end{aligned}
$$
Taking the $(i,j)$-entry on both side, we get
$$
\begin{aligned}
\left(\left(\prod_{\lambda\in I} \mathcal{N}_{\lambda,0}^{n_{\lambda}}\right)\mathcal{N}_{0,1}^{g}\right)_{(i,j)} =& \left(\mathcal{N}_{\beta,g}\right)_{(i,j)} = N_{g}(\beta+\lambda_{i}+\lambda_{j}').
\end{aligned}
$$
Thus, we have proved (V1) in Definition \ref{def_Verlinde_Property}. Now, assume $g\ge 1$.
$$
\begin{aligned}
\mathrm{Tr}\left(\left(\prod_{\lambda\in I} \mathcal{N}_{\lambda,0}^{c_{\lambda}}\right)\mathcal{N}_{0,1}^{g-1}\right)\overset{\text{FA 2}}{=}& \mathrm{Tr}\left(\mathcal{N}_{\sum_{\lambda\in I}c_{\lambda}\lambda,\underbrace{1+\cdots+1}_{g-1 \text{ times}}}\right)\\
    =&\mathrm{Tr}\left(\mathcal{N}_{\beta,g-1}\right)\overset{\text{FA 2}}{=} N_{g-1+1}(\beta)= N_{g}(\beta).
\end{aligned}
$$
Thus, we have proved (V2) in Definition \ref{def_Verlinde_Property}.

Claim: $\mathcal{N}_{0,1}=\sum_{\lambda\in I}\mathrm{Tr}\left(\mathcal{N}_{\lambda',0}\right)\mathcal{N}_{\lambda}$.

Considering the $(i,j)$-entry of the right-hand side, we get
$$
\begin{aligned}
    \left(\sum_{\lambda\in I}\mathrm{Tr}(\mathcal{N}_{\lambda',0})\mathcal{N}_{\lambda,0}\right)_{(i,j)}=& \sum_{\lambda\in I}\mathrm{Tr}(\mathcal{N}_{\lambda',0})(\mathcal{N}_{\lambda,0})_{(i,j)}= \sum_{\lambda\in I} \mathrm{Tr}\left(\mathcal{N}_{\lambda',0}\right) N_{0}\left(\lambda+\lambda_{i}+\lambda_{j}'\right)\\
    =& \sum_{\lambda\in I} N_{1}\left(\lambda'\right)N_{0}\left(\lambda+\lambda_{i}+\lambda_{j}'\right) = N_{1}\left(\lambda_{i}+\lambda_{j}'\right)= \left(\mathcal{N}_{0,1}\right)_{(i,j)}.
\end{aligned}
$$
Thus, we have proved (V3) in Definition \ref{def_Verlinde_Property}.

Part IV: $(3)\Rightarrow (2)$.

Claim 1: $\{\mathcal{N}_{\lambda,0}\}_{\lambda\in I}$ all commute.

Taking the $(i,j)$-entry of $\mathcal{N}_{\lambda_{1},0}\mathcal{N}_{\lambda_{2},0}$, we get
$$
\begin{aligned}
\left(\mathcal{N}_{\lambda_{1},0}\mathcal{N}_{\lambda_{2},0}\right)_{(i,j)}\overset{V1}{=}&N_{0}\left(\lambda_{1}+\lambda_{2}+\lambda_{i}+\lambda_{j}'\right)= N_{0}\left(\lambda_{2}+\lambda_{1}+\lambda_{i}+\lambda_{j}'\right)\\
\overset{V1}{=}& N_{0}\left(\lambda_{2}+\lambda_{1}+\lambda_{i}+\lambda_{j}'\right)=\left(\mathcal{N}_{\lambda_{2},0}\mathcal{N}_{\lambda_{1},0}\right)_{(i,j)}.
\end{aligned}
$$

Claim 2: $\{\mathcal{N}_{\lambda,0}\}_{\lambda\in I}\cup \{\mathcal{N}_{0,1}\}$ all commute.

Let $\gamma\in I$ be arbitrary.
By V3, $\mathcal{N}_{0,1}=\sum_{\lambda\in I}\mathrm{Tr}\left(\mathcal{N}_{\lambda',0}\right)\mathcal{N}_{\lambda,0}$, so 
$$
\begin{aligned}
\mathcal{N}_{\gamma,0}\mathcal{N}_{0,1}\overset{V3}{=}&\mathcal{N}_{\gamma,0}\left(\sum_{\lambda\in I}\mathrm{Tr}\left(\mathcal{N}_{\lambda',0}\right)\mathcal{N}_{\lambda,0}\right)
=\sum_{\lambda\in I}\mathrm{Tr}\left(\mathcal{N}_{\lambda',0}\right)\mathcal{N}_{\gamma,0}\mathcal{N}_{\lambda,0}\\
\overset{\text{Claim 1}}{=}& \sum_{\lambda\in I}\mathrm{Tr}\left(\mathcal{N}_{\lambda',0}\right)\mathcal{N}_{\lambda,0}\mathcal{N}_{\gamma,0}= \left(\sum_{\lambda\in I}\mathrm{Tr}\left(\mathcal{N}_{\lambda',0}\right)\mathcal{N}_{\lambda,0}\right)\mathcal{N}_{\gamma,0}\overset{V3}{=}\mathcal{N}_{0,1}\mathcal{N}_{\lambda,0}.
\end{aligned}
$$

Claim 3: (FA 1) holds.

Let $\alpha,\beta\in \mathbb{N}^{(I)}$, and $g_{1},g_{2}\ge 0$ be arbitrary. Let's write $\alpha=\sum_{\lambda\in I}\alpha_{\lambda}\lambda, \beta=\sum_{\lambda\in I}\beta_{\lambda}\lambda$. Then,
$$
(\mathcal{N}_{\alpha,g_{1}})_{(i,j)}=N_{g_{1}}(\alpha+\lambda_{i}+\lambda_{j})
\overset{V1}{=}\left(\left(\prod_{\lambda\in I} \mathcal{N}_{\lambda,0}^{\alpha_{\lambda}}\right)\mathcal{N}_{0,1}^{g_{1}}\right)_{(i,j)},
$$
for all $(i,j)$-entries. Thus, we have
$$
\mathcal{N}_{\alpha,g_{1}}=\left(\prod_{\lambda\in I} \mathcal{N}_{\lambda,0}^{\alpha_{\lambda}}\right)\mathcal{N}_{0,1}^{g_{1}},\text{ and }\mathcal{N}_{\beta,g_{2}}=\left(\prod_{\lambda\in I} \mathcal{N}_{\lambda,0}^{\beta_{\lambda}}\right)\mathcal{N}_{0,1}^{g_{2}}.
$$
Therefore,
\begin{equation}\label{equ_4}
\begin{aligned}
\mathcal{N}_{\alpha,g_{1}}\mathcal{N}_{\beta,g_{2}}=&\left(\left(\prod_{\lambda\in I} \mathcal{N}_{\lambda,0}^{\alpha_{\lambda}}\right)\mathcal{N}_{0,1}^{g_{1}}\right)\left(\left(\prod_{\lambda\in I} \mathcal{N}_{\lambda,0}^{\beta_{\lambda}}\right)\mathcal{N}_{0,1}^{g_{2}}\right) 
\end{aligned}
\end{equation}

On the other hand, we have $
\alpha+\beta=\sum_{\lambda\in I}\alpha_{\lambda}\lambda+\sum_{\lambda\in I}\beta_{\lambda}\lambda.
$
Therefore,
$$
\begin{aligned}
    \left(\mathcal{N}_{\alpha+\beta,g_{1}+g_{2}}\right)_{(i,j)}=&N_{g_{1}+g_{2}}(\alpha+\beta+\lambda_{i}+\lambda_{j})=\left(\left(\prod_{\lambda\in I} \mathcal{N}_{\lambda,0}^{\alpha_{\lambda}+\beta_{\lambda}}\right)\mathcal{N}_{0,1}^{g_{1}+g_{2}}\right)_{(i,j)}.
\end{aligned}
$$
Therefore, we have

\begin{equation}\label{equ_5}
    \mathcal{N}_{\alpha+\beta,g_{1}+g_{2}}=\left(\prod_{\lambda\in I} \mathcal{N}_{\lambda,0}^{\alpha_{\lambda}+\beta_{\lambda}}\right)\mathcal{N}_{0,1}^{g_{1}+g_{2}}.
\end{equation}

Lastly, we conclude
$$
\begin{aligned}
\mathcal{N}_{\alpha,g_{1}}\mathcal{N}_{\beta,g_{2}}=&\left(\left(\prod_{\lambda\in I} \mathcal{N}_{\lambda,0}^{\alpha_{\lambda}}\right)\mathcal{N}_{0,1}^{g_{1}}\right)\left(\left(\prod_{\lambda\in I} \mathcal{N}_{\lambda,0}^{\beta_{\lambda}}\right)\mathcal{N}_{0,1}^{g_{2}}\right) \ \ \ \ \ \text{(by Equation \eqref{equ_4})}\\
\overset{\text{Claim 2}}{=}& \left(\prod_{\lambda\in I} \mathcal{N}_{\lambda,0}^{\alpha_{\lambda}+\beta_{\lambda}}\right)\mathcal{N}_{0,1}^{g_{1}+g_{2}}
= \mathcal{N}_{\alpha+\beta,g_{1}+g_{2}}  \ \ \ \ \ \ \ \ \ \ \text{(by Equation \eqref{equ_5}).}
\end{aligned}
$$
Thus, we have proved (FA 1).

Claim 4: (FA 2) holds.

Let $\beta\in \mathbb{N}^{(I)}, g\ge 0$ be arbitrary. Let's write $\beta=\sum_{\lambda\in I}b_{\lambda}\lambda$.
$$
\begin{aligned}
    \mathrm{Tr}\left(\mathcal{N}_{\beta,g}\right)=& \mathrm{Tr}\left(\left(\prod_{\lambda\in I}\mathcal{N}_{\lambda,0}^{b_{\lambda}}\right)\mathcal{N}_{0,1}^{g}\right)= N_{g+1}(\beta) \ \ \ \ \ \ (V2).
\end{aligned}
$$
Thus, we have proved (FA 2).
\begin{flushright}
    Q.E.D.
\end{flushright}

\section{Proof of Lemma \ref{lem_1}}\label{app_lem_1}
\noindent\textit{Proof of Lemma \ref{lem_1}.} Part 1: both $S^{\bullet}$ and $T^{\bullet}$ consist of a single irreducible module (i.e. $n=1$).

Without loss of generality, we assume $S^{\bullet}=W_{1}, T^{\bullet}=M_{1}$.

Claim 1: $
    R_{V_{1}\otimes V_{2}, W_{1}\otimes M_{1},0}=R_{V_{1}, W_{1},0}\otimes R_{V_{2}, M_{1},0}$.

The tensor product (or Kronecker product) of $R_{V_{1},W_{1}}$ and $R_{V_{2},M_{1}}$ equals the block matrix
$$
R_{V_{1},W_{1}} \otimes  R_{V_{2},M_{1}} = 
\begin{pmatrix}
    (R_{V_{1},W_{1}})_{(1,1)} R_{V_{2},M_{1}} & \cdots & (R_{V_{1},W_{1}})_{(1,l)} R_{V_{2},M_{1}}\\
    \vdots & \ddots & \vdots\\
    (R_{V_{1},W_{1}})_{(l,1)} R_{V_{2},M_{1}} & \cdots & (R_{V_{1},W_{1}})_{(l,l)} R_{V_{2},M_{1}}
\end{pmatrix}
$$
Its $(i,j)$-block equals
$$
(R_{V_{1},W_{1}})_{(i,j)} \cdot R_{V_{2},M_{1}} = 
\begin{pmatrix}
    (R_{V_{1},W_{1}})_{(i,j)}(R_{V_{2},M_{1}})_{(1,1)} & \cdots &(R_{V_{1},W_{1}})_{(i,j)}(R_{V_{2},M_{1}})_{(1,t)}\\
    \vdots & \ddots & \vdots\\
    (R_{V_{1},W_{1}})_{(i,j)}(R_{V_{2},M_{1}})_{(t,1)} & \cdots & (R_{V_{1},W_{1}})_{(i,j)}(R_{V_{2},M_{1}})_{(t,t)}
\end{pmatrix}
$$
By \cite[Theorem 5.5]{Milas96}, we have the fusion rule
$
\begin{aligned}
\mathcal{N}_{W_{1},W_{i}}^{W_{j}}\mathcal{N}_{M_{1},M_{i'}}^{M_{j'}}
=& \mathcal{N}_{W_{1}\otimes M_{1}, W_{i}\otimes M_{i'}}^{W_{j}\otimes M_{j'}}.
\end{aligned}
$
Thus, the $(i,j)$-block of $R_{V_{1},W_{1}} \otimes  R_{V_{2},M_{1}}$
equals
$$
\begin{pmatrix}
   \mathcal{N}_{W_{1},W_{i}}^{W_{j}}\mathcal{N}_{M_{1},M_{1}}^{M_{1}} & \cdots & \mathcal{N}_{W_{1},W_{i}}^{W_{j}}\mathcal{N}_{M_{1},M_{1}}^{M_{t}}\\
   \vdots & \ddots & \vdots\\
   \mathcal{N}_{W_{1},W_{i}}^{W_{j}}\mathcal{N}_{M_{1},M_{t}}^{M_{1}} & \cdots & \mathcal{N}_{W_{1},W_{i}}^{W_{j}}\mathcal{N}_{M_{1},M_{t}}^{M_{t}}\end{pmatrix} =
   \begin{pmatrix}
       \mathcal{N}_{W_{1}\otimes M_{1},W_{i}\otimes M_{1}}^{W_{j}\otimes M_{1}} & \cdots & \mathcal{N}_{W_{1}\otimes M_{1},W_{i}\otimes M_{1}}^{W_{j}\otimes M_{t}}\\
       \vdots & \ddots  & \vdots\\
       \mathcal{N}_{W_{1}\otimes M_{1},W_{i}\otimes M_{t}}^{W_{j}\otimes M_{1}} & \cdots & \mathcal{N}_{W_{1}\otimes M_{1},W_{i}\otimes M_{t}}^{W_{i}\otimes M_{t}}
   \end{pmatrix}
$$
Thus, we have proved Claim 1.

Claim 2: $R_{V_{1},V_{1},1}\otimes R_{V_{2},V_{2},1}=R_{V_{1}\otimes V_{2},V_{1}\otimes V_{2},1}$.
$$
\begin{aligned}
    R_{V_{1},V_{1},1}\otimes R_{V_{2},V_{2},1} =& \left(\sum_{i\in I}\mathrm{Tr}\left(R_{V_{1},W_{i}',0}\right)R_{V_{1},W_{i},0}\right)\otimes \left(\sum_{j\in J}\mathrm{Tr}\left(R_{V_{2},W_{j}',0}\right)R_{V_{1},W_{j},0}\right) \ \ \ \text{(Theorem \ref{thm_1})}\\
    =& \sum_{(i,j)\in I\times J} \left(\mathrm{Tr}\left(R_{V_{1},W_{i}',0}\right)R_{V_{1},W_{i},0}\right)\otimes \left(\mathrm{Tr}\left(R_{V_{2},W_{j}',0}\right)R_{V_{1},W_{j},0}\right)\\
    =& \sum_{(i,j)\in I\times J} \mathrm{Tr}\left(R_{V_{1},W_{i}',0}\right)\mathrm{Tr}\left(R_{V_{2},W_{j}',0}\right)\cdot \left(R_{V_{1},W_{i},0}\otimes R_{V_{1},W_{j},0}\right).
\end{aligned}
$$
Thus,
$$
\begin{aligned}
     R_{V_{1},V_{1},1}\otimes R_{V_{2},V_{2},1}=& \sum_{(i,j)\in I\times J} \mathrm{Tr}\left(R_{V_{1},W_{i}',0}\otimes R_{V_{2},W_{j}',0}\right)\cdot \left(R_{V_{1},W_{i},0}\otimes R_{V_{1},W_{j},0}\right)\\
    =& \sum_{(i,j)\in I\times J} \mathrm{Tr}\left(R_{V_{1}\otimes V_{2},W_{i}'\otimes W_{j}',0}\right) R_{V_{1}\otimes V_{2},W_{i}\otimes W_{j},0}  \ \ \ \ \ \ \ \ \ \ \ \text{(Claim 1)}\\
    =& \sum_{(i,j)\in I\times J} \mathrm{Tr}\left(R_{V_{1}\otimes V_{2},(W_{i}\otimes W_{j})',0}\right) R_{V_{1}\otimes V_{2},W_{i}\otimes W_{j},0}\\
    =& R_{V_{1}\otimes V_{2},V_{1}\otimes V_{2},1} \ \ \ \ \ \ \ \ \ \ \ \ \ \ \ \ \ \ \ \text{(Theorem \ref{thm_1})}.
\end{aligned}
$$
Thus, we have proved Claim 2.

Claim 3: $R_{V_{1},V_{1},g}\otimes R_{V_{2},V_{2},g}=R_{V_{1}\otimes V_{2},V_{1}\otimes V_{2},g}$, for all $g\ge 0$.

We apply induction on $g\ge 0$. The base cases $g=0$ and $g=1$ were proved in Claim 1 and Claim 2. Suppose that the claim holds for $k\le g$, we prove it for $k=g+1$.
$$
\begin{aligned}
    R_{V_{1},V_{1},g+1}\otimes R_{V_{2},V_{2},g+1}=&\left(R_{V_{1},V_{1},g}R_{V_{1},V_{1},1}\right)\otimes\left( R_{V_{2},V_{2},g}R_{V_{2},V_{2},1}\right)\ \ \ \ \text{(Theorem \ref{thm_1})}\\
    =& \left(R_{V_{1},V_{1},g} \otimes R_{V_{2},V_{2},g}\right)\left(R_{V_{1},V_{1},1}\otimes R_{V_{2},V_{2},1}\right)  \ \ \ \  \text{\cite[Proposition 2]{LGTDO}}\\
    =& R_{V_{1}\otimes V_{2},V_{1}\otimes V_{2},g}R_{V_{1}\otimes V_{2},V_{1}\otimes V_{2},1}  \ \ \ \ \ \text{(induction hypothesis)}\\
    =& R_{V_{1}\otimes V_{2},V_{1}\otimes V_{2},g+1}\ \ \ \ \text{(Theorem \ref{thm_1})}.
\end{aligned}
$$
Thus, Claim 3 holds.

Claim 4: $
    R_{V_{1}\otimes V_{2}, W_{a}\otimes M_{b},0}=R_{V_{1}, W_{a},0}\otimes R_{V_{2}, M_{b},0},
    $
    for all $1\le a\le l, 1\le b\le t$, and for all $g\ge 0$.
By Theorem \ref{thm_1}, we have the following equalities:
$$
\begin{aligned}
    R_{V_{1}\otimes V_{2},W_{1}\otimes M_{1},g} =& R_{V_{1}\otimes V_{2},W_{1}\otimes M_{1},0}R_{V_{1}\otimes V_{2},V_{1}\otimes V_{2},1}^{g} \ \ \ \ \ \\
    =& R_{V_{1}\otimes V_{2},W_{1}\otimes M_{1},0}\left(\sum_{(i,j)\in I\times J} \mathrm{Tr}\left( R_{V_{1}\otimes V_{2},(W_{i}\otimes W_{j})',0}\right) R_{V_{1}\otimes V_{2},(W_{i}\otimes W_{j},0}\right)
\end{aligned}
$$
$$
\begin{aligned}
R_{V_{1}, W_{1},g}\otimes R_{V_{2}, M_{1},g}=& \left(R_{V_{1}, W_{1},0}R_{V_{1},V_{1},g}\right)\otimes \left(R_{V_{2}, M_{1},0}R_{V_{2}, V_{2},g}\right)\ \ \ \ \text{(Theorem \ref{thm_1})}\\
=& (R_{V_{1}, W_{1},0}\otimes R_{V_{2}, M_{1},0})(R_{V_{1},V_{1},g}\otimes R_{V_{2}, V_{2},g}) \text{\cite[Proposition 2]{LGTDO}}\\
=& R_{V_{1}\otimes V_{2},W_{1}\otimes W_{2},0}\left(R_{V_{1},V_{1},g}\otimes R_{V_{2}, V_{2},g}\right) \ \ \ \text{(Claim 1)}\\
=& R_{V_{1}\otimes V_{2},W_{1}\otimes W_{2},0} R_{V_{1}\otimes V_{2},V_{1}\otimes V_{2},g} \ \ \ \text{(Claim 3)}\\
=& R_{V_{1}\otimes V_{2},W_{1}\otimes W_{2},0}\ \ \ \ \text{(Theorem \ref{thm_1})}.
\end{aligned}
$$
Thus, Claim 4 holds, and we conclude the proof of Part 1.

Part 2: assume that the length of $S^{\bullet}$ and $T^{\bullet}$ is $n$, where $n\ge 0$ is arbitrary.

Claim 5: $R_{V_{1}\otimes V_{2}, S^{\bullet}\otimes T^{\bullet},0}=R_{V_{1}, S^{\bullet},0}\otimes R_{V_{2}, T^{\bullet},0}$

We will prove Part 2 by induction on $n$. Part 1 gives the base case for $n=1$. Suppose that Claim 5 holds for $1,...,n$. We will prove that it also holds for $n+1$.
$$
\begin{aligned}
    R_{V_{1},S^{\bullet},0}\otimes R_{V_{2}, T^{\bullet},0} = & \left(R_{V_{1},S_{1},0}R_{V_{2},S_{2}\cdots S_{n},0}\right)\otimes \left(R_{V_{2}, T_{1},0}R_{V_{2}, T_{2}\cdots T_{n},0}\right)\ \ \ \ \text{(Theorem \ref{thm_1})}\\
    =& \left(R_{V_{1},S_{1},0}\otimes R_{V_{2}, T_{1},0}\right)\left(R_{V_{2},S_{2}\cdots S_{n},0}\otimes R_{V_{2}, T_{2}\cdots T_{n},0}\right) \ \ \ \ \ \text{\cite[Proposition 2]{LGTDO}}\\
    =& R_{V_{1}\otimes V_{2},S_{1}\otimes T_{1},0}R_{V_{1}\otimes V_{2}, S_{2}\otimes T_{2}\cdots S_{n}\otimes T_{n},0} \ \ \ \text{(induction hypothesis)}\\
    =& R_{V_{1}\otimes V_{2}, S^{\bullet}\otimes T^{\bullet},0}\ \ \ \ \text{(Theorem \ref{thm_1})}.
\end{aligned}
$$
Thus, we have proved Claim 5.

Claim 6: $R_{V_{1}\otimes V_{2}, S^{\bullet}\otimes T^{\bullet},g}=R_{V_{1}, S^{\bullet},g}\otimes R_{V_{2}, T^{\bullet},g}$, for all $g\ge 0$.
$$
\begin{aligned}
    R_{V_{1}, S^{\bullet},g}\otimes R_{V_{2}, T^{\bullet},g} =& \left(R_{V_{1}, S^{\bullet},0}R_{V_{1},V_{1},g}\right)\otimes \left(R_{V_{2}, T^{\bullet},0}R_{V_{2}, V_{2},g}\right)\ \ \ \ \text{(Theorem \ref{thm_1})}\\
    =& \left(R_{V_{1}, S^{\bullet},0}\otimes R_{V_{2}, T^{\bullet},0}\right)\left(R_{V_{1}, V_{1},g}\otimes R_{V_{2}, V_{2},g}\right) \ \ \ \ \ \text{\cite[Proposition 2]{LGTDO}}.
\end{aligned}
$$
$$
\begin{aligned}
    R_{V_{1}, S^{\bullet},g}\otimes R_{V_{2}, T^{\bullet},g}=& R_{V_{1}\otimes V_{2}, S^{\bullet}\otimes T^{\bullet},0}\left(R_{V_{1}, V_{1},g}\otimes R_{V_{2}, V_{2},g}\right) \ \ \ \ \ \text{(Claim 5)}\\
    =& R_{V_{1}\otimes V_{2}, S^{\bullet}\otimes T^{\bullet},0}R_{V_{1}\otimes V_{2},V_{1}\otimes V_{2},g} \ \ \ \ \ \text{(Claim 3)}\\
    =&R_{V_{1}\otimes V_{2}, S^{\bullet}\otimes T^{\bullet},g}\ \ \ \ \text{(Theorem \ref{thm_1})}.
\end{aligned}
$$
Thus, we have proved Claim 6, and here concludes the entire proof.
\begin{flushright}
    Q.E.D.
\end{flushright}

\section{Proof of Theorem \ref{thm_4}}\label{Appendix_B}
\noindent\textit{Proof of Theorem \ref{thm_4}.} 
let $V_{12}:=V_{1}\otimes V_{1}$, $c_{V_{i}}$ be the central charge of $V_{i}$, for $i=1,2$, and $c_{V_{12}}$ be the central charge of $V_{12}$. Let $W_{1},\cdots, W_{l_{1}}$ be the collection of all irreducible $V_{1}$-modules, up to isomorphism, and $M_{1},\cdots, M_{l_{2}}$ be the collection of all irreducible $V_{2}$-modules, up to isomorphism.
$$
\begin{aligned}
c_{1}\left(\mathbb{V}_{g,n}\left(V_{1},\left\{S^{\bullet}\right\}\right)\otimes \mathbb{V}_{g,n}\left(V_{2},\left\{T^{\bullet}\right\}\right)\right) =& \mathrm{rk}\mathbb{V}_{g,n}(V_{2},\left\{T^{\bullet}\right\}) c_{1}\mathbb{V}_{g,n}(V_{1},\left\{S^{\bullet}\right\}) + \\
&\mathrm{rk}\mathbb{V}_{g,n}(V_{1},\left\{S^{\bullet}\right\}) c_{1}\mathbb{V}_{g,n}(V_{2},\left\{T^{\bullet}\right\}).
\end{aligned}
$$

Step 1: we check the coefficient of $\lambda$.
$$
\begin{aligned}
    \text{Coeff}_{\lambda} c_{1}\mathbb{V}_{g,n}(V_{12}, \{S^{\bullet}\otimes T^{\bullet}\}) =& \mathrm{rk}\mathbb{V}_{g,n}\left(V_{12},\{S^{\bullet}\otimes T^{\bullet}\}\right)\frac{c_{V_{12}}}{2}
    = \mathrm{rk}\mathbb{V}_{g,n}(V_{12},\{S^{\bullet}\otimes T^{\bullet}\})\frac{c_{V_1}+ c_{V_2}}{2}\\
    =&\mathrm{rk}\mathbb{V}_{g,n}(V_{1},\{S^{\bullet}\})\mathrm{rk}\mathbb{V}_{g,n}(V_{2},\{T^{\bullet}\})\frac{c_{V_1}+ c_{V_2}}{2} \ \ \ (\text{Thm } \ref{thm_3})
\end{aligned}
$$

$$
\begin{aligned}
&\text{Coeff}_{\lambda} c_{1}\left(\mathbb{V}_{g,n}(V_{1},\{S^{\bullet}\})\otimes \mathbb{V}_{g,n}(V_{2},\{T^{\bullet}\})\right)\\
=& \mathrm{rk}\mathbb{V}_{g,n}(V_{2},\{T^{\bullet}\})\text{Coeff}_{\lambda} \mathbb{V}_{g,n}(V_{1},\{S^{\bullet}\}) +
\mathrm{rk}\mathbb{V}_{g,n}(V_{1},\{S^{\bullet}\})\text{Coeff}_{\lambda} \mathbb{V}_{g,n}(V_{2},\{T^{\bullet}\})\\
=& \mathrm{rk}\mathbb{V}_{g,n}(V_{2},\{T^{\bullet}\})\mathrm{rk}\mathbb{V}_{g,n}(V_{1},\{S^{\bullet}\})\frac{c_{V_1}}{2}+\mathrm{rk}\mathbb{V}_{g,n}(V_{1},\{S^{\bullet}\})\mathrm{rk}\mathbb{V}_{g,n}(V_{2},\{T^{\bullet}\})\frac{c_{V_2}}{2}\\
=&\mathrm{rk}\mathbb{V}_{g,n}(V_{1},\{S^{\bullet}\})\mathrm{rk}\mathbb{V}_{g,n}(V_{2},\{T^{\bullet}\})\frac{c_{V_1}+ c_{V_2}}{2}=\text{Coeff}_{\lambda} c_{1}\mathbb{V}_{g,n}(V_{12}, \{S^{\bullet}\otimes T^{\bullet}\}).
\end{aligned}
$$

Step 2: we check the coefficient of $\delta_{irr}$ (in this step, we assume $g\ge 1$). Define
$$
\begin{aligned}
    b_{irr} =& \text{Coeff}_{\delta_{irr}}c_{1}\mathbb{V}_{g,n}(V_{12}, \{S^{\bullet}\otimes T^{\bullet}\}), b_{1,irr} = \text{Coeff}_{\delta_{irr}}c_{1}\mathbb{V}_{g,n}(V_{1}, \{S^{\bullet}\}),\\
    b_{2,irr} =& \text{Coeff}_{\delta_{irr}}c_{1}\mathbb{V}_{g,n}(V_{2}, \{T^{\bullet}\}), 
b_{12,irr}=\text{Coeff}_{\delta_{irr}}c_{1}\left(\mathbb{V}_{g,n}(V_{1},\{S^{\bullet}\})\otimes \mathbb{V}_{g,n}(V_{2},\{T^{\bullet}\})\right).
\end{aligned}
$$
Then,
$$
\begin{aligned}
    b_{irr} =& \sum_{i,j}a_{W_{i}\otimes W_{j}}\mathrm{rk}\mathbb{V}_{g-1,n+2}\left(V_{12},\left\{S^{\bullet}\otimes T^{\bullet},W_{i}\otimes W_{j},(W_{i}\otimes W_{j})'\right\}\right)\\
    =& \sum_{i,j}\left(a_{W_{i}}+a_{W_{j}}\right)\mathrm{rk}\mathbb{V}_{g-1,n+2}\left(V_{12},\left\{S^{\bullet}\otimes T^{\bullet},W_{i}\otimes W_{j},W_{i}'\otimes W_{j}'\right\}\right)\\
    =& \sum_{i,j}a_{W_{i}}\mathrm{rk}\mathbb{V}_{g-1,n+2}\left(V_{12},\left\{S^{\bullet}\otimes T^{\bullet},W_{i}\otimes W_{j},W_{i}'\otimes W_{j}'\right\}\right)+\\
    & \sum_{i,j}a_{W_{j}}\mathrm{rk}\mathbb{V}_{g-1,n+2}\left(V_{12},\left\{S^{\bullet}\otimes T^{\bullet},W_{i}\otimes W_{j},W_{i}'\otimes W_{j}'\right\}\right).
\end{aligned}
$$
Therefore, 
$$
\begin{aligned}
    b_{irr} =& \sum_{i,j} a_{W_{i}}\mathrm{rk}\mathbb{V}_{g-1,n+2}\left(V_{1},\left\{S^{\bullet},W_{i},W_{i}'\})\mathrm{rk}\mathbb{V}_{g-1}(V_{2},\{T^{\bullet},M_{j},M_{j}'\right\}\right)+\\
    &\sum_{i,j} a_{W_{j}}\mathrm{rk}\mathbb{V}_{g-1,n+2}\left(V_{1},\left\{S^{\bullet},W_{i},W_{i}'\})\mathrm{rk}\mathbb{V}_{g-1}(V_{2},\{T^{\bullet},M_{j},M_{j}'\right\}\right) (\text{Thm } \ref{thm_3})\\
    =& \sum_{j}\left(\mathrm{rk}\mathbb{V}_{g-1,n+2}\left(V_{2},\left\{T^{\bullet},M_{j},M_{j}'\})\left(\sum_{i}a_{W_{i}}\mathrm{rk}\mathbb{V}_{g-1}(V_{1},\{S^{\bullet},W_{i},W_{i}'\right\}\right)\right)\right)+\\
    & \sum_{i}\left(\mathrm{rk}\mathbb{V}_{g-1,n+2}\left(V_{1},\left\{S^{\bullet},W_{i},W_{i}'\right\}\right)\left(\sum_{j}a_{M_{j}}\mathrm{rk}\mathbb{V}_{g-1,n+2}\left(V_{2},\left\{T^{\bullet},M_{j},M_{j}'\right\}\right)\right)\right).
\end{aligned}
$$
$$
\begin{aligned}
b_{irr}=&\sum_{j}\mathrm{rk}\mathbb{V}_{g-1,n+2}\left(V_{2},\left\{T^{\bullet},M_{j},M_{j}'\right\}\right)b_{1,irr}+\sum_{i}\mathrm{rk}\mathbb{V}_{g-1,n+2}\left(V_{1},\left\{S^{\bullet},W_{i},W_{i}'\right\}\right)b_{2,irr}\\
    =& \left(\sum_{j}\mathrm{rk}\mathbb{V}_{g-1,n+2}\left(V_{2},\left\{T^{\bullet},M_{j},M_{j}'\right\}\right)\right)b_{1,irr}+\left(\sum_{i}\mathrm{rk}\mathbb{V}_{g-1,n+2}\left(V_{1},\left\{S^{\bullet},W_{i},W_{i}'\right\}\right)\right)b_{2,irr}\\
    =& \mathrm{rk}\mathbb{V}_{g,n}\left(V_{2},\left\{T^{\bullet}\right\}\right)b_{1,irr}+\mathrm{rk}\mathbb{V}_{g,n}\left(V_{1},\left\{S^{\bullet}\right\}\right)b_{2,irr}     = b_{12,irr}
\end{aligned}
$$

Let $i_{0}\in \{0,\cdots,g\}, I\subseteq [n]=\{1,\cdots,n\}$.

Step 3: we check the coefficient of $\delta_{i_{0}:I}$.

Define
$$
\begin{aligned}
    b_{i_{0}:I} =& \text{Coeff}_{\delta_{i_{0}:I}}c_{1}\mathbb{V}_{g,n}(V_{12}, \{S^{\bullet}\otimes T^{\bullet}\}),
    b_{i_{0},I}^{1} = \text{Coeff}_{\delta_{i_{0}:I}}c_{1}\mathbb{V}_{g,n}(V_{1}, \{S^{\bullet}\})\\
    b_{i_{0},I}^{2} =& \text{Coeff}_{\delta_{i_{0}:I}}c_{1}\mathbb{V}_{g,n}(V_{2}, \{T^{\bullet}\}),
b_{i_{0},I}^{12}=\text{Coeff}_{\delta_{i_{0}:I}}c_{1}\left(\mathbb{V}_{g,n}(V_{1},S^{\bullet})\otimes \mathbb{V}_{g,n}(V_{2},T^{\bullet})\right).
\end{aligned}
$$
Then,
$$
\begin{aligned}
    b_{i_{0}:I} 
    = \sum_{i,j}&\left(a_{W_{i}\otimes M_{j}}\right)\mathrm{rk}\mathbb{V}_{i_{0},|I|+1}(V_{12},\{W^{I}\otimes M^{I},W_{i}\otimes M_{j}\})\mathrm{rk}\mathbb{V}_{g-i_{0},n-|I|+1}(V_{12},\{W^{I^{c}}\otimes M^{I^{c}},(W_{i}\otimes M_{j})'\})\\
    = \sum_{i,j}&\left(a_{W_{i}}+ a_{M_{j}}\right)\mathrm{rk}\mathbb{V}_{i_{0},|I|+1}(V_{12},\{W^{I}\otimes M^{I},W_{i}\otimes M_{j}\})\mathrm{rk}\mathbb{V}_{g-i_{0},n-|I|+1}(V_{12},\{W^{I^{c}}\otimes M^{I^{c}},W_{i}'\otimes M_{j}'\})\\
    = \sum_{i,j}&a_{W_{i}}\mathrm{rk}\mathbb{V}_{i_{0},|I|+1}(V_{12},\{W^{I}\otimes M^{I},W_{i}\otimes M_{j}\})\mathrm{rk}\mathbb{V}_{g-i_{0},n-|I|+1}(V_{12},\{W^{I^{c}}\otimes M^{I^{c}},W_{i}'\otimes M_{j}'\})+\\
    \sum_{i,j}&a_{M_{j}}\mathrm{rk}\mathbb{V}_{i_{0},|I|+1}(V_{12},\{W^{I}\otimes M^{I},W_{i}\otimes M_{j}\})\mathrm{rk}\mathbb{V}_{g-i_{0},n-|I|+1}(V_{12},\{W^{I^{c}}\otimes M^{I^{c}},W_{i}'\otimes M_{j}'\}).
\end{aligned}
$$
Thus, we have
$$
\begin{aligned}
    b_{i_{0}:I} = \sum_{i,j}&\left(a_{W_{i}}\mathrm{rk}\mathbb{V}_{i_{0},|I|+1}(V_{1},\{W^{I},W_{i}\})\mathrm{rk}\mathbb{V}_{i_{0},|I|+1}(V_{2},\{M^{I},M_{j}\})\right.\\
    &\left.\mathrm{rk}\mathbb{V}_{g-i_{0},n-|I|+1}(V_{1},\{W^{I^{c}},W_{i}'\})\mathrm{rk}\mathbb{V}_{g-i_{0},n-|I|+1}(V_{2},\{M^{I^{c}},M_{j}'\})\right)+\\
    \sum_{i,j}&\left(a_{W_{j}}\mathrm{rk}\mathbb{V}_{i_{0},|I|+1}(V_{1},\{W^{I},W_{i}\})\mathrm{rk}\mathbb{V}_{i_{0},|I|+1}(V_{2},\{M^{I},M_{j}\})\right.\\
    &\left.\mathrm{rk}\mathbb{V}_{g-i_{0},n-|I|+1}(V_{1},\{W^{I^{c}},W_{i}'\})\mathrm{rk}\mathbb{V}_{g-i_{0},n-|I|+1}(V_{2},\{M^{I^{c}},M_{j}'\})\right) \ \ (\text{Theorem } \ref{thm_3})
    \end{aligned}
$$
    Thus, we have
$$
\begin{aligned}
b_{i_{0},I}=\sum_{j}&\left(\mathrm{rk}\mathbb{V}_{i_{0},|I|+1}(V_{2},\{M^{I},M_{j}\})\mathrm{rk}\mathbb{V}_{g-i_{0},n-|I|+1}(V_{2},\{M^{I^{c}},M_{j}'\})\left(\sum_{i}a_{W_{i}}\mathrm{rk}\mathbb{V}_{i_{0},|I|+1}(V_{1},\{W^{I},W_{i}\})\cdot\right.\right.\\
    &\left.\left.\mathrm{rk}\mathbb{V}_{g-i_{0},n-|I|+1}(V_{1},\{W^{I^{c}},W_{i}'\})\right)\right)+\\
    \sum_{i}&\left(\mathrm{rk}\mathbb{V}_{i_{0},|I|+1}(V_{1},\{W^{I},W_{i}\})\mathrm{rk}\mathbb{V}_{g-i_{0},n-|I|+1}(V_{1},\{W^{I^{c}},W_{i}'\})\left(\sum_{j}a_{W_{j}}\mathrm{rk}\mathbb{V}_{i_{0},|I|+1}(V_{2},\{M^{I},M_{j}\})\cdot\right.\right.\\
    &\left.\left.\mathrm{rk}\mathbb{V}_{g-i_{0},n-|I|+1}(V_{2},\{M^{I^{c}},M_{j}'\})\right)\right).
\end{aligned}
$$
Therefore, we have
$$
\begin{aligned}
b_{i_{0}:I}
    =\sum_{j}&\mathrm{rk}\mathbb{V}_{i_{0},|I|+1}(V_{2},\{M^{I},M_{j}\})\mathrm{rk}\mathbb{V}_{g-i_{0},n-|I|+1}(V_{2},\{M^{I^{c}},M_{j}'\})b_{i_{0}:I}^{1}+   \\
    \sum_{i}&\mathrm{rk}\mathbb{V}_{i_{0},|I|+1}(V_{1},\{W^{I},W_{i}\})\mathrm{rk}\mathbb{V}_{g-i_{0},n-|I|+1}(V_{1},\{W^{I^{c}},W_{i}'\})b_{i_{0}:I}^{2}\\
    = \sum_{j}&\left(\mathrm{rk}\mathbb{V}_{i_{0},|I|+1}(V_{2},\{M^{I},M_{j}\})\mathrm{rk}\mathbb{V}_{g-i_{0},n-|I|+1}(V_{2},\{M^{I^{c}},M_{j}'\})\right)b_{i_{0}:I}^{1}+\\
    \sum_{i}&\left(\mathrm{rk}\mathbb{V}_{i_{0},|I|+1}(V_{1},\{W^{I},W_{i}\})\mathrm{rk}\mathbb{V}_{g-i_{0},n-|I|+1}(V_{1},\{W^{I^{c}},W_{i}'\})\right)b_{i_{0}:I}^{2}\\
    = \mathrm{r}\mathrm{k}\mathbb{V}&_{g,n}(V_{2},\{T^{\bullet}\})b_{i_{0}:I}^{1}+\mathrm{rk}\mathbb{V}_{g,n}(V_{1},\{S^{\bullet}\})b_{i_{0}:I}^{2}= b_{i:I}^{12}.
\end{aligned}
$$

\begin{flushright}
    Q.E.D.
\end{flushright}

\section{Proof of Theorem \ref{theorem_7}}\label{app_theorem_7}
\begin{lemma}\label{lem}
    Let $V_{2,2l+1}$ be the Virasoro VOA of central charge $c_{2,2l+2}$ ($l\ge 1$), and $W_{\min}$ be the irreducible module with minimal conformal weight. Then, the FA-matrix associated with $R_{W_{\min},0}$ is a symmetric, upper-left triangular $l\times l$ matrix 
$$
    R_{W_{\min},0}=
  \begin{pmatrix}
    1 & 1 & \cdots & 1 & 1 \\
    1 & 1 & \cdots & 1 & 0\\
    &\vdots \\
    1 & 0 & \cdots & 0 & 0
  \end{pmatrix}.
$$
\end{lemma}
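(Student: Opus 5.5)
The plan is to identify $W_{\min}$ explicitly and then read off its fusion matrix. One route is to specialise Lemma \ref{lem_2} to $k=1$. A cleaner and self-contained route is to go back to Wang's fusion rules for $V_{2,2l+1}$ \cite{Wang93}, and that is the one I will follow.

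First I will label the modules. For $p=2$, $q=2l+1$, the irreducible modules are $L_{c,h_{1,s}}$ with $s=1,\dots,l$ (using $h_{1,s}=h_{1,q-s}$). Their weights are
$$h_{1,s}=\frac{(2s-q)^2-(q-2)^2}{8q},$$
which decrease as $s$ increases in this range, since $|2s-q|$ shrinks. So ordering by increasing conformal weight gives $W_i=L_{c,h_{1,l+1-i}}$, and $W_{\min}=W_1=L_{c,h_{1,l}}$.

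Next I will use Wang's fusion rule for the $(1,s)$ series, which is the $\mathfrak{sl}_2$-type rule at level $q-2$. It says $N_{s_1,s_2}^{s_3}=1$ exactly when three conditions hold: $s_1+s_2+s_3$ is odd, $|s_1-s_2|<s_3<s_1+s_2$, and $s_1+s_2+s_3<2q$. One also has to allow the reflection $s\mapsto q-s$. The entry is
$$(R_{W_{\min},0})_{(i,j)}=\mathrm{rank}\mathbb{V}_{0,3}(W_{\min},W_i,W_j')=N_{l,a}^{b},\qquad a=l+1-i,\ b=l+1-j,$$
where self-duality of all the modules lets me drop the prime. The key step is to show that for each pair $(a,b)$, with $a,b\in\{1,\dots,l\}$, exactly one representative of $b$ or of $q-b$ satisfies the parity and triangle conditions, and that this happens if and only if $a+b\ge l+1$. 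Translating back, $a+b\ge l+1$ is the same as $i+j\le l+1$. This gives ones exactly on and above the anti-diagonal, and zeros strictly below it, which is the claimed upper-left triangular shape. Symmetry of the matrix then follows from $N_{l,a}^{b}=N_{l,b}^{a}$ when all modules are self-dual.

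The main obstacle is the parity bookkeeping in this key step. The cleanest way through it is to take $s_1=l$, $s_2=a$, and to choose $s_3\in\{b,\,q-b\}$ with the parity that makes $l+a+s_3$ odd. Since $q$ is odd, $b$ and $q-b$ have opposite parity, so exactly one of them is admissible. I then check the inequalities for the admissible choice:
\begin{itemize}
\item $|l-a|<s_3$ always holds. This is because $s_3\ge b\ge1$ and $|l-a|\le l-1$, with the parity condition ruling out equality.
\item The condition $s_3<l+a$ together with $l+a+s_3<2q=4l+2$ reduces to $a+b\ge l+1$. If $s_3=b$, this uses $b\le l$. If $s_3=q-b=2l+1-b$, the condition becomes $2l+1-b<l+a$, i.e. $a+b>l+1$; parity then forces $a+b\ge l+1$ overall, while $a+b=l+1$ is covered by the $s_3=b$ case.
\end{itemize}
I expect to spend most of the care on these edge cases, and I will verify them against the small examples ($t=2,3,4$) listed after Lemma \ref{lem_2}, whose first matrices already show the stated shape.
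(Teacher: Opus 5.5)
Your route is the paper's: Wang's fusion rules for the $(1,s)$ series, both Kac representatives $b$ and $q-b$ of the third module, parity (with $q$ odd) selecting exactly one of them, and a single triangle inequality deciding the entry. Your labelling $W_i=L_{c,h_{1,l+1-i}}$ and the criterion $a+b\ge l+1\iff i+j\le l+1$ are also right. However, your inequality bookkeeping is wrong in the unreflected case $s_3=b$.

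Your claim that $|l-a|<s_3$ always holds does not follow from $b\ge 1$ and $|l-a|\le l-1$, and it is false. Take $l=3$ and $a=b=1$, i.e. the entry $(3,3)=N_{W_{\min},V}^{V}$, which must be $0$. Then $l+a+b=5$ is odd, so $s_3=b=1$ is the admissible representative, and $|l-a|=2<1$ fails. Conversely, when $s_3=b$ the conditions $b<l+a$ and $l+a+b<4l+2$ hold automatically; this is exactly where $b\le l$ is used. So they cannot reduce to $a+b\ge l+1$. Followed literally, your argument would put a $1$ in every entry with $l+a+b$ odd, including entries strictly below the anti-diagonal.

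The fix is to swap the roles of the inequalities in this case:
\begin{itemize}
\item If $l+a+b$ is odd, take $s_3=b$. The upper triangle inequality and the level bound are automatic. The only binding condition is the lower triangle inequality $l-a<b$, i.e. $a+b\ge l+1$.
\item If $l+a+b$ is even, take $s_3=2l+1-b$. The lower triangle inequality is automatic, since $s_3\ge l+1>l-a$. So is the bound $l+a+s_3<2q$, which amounts to $a-b<l+1$. The upper triangle inequality $2l+1-b<l+a$ gives $a+b>l+1$. Since parity forces $a+b\equiv l \pmod 2$, this is equivalent to $a+b\ge l+1$.
\end{itemize}
This is exactly the split in the paper's proof. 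Its Case 2 (unreflected representative $k-i$) has the nontrivial inequality $k<t+k-i$, and its Case 1 (reflected representative $k+i+1$) has $k+i+1<k+t$. With this correction your argument goes through.
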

\noindent\textit{Proof:}  $V_{2,2l+1}$ has $l$ irreducible modules \cite[Theorem 4.2]{Wang93}, and we label them from $W_{\min}=W_{0}$ to $W_{l-1}$, as increasing order of conformal weight. It is equivalent to show that for a Virasoro VOA of central charge $c_{2,2l+2}$, we have
$$
        \mathrm{rank}\mathbb{V}_{0}(V_{2,2l+1},\{W_{\min}, W_{k-t}, W_{i}\})=
        \left\{\begin{aligned}
            0&,\text{ if }i\ge t,\\
            1&,\text{ if }i<t
        \end{aligned}
        \right.,
        $$ for all $t\in \{0,1,...,k-1\}$. By \cite[Theorem 4.3]{Wang93}, the fusion rules are$$
\begin{aligned}
L(c,h_{1,k})\times L(c,h_{1,t})=& N_{(1,k),(1,t)}^{(1,1)}L(c,h_{1,1})+\cdots+ N_{(1,k),(1,t)}^{(1,2k)}L(c,h_{1,2k})\\
=& \left(N_{(1,k),(1,t)}^{(1,1)}+N_{(1,k),(1,t)}^{(1,2k)}\right)W_{k}+\cdots + \left(N_{(1,k),(1,t)}^{(1,k)}+N_{(1,k),(1,t)}^{(1,k+1)}\right)W_{\min}
\end{aligned}
$$
For $i\in \{0,...,k-1\}$, the coefficient of $W_{i}$ is 
$$
C_{i}:=N_{(1,k),(1,t)}^{(1,k-i)}+N_{(1,k),(1,t)}^{(1,k+i+1)}
$$
Case 1: $i$ and $t$ have the same parity (i.e., are both even or both odd).

$k-i+k+t$ is even, so 
$
C_{i}=N_{(1,k),(1,t)}^{(1,k+i+1)}.
$
Let's consider the inequalities of the admissible triple:
\begin{enumerate}
    \item $0<k,t,k+1+1<2k+1$
    \item $k+t+k+i+1<2q=4k+2$
    \item $k<k+i+1+t,t<k+k+i+1,k+i+1<k+t$
\end{enumerate}
We can see that the only non-trivial constraint is $i+1<t$, which is equivalent to $i<t$, since $i,t$ have the same parity. That is, in Case 1, $C_{i}=1$ if and only if $i<t$, and is $0$ otherwise.

Case 2: $i,t$ have the different parity.
$$
C_{i}=N_{(1,k),(1,t)}^{k-i}
$$
Similarly, let's consider the conditions of admissible triple
\begin{enumerate}
    \item $0<k,t,k-1<q=2k+1$
    \item $k+t+k-i<2q=2k+2$
    \item $k<t+k-i, t<k+k-i, k-i<k+t$
\end{enumerate}
We can see that the only nontrivial inequality is $k<t+k-i$, which is equivalent to $i<t$. Thus, $C_{i}=1$ is and only if $i<t$, and is $0$ otherwise.
\begin{flushright}
    Q.E.D.
\end{flushright}

\noindent\textit{Proof of Theorem \ref{theorem_7}.} By \cite[Theorem 4.2]{Wang93}, there are $l$ irreducible modules of $V_{2,2l+1}$, up to isomorphism. Let's number them as $W_{\min},...,W_{l-1}$, by an increasing order of conformal weight.

Let
$$
f_{l}(z):=f_{V_{2,2l+1},W_{\min}}(z)=\sum_{n=0}^{\infty}\mathrm{rank}\mathbb{V}_{0,n+3}\left(V_{2,2l+1},\left\{W_{\min}^{n+3}\right\}\right)z^{n}
$$
be the generating function associated with $W_{\min}$. For clarity, let's fixed the central charge as $c_{2,2l+1}$, and denote $R_{l}=R_{W_{\min},0}$.
Consider the matrix
\begin{equation}\label{4}
M_{l}:=\sum_{n=0}^{\infty} R_{l}^{n+1}z^{n}=R_{l}\left(\sum_{n=0}^{\infty} R_{l}^{n}z^{n}\right)=\frac{R_{l}}{1-R_{l}z}.
\end{equation}
Notice that the von Neumann series expansion holds near $z=0$, and the power series $f_{l}(z)$ is just the $(1,1)$-entry of $M_{l}$. By linear algebra facts, we can check that $\det R_{l}=\pm 1$, for all $l$, so $R_{l}^{-1}$ exists. We can actually compute
$$
R_{l}^{-1}=
\begin{pmatrix}
    0 & 0 & \cdots &   & 1\\
    0 &   &        & 1 & -1\\
    \vdots & & & & 0\\
    0 & 1 & -1\\
    1 & -1 & 0 & \cdots & 0
\end{pmatrix}
$$
Thus, equation (\ref{4}) becomes
\begin{equation}
M_{l}=\sum_{n=0}^{\infty} R_{l}^{n+1}z^{n}=R_{l}\left(\sum_{n=0}^{\infty} R_{l}^{n}z^{n}\right)=\frac{R_{l}}{1-R_{l}z}=\frac{1}{R_{l}^{-1}-z}.
\end{equation}
Actually, $R_{l}^{-1}-z$ means $R_{l}^{-1}-z\mathrm{Id}_{l}$.
Notice that 
$$
g_{l}(z)=(M_{l})_{1,1}=\left(\frac{1}{R_{l}^{-1}-z}\right)_{1,1}=\frac{1}{\det \left(R_{l}^{-1}-z\right)}\cdot \det C_{l}
$$
where $C_{l}$ is the bottom-right $(l-1)\times (l-1)$-minor of $(R^{-1}_{l}-z)$. Observe that the central $(l-2)\times (l-2)$ sub-matrix of $R_{l}^{-1}-z$ is just $R_{l-2}^{-1}-z$. That is,
$$
R_{l}^{-1}-z=
\begin{pmatrix}
    -z & 0 & \cdots & 0 & 1\\
    0  &   &        &   & -1\\
    \vdots & & R_{l-2}^{-1}-z & & 0\\
    0 & & & & \vdots\\
    1 & -1 & 0 & \cdots & -z
\end{pmatrix}
$$
Thus, we can compute its determinant, by expanding the leftmost column
$$
\begin{aligned}
    \det (R_{l}^{-1}-z)&=-z \det C_{l}+(-1)^{l+1}\det(\text{ upper right minor})\\
    &=-z\det C_{l}-\det \left(R_{l-2}^{-1}-z\right)
\end{aligned}
$$
Notice that we haven't use the parity of $l$ yet, but the term $(-1)^{l+1}$ cancels anyway. Let's calculate the determinant of $C_{l}$, by expanding the rightmost column, and we get
$
\det C_{l}=-z\det \left(R_{l-2}^{-1}-z\right)-\det C_{l-2}.
$
Let $B_{l}=R_{l-2}^{-1}-z$. Consider its $(1,1)$-entry, and we get
\begin{equation}\label{5}
\begin{aligned}
(M_{l})_{1,1}&=\left(B_{l}^{-1}\right)_{1,1}=\frac{\det C_{l}}{\det B_{l}}
=\frac{\det C_{l}}{-z\det C_{l}-\det B_{l-2}}=\frac{1}{-z-\frac{\det B_{l-2}}{\det C_{l}}}\\
&=\frac{1}{-z-\frac{\det B_{l-2}}{-z\det B_{l-2}-\det C_{l-2}}}=\frac{1}{-z-\frac{1}{-z-\frac{\det C_{l-2}}{\det B_{l-2}}}}.
\end{aligned}
\end{equation}

Case 1: $l$ is even.\\
Claim: $f_{l}=\frac{\det C_{l}}{\det B_{l}}$\\
We prove it by induction. We can check that it is true for $l=2$, since
$$
\left(A_{2}^{-1}-z\right)^{-1}=
\begin{pmatrix}
    -z & 1\\
    z & -1-z
\end{pmatrix}.
$$
Thus, considering the equation \ref{5}, it directly follows inductively, for all even $l$.

Case 2: $l$ is odd.\\
Define $h_{l}(z)$ as the following:
$$
    h_{1}(z)=-z+1, \ \ 
    h_{l+1}(z)=\frac{1}{-h_{l}(z)-1}.
$$
Notice that $h_{l}(z)$ is similar to $f_{l+1}(z)$, except that the bottom layer is $-z+1$, instead of $-z-1$. It is easy to check that
$
\left(\left(R_{l}^{-1}-z\right)^{-1}\right)_{1,1}=h_{l}(z),
$
for $l=1$ and $l=3$. Similarly, the induction step follows from the equation (\ref{5}).
\begin{flushright}
    Q.E.D.
\end{flushright}

\bibliographystyle{amsalpha}

\bibliography{reference}

@article {Ara12,
    AUTHOR = {Arakawa, Tomoyuki},
     TITLE = {A remark on the {$C_2$}-cofiniteness condition on vertex
              algebras},
   JOURNAL = {Math. Z.},
  FJOURNAL = {Mathematische Zeitschrift},
    VOLUME = {270},
      YEAR = {2012},
    NUMBER = {1-2},
     PAGES = {559--575},
      ISSN = {0025-5874,1432-1823},
   MRCLASS = {17B69 (17B68)},
  MRNUMBER = {2875849},
MRREVIEWER = {Hai\ Sheng\ Li},
       DOI = {10.1007/s00209-010-0812-4},
       URL = {https://doi.org/10.1007/s00209-010-0812-4},
}

@article {ALY,
    AUTHOR = {Arakawa, Tomoyuki and Lam, Ching Hung and Yamada, Hiromichi},
     TITLE = {Parafermion vertex operator algebras and {$W$}-algebras},
   JOURNAL = {Trans. Amer. Math. Soc.},
  FJOURNAL = {Transactions of the American Mathematical Society},
    VOLUME = {371},
      YEAR = {2019},
    NUMBER = {6},
     PAGES = {4277--4301},
      ISSN = {0002-9947,1088-6850},
   MRCLASS = {17B69 (17B65)},
  MRNUMBER = {3917223},
MRREVIEWER = {Pierluigi\ M\"oseneder Frajria},
       DOI = {10.1090/tran/7547},
       URL = {https://doi.org/10.1090/tran/7547},
}

@article{AYY,
   title={$\mathbb{Z}_k$-code vertex operator algebras},
   volume={73},
   ISSN={0025-5645},
   url={http://dx.doi.org/10.2969/jmsj/83278327},
   DOI={10.2969/jmsj/83278327},
   number={1},
   journal={Journal of the Mathematical Society of Japan},
   publisher={Mathematical Society of Japan (Project Euclid)},
   author={ARAKAWA, Tomoyuki and YAMADA, Hiromichi and YAMAUCHI, Hiroshi},
   year={2021},
   month=jan }

@article {B1,
    AUTHOR = {Borcherds, Richard E.},
     TITLE = {Vertex algebras, {K}ac-{M}oody algebras, and the {M}onster},
   JOURNAL = {Proc. Nat. Acad. Sci. U.S.A.},
  FJOURNAL = {Proceedings of the National Academy of Sciences of the United
              States of America},
    VOLUME = {83},
      YEAR = {1986},
    NUMBER = {10},
     PAGES = {3068--3071},
      ISSN = {0027-8424},
   MRCLASS = {17B67 (17B10 20D08)},
  MRNUMBER = {843307},
MRREVIEWER = {S.\ I.\ Gel\cprime fand},
       DOI = {10.1073/pnas.83.10.3068},
       URL = {https://doi.org/10.1073/pnas.83.10.3068},
}

@inproceedings {Bea96,
    AUTHOR = {Beauville, Arnaud},
     TITLE = {Conformal blocks, fusion rules and the {V}erlinde formula},
 BOOKTITLE = {Proceedings of the {H}irzebruch 65 {C}onference on {A}lgebraic
              {G}eometry ({R}amat {G}an, 1993)},
    SERIES = {Israel Math. Conf. Proc.},
    VOLUME = {9},
     PAGES = {75--96},
 PUBLISHER = {Bar-Ilan Univ., Ramat Gan},
      YEAR = {1996},
   MRCLASS = {17B67 (14D20 17B68 17B69 81T40)},
  MRNUMBER = {1360497},
MRREVIEWER = {Alex\ Jay\ Feingold},
}

@article {BG,
    AUTHOR = {Belkale, P. and Gibney, A.},
     TITLE = {Basepoint free cycles on {$\overline{{\rm M}}_{0,n}$} from
              {G}romov-{W}itten theory},
   JOURNAL = {Int. Math. Res. Not. IMRN},
  FJOURNAL = {International Mathematics Research Notices. IMRN},
      YEAR = {2021},
    NUMBER = {2},
     PAGES = {855--884},
      ISSN = {1073-7928,1687-0247},
   MRCLASS = {14N35 (14H10)},
  MRNUMBER = {4201956},
MRREVIEWER = {Luca\ Battistella},
       DOI = {10.1093/imrn/rnz184},
       URL = {https://doi.org/10.1093/imrn/rnz184},
}

@article {BL,
    AUTHOR = {Beauville, Arnaud and Laszlo, Yves},
     TITLE = {Conformal blocks and generalized theta functions},
   JOURNAL = {Comm. Math. Phys.},
  FJOURNAL = {Communications in Mathematical Physics},
    VOLUME = {164},
      YEAR = {1994},
    NUMBER = {2},
     PAGES = {385--419},
      ISSN = {0010-3616,1432-0916},
   MRCLASS = {14D20 (14H60 17B67)},
  MRNUMBER = {1289330},
MRREVIEWER = {Emma\ Previato},
       URL = {http://projecteuclid.org/euclid.cmp/1104270837},
}

@misc{Cha,
      title={Positivity of coinvariant divisors on $\overline{\mathrm{M}}_{0,n}$ and the parafermions}, 
      author={Avik Chakravarty},
      year={2025},
      eprint={2506.17593},
      archivePrefix={arXiv},
      primaryClass={math.AG},
      url={https://arxiv.org/abs/2506.17593}, 
}

@misc{Choi,
      title={Conformal Block Divisors for Discrete Series Virasoro VOA $\text{Vir}_{2k+1,2}$}, 
      author={Daebeom Choi},
      year={2025},
      eprint={2502.21270},
      archivePrefix={arXiv},
      primaryClass={math.AG},
      url={https://arxiv.org/abs/2502.21270}, 
}

@article {DG,
    AUTHOR = {Damiolini, Chiara and Gibney, Angela},
     TITLE = {On global generation of vector bundles on the moduli space of
              curves from representations of vertex operator algebras},
   JOURNAL = {Algebr. Geom.},
  FJOURNAL = {Algebraic Geometry},
    VOLUME = {10},
      YEAR = {2023},
    NUMBER = {3},
     PAGES = {298--326},
      ISSN = {2313-1691,2214-2584},
   MRCLASS = {14H10 (14D20 14D21 17B69 81R10)},
  MRNUMBER = {4583950},
       DOI = {10.14231/ag-2023-010},
       URL = {https://doi.org/10.14231/ag-2023-010},
}

@article {DGT22b,
    AUTHOR = {Damiolini, Chiara and Gibney, Angela and Tarasca, Nicola},
     TITLE = {Conformal blocks from vertex algebras and their connections on
              {$\overline{ M}_{g, n}$}},
   JOURNAL = {Geom. Topol.},
  FJOURNAL = {Geometry \& Topology},
    VOLUME = {25},
      YEAR = {2021},
    NUMBER = {5},
     PAGES = {2235--2286},
      ISSN = {1465-3060,1364-0380},
   MRCLASS = {14H10 (14C17 16D90 17B69 81R10 81T40)},
  MRNUMBER = {4310890},
MRREVIEWER = {Zhenbo\ Qin},
       DOI = {10.2140/gt.2021.25.2235},
       URL = {https://doi.org/10.2140/gt.2021.25.2235},
}

@incollection {DGK1,
    AUTHOR = {Damiolini, Chiara and Gibney, Angela and Krashen, Daniel},
     TITLE = {Factorization presentations},
 BOOKTITLE = {Higher dimensional algebraic geometry---a volume in honor of
              {V}. {V}. {S}hokurov},
    SERIES = {London Math. Soc. Lecture Note Ser.},
    VOLUME = {489},
     PAGES = {163--191},
 PUBLISHER = {Cambridge Univ. Press, Cambridge},
      YEAR = {2025},
      ISBN = {978-1-009-39624-0},
   MRCLASS = {14H10 (17B69 81R10 81T40)},
  MRNUMBER = {4844631},
}

@article{DGT22a,
  author = {Damiolini, Chiara and Gibney, Angela and Tarasca, Nicola},
  journal = {Annales Scientifiques de l'École Normale Supérieure},
  title = {On Factorization and Vector Bundles of Conformal Blocks from Vertex Algebras},
  doi = {https://smf.emath.fr/publications/sur-la-factorisation-et-les-fibres-vectoriels-de-blocs-conformes-des-algebres-vertex?language=en},
  volume = {57},
  number = {1},
  pages = {241-292},
  year = {2024}
}

@incollection {DGT3,
    AUTHOR = {Damiolini, Chiara and Gibney, Angela and Tarasca, Nicola},
     TITLE = {Vertex algebras of {C}oh{FT}-type},
 BOOKTITLE = {Facets of algebraic geometry. {V}ol. {I}},
    SERIES = {London Math. Soc. Lecture Note Ser.},
    VOLUME = {472},
     PAGES = {164--189},
 PUBLISHER = {Cambridge Univ. Press, Cambridge},
      YEAR = {2022},
      ISBN = {978-1-108-79250-9; 978-1-108-87006-1},
   MRCLASS = {17B69 (14H10 14H81)},
  MRNUMBER = {4381900},
MRREVIEWER = {Philsang\ Yoo},
}

@article {DLM,
    AUTHOR = {Dong, Chongying and Li, Haisheng and Mason, Geoffrey},
     TITLE = {Modular-invariance of trace functions in orbifold theory and
              generalized {M}oonshine},
   JOURNAL = {Comm. Math. Phys.},
  FJOURNAL = {Communications in Mathematical Physics},
    VOLUME = {214},
      YEAR = {2000},
    NUMBER = {1},
     PAGES = {1--56},
      ISSN = {0010-3616,1432-0916},
   MRCLASS = {17B69 (11F22)},
  MRNUMBER = {1794264},
MRREVIEWER = {Vassily\ Gorbounov},
       DOI = {10.1007/s002200000242},
       URL = {https://doi.org/10.1007/s002200000242},
}

@misc{DLM2,
      title={Regularity of rational vertex operator algebras}, 
      author={Chongying Dong and Haisheng Li and Geoffrey Mason},
      year={1995},
      eprint={q-alg/9508018},
      archivePrefix={arXiv},
      primaryClass={q-alg},
      url={https://arxiv.org/abs/q-alg/9508018}, 
}

@article{DLWY,
    author = "Dong, Chongying and Lam, Ching Hung and Wang, Qing and Yamada, Hiromichi",
    title = "{The Structure of parafermion vertex operator algebras}",
    eprint = "0904.2758",
    archivePrefix = "arXiv",
    primaryClass = "math.QA",
    doi = "10.1007/s00220-010-1114-8",
    journal = "Commun. Math. Phys.",
    volume = "299",
    pages = "783--792",
    year = "2010"
}

@incollection {DMZ94,
    AUTHOR = {Dong, Chongying and Mason, Geoffrey and Zhu, Yongchang},
     TITLE = {Discrete series of the {V}irasoro algebra and the moonshine
              module},
 BOOKTITLE = {Algebraic groups and their generalizations: quantum and
              infinite-dimensional methods ({U}niversity {P}ark, {PA},
              1991)},
    SERIES = {Proc. Sympos. Pure Math.},
    VOLUME = {56, Part 2},
     PAGES = {295--316},
 PUBLISHER = {Amer. Math. Soc., Providence, RI},
      YEAR = {1994},
      ISBN = {0-8218-1541-5},
   MRCLASS = {17B69 (17B68)},
  MRNUMBER = {1278737},
MRREVIEWER = {F\"usun\ Akman},
       DOI = {10.1090/pspum/056.2/1278737},
       URL = {https://doi.org/10.1090/pspum/056.2/1278737},
}

@article {Dong93,
    AUTHOR = {Dong, Chongying},
     TITLE = {Vertex algebras associated with even lattices},
   JOURNAL = {J. Algebra},
  FJOURNAL = {Journal of Algebra},
    VOLUME = {161},
      YEAR = {1993},
    NUMBER = {1},
     PAGES = {245--265},
      ISSN = {0021-8693,1090-266X},
   MRCLASS = {17B65 (81R10)},
  MRNUMBER = {1245855},
MRREVIEWER = {Vjatcheslav\ Futorny},
       DOI = {10.1006/jabr.1993.1217},
       URL = {https://doi.org/10.1006/jabr.1993.1217},
}

@book {DL93,
    AUTHOR = {Dong, Chongying and Lepowsky, James},
     TITLE = {Generalized vertex algebras and relative vertex operators},
    SERIES = {Progress in Mathematics},
    VOLUME = {112},
 PUBLISHER = {Birkh\"auser Boston, Inc., Boston, MA},
      YEAR = {1993},
     PAGES = {x+202},
      ISBN = {0-8176-3721-4},
   MRCLASS = {17B69 (17B65 81R10 81T40)},
  MRNUMBER = {1233387},
MRREVIEWER = {Naihuan\ Jing},
       DOI = {10.1007/978-1-4612-0353-7},
       URL = {https://doi.org/10.1007/978-1-4612-0353-7},
}

@incollection {F2,
    AUTHOR = {Fakhruddin, Najmuddin},
     TITLE = {Chern classes of conformal blocks},
 BOOKTITLE = {Compact moduli spaces and vector bundles},
    SERIES = {Contemp. Math.},
    VOLUME = {564},
     PAGES = {145--176},
 PUBLISHER = {Amer. Math. Soc., Providence, RI},
      YEAR = {2012},
      ISBN = {978-0-8218-6899-7},
   MRCLASS = {14C17 (14D23 81T40)},
  MRNUMBER = {2894632},
MRREVIEWER = {Dmitry\ Kerner},
       DOI = {10.1090/conm/564/11148},
       URL = {https://doi.org/10.1090/conm/564/11148},
}

@article {F3,
    AUTHOR = {Faltings, Gerd},
     TITLE = {A proof for the {V}erlinde formula},
   JOURNAL = {J. Algebraic Geom.},
  FJOURNAL = {Journal of Algebraic Geometry},
    VOLUME = {3},
      YEAR = {1994},
    NUMBER = {2},
     PAGES = {347--374},
      ISSN = {1056-3911,1534-7486},
   MRCLASS = {14D20},
  MRNUMBER = {1257326},
MRREVIEWER = {Jean-Marc\ Dr\'ezet},
}

@article {FHL,
    AUTHOR = {Frenkel, Igor B. and Huang, Yi-Zhi and Lepowsky, James},
     TITLE = {On axiomatic approaches to vertex operator algebras and
              modules},
   JOURNAL = {Mem. Amer. Math. Soc.},
  FJOURNAL = {Memoirs of the American Mathematical Society},
    VOLUME = {104},
      YEAR = {1993},
    NUMBER = {494},
     PAGES = {viii+64},
      ISSN = {0065-9266,1947-6221},
   MRCLASS = {17B37},
  MRNUMBER = {1142494},
MRREVIEWER = {Chong\ Ying\ Dong},
       DOI = {10.1090/memo/0494},
       URL = {https://doi.org/10.1090/memo/0494},
}

@article {FLM2,
    AUTHOR = {Frenkel, I. B. and Lepowsky, J. and Meurman, A.},
     TITLE = {A natural representation of the {F}ischer-{G}riess {M}onster
              with the modular function {$J$}\ as character},
   JOURNAL = {Proc. Nat. Acad. Sci. U.S.A.},
  FJOURNAL = {Proceedings of the National Academy of Sciences of the United
              States of America},
    VOLUME = {81},
      YEAR = {1984},
    NUMBER = {10},
     PAGES = {3256--3260},
      ISSN = {0027-8424},
   MRCLASS = {20D08 (17B67 20C20)},
  MRNUMBER = {747596},
       DOI = {10.1073/pnas.81.10.3256},
       URL = {https://doi.org/10.1073/pnas.81.10.3256},
}

@article {FZ92,
    AUTHOR = {Frenkel, Igor B. and Zhu, Yongchang},
     TITLE = {Vertex operator algebras associated to representations of
              affine and {V}irasoro algebras},
   JOURNAL = {Duke Math. J.},
  FJOURNAL = {Duke Mathematical Journal},
    VOLUME = {66},
      YEAR = {1992},
    NUMBER = {1},
     PAGES = {123--168},
      ISSN = {0012-7094,1547-7398},
   MRCLASS = {17B68 (17B67)},
  MRNUMBER = {1159433},
MRREVIEWER = {Geoffrey\ Mason},
       DOI = {10.1215/S0012-7094-92-06604-X},
       URL = {https://doi.org/10.1215/S0012-7094-92-06604-X},
}

@article {G,
    AUTHOR = {Ghouila-Houri, Alain},
     TITLE = {Caract\'erisation des matrices totalement unimodulaires},
   JOURNAL = {C. R. Acad. Sci. Paris},
  FJOURNAL = {Comptes Rendus Hebdomadaires des S\'eances de l'Acad\'emie des
              Sciences},
    VOLUME = {254},
      YEAR = {1962},
     PAGES = {1192--1194},
      ISSN = {0001-4036},
   MRCLASS = {15.48},
  MRNUMBER = {132752},
MRREVIEWER = {O.\ Taussky-Todd},
}

@article {GKM,
AUTHOR = {Gibney, Angela and Keel, Sean and Morrison, Ian},
TITLE = {Towards the ample cone of {$\overline{M}_{g,n}$}},
JOURNAL = {J. Amer. Math. Soc.},
FJOURNAL = {Journal of the American Mathematical Society},
    VOLUME = {15},
      YEAR = {2002},
    NUMBER = {2},
     PAGES = {273--294},
      ISSN = {0894-0347,1088-6834},
   MRCLASS = {14H10 (14C17 14E30)},
  MRNUMBER = {1887636},
MRREVIEWER = {Dan\ Avritzer},
       DOI = {10.1090/S0894-0347-01-00384-8},
       URL = {https://doi.org/10.1090/S0894-0347-01-00384-8},
}

@misc{GL,
      title={Applications of the factorization theorem of conformal blocks to vertex operator algebras}, 
      author={Xu Gao and Jianqi Liu},
      year={2025},
      eprint={2508.01294},
      archivePrefix={arXiv},
      primaryClass={math.QA},
      url={https://arxiv.org/abs/2508.01294}, 
}

@article {GR,
    AUTHOR = {Gannon, Terry and Riesen, Andrew},
     TITLE = {Orbifolds of pointed vertex operator algebras {I}},
   JOURNAL = {Adv. Math.},
  FJOURNAL = {Advances in Mathematics},
    VOLUME = {482},
      YEAR = {2025},
     PAGES = {Paper No. 110546},
      ISSN = {0001-8708,1090-2082},
   MRCLASS = {17B69 (16T05 18M20)},
  MRNUMBER = {4968024},
       DOI = {10.1016/j.aim.2025.110546},
       URL = {https://doi.org/10.1016/j.aim.2025.110546},
}

@article {T,
    AUTHOR = {Thaddeus, Michael},
     TITLE = {Stable pairs, linear systems and the {V}erlinde formula},
   JOURNAL = {Invent. Math.},
  FJOURNAL = {Inventiones Mathematicae},
    VOLUME = {117},
      YEAR = {1994},
    NUMBER = {2},
     PAGES = {317--353},
      ISSN = {0020-9910,1432-1297},
   MRCLASS = {14D20 (14H60 32L05 81T40)},
  MRNUMBER = {1273268},
MRREVIEWER = {Steven\ B.\ Bradlow},
       DOI = {10.1007/BF01232244},
       URL = {https://doi.org/10.1007/BF01232244},
}

@article {KN,
    AUTHOR = {Kumar, Shrawan and Narasimhan, M. S. and Ramanathan, A.},
     TITLE = {Infinite {G}rassmannians and moduli spaces of {$G$}-bundles},
   JOURNAL = {Math. Ann.},
  FJOURNAL = {Mathematische Annalen},
    VOLUME = {300},
      YEAR = {1994},
    NUMBER = {1},
     PAGES = {41--75},
      ISSN = {0025-5831,1432-1807},
   MRCLASS = {14D20 (14H60 14K25 17B67 22E67 81T40)},
  MRNUMBER = {1289830},
MRREVIEWER = {Emma\ Previato},
       DOI = {10.1007/BF01450475},
       URL = {https://doi.org/10.1007/BF01450475},
}

@article {LGTDO,
    AUTHOR = {Liu, Shuangzhe and Trenkler, G\"otz and Kollo, T\~onu and von
              Rosen, Dietrich and Baksalary, Oskar Maria},
     TITLE = {Professor {H}einz {N}eudecker and matrix differential
              calculus},
   JOURNAL = {Statist. Papers},
  FJOURNAL = {Statistical Papers},
    VOLUME = {65},
      YEAR = {2024},
    NUMBER = {4},
     PAGES = {2605--2639},
      ISSN = {0932-5026,1613-9798},
   MRCLASS = {99-01},
  MRNUMBER = {4753734},
       DOI = {10.1007/s00362-023-01499-w},
       URL = {https://doi.org/10.1007/s00362-023-01499-w},
}

@misc{Milas96,
      title={Tensor product of Vertex operator algebras}, 
      author={Antun Milas},
      year={1996},
      eprint={q-alg/9602026},
      archivePrefix={arXiv},
      primaryClass={q-alg},
      url={https://arxiv.org/abs/q-alg/9602026}, 
}

@misc{M,
      title={On rationality for $C_2$-cofinite vertex operator algebras}, 
      author={Robert McRae},
      year={2021},
      eprint={2108.01898},
      archivePrefix={arXiv},
      primaryClass={math.QA},
      url={https://arxiv.org/abs/2108.01898}, 
}

@misc{Miy,
      title={Modular invariance of vertex operator algebras satisfying $C_2$-cofiniteness}, 
      author={Masahiko Miyamoto},
      year={2002},
      eprint={math/0209101},
      archivePrefix={arXiv},
      primaryClass={math.QA},
      url={https://arxiv.org/abs/math/0209101}, 
}

@article {NT,
    AUTHOR = {Nagatomo, Kiyokazu and Tsuchiya, Akihiro},
     TITLE = {Conformal field theories associated to regular chiral vertex
              operator algebras. {I}. {T}heories over the projective line},
   JOURNAL = {Duke Math. J.},
  FJOURNAL = {Duke Mathematical Journal},
    VOLUME = {128},
      YEAR = {2005},
    NUMBER = {3},
     PAGES = {393--471},
      ISSN = {0012-7094,1547-7398},
   MRCLASS = {81T40 (17B69 81R10)},
  MRNUMBER = {2145740},
MRREVIEWER = {Hai\ Sheng\ Li},
       DOI = {10.1215/S0012-7094-04-12831-3},
       URL = {https://doi.org/10.1215/S0012-7094-04-12831-3},
}

@book {SS,
    AUTHOR = {Sch\"utt, Matthias and Shioda, Tetsuji},
     TITLE = {Mordell-{W}eil lattices},
    SERIES = {Ergebnisse der Mathematik und ihrer Grenzgebiete. 3. Folge. A
              Series of Modern Surveys in Mathematics [Results in
              Mathematics and Related Areas. 3rd Series. A Series of Modern
              Surveys in Mathematics]},
    VOLUME = {70},
 PUBLISHER = {Springer, Singapore},
      YEAR = {2019},
     PAGES = {xvi+431},
      ISBN = {978-981-32-9300-7; 978-981-32-9301-4},
   MRCLASS = {11G05 (11F80 11H06 14J20 14J26 14J27 14J28)},
  MRNUMBER = {3970314},
MRREVIEWER = {\'Alvaro\ Lozano-Robledo},
       DOI = {10.1007/978-981-32-9301-4},
       URL = {https://doi.org/10.1007/978-981-32-9301-4},
}

@incollection {TUY,
    AUTHOR = {Tsuchiya, Akihiro and Ueno, Kenji and Yamada, Yasuhiko},
     TITLE = {Conformal field theory on universal family of stable curves
              with gauge symmetries},
 BOOKTITLE = {Integrable systems in quantum field theory and statistical
              mechanics},
    SERIES = {Adv. Stud. Pure Math.},
    VOLUME = {19},
     PAGES = {459--566},
 PUBLISHER = {Academic Press, Boston, MA},
      YEAR = {1989},
      ISBN = {0-12-385342-7},
   MRCLASS = {81T40 (14H15 17B67 17B81 32G15)},
  MRNUMBER = {1048605},
MRREVIEWER = {Yukihiko\ Namikawa},
       DOI = {10.2969/aspm/01910459},
       URL = {https://doi.org/10.2969/aspm/01910459},
}

@article{V,
title = {Fusion rules and modular transformations in 2D conformal field theory},
journal = {Nuclear Physics B},
volume = {300},
pages = {360-376},
year = {1988},
issn = {0550-3213},
doi = {https://doi.org/10.1016/0550-3213(88)90603-7},
url = {https://www.sciencedirect.com/science/article/pii/0550321388906037},
author = {Erik Verlinde}
}

@article {Wang93,
    AUTHOR = {Wang, Weiqiang},
     TITLE = {Rationality of {V}irasoro vertex operator algebras},
   JOURNAL = {Internat. Math. Res. Notices},
  FJOURNAL = {International Mathematics Research Notices},
      YEAR = {1993},
    NUMBER = {7},
     PAGES = {197--211},
      ISSN = {1073-7928,1687-0247},
   MRCLASS = {17B68 (81R10)},
  MRNUMBER = {1230296},
MRREVIEWER = {Chong\ Ying\ Dong},
       DOI = {10.1155/S1073792893000212},
       URL = {https://doi.org/10.1155/S1073792893000212},
}

@book {U08,
    AUTHOR = {Ueno, Kenji},
     TITLE = {Conformal field theory with gauge symmetry},
    SERIES = {Fields Institute Monographs},
    VOLUME = {24},
 PUBLISHER = {American Mathematical Society, Providence, RI; Fields
              Institute for Research in Mathematical Sciences, Toronto, ON},
      YEAR = {2008},
     PAGES = {viii+168},
      ISBN = {978-0-8218-4088-7},
   MRCLASS = {81T40 (17B69 17B81 81R10)},
  MRNUMBER = {2433154},
MRREVIEWER = {Domenico\ Fiorenza},
       DOI = {10.1090/fim/024},
       URL = {https://doi.org/10.1090/fim/024},
}

\end{document}